\let\origsection=\section \def\section{\@ifstar{\origsection*}{\mysection}} 
\def\mysection{\@startsection{section}{1}\z@{.7\linespacing\@plus\linespacing}{.5\linespacing}{\normalfont\scshape\centering\S}}
\renewcommand{\PrintDOI}[1]{\doi{#1}}
\numberwithin{equation}{section}
\numberwithin{figure}{section}
\let\polishlcross=\l
\def\l{\ifmmode\ell\else\polishlcross\fi}
\def\paragraph#1{%
  \noindent\textbf{#1.}\enspace}
\let\emptyset=\varnothing
\def\moverlay{\mathpalette\mov@rlay}
\def\mov@rlay#1#2{\leavevmode\vtop{   \baselineskip\z@skip \lineskiplimit-\maxdimen
   \ialign{\hfil$\m@th#1##$\hfil\cr#2\crcr}}}
\newcommand{\charfusion}[3][\mathord]{
    #1{\ifx#1\mathop\vphantom{#2}\fi
        \mathpalette\mov@rlay{#2\cr#3}
      }
    \ifx#1\mathop\expandafter\displaylimits\fi}
\DeclareFontFamily{U}  {MnSymbolC}{}
\DeclareSymbolFont{MnSyC}         {U}  {MnSymbolC}{m}{n}
\DeclareFontShape{U}{MnSymbolC}{m}{n}{
    <-6>  MnSymbolC5
   <6-7>  MnSymbolC6
   <7-8>  MnSymbolC7
   <8-9>  MnSymbolC8
   <9-10> MnSymbolC9
  <10-12> MnSymbolC10
  <12->   MnSymbolC12}{}
\DeclareMathSymbol{\powerset}{\mathord}{MnSyC}{180}
\let\epsilon=\varepsilon
\let\rho=\varrho
\let\theta=\vartheta
\let\phi=\varphi
\theoremstyle{plain}
\newtheorem{thm}{Theorem}[section]
\newtheorem{theorem}[thm]{Theorem}
\newtheorem{lemma}[thm]{Lemma}
\newtheorem{corollary}[thm]{Corollary}
\newtheorem{proposition}[thm]{Proposition}
\newtheorem{conjecture}[thm]{Conjecture}
\newtheorem{question}[thm]{Question}
\newtheorem*{claim*}{Claim}
\newtheorem{thm-intro}{Theorem}[]
\newtheorem{conj-intro}[thm-intro]{Conjecture}
\newtheorem{question-intro}[thm-intro]{Question}
\newtheorem*{meta-question*}{Meta question}
\theoremstyle{definition}
\newtheorem{definition}[thm]{Definition}
\newtheorem{remark}[thm]{Remark}
\newtheorem*{example*}{Example}
\newcommand{\abs}[1]{\ensuremath{{\lvert {#1} \rvert}}}
\newcommand{\restricted}{\ensuremath{{\upharpoonright}}}
\newcommand{\floor}[1]{\left\lfloor{#1}\right\rfloor}
\newcommand{\ceil}[1]{\left\lceil{#1}\right\rceil}
\begin{document}

\author[J.~P.~Gollin]{J.~Pascal Gollin}
\address{J. Pascal Gollin, Discrete Mathematics Group, Institute for Basic Science (IBS), 55 Expo-ro, Yuseong-gu, Daejeon, Korea, 34126}
\email{\tt pascalgollin@ibs.re.kr}
\thanks{The first author was supported by the Institute for Basic Science (IBS-R029-Y3).}

\author[K.~Hendrey]{Kevin Hendrey}
\address{Kevin Hendrey, Discrete Mathematics Group, Institute for Basic Science (IBS), 55 Expo-ro, Yuseong-gu, Daejeon, Korea, 34126}
\email{\tt kevinhendrey@ibs.re.kr}
\thanks{The second, third, and fourth authors were supported by the Institute for Basic Science (IBS-R029-C1).}

\author[A.~Methuku]{Abhishek Methuku}
\address{Abhishek Methuku, School of Mathematics, University of Birmingham, Edgbaston, Birmingham, B15 2TT, UK}
\email{\tt abhishekmethuku@gmail.com}
\thanks{Additionally the third author was supported by the EPSRC grant no EP/S00100X/1 (A. Methuku).}

\author[C.~Tompkins]{Casey Tompkins}
\address{Casey Tompkins, Discrete Mathematics Group, Institute for Basic Science (IBS), 55 Expo-ro, Yuseong-gu, Daejeon, Korea, 34126}
\email{\tt caseytompkins@ibs.re.kr}

\author[X.~Zhang]{Xin Zhang}
\address{Xin Zhang, School of Mathematics and Statistics, Xidian University, 266 Xifeng Rd., Xi'an, Shaanxi, China, 710126}
\email{\tt xzhang@xidian.edu.cn}
\thanks{The last author was supported by the NSFC grant No.\,11871055.}

\title[Counting cliques in $1$-planar graphs]{Counting cliques in $1$-planar graphs}

\date\today

\keywords{$1$-planar graphs, extremal graph theory, clique}

\subjclass[2020]{05C10, 05C35}

\begin{abstract}
    The problem of maximising the number of cliques among $n$-vertex graphs from various graph classes has received considerable attention. 
    We investigate this problem for the class of $1$-planar graphs where we determine precisely the maximum total number of cliques as well as the maximum number of cliques of any fixed size.
    We also precisely characterise the extremal graphs for these problems. 
\end{abstract}

\maketitle

\section{Introduction}
\label{sec:intro}

A $1$-planar graph is a graph which can be drawn in the plane such that each edge is crossed by at most one other edge (see Section~\ref{sec:prelims} for a formal definition). 
This class forms a natural extension of the class of planar graphs, and was first introduced by Ringel in 1965~\cite{ringel1965}. 
His motivation was the problem of simultaneously colouring vertices and faces of a plane graph so that each face and vertex gets a distinct colour from all neighbouring faces and vertices. 
Such a colouring corresponds to a vertex colouring of a $1$-planar graph, and 
Ringel made progress on this problem by showing that every $1$-planar graph is $7$-colourable. 
The problem was later fully answered by Borodin~\cite{Borodin84}, who showed that six colours are sufficient to colour every $1$-planar graph (the triangular prism has no such $5$-vertex-face-colouring). 

Since its introduction, the class of~$1$-planar graphs has proven to be a topic of much interest, as reflected by the large body of research in the area (some results on the topic are surveyed in~\cites{DidimoLM19,HongTokuyama20,KobourovLM17}). 
One reason for interest in the class is that it is closely related to the class of planar graphs, while having a number of important qualitative differences. 
Most notably, whereas the classical results of Kuratowski~\cite{Kuratowski} and Wagner~\cite{MR1513158} tell us that planar graphs can be defined as exactly those graphs with no~$K_{3,3}$ or~$K_5$ (topological) minor, the class of $1$-planar graphs has no such characterisation. 
In fact it is easy to see that every graph can be made $1$-planar by subdividing its edges sufficiently many times. 
Another key distinction is that planar graphs can be recognised in time linear in the number of vertices~\cite{HopcroftT74}, but the recognition problem for $1$-planar graphs is NP-complete~\cites{BodlaenderG07, KorzhikMohar13}. 
On the other hand, $1$-planar graphs are only a linear number of edges away from being planar. 
In fact, whenever an $n$-vertex graph is drawn in the plane so that each edge is crossed at most once and adjacent edges do not cross, the total number of crossings is at most~${n-2}$~\cite{ZLW12}.

In extremal graph theory, determining the maximum number of edges in an $n$-vertex graph from a given graph class is a fundamental problem. 
The classical theorem of Tur\'{a}n~\cite{turan1941external} asserts that in the class of~$K_r$-free graphs, the maximum number of edges is attained (uniquely) by the complete multipartite graph with~${r-1}$ parts each of size~${\floor{\frac{n}{r-1}}}$ or~${\ceil{\frac{n}{r-1}}}$.
In the class of planar graphs it is a simple consequence of Euler's formula that an $n$-vertex graph contains at most~${3n-6}$ edges.
The bound on the number of crossings in $1$-planar drawings implies that $n$-vertex $1$-planar graphs have at most ${4n-8}$ edges, although this was first shown by Bodendiek, Schumacher and Wagner \cite{BodendiekSW83}.
In the same paper, they characterised the $n$-vertex $1$-planar graphs which achieve this bound as exactly the graphs obtained from $3$-connected planar quadrangulations by adding a pair of crossing edges to every face.

Going beyond maximising edges it is natural to consider maximising the number of cliques of a given size~$t$ among $n$-vertex graphs from a class of graphs~$\mathcal{G}$. 
Zykov~\cite{Zykov} and Erd\H{o}s~\cite{erdos} independently determined this number for the class of $K_r$-free graphs for all positive integers~$t$ and~$n$. 
Hakimi and Schmeichel~\cite{MR519175} determined the maximum number of triangles in an $n$-vertex planar graph (see Theorem~\ref{thm:planarK3}), and the maximum number of cliques of size four in an $n$-vertex planar graph was determined independently by Alon and Caro~\cite{MR791009}, and Wood~\cite{wood}. 
We determine the maximum number of cliques of any fixed size in an $n$-vertex $1$-planar graph. 
Note that for ${n \leq 6}$, the clique~$K_n$ is $1$-planar, and contains the maximum possible number of cliques of any fixed size. 
For ${t \geq 7}$, no $1$-planar graph contains a clique of size~$t$. 
All remaining cases are covered by the following theorems. 

\begin{thm}
    \label{thm:mainintro1}
    Given integers~${k \geq 2}$ and ${s \in \{0,1,2\}}$, the maximum number~${f_3(3k+s)}$ of subgraphs isomorphic to~$K_3$ in a $1$-planar graph with~${3k + s}$ vertices is given by 
    \[
        f_3(3k+s) = 
        \begin{cases}
            32          & \textnormal{ if } 3k+s =8,\\
            19k+5s-18   & \textnormal{ otherwise. }
        \end{cases}
    \]
\end{thm}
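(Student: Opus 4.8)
The plan is to prove matching upper and lower bounds and to carry the extremal graphs through the argument so that the characterisation drops out of the equality analysis. Throughout I would fix a \emph{good} $1$-planar drawing (no edge crosses itself, no two edges cross twice, adjacent edges do not cross), write $t(G)$ for the number of triangles and $t_v$ for the number through a vertex $v$, and use the identity $3\,t(G)=\sum_v t_v=\sum_v e\bigl(G[N(v)]\bigr)$, since a triangle through $v$ is exactly an edge inside its neighbourhood.

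For the lower bound I would use two regimes. For the bulk of the cases take the \emph{chain of $K_6$'s}: partition the vertices into triangles (cliques) $T_0,\dots,T_{k-1}$ and join consecutive triangles completely, so each consecutive pair $T_{i-1}\cup T_i$ spans a copy of $K_6$ and any two consecutive copies meet in one triangle. Inclusion--exclusion over these $k-1$ copies gives $(k-1)\cdot 20-(k-2)\cdot 1=19k-18$ triangles when $n=3k$, and one realises the graph $1$-planarly as a strip of crossed $K_6$'s; the residual cases $s\in\{1,2\}$ follow by a minor modification of the chain that creates exactly $5s$ new triangles. For the small and sporadic values one does strictly better with near-complete graphs: on $6,7,8$ vertices the graphs $K_6$, $K_7$ minus two independent edges, and $K_8$ minus a perfect matching (that is, $K_{2,2,2,2}$) are $1$-planar and have $20$, $25$ and $32$ triangles respectively, the last beating the generic value $30$ and explaining the exceptional entry. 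In each case I would exhibit an explicit drawing to certify $1$-planarity and count triangles by inclusion--exclusion.

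The heart of the upper bound is a local estimate, which I would phrase as: in any $1$-planar graph every vertex with $\deg(v)\ge 2$ satisfies $t_v\le 3\deg(v)-5$; equivalently, a $1$-planar graph on $N+1$ vertices with a universal vertex has at most $4N-5$ edges, one fewer than the general bound $4(N+1)-8$. This reduces to the Bodendiek--Schumacher--Wagner characterisation quoted above: applied to the induced subgraph $G[N[v]]$, if it attained the maximum edge count it would be a $3$-connected planar quadrangulation with a crossing pair inserted in each face, and a direct check shows such a graph (bipartite skeleton, diagonals only between co-facial same-part vertices) has no dominating vertex; small degrees are checked by hand. Granting this, $3\,t(G)=\sum_v t_v\le\sum_v\bigl(3\deg(v)-5\bigr)=6e-5n\le 6(4n-8)-5n=19n-48$, so $t(G)\le\tfrac{19n}{3}-16$, which already has the correct leading coefficient.

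The main obstacle is closing the remaining additive gap \emph{exactly}, since $\tfrac{19n}{3}-16$ overshoots the target $19k+5s-18$ by a small amount depending on $s$. The two inequalities used above cannot both be tight in a compatible way: a graph close to $4n-8$ edges cannot simultaneously have $t_v=3\deg(v)-5$ at every vertex. I would therefore refine the summation into a discharging argument that tracks the slack in the local bound together with the residue $n\bmod 3$, pinning the constant and, in the equality case, forcing the chain structure (respectively the near-complete small graphs). This is precisely where the exceptional value at $n=8$ surfaces, as the crude bound there permits $32$ but not $30$, so the sporadic graph must be recognised directly. The small cases $n\le 8$ are handled by explicit analysis, and the same equality discussion yields the characterisation of all extremal graphs.
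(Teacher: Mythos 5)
Your lower bound is essentially the paper's construction (the chain $K_3\boxtimes P_k$ of stitched $K_6$'s, modified at one end by $K_3+C_4$ or $K_2+\overline{P_6}$ for $s\in\{1,2\}$, with $K_{2,2,2,2}$ as the sporadic winner at $n=8$), and your local lemma $t_v\le 3\deg(v)-5$ is correct and gives the right leading coefficient $19/3$; verifying it does take more than ``a direct check'' (for $\deg(v)=7$ it needs the uniqueness of the $8$-vertex optimal $1$-planar graph, and for $\deg(v)\ge 9$ a genuine argument that no optimal $1$-planar graph has a dominating vertex, which follows from a face-count around the dominating vertex in the underlying quadrangulation), but these are fixable details.

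The genuine gap is the step you defer to ``a discharging argument that tracks the slack.'' Quantitatively, your summation needs total slack $\sum_v\bigl(3\deg(v)-5-t_v\bigr)\ge 6(e-4n+8)+6$, $+10$, or $+14$ according as $s=0,1,2$, and the local bound is \emph{tight at every vertex} of the extremal chains (degree-$5$ end vertices with $t_v=10$ and degree-$8$ internal vertices with $t_v=19$), so no slack is available locally; the required deficit must come from showing $e\le 4n-9$ (resp.\ $4n-10$, $4n-11$) for triangle-maximising graphs, which is itself a global structural fact, not a consequence of the vertex-local estimate. This is exactly where the paper's proof does its real work: it fixes a rich $1$-drawing, shows that if every crossed edge lies in at least four triangles then the graph contains $K_6$ or $K_{2,2,2,2}$ (Lemma~\ref{lem:4trianglecrossings}), deduces a separating triangle in the true-planar skeleton from the unique drawing of $K_6$ (Lemma~\ref{lem:K6subgraphseparations}), and then inducts via the stitch decomposition --- and even then it needs computer search (\emph{plantri}) over $3$-connected planar quadrangulations with up to $20$ vertices to settle the base cases and the exceptional $n\in\{8,11\}$ configurations. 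Your proposal contains none of these ingredients, and the fact that the exact constant depends on $n\bmod 3$ and admits the sporadic value $32$ at $n=8$ is strong evidence that a purely local/discharging refinement of the edge-count bound will not close the gap without importing an equivalent global decomposition. As written, the argument establishes $t(G)\le\frac{19n-48}{3}$ but not the theorem.
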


\begin{thm}
    \label{thm:mainintro2}
    Given integers~${k \geq 2}$, and ${s \in \{0,1,2\}}$ and~${t \in \{4,5,6\}}$, the maximum number ${f_t(3k+s)}$
    of subgraphs isomorphic to~$K_t$ in a $1$-planar graph with~${3k + s}$ vertices is given by 
    \[
        f_t(3k+s) = (k-1)\binom{6}{t}+\binom{s+3}{t}.\]
\end{thm}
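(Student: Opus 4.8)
I would establish the bound by a single ``chain'' construction. Take vertex-disjoint triangles $T_0,T_1,\dots,T_{k-1}$ and, for each $1\le i\le k-1$, let the block $B_i$ be a copy of $K_6$ on $T_{i-1}\cup T_i$; then attach a ``cap'' $C\cong K_{s+3}$ on $T_{k-1}$ together with $s$ new vertices. This uses $6+3(k-2)+s=3k+s$ vertices and produces $k-1$ copies of $K_6$ and one copy of $K_{s+3}$, with consecutive blocks meeting in a triangle and non-consecutive blocks disjoint. The resulting graph is chordal, so every clique of size $t\ge 4$ lies inside one block; since distinct blocks share at most a triangle, no such clique is double counted, and therefore the number of copies of $K_t$ is exactly $(k-1)\binom 6 t+\binom{s+3}t$. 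For $1$-planarity I would fix a $1$-planar drawing of $K_6$ possessing two vertex-disjoint triangular faces and then recursively draw $B_{i+1}$ inside the $T_i$-face of $B_i$, finally placing the drawing of the cap inside the last free triangular face.

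\textbf{Upper bound: reduction to a budget problem.} Because $K_7$ is not $1$-planar, every clique has at most $6$ vertices, so each copy of $K_t$ with $t\in\{4,5,6\}$ sits inside a maximal clique of size $4$, $5$ or $6$. The heart of the argument is to select a family of dense blocks $Q$ that together account for every copy of $K_t$ once, and to prove the \emph{budget inequality} $\sum_Q(\abs Q-3)\le n-3$. Granting this, put $a_Q=\abs Q-3\in\{1,2,3\}$; the per-unit clique yield $\binom{a+3}t/a$ is maximised at $a=3$, so a greedy/exchange argument over the integer solutions of $\sum_Q a_Q\le 3(k-1)+s$ shows that $\sum_Q\binom{\abs Q}t$ is largest when $k-1$ of the blocks are copies of $K_6$ and the residual budget $s$ is spent on a single block $K_{s+3}$ (for instance, one $K_5$ beats two $K_4$'s), giving the claimed value $(k-1)\binom 6 t+\binom{s+3}t$. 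Equivalently, this can be organised as an induction in steps of three, deleting the three vertices of the private triangle of an extreme $K_6$ block so as to remove exactly $\binom 6 t$ copies of $K_t$ and reduce to $f_t(n-3)$; either formulation rests on the same structural input.

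\textbf{Main obstacle.} The difficulty is to control how dense blocks overlap, and this is exactly where $1$-planarity, not merely the bound on clique size, must enter. For copies of $K_6$ the situation is clean: two distinct $K_6$'s cannot share four vertices, since their union would be an $8$-vertex graph with $4\cdot 8-8$ edges that contains a $K_5$, whereas by the characterisation of edge-extremal $1$-planar graphs quoted in the introduction every such graph is a crossed quadrangulation and hence $K_5$-free; for the same reason $K_6$'s cannot be arranged cyclically (a triangle of $K_6$'s on $9$ vertices already exceeds the $4n-8$ bound), so their intersection pattern is a forest and the budget inequality for $t=6$ follows. The genuine obstacles are, first, proving the budget inequality $\sum_Q(\abs Q-3)\le n-3$ in general, which forbids the blocks from packing too densely and really encodes the topology of the drawing, and second, that the smaller blocks do \emph{not} overlap laminarly --- two maximal $K_5$'s can share a $K_4$, as in $K_6$ minus an edge --- so the clean ``each large clique lies in a unique block'' decomposition fails for $t\in\{4,5\}$ and must be replaced by a careful accounting, presumably through a direct analysis of the $1$-planar drawing supplemented by the triangle bound of Theorem~\ref{thm:mainintro1}. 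The remaining ingredients, namely the finitely many base cases $n\le 8$ (note that $f_t$ has no exceptional value at $n=8$ when $t\ge 4$, in contrast to the triangle count) and the discrete optimisation above, are routine and would be verified directly.
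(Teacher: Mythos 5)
Your lower bound is correct and coincides with the paper's construction (iterated stitches of $K_6$ plus a cap of size $s+3$). The upper bound, however, is not a proof but a programme, and its central lemma fails as stated. You propose to choose a family of blocks $Q$ covering every copy of $K_t$ and to prove the budget inequality $\sum_Q(\abs{Q}-3)\le n-3$; but in $K_{2,2,2,2}$ every $K_4$ is a maximal clique, so any covering family for $t=4$ must contain all $16$ of them, giving $\sum_Q(\abs{Q}-3)=16>5=n-3$. So the inequality cannot hold for an arbitrary $1$-planar graph with an arbitrary covering family, and the subsequent exchange argument has nothing to stand on. (The count $\mathcal{N}(K_{2,2,2,2},K_4)=16<f_4(8)=20$ is still below the target, but for reasons your accounting does not capture.) You yourself flag the two remaining obstacles --- proving the budget inequality and handling the non-laminar overlap of maximal $K_4$'s and $K_5$'s --- and defer both to ``a careful accounting'' via the drawing; that deferred accounting is precisely the content of the theorem, so the proposal has a genuine gap rather than a different proof.

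For comparison, the paper does not attempt a global block decomposition at all. It inducts on $n$: if $G$ contains a $K_6$ subgraph, the rigidity of the unique $1$-drawing of $K_6$ (Lemma~\ref{lem:K6subgraphseparations}) produces a separating triangle in the true-planar skeleton, hence a stitch decomposition (Lemma~\ref{lem:separatingstitch}), and a purely arithmetic lemma about $3$-separations (Lemma~\ref{lem:3sepsKt}) closes the induction; if $G$ is $K_6$-free, one iteratively deletes crossed edges lying in at most three triangles (which exist by Lemma~\ref{lem:4trianglecrossings}, since otherwise a $K_6$ or $K_{2,2,2,2}$ appears) to reduce to the planar extremal bounds, yielding Corollary~\ref{cor:K6freebound}, which forces $n\le 8$ and disposes of the $K_{2,2,2,2}$-type exceptions directly. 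Your observations that two $K_6$'s cannot share four vertices and cannot form a cycle of triangle-intersections are correct and are implicit consequences of the paper's Lemma~\ref{lem:K6subgraphseparations}, but they only address $t=6$; for $t\in\{4,5\}$ the local-deletion argument (or something equally tied to the drawing) appears unavoidable, and supplying it would essentially reconstruct the paper's proof.
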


Building on these questions, it is natural to ask for a structural characterisation of the extremal graphs. 
The planar graphs which maximise the number of triangles are the planar graphs which can be formed from~$K_3$ by iteratively pasting copies of~$K_4$ on facial triangles. 
These graphs are called Apollonian networks, 
and they are also the planar graphs with the maximum number of cliques of size four. 
We provide such a structural characterisation for $1$-planar graphs, determining precisely which graphs attain the bounds in Theorems~\ref{thm:mainintro1} and~\ref{thm:mainintro2}. 
When the number of vertices is divisible by three, these extremal graphs are analogous to Apollonian networks; they are the $1$-planar graphs formed from $K_3$ by iteratively pasting copies of~$K_6$ on facial triangles\footnotemark\ (see Figure~\ref{fig:K6-stichtings}). 
We give a formal description of all extremal graphs in Section~\ref{sec:stitchings}. 

\footnotetext{We will see in Section~\ref{sec:treedec} that these graphs are exactly the graphs which are isomorphic to the strong product~${K_3 \boxtimes P_{m}}$ of a triangle~$K_3$ and a path~$P_{m}$ of length~$m-1$ for some positive integer~$m$.}

    \begin{figure}[htbp]
        \centering
        \begin{tikzpicture}
            [scale=0.8]
            \tikzset{vertex/.style = {circle, draw, fill=black!50, inner sep=0pt, minimum width=4pt}}
            \tikzset{edge0/.style = {line width=1.2pt, black}}
            \tikzset{edge1/.style = {line width=1pt, blue, opacity=0.7}}
            
            \node [vertex] (v0) at (210:3) {};
            \node [vertex] (v1) at (330:3) {};
            \node [vertex] (v2) at (90:3) {};
            \node [vertex] (w0) at (210:1.5) {};
            \node [vertex] (w1) at (330:1.5) {};
            \node [vertex] (w2) at (90:1.5) {};
            
            \foreach \i in {0,1,2} {
                \pgfmathtruncatemacro{\j}{Mod(\i+1,3)}
                \foreach \v in {v,w} {
                    \draw [edge0] (\v\i) edge (\v\j) {};
                };
                \draw [edge0] (v\i) edge (w\i) {};
                \draw [edge1] (v\i) edge (w\j) {};
                \draw [edge1] (v\j) edge (w\i) {};
            };
        \end{tikzpicture}
        \begin{tikzpicture}
            [scale=0.8]
            \tikzset{vertex/.style = {circle, draw, fill=black!50, inner sep=0pt, minimum width=4pt}}
            \tikzset{edge0/.style = {line width=1.2pt, black}}
            \tikzset{edge1/.style = {line width=1pt, blue, opacity=0.7}}
            
            \node [vertex] (v0) at (210:3) {};
            \node [vertex] (v1) at (330:3) {};
            \node [vertex] (v2) at (90:3) {};
            \node [vertex] (w0) at (210:1.875) {};
            \node [vertex] (w1) at (330:1.875) {};
            \node [vertex] (w2) at (90:1.875) {};
            \node [vertex] (x0) at (210:0.75) {};
            \node [vertex] (x1) at (330:0.75) {};
            \node [vertex] (x2) at (90:0.75) {};
            
            \foreach \i in {0,1,2} {
                \pgfmathtruncatemacro{\j}{Mod(\i+1,3)}
                \foreach \v in {v,w,x} {
                    \draw [edge0] (\v\i) edge (\v\j) {};
                };
                \draw [edge0] (v\i) edge (w\i) {};
                \draw [edge0] (w\i) edge (x\i) {};
                \draw [edge1] (v\i) edge (w\j) {};
                \draw [edge1] (v\j) edge (w\i) {};
                \draw [edge1] (w\i) edge (x\j) {};
                \draw [edge1] (w\j) edge (x\i) {};
            };
        \end{tikzpicture}
        \begin{tikzpicture}
            [scale=0.8]
            \tikzset{vertex/.style = {circle, draw, fill=black!50, inner sep=0pt, minimum width=4pt}}
            \tikzset{edge0/.style = {line width=1.2pt, black}}
            \tikzset{edge1/.style = {line width=1pt, blue, opacity=0.7}}
            
            \node [vertex] (v0) at (210:3) {};
            \node [vertex] (v1) at (330:3) {};
            \node [vertex] (v2) at (90:3) {};
            \node [vertex] (w0) at (210:2.25) {};
            \node [vertex] (w1) at (330:2.25) {};
            \node [vertex] (w2) at (90:2.25) {};
            \node [vertex] (x0) at (210:1.5) {};
            \node [vertex] (x1) at (330:1.5) {};
            \node [vertex] (x2) at (90:1.5) {};
            \node [vertex] (y0) at (210:0.75) {};
            \node [vertex] (y1) at (330:0.75) {};
            \node [vertex] (y2) at (90:0.75) {};
            
            \foreach \i in {0,1,2} {
                \pgfmathtruncatemacro{\j}{Mod(\i+1,3)}
                \foreach \v in {v,w,x,y} {
                    \draw [edge0] (\v\i) edge (\v\j) {};
                };
                \draw [edge0] (v\i) edge (w\i) {};
                \draw [edge0] (w\i) edge (x\i) {};
                \draw [edge0] (x\i) edge (y\i) {};
                \draw [edge1] (v\i) edge (w\j) {};
                \draw [edge1] (v\j) edge (w\i) {};
                \draw [edge1] (w\i) edge (x\j) {};
                \draw [edge1] (w\j) edge (x\i) {};
                \draw [edge1] (x\i) edge (y\j) {};
                \draw [edge1] (x\j) edge (y\i) {};
            };
        \end{tikzpicture}
        \caption{Examples of extremal graphs for Theorems~\ref{thm:mainintro1},~\ref{thm:mainintro2} and~\ref{thm:mainintro3}.}
        \label{fig:K6-stichtings}
    \end{figure}
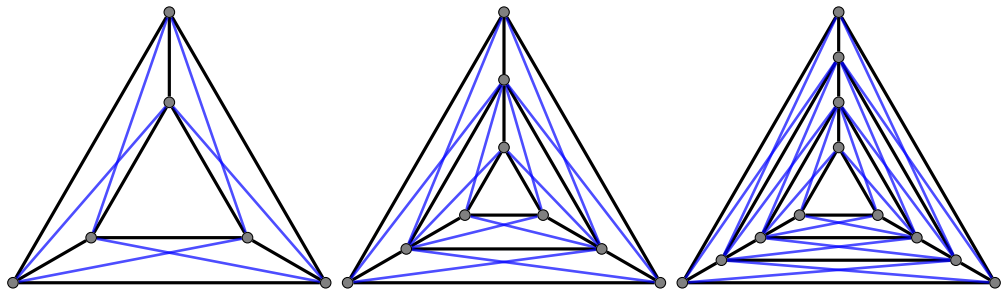

In the class of planar graphs, the $n$-vertex graphs with the maximum number of cliques of size four also have the maximum number of triangles and the maximum number of edges \cite{Zykov}, and therefore have the maximum total number of cliques. 
In fact this was Wood's motivation for counting cliques of size four in planar graphs \cite{wood}. 
The same is true of the class of $K_r$-free graphs, where the graphs with the maximum number of cliques of size~$t$ are exactly those with the maximum number of edges for all~$t$ between~$2$ and~$r$. 
In the case of $1$-planar graphs, the situation is not as simple. 
In fact, for~$n$ at least~$9$ and~$t$ between~$3$ and~$6$, every $n$-vertex $1$-planar graph which maximises the number of cliques of size~$t$ has strictly fewer than~${4n-8}$ edges (i.e.\ the maximum number of edges in an $n$-vertex $1$-planar graph). 
Furthermore, when~$n$ is at least~$7$ and not divisible by~$3$, there is no $n$-vertex $1$-planar graph which simultaneously maximises the number of triangles and cliques of size five. 
We nevertheless determine the maximum total number of cliques in $n$-vertex $1$-planar graphs. 
We also characterise the $1$-planar graphs attaining this bound, which we describe in Section~\ref{sec:stitchings}. 
When~$n$ is divisible by three, these are the previously described graphs which maximise the number of cliques of all fixed sizes between~$3$ and~$6$ (see Figure~\ref{fig:K6-stichtings}).

\begin{thm}\label{thm:mainintro3}
    Given integers~${k \geq 1}$ and~${s \in \{0,1,2\}}$, the maximum number~${f(3k+s)}$ of cliques in a $1$-planar graph with~${3k + s}$ vertices is given by 
    \[
        f(3k+s) = 
        56(k-1)+2^{s+3}.
    \]
\end{thm}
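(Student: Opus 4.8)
The plan is to prove the matching upper and lower bounds for the number $c(G)$ of cliques of a $1$-planar graph $G$, where throughout I write $c(G)$ for the total number of cliques including the empty one, so that the claimed value is $c\le f(n)$ with $n=3k+s$. The single tool I would use repeatedly is the deletion identity
\[
    c(G) = c(G-v) + c\bigl(G[N_G(v)]\bigr),
\]
valid for every vertex $v$, because the cliques containing $v$ are exactly the sets $\{v\}\cup C$ with $C$ a clique of the $1$-planar graph $G[N_G(v)]$. More generally, for a vertex set $S$ the quantity $c(G)-c(G-S)$ counts precisely the cliques meeting $S$.

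For the lower bound, given $n=3k+s$ I would take the graph $G_{k,s}$ obtained from a copy of $K_{s+3}$ by iteratively pasting $k-1$ copies of $K_6$ onto facial triangles, each drawn with its three crossing pairs inside the triangle as in Figure~\ref{fig:K6-stichtings}; this keeps the drawing $1$-planar. Since $K_{s+3}$ contributes $2^{s+3}$ cliques and a paste of $K_6$ along a triangle $T$ creates exactly the cliques using at least one of the three new vertices, namely $2^6-2^3=56$ of them, the deletion identity gives $c(G_{k,s}) = 2^{s+3} + 56(k-1) = f(n)$. For $n\le 6$ the extremal graph is simply $K_n$, with $c(K_n)=2^n=f(n)$.

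For the upper bound I would induct on $n$, with base cases $n\le 8$ verified directly; here the sporadic case $n=8$ of Theorem~\ref{thm:mainintro1} must be inspected to confirm it does not beat $f(8)=88$. Keeping $s$ fixed and passing from $n$ to $n-3$, since $f(3k+s)-f(3(k-1)+s)=56$ it suffices to locate three vertices forming a triangle $T$ that lie in at most $56$ cliques in total, i.e.\ with $c(G)-c(G-T)\le 56$; then $G-T$ is $1$-planar on $3(k-1)+s$ vertices and induction applies. The case that $G$ has essentially no triangles is immediate: a triangle-free $1$-planar graph has $c(G)=1+n+e(G)\le 5n-7<f(n)$ for all $n\ge 9$, so I may assume $G$ is dense and in particular contains triangles.

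The heart of the matter, and the step I expect to be hardest, is to guarantee such a triangle $T$ and to control equality. I would work with a $1$-planar drawing together with the structural information extracted in the proofs of Theorems~\ref{thm:mainintro1} and~\ref{thm:mainintro2} describing how cliques of size at least four sit locally (each tied to a crossing pair and its four endpoints), and argue via a discharging or minimal-counterexample analysis on the faces that some peripheral facial triangle meets at most $56$ cliques, with equality forcing its vertices' neighbourhoods into exactly the $K_6$-paste configuration. Tracking equality through the induction should then identify the extremal graphs as the iterated $K_6$-pastings, i.e.\ the strong products $K_3\boxtimes P_m$ when $3\mid n$, matching Section~\ref{sec:stitchings}. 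I expect two delicate points: first, that one genuinely needs this localized count rather than summing the individual maxima from Theorems~\ref{thm:mainintro1} and~\ref{thm:mainintro2} against $e(G)\le 4n-8$, since that naive bound overshoots $f(n)$ by $1$, $4$, or $3$ according to $s\equiv 0,1,2\pmod 3$ because the maxima are not simultaneously attainable; and second, the book-keeping in the equality analysis when several near-extremal triangles interact.
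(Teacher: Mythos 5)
Your lower bound is correct and coincides with the paper's construction (the class~$\mathcal{E}$ of iterated $K_6$-stitches onto $K_{s+3}$, each paste contributing $2^6-2^3=56$ new cliques), and your deletion identity and the arithmetic of the proposed induction are sound. The problem is that the entire difficulty of the theorem is concentrated in the one step you defer: the existence, in every $1$-planar graph with at least $f(n)$ cliques and $n\ge 9$ vertices, of a triangle $T$ whose three vertices together meet at most $56$ cliques. You offer only ``a discharging or minimal-counterexample analysis on the faces,'' with no indication of what is being discharged or why a ``peripheral facial triangle'' with this property should exist. Note that this is not a property one can read off locally: in $K_{2,2,2,2}$ \emph{every} triangle meets $57>56$ cliques (delete a triangle and $24$ cliques of $K_{1,1,1,2}$ remain out of $81$), and in the extremal graph $K_3\boxtimes P_m$ only the two \emph{end} triangles meet $56$ cliques while the interior ones meet $105$; so locating the good triangle already presupposes knowledge of the global structure you are trying to establish. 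A $1$-planar drawing need not even have a facial triangle a priori.

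The paper closes exactly this gap with a two-case structural argument that your proposal does not contain. After reducing to edge-maximal $3$-connected graphs, it splits on whether $G$ contains a $K_6$ subgraph. If not, Lemma~\ref{lem:4trianglecrossings} shows that one can iteratively delete crossed edges each lying in at most three triangles (otherwise a $K_6$ or $K_{2,2,2,2}$ appears), reducing to a planar graph; combined with the planar extremal counts this gives $\mathcal{N}_{\textnormal{cliques}}(G)\le 16n-32$ (Corollary~\ref{cor:K6freebound}), which is below $f(n)$ for $n\ge 9$, so only $n\in\{7,8\}$ survive as sporadic cases. If $G$ does contain a $K_6$, the uniqueness of its simple $1$-drawing (Lemma~\ref{lem:unique1drawing}) forces a separating triangle in the true-planar skeleton (Lemma~\ref{lem:K6subgraphseparations}); the graph is then a stitch of two smaller graphs along that triangle, and the separation arithmetic of Lemma~\ref{lem:3sepsCliques} — applied to \emph{both} sides $G[A]$ and $G[B]$, rather than to $G-V(T)$ — closes the induction and simultaneously yields the characterisation of the extremal graphs. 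Without some substitute for this dichotomy (in particular, for the $K_6$-free case, where no triangle of the required kind is ever exhibited), your induction cannot get started, so as written the proposal has a genuine gap at its central step.
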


We summarise our results in the context of the extremal functions for counting cliques in $1$-planar graphs in Table~\ref{tab:results}. 

\begin{table}[htbp]\label{tab:results}
    \centering
    \begin{spacing}{1.5}
    \begin{tabular}{|c||c||c|c||c|c||c|c|}
        \hline
        & 
        \multirow{3}{*}{$n \leq 6$} & 
        \multicolumn{6}{c|}{$n \geq 7$}\\
        
        \cline{3-8}
        & & 
        \multicolumn{2}{c||}{$n\equiv 1$ (mod 3)} & 
        \multicolumn{2}{c||}{$n\equiv 2$ (mod 3)} &
        \multicolumn{2}{c|}{$n\equiv 0$ (mod 3)} \\
        
        \cline{3-8}
        & & 
        $n=7$ & $n>7$ &
        $n=8$ & $n>8$ &
        $n=9$ & $n>9$\\
    
        \hline\hline
        
        %
        
        $f_2(n)$ & 
        $\binom{n}{2}$ & 
        $19$ & $4n-8$ & 
        \multicolumn{2}{c||}{$4n-8$} &
        $27$ & $4n-8$ \\
        
        \hline
        
        $f_3(n)$ & 
        $\binom{n}{3}$ & 
        \multicolumn{2}{c||}{$\frac{19n-58}{3}$} & 
        $32$ & $\frac{19n-62}{3}$ & 
        \multicolumn{2}{c|}{$\frac{19n-54}{3}$} \\
        
        \hline
        
        $f_4(n)$ & 
        $\binom{n}{4}$ & 
        \multicolumn{2}{c||}{$5n-19$} & 
        \multicolumn{2}{c||}{$5n-20$} & 
        \multicolumn{2}{c|}{$5n-15$} \\
        
        \hline
        
        $f_5(n)$ & 
        $\binom{n}{5}$ & 
        \multicolumn{2}{c||}{$2n-8$} & 
        \multicolumn{2}{c||}{$2n-9$} & 
        \multicolumn{2}{c|}{$2n-6$} \\
        
        \hline
        
        $f_6(n)$ & 
        $\binom{n}{6}$ & 
        \multicolumn{2}{c||}{$\frac{n-4}{3}$} & 
        \multicolumn{2}{c||}{$\frac{n-5}{3}$} &
        \multicolumn{2}{c|}{$\frac{n-3}{3}$} \\
        
        \hline
        
        %
        
        $f(n)$ & 
        $\sum \limits_{t = 0}^{n} \binom{n}{t}$ &
        \multicolumn{2}{c||}{$\frac{56n - 176}{3}$} &
        \multicolumn{2}{c||}{$\frac{56n - 184}{3}$} &
        \multicolumn{2}{c|}{$\frac{56n - 144}{3}$} \\
        
        \hline
    \end{tabular}
    \end{spacing}
    \caption{The maximum number~${f_t(n)}$ of cliques of size~$t$ 
        and the maximum number~${f(n)}$ of cliques in $n$-vertex $1$-planar graphs.}
\end{table}

\subsection{Related extremal results}
The problem of maximising cliques in a given graph class has long been an important topic in extremal combinatorics. 
Motivated by the famous problem of Erd\H{o}s of maximising pentagons in triangle-free graphs (solved by Hatami~\emph{et al.}~\cite{c5c3} and Grzesik~\cite{MR2959390}), the dual problem problem of maximising triangles in the class of pentagon-free graphs was introduced by Bollob\'as and Gy\H{o}ri~\cite{MR2433860}, and has still not been resolved. 
There are now many graphs~$H$ for which the problem of maximising triangles in $H$-free graphs has been studied, see for example~\cites{AlonShikhelman,MR2900057}.
Determining the maximum number of cliques in a regular graph was a famous open problem of Alon and Kahn (phrased in terms of independent sets). 
Kahn~\cite{MR1841642} resolved the problem under the assumption that the graph is bipartite. 
The conjecture was finally settled by Zhao~\cite{MR2593625} who was able to deduce the general case from the bipartite case.
 The maximum number of cliques in the class of graphs with bounded clique or independence number was determined by Cutler and Radcliffe~\cite{MR2831105}.  If only the number of edges and vertices are fixed, then Wood~\cite{wood} determined the maximum possible number of cliques.

A more topological example is the class of graphs embeddable on a fixed surface, where Huynh, Joret and Wood~\cite{HJW-subgraphdensities} determined asymptotically the maximum number of triangles and the maximum number of subgraphs isomorphic to~$K_4$. 
In the case of~$K_t$ for~${t \geq 5}$ they obtained estimates on the maximum number of subgraphs isomorphic to~$K_t$ (which for~${t \geq 5}$ is constant with respect to~$n$).
Dujmovi\'{c}~\emph{et al}.~\cite{MR2838013} considered the question of the maximising the total number of cliques in graphs embeddable on a fixed surface, determining the precise asymptotic dependence on $n$, and giving a structural description of the class of extremal graphs.

Other minor closed classes have also been considered. Wood~\cite{MR3558055} determined for all $t$ and $r \le 9$ the maximum number of subgraphs isomorphic to $K_t$ in the class of $K_r$-minor free graphs, as well as the maximum total number of cliques in these classes. 
While giving bounds on the number of graphs in proper minor closed classes, Norine, Seymour, Thomas and Wollan~\cite{MR2236510} obtained a bound on the total number of cliques in a graph with no $K_r$-minor (of the form ${c \cdot n}$).
Bounds of this form were also obtained independently by Wood~\cite{wood}, and used in an algorithmic problem about finding separators by Reed and Wood~\cite{MR2571902}.
Fomin, Oum and Thilikos~\cite{MR2673004} improved the constant factor~$c$ to $2^{O(\log(\log(r)))}$. 
This constant was further improved to $2^{5r+o(r)}$ by Lee and Oum~\cite{MR3414466}, resolving a conjecture of Wood~\cite{wood},  and finally to $3^{2r/3+o(r)}$ by Fox and Wei~\cite{MR3667668}, which is sharp for~${n \geq 4r/3}$. 
This result was subsequently strengthened by Fox and Wei~\cite{MR4187140} to hold in the class of $K_r$-subdivision free graphs.
Kawarabayashi and Wood~\cite{kawa} considered the class of graphs forbidding a given odd-minor and proved a bound of $O(n^2)$ on the number of cliques, which is tight up to a constant factor.

In this paper we emphasize only the extremal problem of bounding the number of cliques in graph classes. However, the algorithmic problem of enumerating the cliques (or other subgraphs) in graphs from a given graph class has also garnered much attention. 
In particular, Chiba and Nishizeki~\cite{MR774940} gave an algorithm to list the cliques in any graph, whose running time is linear in the number of cliques and arboricity of the graph.

\subsection{Overview of the proofs}
We will prove Theorems~\ref{thm:mainintro1},~\ref{thm:mainintro2} and~\ref{thm:mainintro3} by proving the stronger results Theorems~\ref{thm:1planartriangles},~\ref{thm:1planarbigcliques} and~\ref{thm:1planarallcliques}
stated in Section~\ref{sec:mainproofs}, which additionally give a precise characterisation of the extremal graphs.
We now give a rough overview of the proof of Theorem~\ref{thm:1planartriangles}. 
The proofs of Theorems~\ref{thm:1planarbigcliques} and~\ref{thm:1planarallcliques} work in a similar way.
We proceed by induction on the number of vertices~$n$ of an extremal graph~$G$. 
When~$n$ is at most twenty, we determine all possible extremal graphs with some computational assistance. 

For larger values of~$n$, we fix a $1$-drawing of $G$ satisfying some useful properties (see Section~\ref{sec:rich1drawings}).
Our initial strategy will be to iteratively find and delete crossed edges which are contained in at most~$3$ triangles. 
If we can reduce~$G$ to a planar graph in this way, then we can bound the number of triangles in~$G$ in terms of the crossing number and the extremal number of triangles in planar graphs to complete the proof.
Otherwise, $G$ contains a subgraph such that every crossed edge is in at least four triangles. 
Using the properties of the drawing and the simple observation that vertex-disjoint triangles cross an even number of times, we are able to deduce that $G$ contains a subgraph isomorphic to either~$K_{2,2,2,2}$ or~$K_6$. 
It turns out that no $3$-connected $1$-planar graph contains a proper subgraph isomorphic to~$K_{2,2,2,2}$, and it is easy to show that~$G$ is $3$-connected, so we deduce that $G$ has a subgraph isomorphic to~$K_6$. 
By analysing the drawing restricted to this subgraph, we find a triangle which separates~$G$. 
This allows us to inductively deduce the structure of~$G$ from the structure of the two sides of the separation, and thus compute the extremal function.

\subsection{Structure of the paper}
The paper is structured as follows. 
In Section~\ref{sec:prelims}, we formally define drawings, set up the notation which will be used throughout the paper and collect some useful results from the literature. 
In Section~\ref{sec:stitchings}, we present the classes of extremal graphs for Theorems~\ref{thm:mainintro1},~\ref{thm:mainintro2} and~\ref{thm:mainintro3} and briefly discuss some properties of these constructions. 
In Section~\ref{sec:rich1drawings}, we describe a class of $1$-drawings with some useful properties, and prove interesting facts about them. 
In Section~\ref{sec:nok6}, we compute an upper bound for the number of cliques of any given size in a $1$-planar graph with no subgraph isomorphic to~$K_6$. In Section~\ref{sec:mainproofs}, we complete the proofs of Theorems~\ref{thm:mainintro1}, \ref{thm:mainintro2} and~\ref{thm:mainintro3}, and additionally we show that the classes of graphs defined in Section~\ref{sec:stitchings} are exactly the classes of extremal graphs.
In Section~\ref{sec:treedec}, we give more precise characterisations of the classes of extremal graphs in terms of tree-decompositions, allowing us to generate the graphs in each class in polynomial time. We conclude in Section~\ref{sec:conclusion}
with a discussion of some open questions.

\section{Preliminaries}\label{sec:prelims}

\subsection{Basic notation}

Given an integer~$k$, we denote by~${[k]}$ the set of all integers~$i$ with~${1 \leq i \leq k}$. 

Let~$G$ be a graph with vertex set~${V(G)}$ and edge set~${E(G)}$. 
We denote an edge between vertices~$v$ and~$w$ by the string~$vw$. 
For a set~${X \subseteq V(G)}$, we denote the subgraph of $G$ \emph{induced} on~$X$ by~${G[X]}$. 
We denote by~${G - X}$ the subgraph of~$G$ induced on~${V(G) \setminus X}$.
We denote by~$\overline{G}$ the \emph{complement} of~$G$, which is the graph with vertex set~${V(G)}$ and edge set~${\{ vw \colon v, w \in V(G), vw \notin E(G) \}}$.

Let~$G$ and~$H$ be graphs. 
We denote by ${G \cap H}$ the \emph{intersection} of~$G$ and~$H$, which is the graph on~${V(G) \cap V(H)}$ with the edge set~${E(G) \cap E(H)}$.  
We denote by~${G \cup H}$ the \emph{union} of~$G$ and~$H$, which is the graph on~${V(G) \cup V(H)}$ with the edge set~${E(G) \cup E(H)}$, 
and we denote by~${G \sqcup H}$ the \emph{disjoint union} of~$G$ and~$H$, which is the graph on the disjoint union of~$V(G)$ and~$V(H)$ whose edge set is the disjoint union of~$E(G)$ and~$E(H)$. 
We denote by~${G + H}$ the \emph{graph join} of~$G$ and~$H$, which is the union of~${G \sqcup H}$ with the complete bipartite graph with bipartition classes~$V(G)$ and~$V(H)$. 
We denote by~${G \boxtimes H}$ the \emph{strong product} of~$G$ and~$H$, which is the graph on~${V(G) \times V(H)}$ with an edge between~$(v,w)$ and~$(v',w')$ if and only if either
\begin{itemize}
    \item ${v = v'}$ and~${ww' \in E(H)}$; 
    \item ${vv' \in E(G)}$ and~${w = w'}$; or
    \item ${vv' \in E(G)}$ and~${ww' \in E(H)}$.
\end{itemize}
We write~${H \subseteq G}$ if~$H$ is a subgraph of~$G$, that is~$V(H)$ is a subset of~$V(G)$ and~$E(H)$ is a subset of~$E(G)$. 
We write~$\mathcal{N}(G,H)$ for the number of subgraphs~${H' \subseteq G}$ that are isomorphic to~$H$.

For a non-negative integer~$k$, we denote by~$K_k$ a fixed clique (complete graph) with~$k$ vertices, and if~${k \geq 1}$ we denote by~$P_k$ a fixed path with $k$ vertices, and if~${k \geq 3}$ we denote by~$C_k$ a fixed cycle with~$k$ vertices.
Given an integer~${\ell \geq 2}$ and a family~${( k_i \colon i \in [\ell])}$ of positive integers, we denote by~$K_{k_1, \ldots, k_\ell}$ a fixed complete multipartite graph with~$\ell$ partition classes of sizes~${k_1,\ldots, k_\ell}$, respectively. 

A \emph{separation} of~$G$ is a pair~${(A, B)}$ of subsets of~$V(G)$ such that~${G[A] \cup G[B] = G}$. 
Its \emph{order} is the size of its \emph{separator}~${A \cap B}$, and we call a separation of order~$k$ for some integer~$k$ a \emph{$k$-separation}. 
We say a separation~$(A,B)$ is \emph{non-trivial} if both~${A \setminus B}$ and~${B \setminus A}$ are non-empty. 
For a positive integer~$k$, we say~$G$ is \emph{$k$-connected} if~$G$ has at least~${k+1}$ vertices and no non-trivial $k$-separation. 
A subset~$S$ of~${V(G)}$ is a \emph{non-trivial $\abs{S}$-separator of~$G$} if~${G-S}$ has at least two components, i.e.~there is a non-trivial separation of~$G$ with~$S$ as its separator. 
Given subsets~$S$,~$X$ and~$Y$ of~$V(G)$, we say~$S$ \emph{separates}~$X$ and~$Y$ if there is a separation~${(A,B)}$ whose separator is~$S$ with~${X  \subseteq A}$ and~${Y \subseteq B}$. 
We call a triangle~$C$ in~$G$ for which~${V(C)}$ is a non-trivial $3$-separator a \emph{separating triangle of~$G$}.

\subsection{Drawings}\label{subsec:drawings}

Given a graph~$G$, 
let us consider the simplicial $1$-complex~$\abs{G}$ of~$G$ (as a topological space). 
For a surface space~$\mathbb{S}$ 
a \emph{drawing of~$G$ in~$\mathbb{S}$} is a map~${\phi \colon \abs{G} \to \mathbb{S}}$ with the following properties: 
\begin{enumerate}
    [label=(D\arabic*)]
    \item\label{item:drawing1} $\phi$ is continuous; 
    \item\label{item:drawing2} $\phi$ is injective on the vertices of~$G$, i.e.~for every vertex~$v$, if~${\phi(v) = \phi(x)}$, then~${x = v}$;
    \item\label{item:drawing3} there are only finitely many points~${x \in \mathbb{S}}$ for which $\phi^{-1}(x)$ has size least~$2$; we call these points the \emph{crossings (of~$\phi$)}; 
    \item\label{item:drawing4} each crossing $x$ of $\phi $ satisfies $\abs{\phi^{-1}(x)} = 2$, 
    and the corresponding points of~$\abs{G}$ are inner points of distinct edges; we call these edges the \emph{edges involved in the crossing}; 
    \item\label{item:drawing5} for each crossing~$x$ of~$\phi$ there is an open set~$D$ with~${x \in D}$ such that for every open set~${D' \subseteq D}$ with~${x \in D'}$ the removal of the image of one of the edges involved in~$x$ disconnects~$D'$ such that multiple components contain image points of the other edge involved in~$x$. 
\end{enumerate}
For two edges involved in a crossing of~$\phi$, we say these edges \emph{cross} (with respect to~$\phi$). 
An edge which crosses some other edge is called a \emph{crossed edge} (with respect to~$\phi$). 
We may omit the phrase ``with respect to~$\phi$'' if the drawing we are referring to is clear from context. 

Intuitively we think of a drawing as the image of such a map~$\phi$ in the surface: 
a representation of a graph in the surface, where the vertices of the graph are represented by distinct points and the edges by Jordan arcs joining the corresponding pairs of points. 
Properties~\ref{item:drawing3} and~\ref{item:drawing4} mean that there are finitely many points where exactly two edges cross. 
Property~\ref{item:drawing5} means that for each crossing point the edges really ``cross'' and not just ``touch''. 

\vspace{0.2cm}

Let~$\phi$ be a drawing of a graph~$G$ (in a surface~$\mathbb{S}$). 
For simplicity, we may refer to the images of vertices or edges simply as vertices or edges, respectively, of the drawing (and similarly for other graph structures). 
For a subgraph~${H}$ of~$G$ we write~${\phi \restricted H}$ for the drawing of~$H$ in~$\mathbb{S}$ which is just the restriction of~$\phi$ to~$\abs{H}$ (considered as a subspace of~$\abs{G}$). 
We write~${\phi(H)}$ instead of~${\phi(\abs{H})}$ for the image of~$\abs{H}$, and for simplicity, we will write~${\phi(e)}$ instead of~${\phi(G[v,w])}$ for an edge~${e = vw}$. 
Given another drawing~$\phi'$ of~$G$ in~$\mathbb{S}$, we say~$\phi$ and~$\phi'$ are \emph{equivalent} if~$\phi'$ can be written as the composition of~$\phi$ and an automorphism of~$\mathbb{S}$. 
Moreover, we say~$\phi$ and~$\phi'$ are \emph{weakly equivalent} if~$\phi$ is equivalent to the composition of~$\phi'$ with some automorphism of~$G$. 

A \emph{region}~$R$ of~${\mathbb{S} \setminus \phi(G)}$ is an (arc-)connected component of that topological space. 
The \emph{boundary} of a region~$R$ is the difference of the closure of~$R$ (in~$\mathbb{S}$) and~$R$. 
If the boundary of~$R$ does not contain any crossings of~$\phi$, then we call~$R$ a \emph{face} of~$\phi$. 
Note that if the boundary of a face contains an inner point of some edge~$e$ of~$G$, then it contains~$\phi(e)$. 
We define the \emph{degree} of a face~$F$ to be the number of edges~$e$ for which~$\phi(e)$ is contained in the boundary of~$F$, plus the number of edges~$e$ for which~$\phi(e)$ is contained in the boundary of~$F$ and not contained in the boundary of any other region of~${\mathbb{S} \setminus \phi(G)}$. 
If the boundary of a face is equal to~$\phi(C)$ for a cycle~${C \subseteq G}$, then we call~$C$ a \emph{facial cycle} with respect to~$\phi$. 
Note that if~$G$ is $2$-connected, then the boundary of each face of~$\phi$ is a facial cycle with respect to~$\phi$. 
If the drawing is clear from the context, we may refer to the faces of the drawing or facial cycles with respect to the drawing by faces or facial cycles, respectively, of the graph. 

We say~$\phi$ is \emph{simple} if no two adjacent edges cross. 
Given a non-negative integer~$k$, we say~$\phi$ is a \emph{$k$-drawing (of~$G$ in~$\mathbb{S}$)} if no edge of~$G$ is involved in more than~$k$ crossings. 

\begin{remark}
    A graph has a $1$-drawing (in~$\mathbb{S}$) if and only if it has a simple $1$-drawing (in~$\mathbb{S}$). \qed
\end{remark}

If~$\phi$ is a drawing of a graph~$G$ in the $2$-dimensional sphere~$\mathbb{S}^2$, we call~$\phi$ just a \emph{drawing of~$G$}.
From now on we will only consider such drawings. 

Note that there is a natural correspondence between drawings in the plane and in~$\mathbb{S}^2$.

\vspace{0.2cm}

Given a non-negative integer~$k$, a graph~$G$ is called \emph{$k$-planar} if there is a $k$-drawing~$\phi$ of~$G$. 
Note that the~$0$-planar graphs in this context are precisely the planar graphs. 
The minimum integer~$k$ for which a graph~$G$ is $k$-planar is the \emph{local crossing number} of~$G$. 

A graph~$G$ is an \emph{edge-maximal $k$-planar graph} if no proper supergraph of~$G$ on the same vertex set is $k$-planar. 

Building on earlier work of Schumacher~\cite{Schumacher86}, Suzuki~\cite{suzuki2010} studied which edge-maximal $1$-planar graphs have unique $1$-drawings, up to weak equivalence. 
We will use the following lemma.
 
\begin{lemma}[\cite{suzuki2010}*{Lemma~17 and Corollary~4}]
    \label{lem:unique1drawing}
    The graphs~$K_6$,~$K_3+C_4$ and~$K_{2,2,2,2}$ each have a unique simple $1$-drawing up to weak equivalence.
\end{lemma}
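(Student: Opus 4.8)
The plan is to treat all three graphs by a single method that exploits their near-extremality and reduces uniqueness of the drawing to uniqueness of a planar embedding via Whitney's theorem. Fix a simple $1$-drawing $\phi$ of $G$ and form its \emph{planarization} $\hat{G}$ by placing a dummy vertex at each crossing. Since $\phi$ is simple and each edge is crossed at most once, no two crossing vertices are adjacent, so $\hat{G}$ is a simple plane graph on $n+c$ vertices and $m+2c$ edges, where $n=\abs{V(G)}$, $m=\abs{E(G)}$ and $c$ is the number of crossings. Euler's formula gives $m+2c \le 3(n+c)-6$, that is $c \ge m-3n+6$; the right-hand side equals $3$, $4$ and $6$ for $K_6$, $K_3+C_4$ and $K_{2,2,2,2}$ respectively, with equality exactly when $\hat{G}$ is a plane triangulation.

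First I would argue that $c$ attains this minimum, so that $\hat{G}$ is a triangulation. The lower bound is immediate; the upper bound is the delicate point, since a priori a $1$-drawing need not be crossing-minimal. Here I would use the small size and symmetry of the three graphs together with the observation (noted in the overview) that vertex-disjoint triangles cross an even number of times to exclude configurations with extra crossings. Once $\hat{G}$ is a triangulation, every crossing vertex $x$ has degree $4$ and its four incident faces are triangles; writing the crossing as the pair $ac$, $bd$ with cyclic order $a,b,c,d$ around $x$, these triangles force the edges $ab, bc, cd, da$ to lie in $G$ and be uncrossed. Thus each crossing is a \emph{kite}: a $K_4$ on $\{a,b,c,d\}$ drawn with its diagonals $ac$, $bd$ crossing inside the quadrilateral $abcd$, and this quadrilateral bounds a face of the plane graph $P$ formed by the uncrossed edges. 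Hence the crossed edges are precisely the kite diagonals, and $P$ has exactly $c$ quadrilateral faces (one per crossing) with all other faces triangular, and has $9$, $11$ and $12$ edges respectively.

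Next I would pin down $P$ and its embedding. For $K_{2,2,2,2}$ this is cleanest: the $12$ uncrossed edges form a $3$-regular plane graph whose six faces are the quadrilaterals hosting the kites, which I expect to force $P$ to be the cube graph, and indeed inserting both diagonals into every face of the cube recovers $K_{2,2,2,2}$ (adjacency becoming ``differ in one or two coordinates''). For $K_6$ and $K_3+C_4$ a short analysis of which $K_4$-subgraphs can simultaneously be kites, that is, have pairwise edge-disjoint diagonal pairs whose sides glue into a triangulated sphere, should determine $P$ up to isomorphism. In each case $P$ is a $3$-connected plane graph, so by Whitney's theorem its embedding is unique up to reflection; the kite inside each quadrilateral face is then forced, so $\phi$ is determined up to a homeomorphism of the sphere and a relabelling of $V(G)$ preserving $P$, which is exactly weak equivalence.

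The main obstacle I anticipate is the upper bound $c \le m-3n+6$, equivalently showing that no simple $1$-drawing uses more than the minimum number of crossings: Euler's formula bounds $c$ only from below, and weak equivalence preserves the crossing number, so a single stray drawing with extra crossings would already defeat uniqueness. I expect this step, together with the verification that only one family of kites is consistent with each graph, to require the bulk of the casework, while the concluding appeal to Whitney's theorem is routine.
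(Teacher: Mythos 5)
First, a remark on the comparison you were asked to survive: the paper does not prove this lemma at all --- it is imported verbatim from Suzuki (Lemma~17 and Corollary~4 of \cite{suzuki2010}), so there is no in-paper argument to match. That said, your overall strategy --- planarize, use Euler's formula to bound the number of crossings from below, show that each crossing sits inside a ``kite'' whose four sides are uncrossed, pass to the true-planar skeleton $P$, and finish with Whitney's theorem --- is the standard way such uniqueness statements are established and is close in spirit both to Suzuki's analysis and to the machinery this paper develops for rich drawings in Section~\ref{sec:rich1drawings}. The plan is sound, and the kite step (the four faces around a degree-$4$ crossing vertex of a triangulated planarization force an uncrossed $4$-cycle $abcd$) is correct as stated.

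However, the proposal has a genuine gap exactly where you flag it, and it is not closable by the tool you propose. Everything after the first step is conditional on $c=m-3n+6$, i.e.\ on the planarization being a triangulation; with even one extra crossing the four regions around a crossing vertex need not all be triangles and the kite structure is no longer forced. For $K_{2,2,2,2}$ the matching upper bound $c\le 6$ does follow from the known bound of at most $n-2$ crossings in a simple $1$-drawing (Lemma~\ref{lem:crossingnumber}), so that case closes. But for $K_6$ and $K_3+C_4$ that bound gives $c\le 4$ and $c\le 5$ respectively, one more than needed, and the parity of crossings between vertex-disjoint triangles does not exclude the extra crossing: in $K_6$ every crossing involves four distinct vertices and lies in exactly two of the ten pairs of disjoint triangles, so summing the (even) number of crossings over all such pairs yields only $2c\equiv 0 \pmod 2$, which is vacuous. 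Ruling out a $4$-crossing simple $1$-drawing of $K_6$ and a $5$-crossing one of $K_3+C_4$ therefore needs a separate argument that the proposal does not supply. In addition, the classification of which families of kites can coexist (equivalently, which spanning subgraphs can serve as $P$ with the prescribed quadrilateral faces), and the $3$-connectivity of $P$ needed to invoke Whitney, are asserted rather than proved. Until these steps are carried out the proposal is a program for a proof rather than a proof.
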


    \begin{figure}[htbp]
        \centering
        \begin{tikzpicture}
            [scale=0.8]
            \tikzset{vertex/.style = {circle, draw, fill=black!50, inner sep=0pt, minimum width=4pt}}
            \tikzset{edge0/.style = {line width=1.2pt, black}}
            \tikzset{edge1/.style = {line width=1pt, blue, opacity=0.7}}
            
            \node [vertex] (v0) at (210:3) {};
            \node [vertex] (v1) at (330:3) {};
            \node [vertex] (v2) at (90:3) {};
            \node [vertex] (w0) at (210:1.5) {};
            \node [vertex] (w1) at (330:1.5) {};
            \node [vertex] (w2) at (90:1.5) {};
            
            \foreach \i in {0,1,2} {
                \pgfmathtruncatemacro{\j}{Mod(\i+1,3)}
                \foreach \v in {v,w} {
                    \draw [edge0] (\v\i) edge (\v\j) {};
                };
                \draw [edge0] (v\i) edge (w\i) {};
                \draw [edge1] (v\i) edge (w\j) {};
                \draw [edge1] (v\j) edge (w\i) {};
            };
        \end{tikzpicture}
        \quad
        \begin{tikzpicture}
            [scale=0.8]
            \tikzset{vertex/.style = {circle, draw, fill=black!50, inner sep=0pt, minimum width=4pt}}
            \tikzset{edge0/.style = {line width=1.2pt, black}}
            \tikzset{edge1/.style = {line width=1pt, blue, opacity=0.7}}
            
            \coordinate (v0) at (90:3) {};
            \coordinate (v1) at (210:3) {};
            \coordinate (v2) at (330:3) {};
            \coordinate (v3) at (160:0.75) {};
            \coordinate (v4) at (20:0.75) {};
            \coordinate (v5) at (210:1.5) {};
            \coordinate (v6) at (330:1.5) {};
            
            \foreach \i in {1,...,6} {
                \pgfmathtruncatemacro{\p}{Mod(\i,2)}
                \draw [edge\the\numexpr 1-\p \relax] (v\i) edge (v\the\numexpr \i+\p \relax) {};
                   \ifthenelse{\i < 5}{
                       \draw [edge0] (v0) edge (v\i) {};
                       \draw [edge0] (v\i) edge (v\the\numexpr 6-\p \relax) {};
                       \draw [edge1] (v\i) edge (v\the\numexpr 5+\p \relax) {};
                   }{
                       \draw [edge1] (v0) edge (v\i) {};
                       \draw [edge1] (v\the\numexpr \i-4 \relax) edge (v\the\numexpr \i-2 \relax) {};
                   }
            }
            
            \foreach \i in {0,...,6} {
                \node [vertex] at (v\i) {};
            };
        \end{tikzpicture}
        \quad
        \begin{tikzpicture}
            [scale=0.8]
            \tikzset{vertex/.style = {circle, draw, fill=black!50, inner sep=0pt, minimum width=4pt}}
            \tikzset{edge0/.style = {line width=1.2pt, black}}
            \tikzset{edge1/.style = {line width=1pt, blue, opacity=0.7}}
            
            \node [vertex] (v0) at (45:2.5) {};
            \node [vertex] (v1) at (135:2.5) {};
            \node [vertex] (v2) at (225:2.5) {};
            \node [vertex] (v3) at (315:2.5) {};
            \node [vertex] (w0) at (45:1) {};
            \node [vertex] (w1) at (135:1) {};
            \node [vertex] (w2) at (225:1) {};
            \node [vertex] (w3) at (315:1) {};
            
            \foreach \i in {0,1,2,3} {
                \pgfmathtruncatemacro{\j}{Mod(\i+1,4)}
                \pgfmathtruncatemacro{\k}{Mod(\i+2,4)}
                \foreach \v in {v,w} {
                    \draw [edge0] (\v\i) edge (\v\j) {};
                };
                \draw [edge0] (v\i) edge (w\i) {};
                \draw [edge1] (v\i) edge (w\j) {};
                \draw [edge1] (v\j) edge (w\i) {};
                \draw [edge1] (w\i) edge (w\k) {};
            };
            \draw [edge1] plot[smooth, tension=0.7] coordinates {(v0) (135:3) (v2)};
            \draw [edge1] plot[smooth, tension=0.7] coordinates {(v1) (45:3) (v3)};
        \end{tikzpicture}
        \caption{The unique simple $1$-planar drawings (up to weak equivalence) of~$K_6$ on the left, of~$K_3 + C_4$ in the middle and of~$K_{2,2,2,2}$ on the right.}
        \label{fig:unique1drawing}
    \end{figure}
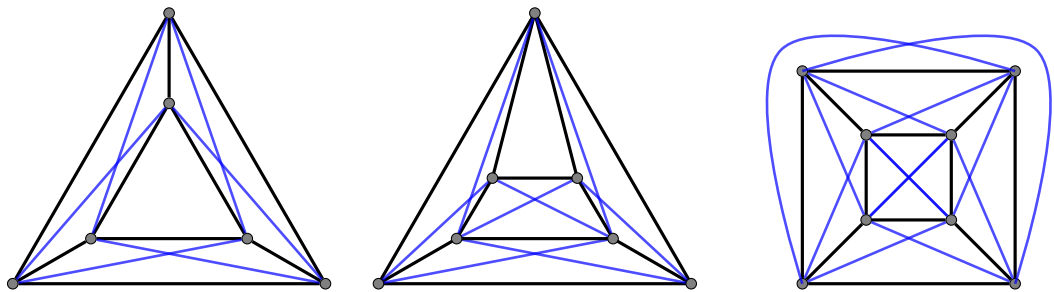

From Lemma~\ref{lem:unique1drawing} and Figures~\ref{fig:unique1drawing} and~\ref{fig:7vx1pl} we obtain the following corollary. 

\begin{corollary}
    \label{cor:drawing-properties}
    Let~$G$ be isomorphic to a graph in $\{K_6,K_3+C_4,K_{2,2,2,2}\}$ and let $\phi$ be a simple $1$-drawing of $G$. The following statements hold.
    \begin{enumerate}
        [label=(\roman*)]
        \item \label{item:dp1} Every edge which is not crossed with respect to~$\phi$ is incident with two edges which cross each other.
        \item \label{item:dp2} If the boundary of some region~$R$ of ${\mathbb{S}^2\setminus \phi(G)}$ contains at least three vertices, then the boundary of~$R$ is a facial triangle with respect to~$\phi$, and~$G$ is not isomorphic to~$K_{2,2,2,2}$.
        \item \label{item:dp3} If $G$ is not isomorphic to $K_{2,2,2,2}$, then exactly two triangles are facial with respect to $\phi$ and these are vertex-disjoint. 
        \qed
    \end{enumerate}
\end{corollary}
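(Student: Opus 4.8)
The plan is to reduce all three statements to a finite inspection by invoking Lemma~\ref{lem:unique1drawing}. Each of $K_6$, $K_3 + C_4$, and $K_{2,2,2,2}$ has, up to weak equivalence, a unique simple $1$-drawing, so $\phi$ is weakly equivalent to the representative drawing exhibited in Figures~\ref{fig:unique1drawing} and~\ref{fig:7vx1pl}. Since every quantity appearing in the three properties---which edges cross, which edges are incident, whether a cycle bounds a face, and how many vertices lie on the boundary of a region---is preserved both by homeomorphisms of $\mathbb{S}^2$ and by automorphisms of $G$, each property is invariant under weak equivalence. Hence it suffices to verify~\ref{item:dp1},~\ref{item:dp2} and~\ref{item:dp3} for the single representative drawing of each of the three graphs.

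For~\ref{item:dp1} I would list the uncrossed edges of each representative drawing and, for each, point to a crossing pair of edges meeting it at its two ends; in the drawing of $K_6$, for instance, the uncrossed edges are the two triangles together with the three spokes, and in every case one endpoint carries one edge of a crossing pair while the other endpoint carries the partner edge. The same direct check handles $K_3 + C_4$ and $K_{2,2,2,2}$. For~\ref{item:dp3}, restricting to $K_6$ and $K_3 + C_4$, I would read the facial triangles straight off the figures: in $K_6$ they are the inner and outer triangles, and in $K_3 + C_4$ they are the two triangles displayed in the drawing; in each case the two triangles share no vertex, and one checks that no further $3$-cycle bounds a face.

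The step demanding the most care is~\ref{item:dp2}, because it quantifies over all regions of $\mathbb{S}^2 \setminus \phi(G)$ rather than only the faces, so the small regions incident with the crossings must also be accounted for. The plan is to enumerate the regions of each representative drawing and record the number of vertices on each boundary, confirming that in the drawings of $K_6$ and $K_3 + C_4$ the only regions with at least three boundary vertices are the two facial triangles, while in the drawing of $K_{2,2,2,2}$ every region has at most two vertices on its boundary. The main obstacle is simply guaranteeing that this enumeration is exhaustive, since an overlooked region could invalidate the claim; I would control this by passing to the planarisation (replacing each crossing by a degree-four vertex) and using Euler's formula to predict the total number of regions, then matching that count against the regions found by inspection.
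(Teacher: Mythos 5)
Your proposal is correct and follows essentially the same route as the paper, which derives the corollary directly from Lemma~\ref{lem:unique1drawing} and an inspection of the unique drawings in Figures~\ref{fig:unique1drawing} and~\ref{fig:7vx1pl}. If anything, your treatment is more careful than the paper's (which offers no written argument), particularly in distinguishing regions from faces in~\ref{item:dp2} and in using the planarisation and Euler's formula to certify that the region enumeration is exhaustive.
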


Given a drawing~$\phi$ of a graph~$G$, we define the \emph{true-planar skeleton~$\mathcal{S}(\phi)$ of~$\phi$} to be the 
subgraph of~$G$ containing exactly the edges of~$G$ which are not crossed. 
We will always consider~$\mathcal{S}(\phi)$ to be drawn with the restriction~${\phi \restricted \mathcal{S}(\phi)}$, which is a planar drawing of~${\mathcal{S}(\phi)}$. 
For all figures in this paper we use black edges for the true-planar skeletons of the drawings depicted.

\subsection{%
\texorpdfstring{Extremal results for planar and $1$-planar graphs}%
{Extremal results for planar and 1-planar graphs}}

We now collect some results from the literature which will be needed in the upcoming proofs.

A planar graph $G$ is an \emph{Apollonian network} if it is isomorphic to $K_3$ or it contains a vertex $v$ of degree $3$ such that $G-v$ is an Apollonian network. 

\begin{theorem}
    \label{thm:planarK3}
    \cite{MR519175}
    For an integer~${n \geq 3}$, every $n$-vertex planar graph has at most~${3n-8}$ triangles, with equality if and only if it is an Apollonian network.
\end{theorem}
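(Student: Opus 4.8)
The plan is to prove the upper bound and the equality characterisation together by induction on~$n$, after first reducing to the case that~$G$ is a triangulation (a maximal planar graph). For the reduction, I note that adding edges can only increase the number of triangles, so the maximum is attained by an edge-maximal planar graph; and if~$G$ is not a triangulation then some face has degree at least~$4$, and inserting a chord of that face creates a new triangle. Hence every non-triangulation has strictly fewer triangles than some triangulation on the same vertex set, so it suffices to prove the bound for triangulations, and this also shows that any graph attaining~${3n-8}$ triangles must already be a triangulation.

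For a triangulation, Euler's formula gives exactly~${2n-4}$ triangular faces, and every triangle of the graph is either one of these faces or a separating triangle. Thus the number~$t(G)$ of triangles equals~${2n-4}$ plus the number of separating triangles, and the heart of the matter is to control the latter. The base cases~${n=3}$ and~${n=4}$ (namely~$K_3$ and~$K_4$) are immediate. For~${n \geq 5}$, if~$G$ has no separating triangle then~${t(G) = 2n-4 < 3n-8}$, a strict inequality (and such a~$G$, being $4$-connected, is not Apollonian). Otherwise~$G$ has a separating triangle~$T$, which splits~$G$ into two triangulations~$G_1$ and~$G_2$ on~$n_1$ and~$n_2$ vertices with~${n_1+n_2 = n+3}$ and~${4 \leq n_i \leq n-1}$, meeting exactly in~$T$. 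Since no edge joins the two sides of~$T$, every triangle of~$G$ lies entirely in one part and~$T$ is their only common triangle, so~${t(G) = t(G_1)+t(G_2)-1 \leq (3n_1-8)+(3n_2-8)-1 = 3n-8}$ by the induction hypothesis.

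For the forward direction of the equality characterisation I expect the main difficulty to be extracting a removable degree-$3$ vertex, and here I would use an \emph{innermost} separating triangle~$T$, chosen to bound a disk~$D$ containing as few vertices as possible. Then the triangulation~$G_1$ filling~$D$ can have no separating triangle of its own, since such a triangle would be a separating triangle of~$G$ enclosing strictly fewer vertices; consequently~${t(G_1) = 2n_1 - 4}$. Substituting this together with~${t(G_2) \leq 3n_2 - 8}$ into the identity~${t(G) = t(G_1) + t(G_2) - 1 = 3n - 8}$ forces~${n_1 = 4}$, so~${G_1 = K_4}$ and~$D$ contains a single vertex~$v$, which then has degree~$3$ in~$G$. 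Thus~${G - v}$ is an~${(n-1)}$-vertex triangulation with~${t(G-v) = t(G) - 3 = 3(n-1) - 8}$, so by induction~${G-v}$ is Apollonian, and therefore so is~$G$.

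Finally, the converse direction—that every Apollonian network attains~${3n-8}$—follows from the same count: removing the degree-$3$ vertex provided by the recursive definition deletes exactly the three triangles through it, so an easy induction starting from~$K_3$ (which has~${3\cdot 3 - 8 = 1}$ triangle) gives~${3n-8}$ throughout. The genuinely delicate point remains the innermost-triangle argument, where one must verify that the enclosed triangulation inherits no separating triangle and hence collapses to a single stacked vertex.
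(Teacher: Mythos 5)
The paper does not prove this statement: it is imported directly from Hakimi and Schmeichel \cite{MR519175} and used as a black box (in Lemma~\ref{lem:K6freebound} and elsewhere), so there is no in-paper argument to compare yours against. Your proof is correct and self-contained. The skeleton --- reduce to triangulations, write the triangle count as $2n-4$ plus the number of separating triangles, split along a separating triangle to get $t(G)=t(G_1)+t(G_2)-1$ for the upper bound, and take an innermost separating triangle to force $n_1=4$ and extract a stacked degree-$3$ vertex in the equality case --- all checks out, and the step you flag as delicate does go through: a separating triangle $T'$ of the inner triangulation $G_1$ is again a separating triangle of $G$ (any path between the two sides would have to cross $\phi(T')$), and since $T'\neq T$ it has a vertex strictly inside the disk $D$, so the disk it bounds inside $D$ contains strictly fewer vertices, contradicting minimality. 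The only informality is in the initial reduction: for a planar graph that is not $2$-connected a face boundary need not be a cycle, and adding a ``chord'' of a long face need not immediately close a triangle. This is cosmetic rather than a gap --- complete $G$ to a triangulation $T^{*}$, note that adding edges never destroys triangles, and observe that the last edge $e$ added lies on two triangular faces of $T^{*}$, each of which is a triangle of $T^{*}$ but not of $T^{*}-e$, so $\mathcal{N}(T^{*},K_3)>\mathcal{N}(T^{*}-e,K_3)\geq\mathcal{N}(G,K_3)$, which gives both the reduction and the fact that an extremal graph must be a triangulation.
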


\begin{theorem}
    \label{thm:planarK4}
    \cites{MR791009,wood} 
    For an integer~${n \geq 4}$, every $n$-vertex planar graph has at most~${n-3}$ subgraphs isomorphic to~$K_4$, with equality if and only if it is an Apollonian network. 
\end{theorem}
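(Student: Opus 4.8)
The plan is to prove the statement by induction on $n$, first reducing to maximal planar graphs (triangulations) and then exploiting separating triangles. Write $q(G)$ for the number of subgraphs of $G$ isomorphic to $K_4$. Since adding an edge to a planar graph never decreases $q$, and every planar graph on $n \geq 3$ vertices extends to a triangulation $\hat{G}$ on the same vertex set, it suffices to bound $q$ for triangulations: if $G \subseteq \hat{G}$ then $q(G) \leq q(\hat{G})$, and I will show $q(\hat{G}) \leq n-3$. The base case $n=4$ is immediate, since the only planar graph on four vertices containing a $K_4$ is $K_4$ itself, with $q = 1 = n-3$, and $K_4$ is the Apollonian network on four vertices.

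For the inductive step, let $G$ be a triangulation on $n \geq 5$ vertices. I would record the structural dichotomy that $G$ either has a separating triangle or is $4$-connected. In the $4$-connected case the key observation is that $G$ contains no $K_4$ at all: if $\{a,b,c,d\}$ induced a $K_4$, then using $n \geq 5$ some further vertex lies strictly inside one of the four triangular faces of this $K_4$ in the plane drawing, so the boundary of that face is a separating triangle, contradicting $4$-connectivity. Hence $q(G) = 0 < n-3$ here, with strict inequality, so equality does not occur.

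The main case is when $G$ has a separating triangle $T$ with vertex set $\{a,b,c\}$. Writing $(A,B)$ for the corresponding separation with $A \cap B = V(T)$ and both sides non-trivial, I set $G_1 = G[A]$ and $G_2 = G[B]$; each is a triangulation with outer triangle $T$, and $\abs{A} + \abs{B} = n+3$ with $\abs{A}, \abs{B} \geq 4$. The crucial additivity is $q(G) = q(G_1) + q(G_2)$: no edge joins $A \setminus B$ to $B \setminus A$, so a four-clique cannot use vertices strictly on both sides, and since $\abs{V(T)} = 3$ no four-clique lies inside $T$ alone; thus every $K_4$ lies in exactly one of $G_1, G_2$. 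Applying the induction hypothesis gives $q(G) \leq (\abs{A}-3) + (\abs{B}-3) = n-3$, completing the bound.

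For the equality characterisation I would run the same induction. If $q(G) = n-3$ for a triangulation $G$ on $n \geq 5$ vertices, then $q(G) > 0$ forces a separating triangle, and additivity forces $q(G_i) = \abs{V(G_i)} - 3$ on both sides, so by induction $G_1$ and $G_2$ are Apollonian networks; I would then use that pasting two Apollonian networks along a common facial triangle again yields an Apollonian network (peel degree-$3$ vertices off one side, avoiding the gluing triangle, until it reduces to $K_3$). Conversely, each addition of a degree-$3$ vertex in the Apollonian construction creates exactly one new $K_4$, so an Apollonian network on $n$ vertices attains $q = n-3$. To return from triangulations to arbitrary planar graphs, I would note that every edge of an Apollonian network on $\geq 4$ vertices lies in some $K_4$, so deleting an edge strictly decreases $q$; hence a non-maximal $G$ with $q(G) = n-3$ is impossible, since its triangulation $\hat{G}$ would be Apollonian with $q(\hat{G}) = n-3$ yet $q(G) \leq q(\hat{G}) - 1$. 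I expect the main obstacle to lie in this equality case rather than the bound: one must show cleanly that the $4$-connected case never attains equality, that gluing preserves the Apollonian property, and that equality forces maximality, each of which needs a small but genuine structural argument.
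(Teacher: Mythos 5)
The paper does not prove this statement: Theorem~\ref{thm:planarK4} is quoted from Alon--Caro and Wood and used as a black box, so there is no internal proof to compare yours against. On its own terms your argument is correct, and it is essentially the standard induction over separating triangles. Two remarks. First, you do not actually need the dichotomy ``a triangulation on at least five vertices is $4$-connected or has a separating triangle'' (which, while standard, is itself a small lemma): your own argument in the $4$-connected case already shows directly that any $K_4$ in a triangulation on at least five vertices forces a separating triangle, since a fifth vertex lies strictly inside one of the four triangular regions of the drawn $K_4$ and that region's boundary separates it from the opposite vertex. So you may simply split into the case $\mathcal{N}(G,K_4)=0$ (where $0<n-3$) and the case where $G$ contains a $K_4$ and hence a separating triangle. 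Second, the gluing step in the equality case --- that pasting two Apollonian networks along a triangle facial in both again yields an Apollonian network --- does require the sub-lemma you flag: an Apollonian network on at least four vertices has, for \emph{every} triangle $T$, a degree-$3$ vertex outside $V(T)$ whose deletion leaves an Apollonian network. The recursive definition only guarantees \emph{some} peelable vertex, which could lie on the gluing triangle (as in $K_4$). The sub-lemma holds by a short induction: if the only given peelable vertex $u$ lies on $T=\{u,a,b\}$, its third neighbour $w$ forms a triangle $\{a,b,w\}$ because the neighbourhood of a degree-$3$ vertex in a maximal planar graph induces a triangle, and one recurses on $G-u$ with that triangle; the vertex so obtained is non-adjacent to $u$ and hence still has degree $3$ in $G$. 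With that written out, the additivity across the separating triangle, the converse count of $n-3$ for Apollonian networks, and the observation that every edge of an Apollonian network on at least four vertices lies in a $K_4$ (so equality forces edge-maximality) are all sound.
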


\begin{thm}
    \label{thm:edgeextremal1planar}
    \cite{BodendiekSW83}
    Given a non-negative integer~$n$, the maximum number~$f_2(n)$ of edges in a $1$-planar graph with~$n$ vertices is given by 
    
    \[
        f_2(n) = 
        \begin{cases}
            \binom{n}{2} & \textnormal{ if } n \leq 6,\\
            4n-9         & \textnormal{ if } n \in \{7,9\},\\
            4n-8         & \textnormal{ otherwise. }
        \end{cases}
    \]
\end{thm}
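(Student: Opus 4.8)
The plan is to prove matching upper and lower bounds, with the exceptional values $n \in \{7,9\}$ arising because the extremal construction requires a quadrangulation that does not exist for these two orders. For $n \le 6$ the bound $\binom{n}{2}$ is trivially an upper bound and is attained by $K_n$, which is $1$-planar since $K_6$ is (Figure~\ref{fig:unique1drawing}). For $n \ge 7$ I would fix a simple $1$-drawing $\phi$ of an $n$-vertex $1$-planar graph $G$ (which we may assume to exist), let $c$ be its number of crossings, and planarise $\phi$ by replacing each crossing with a new vertex of degree $4$. This produces a plane graph on $n+c$ vertices with $\abs{E(G)}+2c$ edges; provided this graph is simple, Euler's formula gives $\abs{E(G)}+2c \le 3(n+c)-6$, that is, $\abs{E(G)} \le 3n+c-6$. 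Since a simple $1$-drawing has at most $n-2$ crossings, we get $c \le n-2$ and hence $\abs{E(G)} \le 4n-8$.

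To isolate the exceptional cases I would analyse when $\abs{E(G)} = 4n-8$. Equality forces simultaneously $c = n-2$ and equality in Euler's formula, so the planarisation is a triangulation. A local analysis at each crossing vertex (whose four neighbours must be pairwise consecutively adjacent) then shows that the four endpoints of each crossing form a quadrilateral whose four sides are uncrossed edges and whose two diagonals are precisely the crossing pair. Consequently the true-planar skeleton $\mathcal{S}(\phi)$ is a simple quadrangulation of $\mathbb{S}^2$ with $n$ vertices, $2n-4$ edges, and $n-2$ quadrilateral faces, each containing exactly one crossing. Moreover no vertex of $\mathcal{S}(\phi)$ can have degree $2$: a degree-$2$ vertex would lie on two quadrilateral faces sharing the same pair of non-neighbouring corners, forcing the common diagonal between those corners to be crossed in both faces, contradicting that each edge is crossed at most once. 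Hence $\mathcal{S}(\phi)$ has minimum degree at least $3$.

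Since a quadrangulation is bipartite, writing $(A,B)$ for its bipartition, each class satisfies $3\abs{A} \le \sum_{v \in A}\deg(v) = \abs{E(\mathcal{S}(\phi))} = 2n-4$, and likewise for $B$. Thus $\abs{A},\abs{B} \le (2n-4)/3$, so $n = \abs{A}+\abs{B} \le (4n-8)/3$, giving $n \ge 8$ and excluding $n=7$; for $n=9$ the same inequalities give $\abs{A},\abs{B} \le 14/3 < 5$, which is incompatible with $\abs{A}+\abs{B}=9$, excluding $n=9$. Therefore $\abs{E(G)} \le 4n-9$ for $n \in \{7,9\}$. For the lower bounds I would exhibit explicit $1$-planar graphs with $19$ and $27$ edges for $n=7$ and $n=9$ (as in Figure~\ref{fig:7vx1pl}); take $K_n$ for $n \le 6$; and for $n=8$ or $n \ge 10$ take a $3$-connected simple quadrangulation $Q$ on $n$ vertices (the cube for $n=8$, and standard quadrangulation families for $n \ge 10$) and insert a crossing pair of diagonals into each of its $n-2$ faces. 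Three-connectivity guarantees these $2(n-2)$ diagonals are distinct new edges, yielding a simple $1$-planar graph with $(2n-4)+2(n-2)=4n-8$ edges.

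The main obstacle I expect is the equality analysis: deducing rigorously from $\abs{E(G)}=4n-8$ that $\mathcal{S}(\phi)$ is a minimum-degree-$3$ quadrangulation requires tight control of the drawing's local structure at every crossing and a careful argument that no parallel edges arise — the same simplicity concern already flagged in the planarisation step of the upper bound. Complementing this, one must verify that $3$-connected simple quadrangulations genuinely exist for every $n \ge 10$ of both parities, which is a routine but not entirely trivial construction.
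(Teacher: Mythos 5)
The paper does not prove this statement at all --- it is quoted directly from Bodendiek, Schumacher and Wagner \cite{BodendiekSW83} (with the crossing bound attributed to \cite{ZLW12}), so there is no internal proof to compare yours against. That said, your argument is the standard one from the literature and is essentially sound: planarising a simple $1$-drawing gives $\abs{E(G)} \le 3n + c - 6$, the crossing bound $c \le n-2$ yields $4n-8$, and in the equality case the planarisation is a triangulation whose local structure at each degree-$4$ crossing vertex forces the true-planar skeleton to be an $n$-vertex quadrangulation with $n-2$ faces, one crossing per face, with the crossed edges joining opposite corners. Your minimum-degree-$3$ argument (two faces at a degree-$2$ vertex would share the opposite pair $\{x,y\}$, so the edge $xy$ would have to carry the crossing of both faces) together with bipartiteness gives $3n \le 2(2n-4)$ and hence $n \ge 8$, and the refined count $\abs{A},\abs{B} \le 4$ kills $n=9$; this correctly isolates exactly the exceptional orders $7$ and $9$. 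The two caveats you flag yourself are the genuine points requiring care: simplicity of the planarised graph (which follows because the drawing is simple, so the four segments at a crossing end at four distinct vertices and no edge carries two crossings) and the existence of $3$-connected simple quadrangulations for $n=8$ and all $n \ge 10$ (which is classical and matches the paper's own description of the extremal graphs). With those details written out, the proof is complete and consistent with the characterisation of optimal $1$-planar graphs that the paper quotes in its introduction.
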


\begin{lemma}
    \label{lem:crossingnumber}
    \cite{ZLW12}
    A simple $1$-drawing of an $n$-vertex graph has at most~${n-2}$ crossings.
\end{lemma}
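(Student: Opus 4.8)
The plan is to planarise the drawing and then extract a bipartite planar graph on which the elementary bound ``a simple bipartite planar graph on $m \ge 3$ vertices has at most $2m-4$ edges'' does all the work. Let $\phi$ be a simple $1$-drawing of an $n$-vertex graph $G$ with $c$ crossings. If $c=0$ there is nothing to prove, so assume $c \ge 1$. First I would form the planarisation $H$ of $\phi$: place a new vertex at each crossing of $\phi$ and regard each crossed edge as being subdivided by the crossing vertex on it. Since $\phi$ is a drawing in the sphere whose crossings are its only multiple points, $H$ is a plane graph; write $X$ for its set of $c$ crossing vertices, so that $V(G)$ are its remaining vertices. Because $\phi$ is a $1$-drawing, no edge of $G$ is crossed more than once, so each crossed edge is subdivided exactly once, and by~\ref{item:drawing4} exactly two edges meet at each crossing, whence every vertex of $X$ has degree exactly $4$ in $H$.

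Next I would consider the bipartite subgraph $B$ of $H$ consisting of all edges of $H$ incident with $X$ (equivalently, the half-edges running from each crossing to the endpoints of the two edges meeting there). By construction every vertex of $X$ has degree $4$ in $B$, so $\abs{E(B)} = 4c$. The two edges crossing at a given $x \in X$ are non-adjacent in $G$ — this is exactly where simplicity of $\phi$ is used — so they have four \emph{distinct} endpoints; moreover, since these edges are crossed only at $x$, the four half-edges at $x$ reach four vertices of $V(G)$ rather than other crossing vertices. Hence $B$ is a simple bipartite graph with parts $X$ and a subset of $V(G)$, and it is planar as a subgraph of the plane graph $H$. Writing $m \le n$ for the number of vertices of $V(G)$ incident with a crossing, we have $\abs{V(B)} = c + m$, and the bipartite planar edge bound gives $4c = \abs{E(B)} \le 2(c+m) - 4 \le 2(c+n) - 4$. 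Rearranging yields $2c \le 2n - 4$, that is $c \le n-2$, as required.

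The conceptual content is entirely in setting up $B$ correctly; once that is done the estimate is immediate. Accordingly, the step that needs the most care — and where both hypotheses are genuinely consumed — is the verification that $B$ is simple and bipartite: simplicity of $\phi$ guarantees that the two edges at each crossing contribute four distinct neighbours (so no crossing vertex is joined to a single vertex by a doubled edge), while the $1$-drawing condition guarantees that these neighbours lie in $V(G)$ and not in $X$ (so $B$ really is bipartite with $X$ as one side and the degree-$4$ count is exact). I would also record the trivial boundary checks: $c \ge 1$ forces $\abs{V(B)} \ge 5 \ge 3$, so the bipartite planar bound applies, and the possibly disconnected case of that bound only makes the inequality slacker.
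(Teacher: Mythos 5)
The paper states this lemma as an external citation to~\cite{ZLW12} and includes no proof of its own, so there is no internal argument to compare against; judged on its own terms, your proof is correct and complete. The planarisation is legitimate, each crossing vertex has degree exactly~$4$ because the drawing is a $1$-drawing (so each crossed edge is subdivided exactly once and its two halves run directly to endpoints in $V(G)$, making $B$ genuinely bipartite with $X$ as one side), and simplicity of the drawing is used exactly where you say it is, to guarantee the four neighbours of each crossing vertex are distinct so that $B$ is a simple graph and the bound $\abs{E(B)}\leq 2\abs{V(B)}-4$ applies. The arithmetic $4c\leq 2(c+m)-4$ then gives $c\leq m-2\leq n-2$ as claimed, and your boundary remarks (the case $c=0$, the $\abs{V(B)}\geq 3$ hypothesis, and the disconnected case of the bipartite planar bound) dispose of the remaining technicalities. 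This is essentially the standard argument for this fact, and it is a reasonable self-contained substitute for the citation.
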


\begin{thm}
    \label{thm:Korzhik}
    \cite{Korzhik08}
    A $7$-vertex graph is $1$-planar if and only if it has no subgraph isomorphic to~${K_4 + \overline{K_3}}$.
\end{thm}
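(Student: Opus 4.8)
The plan is to prove the two implications separately, reducing each to a small structural core and exploiting the fact that $1$-planarity is inherited by subgraphs. A convenient first step is to reformulate the forbidden subgraph. Since $K_4 + \overline{K_3}$ has exactly seven vertices, a $7$-vertex graph $G$ contains it (necessarily as a spanning subgraph) precisely when $V(G)$ splits into a clique $S$ of size $4$ and a part $T$ of size $3$ with all $S$--$T$ edges present; equivalently, when every vertex of $S$ is adjacent to all six other vertices. Hence $G$ contains $K_4 + \overline{K_3}$ if and only if $G$ has at least four universal vertices (vertices of degree $6$), and the theorem becomes: \emph{a $7$-vertex graph is $1$-planar if and only if it has at most three universal vertices}.

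For the sufficiency I would argue by subgraph-monotonicity: it suffices to show that every edge-maximal $7$-vertex graph with at most three universal vertices is $1$-planar, since any $K_4 + \overline{K_3}$-free graph is a subgraph of such an edge-maximal graph. Passing to complements, these correspond to the edge-minimal graphs on seven vertices with at most three isolated vertices in which every edge has an endpoint of degree $1$; a short degree count shows these are exactly $2K_2 + 3K_1$ and $K_{1,3} + 3K_1$, so there are precisely two edge-maximal graphs to verify, namely $K_3 + C_4$ (with $19$ edges) and $K_3 + (K_3 \sqcup K_1)$ (with $18$ edges). The first is $1$-planar by Lemma~\ref{lem:unique1drawing}. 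The second is isomorphic to $K_6$ together with one extra vertex joined to a triangle of the $K_6$; using that the unique simple $1$-drawing of $K_6$ has a facial triangle (item~\ref{item:dp3} of Corollary~\ref{cor:drawing-properties}) and that $K_6$ is vertex-transitive, I can relabel so that this triangle is facial, place the extra vertex inside the corresponding face, and join it to the three vertices without creating new crossings, exhibiting a $1$-drawing. Thus every $K_4 + \overline{K_3}$-free $7$-vertex graph is $1$-planar.

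For the necessity it again suffices, by monotonicity, to prove that the single graph $K_4 + \overline{K_3}$ is not $1$-planar, and this is the crux. The naive edge bound does not settle it: by Theorem~\ref{thm:edgeextremal1planar} a $7$-vertex $1$-planar graph may have up to $f_2(7) = 19$ edges, whereas $K_4 + \overline{K_3}$ has only $18$, so it sits just below the threshold and a genuine drawing argument is needed. I would assume a simple $1$-drawing $\phi$ (permissible since any $1$-planar graph has a simple $1$-drawing) and analyse it through the three copies of $K_5$ spanned by $S$ together with each vertex of $T$, which pairwise share the $K_4$ on $S$: each such $K_5$ forces at least one crossing among its own edges, Lemma~\ref{lem:crossingnumber} caps the total number of crossings at five, and each edge is crossed at most once. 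Combining these constraints with the parity observation that two vertex-disjoint triangles cross an even number of times should force an impossible crossing pattern. The main obstacle is precisely this step: because $K_4 + \overline{K_3}$ is so close to the edge-extremal $1$-planar graphs, the contradiction is delicate, and it may instead be cleaner to enumerate the edge-maximal $7$-vertex $1$-planar graphs and check directly that none of them has four universal vertices.
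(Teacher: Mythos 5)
This theorem is not proved in the paper at all: it is imported verbatim from Korzhik's work on minimal non-$1$-planar graphs, so the only honest comparison is between your attempt and the actual mathematical content of that result. Your reformulation (a $7$-vertex graph contains $K_4 + \overline{K_3}$ iff it has at least four universal vertices) is correct, and your sufficiency argument is sound and complete: passing to complements, the edge-maximal $K_4+\overline{K_3}$-free graphs on $7$ vertices are exactly $K_3 + C_4$ and $K_3 + \overline{K_{1,3}}$, and your verification that both are $1$-planar (the first via Lemma~\ref{lem:unique1drawing}, the second by inserting a vertex into a facial triangle of the unique drawing of $K_6$, using that $\mathrm{Aut}(K_6)$ is transitive on triangles) is fine. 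Indeed these are precisely the two graphs appearing in the paper's Corollary~\ref{cor:7vertex1planar}.

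The genuine gap is the necessity direction: you never prove that $K_4 + \overline{K_3}$ is not $1$-planar, and this is the entire substance of the theorem. You correctly observe that edge counting cannot settle it (the graph has $18$ edges while $f_2(7)=19$), and you propose combining the three copies of $K_5$ through the shared $K_4$, the bound of five crossings from Lemma~\ref{lem:crossingnumber}, and the parity of crossings between disjoint triangles — but you explicitly stop short of deriving a contradiction, and these constraints alone do not obviously conflict: one must actually analyse how the (at least) one crossing forced inside each $K_5$ can be distributed among the six $K_4$-edges and the twelve $S$--$T$ edges, which is a delicate case analysis of exactly the kind Korzhik carries out. Your fallback suggestion, to ``enumerate the edge-maximal $7$-vertex $1$-planar graphs and check that none has four universal vertices,'' is close to circular: producing that list is equivalent to the characterisation being proved (it is Corollary~\ref{cor:7vertex1planar}, which the paper derives \emph{from} this theorem), so it cannot be assumed. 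As it stands, the proposal establishes only one implication.
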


The following corollary is simple to deduce.
\begin{corollary}
    \label{cor:7vertex1planar}
    A graph with at most~$7$ vertices is $1$-planar if and only if it is isomorphic to a subgraph of either~${K_3 + C_4}$ or~${K_3 + \overline{K_{1,3}}}$. 
    \qed
\end{corollary}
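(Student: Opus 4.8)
The plan is to pass to complements and thereby reduce the whole statement to an elementary fact about seven-vertex graphs, using Theorem~\ref{thm:Korzhik} as the only external input. First I would reduce to the case where $G$ has exactly seven vertices: adding an isolated vertex changes neither $1$-planarity nor the property of being isomorphic to a subgraph of $K_3 + C_4$ or of $K_3 + \overline{K_{1,3}}$ (both of which have exactly seven vertices, so that a seven-vertex subgraph is a spanning one). Hence, padding $G$ with isolated vertices if necessary, I may assume $\abs{V(G)} = 7$.

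Set $H := \overline{G}$. The engine of the argument is that complementation reverses containment: for seven-vertex graphs, $A$ is isomorphic to a subgraph of $B$ if and only if $\overline{B}$ is isomorphic to a subgraph of $\overline{A}$. I would record the three relevant complements, using that the complement of a join is the disjoint union of the complements: $\overline{K_4 + \overline{K_3}} = \overline{K_4} \sqcup K_3$ (a triangle plus four isolated vertices); $\overline{K_3 + C_4} = \overline{K_3} \sqcup \overline{C_4} = \overline{K_3} \sqcup (K_2 \sqcup K_2)$; and $\overline{K_3 + \overline{K_{1,3}}} = \overline{K_3} \sqcup K_{1,3}$. Combining these with Theorem~\ref{thm:Korzhik}, all three conditions become conditions on $H$ alone: the graph $G$ is $1$-planar if and only if $H$ is \emph{not} isomorphic to a subgraph of $\overline{K_4} \sqcup K_3$, that is, not all edges of $H$ fit inside a single triple of vertices, which is to say $H$ has at least four non-isolated vertices; meanwhile $G \subseteq K_3 + C_4$ if and only if $H$ contains two disjoint edges (a copy of $K_2 \sqcup K_2$), and $G \subseteq K_3 + \overline{K_{1,3}}$ if and only if $H$ contains a copy of $K_{1,3}$. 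Note that this treats both directions of the corollary simultaneously, so no separate verification that the two named graphs are $1$-planar is needed.

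The corollary thus reduces to the following elementary claim about a seven-vertex graph $H$: \emph{$H$ has at least four non-isolated vertices if and only if $H$ contains two disjoint edges or contains $K_{1,3}$.} The backward direction is immediate, since a copy of $K_2 \sqcup K_2$ or of $K_{1,3}$ already displays four non-isolated vertices. For the forward direction I would argue by contraposition: suppose $H$ contains neither two disjoint edges nor a copy of $K_{1,3}$. Absence of $K_{1,3}$ forces maximum degree at most two, while absence of two disjoint edges forces the edges of $H$ to pairwise intersect. A family of pairwise-intersecting edges is either a star through a common vertex or a triangle; under the degree bound a star has at most two edges and hence at most three non-isolated vertices, and a triangle has exactly three. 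Either way $H$ has at most three non-isolated vertices, which is the required contrapositive.

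Since the problem is, as the authors note, simple to deduce, there is no deep obstacle: the only points requiring care are the complement bookkeeping (verifying the three identities above and that ``$H$ is a subgraph of $\overline{K_4} \sqcup K_3$'' is the same as ``$H$ has at most three non-isolated vertices'') and the legitimacy of the reduction to seven vertices. The graph-theoretic core is the standard fact that an intersecting family of edges is a star or a triangle, which can be checked in one line by observing that, given two edges through a common vertex, any further edge either passes through that vertex or closes a triangle, after which no additional edge can meet all three sides without lying inside the triangle.
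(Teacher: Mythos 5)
Your proposal is correct and complete. The paper offers no proof at all for this corollary --- it is stated with an immediate \qed{} after the remark that it ``is simple to deduce'' from Theorem~\ref{thm:Korzhik}, with Figure~\ref{fig:7vx1pl} supplying explicit $1$-drawings of ${K_3+C_4}$ and ${K_3+\overline{K_{1,3}}}$ that certify the ``if'' direction. Your complementation argument is a clean way to carry out the whole deduction in one pass: the padding to exactly seven vertices is legitimate, the three complement identities are right (in particular $\overline{C_4} = K_2 \sqcup K_2$ and $\overline{G+H} = \overline{G} \sqcup \overline{H}$), the translation of ``$H$ is a subgraph of $\overline{K_4}\sqcup K_3$'' into ``$H$ has at most three non-isolated vertices'' is correct, and the final elementary claim (an edge family with no two disjoint members is a star or a triangle, and excluding $K_{1,3}$ caps the star at two edges) is sound. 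A pleasant byproduct you rightly point out is that $1$-planarity of the two extremal graphs falls out of the same equivalence, so the drawings in Figure~\ref{fig:7vx1pl} are not logically needed for this statement.
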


    \begin{figure}[htbp]
        \centering
        \begin{tikzpicture}
            [scale=0.8]
            \tikzset{vertex/.style = {circle, draw, fill=black!50, inner sep=0pt, minimum width=4pt}}
            \tikzset{edge0/.style = {line width=1.2pt, black}}
            \tikzset{edge1/.style = {line width=1pt, blue, opacity=0.7}}
            
            \coordinate (v0) at (90:3) {};
            \coordinate (v1) at (210:3) {};
            \coordinate (v2) at (330:3) {};
            \coordinate (v3) at (160:0.75) {};
            \coordinate (v4) at (20:0.75) {};
            \coordinate (v5) at (210:1.5) {};
            \coordinate (v6) at (330:1.5) {};
            
            \foreach \i in {1,...,6} {
                \pgfmathtruncatemacro{\p}{Mod(\i,2)}
                \draw [edge\the\numexpr 1-\p \relax] (v\i) edge (v\the\numexpr \i+\p \relax) {};
                   \ifthenelse{\i < 5}{
                       \draw [edge0] (v0) edge (v\i) {};
                       \draw [edge0] (v\i) edge (v\the\numexpr 6-\p \relax) {};
                       \draw [edge1] (v\i) edge (v\the\numexpr 5+\p \relax) {};
                   }{
                       \draw [edge1] (v0) edge (v\i) {};
                       \draw [edge1] (v\the\numexpr \i-4 \relax) edge (v\the\numexpr \i-2 \relax) {};
                   }
            }
            
            \foreach \i in {0,...,6} {
                \node [vertex] at (v\i) {};
            };
        \end{tikzpicture}
        \quad
        \begin{tikzpicture}
            [scale=0.8]
            \tikzset{vertex/.style = {circle, draw, fill=black!50, inner sep=0pt, minimum width=4pt}}
            \tikzset{edge0/.style = {line width=1.2pt, black}}
            \tikzset{edge1/.style = {line width=1pt, blue, opacity=0.7}}
            
            \node [vertex] (v0) at (210:3) {};
            \node [vertex] (v1) at (330:3) {};
            \node [vertex] (v2) at (90:3) {};
            \node [vertex] (w0) at (210:1.5) {};
            \node [vertex] (w1) at (330:1.5) {};
            \node [vertex] (w2) at (90:1.5) {};
            \node [vertex] (c) at (0:0) {};
            
            \foreach \i in {0,1,2} {
                \pgfmathtruncatemacro{\j}{Mod(\i+1,3)}
                \foreach \v in {v,w} {
                    \draw [edge0] (\v\i) edge (\v\j) {};
                };
                \draw [edge0] (v\i) edge (w\i) {};
                \draw [edge1] (v\i) edge (w\j) {};
                \draw [edge1] (v\j) edge (w\i) {};
                \draw [edge0] (w\i) edge (c) {};
            };
        \end{tikzpicture}
        \caption{A simple $1$-planar drawing of~${K_3 + C_4}$ on the left and of~${K_3 + \overline{K_{1,3}}}$ on the right. }
        \label{fig:7vx1pl}
    \end{figure}
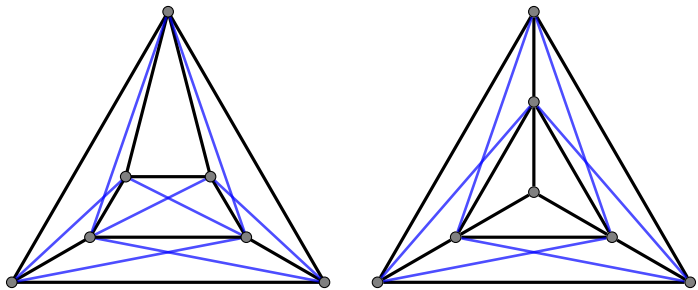

\section{Constructing the extremal graphs}\label{sec:stitchings}

In this section, we will construct the extremal graphs for Theorems~\ref{thm:mainintro1},~\ref{thm:mainintro2} and~\ref{thm:mainintro3}. 
In order to do this, we must first give some definitions. 
The following simple lemma, which we state here without proof, describes a method of combining two $1$-planar graphs to form a larger $1$-planar graph which is fundamental to our constructions.

\begin{lemma}
    \label{lem:stitching}
    Let~$G_1$ and~$G_2$ be $1$-planar graphs such that~${G_1 \cap G_2}$ is a clique~$K$ with at most three vertices.  
    For each~${i \in [2]}$ let~$\phi_i$ be a $1$-drawing of~$G_i$  
    such that~$K$ is contained in~$\mathcal{S}(\phi_i)$
    and~${\phi_i(G_i - V(K))}$ is contained in a unique region of~${\mathbb{S}^2 \setminus \phi_i(K)}$. 
    Then~${G := G_1 \cup G_2}$ has a $1$-drawing~$\phi$ whose restriction to~$G_i$ is equivalent to~$\phi_i$ for~${i \in [2]}$, such that no edge of~$G_1$ crosses any edge of~$G_2$. 
    \qed
\end{lemma}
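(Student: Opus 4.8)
The plan is to realise $\phi$ as a topological gluing of the two given drawings along the clique $K$, exploiting that for each $i$ all of $G_i$ except $K$ is drawn on one side of $\phi_i(K)$. Concretely, I would first produce, for each $i \in [2]$, a closed disk $\Delta_i \subseteq \mathbb{S}^2$ with $\phi_i(G_i) \subseteq \Delta_i$ and $\phi_i(K) \subseteq \partial\Delta_i$, meeting $\phi_i(G_i)$ only along $\phi_i(K)$; that is, $\phi_i(G_i) \cap \partial\Delta_i = \phi_i(K)$. Then I would glue $\Delta_1$ to a copy of $\Delta_2$ along their boundary circles by a homeomorphism identifying $\phi_1(K)$ with $\phi_2(K)$ vertex-by-vertex and edge-by-edge, and identify the resulting sphere with $\mathbb{S}^2$ so that $\Delta_1$ is one disk and the attached copy of $\Delta_2$ fills the complementary disk $\mathbb{S}^2\setminus\mathrm{int}(\Delta_1)$. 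Since $G_1\cap G_2=K$, this yields a well-defined map $\phi$ on $\abs{G}$, and because $\phi(G_1)$ and $\phi(G_2)$ lie in the two disks $\Delta_1$ and $\mathbb{S}^2\setminus\mathrm{int}(\Delta_1)$, which meet only along $\partial\Delta_1$ where they both reduce to $\phi_1(K)$, no edge of $G_1$ shares an interior point with an edge of $G_2$.

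The disks $\Delta_i$ are produced by a short case analysis on $\abs{V(K)}$. If $K$ is a triangle, then $\phi_i(K)$ is a Jordan curve (it is uncrossed, as $K\subseteq\mathcal{S}(\phi_i)$), so by the Jordan curve theorem $\mathbb{S}^2\setminus\phi_i(K)$ has two regions; the hypothesis puts $\phi_i(G_i-V(K))$ in one region $R_i$, and since no edge of $G_i$ crosses $\phi_i(K)$ one checks that $\phi_i(G_i)\subseteq\overline{R_i}$, so the Jordan--Schoenflies theorem lets me take $\Delta_i:=\overline{R_i}$, a closed disk with $\partial\Delta_i=\phi_i(K)$. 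If $K$ is a single edge or vertex, then $\phi_i(K)$ does not separate $\mathbb{S}^2$ and the ``unique region'' hypothesis is automatic; here I instead pick a face $F_i$ of $\phi_i$ whose boundary contains all of $\phi_i(K)$ (for an edge, the face on one side of the uncrossed arc $\phi_i(K)$; for a vertex, any face incident with $\phi_i(K)$), carve out an open disk $B_i\subseteq F_i$ whose boundary circle contains $\phi_i(K)$ and meets $\phi_i(G_i)$ only there, and set $\Delta_i:=\mathbb{S}^2\setminus B_i$. The empty case $\abs{V(K)}=0$ is the disjoint union, placed in two disjoint disks.

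Finally I would build the gluing homeomorphism. On $\partial\Delta_i$ the clique $\phi_i(K)$ appears as a point, an arc, or the whole circle (for $K$ a vertex, edge, or triangle), and in each case a boundary homeomorphism $\partial\Delta_2\to\partial\Delta_1$ matching $\phi_2(w)\mapsto\phi_1(w)$ for $w\in V(K)$ and matching the edges of $K$ clearly exists; it extends to a homeomorphism $g$ of the closed disks, and hence of $\mathbb{S}^2$, by the Jordan--Schoenflies theorem. Setting $\phi\restricted G_1=\phi_1$ and $\phi\restricted G_2=g\circ\phi_2$ gives a drawing whose restrictions are equivalent to $\phi_1$ and $\phi_2$; every crossing of $\phi$ is inherited from $\phi_1$ or $\phi_2$ and the edges of $K$ remain uncrossed, so $\phi$ is a $1$-drawing with no crossing between $G_1$ and $G_2$. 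I expect the main obstacle to be purely the topological bookkeeping in the second step: verifying that the disks $\Delta_i$ can be chosen to meet $\phi_i(G_i)$ exactly in $\phi_i(K)$, so that the gluing creates no spurious crossings near the vertices of $K$ where other edges of $G_i$ emanate. Everything else is a direct application of the Jordan--Schoenflies theorem.
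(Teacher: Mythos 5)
Your argument is correct. Note that the paper deliberately states Lemma~\ref{lem:stitching} without proof, calling it a ``simple lemma'', so there is no written proof to compare against; your cut-and-paste construction --- realising each $\phi_i(G_i)$ inside a closed disk $\Delta_i$ whose boundary meets the drawing exactly in $\phi_i(K)$, and gluing the two disks along a boundary homeomorphism that matches $K$ vertex-by-vertex and edge-by-edge, with Jordan--Schoenflies supplying both the disks in the triangle case and the extension of the boundary homeomorphism needed for the equivalence claim --- is precisely the standard argument the authors are suppressing. The one point worth making explicit is why $\phi_i(G_i)\subseteq\overline{R_i}$ and why no edge of $G_i-E(K)$ meets $\partial\Delta_i$ outside the vertices of $K$: this follows from properties (D2)--(D4) of drawings (a vertex image coincides with no other image point, and two edges meet only at shared endpoints or at crossings) together with the hypothesis $K\subseteq\mathcal{S}(\phi_i)$, which rules out crossings on the edges of $K$; you flag this as the main bookkeeping step and it does go through.
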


This lemma motivates the following definition. 

\begin{definition}
    Given~$1$-planar graphs~$H_1$ and $H_2$, a graph~$G$ is a \emph{stitch of~$H_1$ and~$H_2$} if there is a simple $1$-drawing~$\phi$ of~$G$ and there are proper subgraphs~$G_1$ and~$G_2$ of~$G$ isomorphic to~$H_1$ and~$H_2$ respectively, such that~${G_1 \cup G_2 = G}$ and~${T:=G_1 \cap G_2}$ is a facial triangle with respect to both~${\phi \restricted G_1}$ and~${\phi \restricted G_2}$. We say that~$(G_1,G_2,T,\phi)$ \emph{witnesses} that $G$ is a stitch of $G_1$ and $G_2$.
\end{definition}

Lemma~\ref{lem:stitching} guarantees that the stitch of two $1$-planar graphs is $1$-planar. 
Furthermore, given graphs~$H_1$ and~$H_2$ with $1$-drawings~$\phi_1$ and~$\phi_2$ respectively, every stitch of~$H_1$ and~$H_2$ has a $1$-drawing~$\phi$ such that all but one facial triangle of~$\phi_1$ and all but one facial triangle of~$\phi_2$ corresponds to a facial triangle of~$\phi$. 
This allows us to iteratively take stitches of copies of a graph~$H$, provided~$H$ has a $1$-drawing with at least two facial triangles. For example, we observe that the strong product of a triangle and a path is $1$-planar by iteratively stitching copies of~$K_6$ together. 

On the other hand, we can identify whether a graph is a stitch of two smaller graphs by finding a $1$-drawing for which the true-planar skeleton contains a  separating triangle, as the following lemma illustrates. 

\begin{lemma}\label{lem:separatingstitch}
    Given a graph~$G$, there exist graphs~$H_1$ and~$H_2$ such that~$G$ is a stitch of~$H_1$ and~$H_2$ if and only if there is a simple $1$-drawing~$\phi$ of~$G$ such that~${\mathcal{S}(\phi)}$ contains a separating triangle of~$G$. 
    \qed
\end{lemma}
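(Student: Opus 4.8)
The plan is to prove both implications by exploiting the single structural fact that an uncrossed triangle is drawn as a Jordan curve. Concretely, if a triangle $T$ satisfies $T \subseteq \mathcal{S}(\phi)$, then its three edges are pairwise uncrossed, so $\phi(T)$ is a simple closed curve which, by the Jordan curve theorem on $\mathbb{S}^2$, bounds two open disks $D_1$ and $D_2$. No edge of $G$ can cross $\phi(T)$: such an edge would either cross one of the (uncrossed) edges of $T$, or pass through a vertex of $T$ as an interior point, and the latter is forbidden by the drawing axioms~\ref{item:drawing2} and~\ref{item:drawing4}, since a point lying on both a vertex and the interior of an edge would be a crossing having a vertex among its preimages. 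Consequently every vertex of $G - V(T)$ lies in exactly one of $D_1, D_2$, and (by connectivity) each component of $G - V(T)$ lies entirely in a single disk, while every edge of $G$ has its image contained in the closure of a single disk. This dichotomy is the engine for both directions.

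For the forward direction, I would start from a witness $(G_1, G_2, T, \phi)$. Since $T$ is facial with respect to $\phi \restricted G_1$, its edges bound a face of that restricted drawing and hence are crossed by no edge of $G_1$; the same holds for $G_2$, and as $E(G) = E(G_1) \cup E(G_2)$, the edges of $T$ are uncrossed in $\phi$, giving $T \subseteq \mathcal{S}(\phi)$. It then remains to verify that $T$ is a separating triangle. Here I would use $V(G_1) \cap V(G_2) = V(T)$ together with $G_1 \cup G_2 = G$: any edge joining $V(G_1) \setminus V(T)$ to $V(G_2) \setminus V(T)$ would have to lie in $G_1$ or in $G_2$, which is impossible, so $(V(G_1), V(G_2))$ is a separation of $G$ with separator $V(T)$. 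Properness of $G_1$ and $G_2$ forces both $V(G_1) \setminus V(T)$ and $V(G_2) \setminus V(T)$ to be nonempty, so the separation is non-trivial and $T$ is a separating triangle of $G$ contained in $\mathcal{S}(\phi)$.

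For the reverse direction, I would take the given drawing $\phi$ and its separating triangle $T \subseteq \mathcal{S}(\phi)$, and use the disks $D_1, D_2$ above. Since $T$ separates $G$, the graph $G - V(T)$ has at least two components, and by the dichotomy each lies in a single disk. I would then split the vertex set of $G - V(T)$ into two nonempty parts $\mathcal{A}$ and $\mathcal{B}$, each a union of components and each contained in a single disk: if both disks contain a component, let $\mathcal{A}$ and $\mathcal{B}$ be the vertices in $D_1$ and in $D_2$ respectively; otherwise all components lie in one disk and I would split them arbitrarily into two nonempty unions of components. Setting $G_1 := G[V(T) \cup \mathcal{A}]$ and $G_2 := G[V(T) \cup \mathcal{B}]$ (and $H_i := G_i$), the equalities $G_1 \cup G_2 = G$ and $G_1 \cap G_2 = G[V(T)] = T$ follow from the absence of edges between distinct components of $G - V(T)$, and both subgraphs are proper because each part is nonempty. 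Finally, $T$ is facial with respect to $\phi \restricted G_i$ because the image of $G_i$ lies in the closure of a single disk, leaving the opposite disk as a face bounded exactly by $\phi(T)$; thus $(G_1, G_2, T, \phi)$ witnesses that $G$ is a stitch.

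The one genuinely delicate point — and the step I expect to require the most care — is the choice of the partition $\mathcal{A}, \mathcal{B}$ in the reverse direction, specifically the case in which $\phi$ happens to draw all of $G - V(T)$ on one side of $\phi(T)$. There it is not enough to split along the two sides of the curve; instead one must split the components within a single disk and rely on the opposite, empty disk to certify that $T$ is facial in both restricted drawings. Verifying faciality in this degenerate configuration, and checking that distinct components really do land in well-defined disks (which is exactly where the claim that no edge crosses $\phi(T)$ is used), are the places I would write out most carefully.
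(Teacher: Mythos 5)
Your proof is correct, and since the paper states this lemma with its proof omitted (the \qed follows the statement directly), your argument supplies exactly the routine verification the authors leave implicit: the forward direction from the definitions of stitch and separation, and the reverse direction via the Jordan curve bounded by the uncrossed triangle, including the degenerate case where all components of $G-V(T)$ are drawn on one side and the empty disk certifies faciality for both restrictions.
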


\begin{figure}[htbp]
	\centering
	\begin{tikzpicture}
        [scale=0.8]
    	\tikzset{vertex/.style = {circle, draw, fill=black!50, inner sep=0pt, minimum width=4pt}}
            \tikzset{edge0/.style = {line width=1.2pt, black}}
            \tikzset{edge1/.style = {line width=1pt, blue, opacity=0.7}}

        \coordinate (v0) at (90:3) {};
        \coordinate (v1) at (210:3) {};
        \coordinate (v2) at (330:3) {};
        \coordinate (v3) at (160:0.75) {};
        \coordinate (v4) at (20:0.75) {};
        \coordinate (v5) at (210:1.5) {};
        \coordinate (v6) at (330:1.5) {};
        \coordinate (v7) at (160:0.25){};
        \draw[edge0] (v0)--(v1) {};
        \draw[edge0] (v1)--(v2) {};
        \draw[edge0] (v2) --(v0) {};
        \draw[edge0] (v0)--(v3) {};
        \draw[edge0] (v0) -- (v4) {};
        \draw[edge0] (v5)--(v6) {};
        \draw[edge0] (v3)--(v5) {};
        \draw[edge0] (v4) --(v6) {};
        \draw[edge0] (v1)--(v5) {};
        \draw[edge0] (v2) -- (v6) {};
        \draw[edge0] (v3)--(v7) {};
        \draw[edge0] (v4) -- (v7) {};
        \draw[edge0] (v7)--(v5) {};
        
        \draw[edge1] (v0)--(v5) {};
        \draw[edge1] (v0) -- (v6) {};
        \draw[edge1] (v4)--(v2) {};
        \draw[edge1] (v4) -- (v5) {};
        \draw[edge1] (v7)--(v6) {};
        
        \draw[edge1] (v3)--(v1) {};
        \draw[edge1] (v0) -- (v7) {};
        \draw[edge1] (v4)--(v3) {};
        \draw[edge1] (v2) -- (v5) {};
        \draw[edge1] (v1)--(v6) {};
	
        \foreach \i in {0,...,7} {
            \node [vertex] at (v\i) {};
        };
	\end{tikzpicture}
	\caption{A simple $1$-drawing of~${K_2 + \overline{P_6}}$. We will show in Subsection~\ref{subsec:efficent} that this drawing is unique (up to weak equivalence).}
	\label{fig:K2+P5C}
\end{figure}
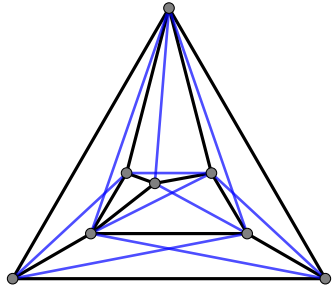

We now define classes~$\mathcal{E}_t$ for~${t \in \{3,4,5,6\}}$ and~$\mathcal{E}$, which we will show are the classes of extremal~$1$-planar graphs for cliques of size~$t$ and for the total number of cliques, respectively.

\begin{definition}\label{def:extremalgraphs}
    We define the classes~$\mathcal{E}_3$,~$\mathcal{E}_4$,~$\mathcal{E}_5$,~$\mathcal{E}_6$ and~$\mathcal{E}$ recursively as follows. 
    Let~$\mathcal{E}_3$ be the class consisting of 
    \begin{itemize}
        \item every graph with at most two vertices; 
        \item every graph isomorphic to one of~$K_3$,~$K_4$,~$K_5$,~$K_6$,~$K_3 + C_4$ or~$K_{2,2,2,2}$;
        \item every graph~$G$ which is a stitch of some graphs~${H_1, H_2 \in \mathcal{E}_3 \cup \{ K_2 + \overline{P_6} \}}$ such that for each~${i \in [2]}$ there exist integers~${k_i \geq 2}$ and~${s_i \in \{0,1,2\}}$ with~${s_1 + s_2 \leq 2}$ and~${\abs{V(H_i)} = 3k_i + s_i}$.
    \end{itemize}
     
    Let~$\mathcal{E}_4$ be the class consisting of 
    \begin{itemize}
        \item every graph with at most three vertices; 
        \item every clique with at most six vertices;
        \item every graph isomorphic to~$K_3 + C_4$;
        \item every graph~$G$ which is a stitch of some graphs~${H_1, H_2 \in \mathcal{E}_4}$ such that for each~${i \in [2]}$ there exist integers~${k_i \geq 1}$ and~${s_i \in \{0,1,2\}}$ with~${s_1 = 0}$ and \linebreak ${\abs{V(H_i)} = 3k_i + s_i}$.
    \end{itemize}
    
    Let~$\mathcal{E}_5$ be the class consisting of 
    \begin{itemize}
        \item every graph with at most four vertices; 
        \item every graph isomorphic to one of~$K_5$~or~$K_6$;
        \item every graph~$G$ which is a stitch of some graphs~${H_1, H_2 \in \mathcal{E}_5}$ such that for each~${i \in [2]}$ there exist integers~${k_i \geq 1}$ and~${s_i \in \{0,1,2\}}$ with~${s_1 = 0}$ and \linebreak ${\abs{V(H_i)} = 3k_i + s_i}$.
    \end{itemize}
     
    Let~$\mathcal{E}_6$ be the class consisting of 
    \begin{itemize}
        \item every graph with at most five vertices; 
        \item every graph isomorphic to~$K_6$;
        \item every graph~$G$ which is a stitch of some graphs~${H_1, H_2 \in \mathcal{E}_6}$ such that for each~${i \in [2]}$ there exist integers~${k_i \geq 1}$ and~${s_i \in \{0,1,2\}}$ with~${s_1 + s_2 \leq 2}$ and \linebreak ${\abs{V(H_i)} = 3k_i + s_i}$.
    \end{itemize}
    
    Let~$\mathcal{E}$ be the class consisting of 
    \begin{itemize}
        \item every clique with at most six vertices;
        \item every graph isomorphic to~${K_3 + C_4}$; 
        \item every graph~$G$ which is a stitch of some graphs~${H_1, H_2 \in \mathcal{E}}$ such that for each~${i \in [2]}$ there exist integers~${k_i \geq 1}$ and~${s_i \in \{0,1,2\}}$ with~${s_1 = 0}$ and \linebreak ${\abs{V(H_i)} = 3k_i + s_i}$.
    \end{itemize}
\end{definition}

It is worth noting that since stitches are defined in terms of $1$-drawings, the above definitions do not immediately provide an easy way of generating all of the graphs of a given size in any of these classes. In order to generate such a list, one would need a method of determining for each graph in the class which of its triangles are facial with respect to some $1$-drawing, and thus which stitches are possible. In Section~\ref{sec:treedec}, we discuss in more detail how to generate the $n$-vertex graphs in these classes, and provide alternative structural descriptions for them.

\section{%
\texorpdfstring{Rich $1$-drawings and their true-planar skeletons}%
{Rich 1-drawings and their true-planar skeletons}}
\label{sec:rich1drawings}

Consider the following basic observation about $1$-planar graphs. 

\begin{lemma}
    \label{lem:crossingsarecliques}
    \cite{BT18}*{Lemma~3}
	If~$G$ is an edge-maximal $1$-planar graph 
	and some edge~$vw$ crosses some edge~$xy$ in some simple $1$-planar drawing of~$G$, then~${\{v,w,x,y\}}$ induces a~$K_4$ in~$G$.
\end{lemma}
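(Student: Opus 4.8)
The plan is to argue by edge-maximality. Since $\phi$ is simple, the crossing edges $vw$ and $xy$ are non-adjacent, so $v,w,x,y$ are four distinct vertices and the edges $vw$ and $xy$ already lie in $G$. It therefore remains to show that the four ``side'' pairs $vx$, $xw$, $wy$ and $yv$ are all edges of $G$. I would establish this one pair at a time: if, say, $vx \notin E(G)$, I would exhibit a simple $1$-drawing of $G + vx$, contradicting the edge-maximality of $G$. Since each of the four pairs is treated independently in the original drawing $\phi$, there is no need to add several edges at once.

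The geometric heart of the argument is the local picture at the crossing point $p$ of $\phi(vw)$ and $\phi(xy)$. By property~\ref{item:drawing5} the two edges genuinely cross at $p$, so in a small disk $D$ around $p$ the four half-arcs $pv$, $pw$, $px$, $py$ emanate from $p$, and since they come from two transversally crossing arcs they occur in the alternating cyclic order $v,x,w,y$ around $p$. Consequently the consecutive pairs in this cyclic order are exactly $\{v,x\}$, $\{x,w\}$, $\{w,y\}$ and $\{y,v\}$, and each such pair bounds one of the four sectors into which $D \setminus (\phi(vw) \cup \phi(xy))$ is divided. To add the edge $vx$, I would route a curve $\gamma$ from $v$ to $x$ that follows closely alongside the arc $pv \subseteq \phi(vw)$, rounds the corner at $p$ through the sector between $pv$ and $px$, and then follows closely alongside the arc $px \subseteq \phi(xy)$ to reach $x$. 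Because the drawing is a simple $1$-drawing, the edges $vw$ and $xy$ are each involved in exactly the one crossing at $p$; hence a sufficiently thin tube around the path $pv \cup px$ meets $\phi(G)$ only in $\phi(vw) \cup \phi(xy)$, and $\gamma$ can be drawn inside this tube and inside the chosen sector near $p$ so that it meets $\phi(G)$ only at its endpoints. Thus $\gamma$ crosses no edge at all, and adding it yields a simple $1$-drawing of $G + vx$. Applying the same construction to the other three consecutive pairs shows that $vx$, $xw$, $wy$, $yv$ all lie in $E(G)$, so together with $vw$ and $xy$ the set $\{v,w,x,y\}$ induces a $K_4$.

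The hard part will be making the curve construction genuinely crossing-free rather than merely plausible. The delicate points are three. Near $p$, one must keep $\gamma$ strictly inside a single sector, so that it avoids both $\phi(vw)$ and $\phi(xy)$. Along the open arcs $pv$ and $px$, one must use $1$-planarity together with a compactness argument to fix a uniformly thin tube containing no edge other than $vw$ and $xy$: no third edge crosses $vw$ or $xy$ away from $p$, and, choosing the tube radius below the minimum distance from these arcs (outside neighborhoods of their endpoints and of $p$) to all other finitely many edges, no edge can run arbitrarily close to them along their whole length. Finally, near the vertices $v$ and $x$ one must let $\gamma$ leave each endpoint in a direction immediately adjacent to $\phi(vw)$, respectively $\phi(xy)$, so that it avoids the other edges incident to $v$ and $x$. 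Once these local routings are verified, $\gamma$ is an uncrossed arc, so the new drawing is still simple and still a $1$-drawing, giving the desired contradiction with edge-maximality.
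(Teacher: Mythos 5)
The paper does not prove this lemma; it is imported verbatim from Bar\'at and T\'oth \cite{BT18}*{Lemma~3}, so there is no internal proof to compare against. Your argument is the standard (and essentially the original) one: if one of the four side pairs, say $vx$, were missing, one routes a new uncrossed arc hugging the half-arcs $pv$ and $px$ and rounding the corner at the crossing point inside a single sector, which is possible because in a simple $1$-drawing neither $vw$ nor $xy$ meets any third edge, and this contradicts edge-maximality. The delicate points you flag (alternation of the four half-arcs at $p$ via~\ref{item:drawing5}, a compactness argument for the thin tube, and leaving $v$ and $x$ in the gap adjacent to $\phi(vw)$ resp.\ $\phi(xy)$) are exactly the right ones and are routine to complete, so the proposal is correct.
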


This fact motivates the following definition. 
A $1$-drawing~$\phi$ of a graph~$G$ (in a surface~$\mathbb{S}$) is called \emph{quasi-rich} 
if it is simple and for every pair~$vw$ and~$xy$ of edges which cross, no other edges in ${G[\{v,w,x,y\}]}$ are crossed, and is called \emph{rich} if it is quasi-rich and for every pair~$vw$ and~$xy$ of edges which cross, ${G[\{v,w,x,y\}]}$ is isomorphic to~$K_4$. 

\begin{lemma}
    \label{lem:rich1drawing}
    Let~$\phi$ be a simple $1$-drawing of an edge-maximal $1$-planar graph~$G$ such that~$G$ is $4$-connected or~$\phi$ has the minimum possible number of crossings among all $1$-drawings of~$G$. 
    Then~$\phi$ is rich. 
    In particular, every edge-maximal $1$-planar graph has a rich $1$-drawing.
\end{lemma}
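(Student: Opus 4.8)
The plan is to reduce everything to proving that $\phi$ is quasi-rich. Indeed, once quasi-richness is known, richness is immediate: Lemma~\ref{lem:crossingsarecliques} guarantees that for every crossing pair $vw$, $xy$ the vertex set $\{v,w,x,y\}$ already induces a copy of $K_4$ in $G$, which is exactly the extra condition distinguishing rich from quasi-rich. So suppose for contradiction that $\phi$ is simple but not quasi-rich. Then there is a crossing pair $vw$, $xy$ whose four endpoints induce a $K_4$, together with some further crossed edge among the six edges of $G[\{v,w,x,y\}]$. As $\phi$ is a $1$-drawing, the edges $vw$ and $xy$ are each involved only in their mutual crossing and so cannot be the extra crossed edge; hence one of the four remaining (``side'') edges of this $K_4$ is crossed, say $vx$ is crossed by an edge $e = pq$.

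Next I would prove a sealing claim. Write $c$ for the crossing point of $vw$ and $xy$, and let $R$ be the triangular region of $\phi \restricted G[\{v,w,x,y\}]$ lying on the inner side of $vx$, bounded by $vx$, the sub-arc of $vw$ from $v$ to $c$, and the sub-arc of $xy$ from $x$ to $c$. Since $vw$ and $xy$ have already used their one permitted crossing at $c$, no edge of $G$ may cross either of these two sub-arcs; therefore the only way for an edge of $G$ to pass between $R$ and its exterior is to cross $vx$, which happens exactly once, via $e$. Consequently, if $S$ denotes the set of vertices of $G$ drawn strictly inside $R$, then $S$ is non-empty (the endpoint of $e$ inside $R$ lies in $S$, and by simplicity it is neither $v$ nor $x$), and the only edge of $G$ joining $S$ to $V(G) \setminus (S \cup \{v,x\})$ is the single edge $e = pq$, where we may take $p \in S$ and $q \notin S$. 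In particular $\{v,x,q\}$ is a separator of $G$ of order exactly three (the three vertices are distinct by simplicity) which separates $S$ from any vertex of $\{w,y\} \setminus \{q\}$.

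With the sealing claim in hand, both hypotheses give a contradiction. If $G$ is $4$-connected, then $\{v,x,q\}$ is a non-trivial separator of order three, which is impossible. If instead $\phi$ realises the minimum number of crossings among all $1$-drawings of $G$, I would obtain a drawing with strictly fewer crossings by ``flipping the flap'': the entire drawing inside $R$ attaches to the rest of $G$ only at $v$ and $x$ and through the single edge $e$, so one can re-embed the contents of $\overline R$ in a thin disk hugging $vx$ on its outer side (keeping the attachments at $v$ and $x$) and then route $e = pq$ from its relocated endpoint $p$ to $q$ along the part of $e$ that already lay outside $R$. This removes the crossing of $e$ with $vx$ and introduces no new crossing, contradicting minimality.

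The step I expect to be the main obstacle is this last redrawing: one must check carefully that the flap can be transplanted to the outer side of $vx$ without meeting any other part of the drawing. I expect this to work because $vx$ is crossed only by $e$, so a sufficiently thin collar along the outer side of $vx$ is free of the graph apart from $e$ itself, giving room to insert the flap next to the edge $vx$ in the rotations at $v$ and $x$. Finally, for the ``in particular'' statement: an edge-maximal $1$-planar graph is $1$-planar, hence by the remark in Section~\ref{subsec:drawings} it has a simple $1$-drawing, and among all $1$-drawings of $G$ one minimising the number of crossings may be taken simple (adjacent crossings can always be removed without increasing the count); by the first part of the lemma this drawing is rich.
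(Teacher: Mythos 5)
Your proof is correct and follows the same skeleton as the paper's: reduce richness to quasi-richness via Lemma~\ref{lem:crossingsarecliques}, observe that the two arcs of $vw$ and $xy$ running from $v$ and $x$ to their crossing point seal off a region $R$ that an edge can only enter by crossing $vx$, and extract the non-trivial $3$-separator $\{v,x,q\}$ to dispose of the $4$-connected case. The one place you genuinely diverge is the minimum-crossing case, and there your route is heavier than it needs to be: rather than transplanting the entire contents of $R$ across $vx$ and re-routing $e$ (the ``flap flip'' whose topological details you rightly flag as the delicate step), the paper simply redraws the single edge $vx$. Since $vw$ and $xy$ are crossed only by each other, the region of $\mathbb{S}^2\setminus\phi(G)$ adjacent to the wedge at the crossing point between the $v$-arc of $vw$ and the $x$-arc of $xy$ has both $v$ and $x$ on its boundary and contains no point of the drawing, so $vx$ can be rerouted through it crossing-free; this removes the crossing with $e$ and changes nothing else. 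Your flap surgery does work, but the collar-plus-rotation argument it needs is exactly the kind of step that invites errors, and the sealed-region observation you already proved hands you the cheaper move. Everything else --- including your treatment of the ``in particular'' clause via a crossing-minimal (hence simple) drawing --- matches the paper; the only point worth making explicit in your write-up is that $w$ and $y$ lie outside $R$ (their germs at the crossing point lie in the complementary wedge), which is what guarantees the separation $\{v,x,q\}$ is non-trivial.
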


\begin{proof}
    Let~$vw$ and~$xy$ be a pair of edges which cross. 
    By Lemma~\ref{lem:crossingsarecliques}, ${\{v,w,x,y\}}$ induces a~$K_4$ in~$G$. 
    Suppose for a contradiction that~$\phi$ is not rich. 
    Without loss of generality, the edge~$vx$ is crossed by some edge~$e$. 
    Since~$vw$ and~$xy$ cross each other and no other edges, there is a region of~${\mathbb{S}^2 \setminus \phi(G)}$ whose boundary contains both~$v$ and~$x$. 
    Hence the number of crossings could be reduced by redrawing~$vx$ through this region, so we may assume that~$G$ is $4$-connected. 
    However for some endvertex~$z$ of~$e$, the set~${\{v,x,z\}}$ is a $3$-separator of~$G$, a contradiction. 
\end{proof}

Note that not all simple $1$-drawings of edge-maximal $1$-planar graphs are rich, see for example Figure~\ref{fig:simplenotrich}. Lemma~\ref{lem:rich1drawing} and Corollary~\ref{cor:crossingcount}, which we prove at the end of this section, together imply that for $3$-connected edge-maximal $1$-planar graphs, rich $1$-drawings are exactly the $1$-drawings with the fewest crossings. 
However this is not the case for $1$-planar graphs in general. 
Indeed some $1$-planar graphs, such as $K_{3,3}$, contain no cliques of size $4$, and hence have no rich $1$-drawings. 
However it is easily seen that deleting crossed edges from a $1$-drawing preserves richness, which leads to the following observation. 

    \begin{figure}[htbp]
        \centering
        \begin{tikzpicture}
            [scale=0.8]
            \tikzset{vertex/.style = {circle, draw, fill=black!50, inner sep=0pt, minimum width=4pt}}
            \tikzset{edge0/.style = {line width=1.2pt, black}}
            \tikzset{edge1/.style = {line width=1pt, blue, opacity=0.7}}
            
            \coordinate (v0) at (210:3) {};
            \coordinate (v1) at (330:3) {};
            \coordinate (v2) at (90:3) {};
            \coordinate (w0) at (210:1.5) {};
            \coordinate (w1) at (330:1.5) {};
            \coordinate (w2) at (90:1.5) {};
            \coordinate (c) at (0:0) {};
            
            \foreach \i in {0,1,2} {
                \pgfmathtruncatemacro{\j}{Mod(\i+1,3)}
                \draw [edge0] (v\i) edge (v\j) {};
                \ifthenelse{\i = 0}{
                }{
                    \draw [edge0] (w\i) edge (w\j) {};
                };
                \draw [edge0] (v\i) edge (w\i) {};
                \draw [edge1] (v\i) edge (w\j) {};
                \draw [edge1] (v\j) edge (w\i) {};
            };
            
            \draw [edge0] (w0) edge (c) {};
            \draw [edge0] (w1) edge (c) {};
            \draw [edge1] (w2) edge (c) {};
            \draw [edge1] plot[smooth, tension=0.7] coordinates {(w0) (90:0.5) (w1)};
            
            \foreach \i in {0,1,2} {
                \foreach \v in {v,w} {
                    \node [vertex] at (\v\i) {};
                }
            };
            \node [vertex] at (c) {};
        \end{tikzpicture}
        \quad
        \begin{tikzpicture}
            [scale=0.8]
            \tikzset{vertex/.style = {circle, draw, fill=black!50, inner sep=0pt, minimum width=4pt}}
            \tikzset{edge0/.style = {line width=1.2pt, black}}
            \tikzset{edge1/.style = {line width=1pt, blue, opacity=0.7}}
            
            \node [vertex] (v0) at (210:3) {};
            \node [vertex] (v1) at (330:3) {};
            \node [vertex] (v2) at (90:3) {};
            \node [vertex] (w0) at (210:1.5) {};
            \node [vertex] (w1) at (330:1.5) {};
            \node [vertex] (w2) at (90:1.5) {};
            \node [vertex] (c) at (0:0) {};
            
            \foreach \i in {0,1,2} {
                \pgfmathtruncatemacro{\j}{Mod(\i+1,3)}
                \foreach \v in {v,w} {
                    \draw [edge0] (\v\i) edge (\v\j) {};
                };
                \draw [edge0] (v\i) edge (w\i) {};
                \draw [edge1] (v\i) edge (w\j) {};
                \draw [edge1] (v\j) edge (w\i) {};
                \draw [edge0] (w\i) edge (c) {};
            };
        \end{tikzpicture}
        \caption{Simple $1$-planar drawings of~${K_3 + \overline{K_{1,3}}}$, the one on the left is not rich, the one on the right is rich.}
        \label{fig:simplenotrich}
    \end{figure}
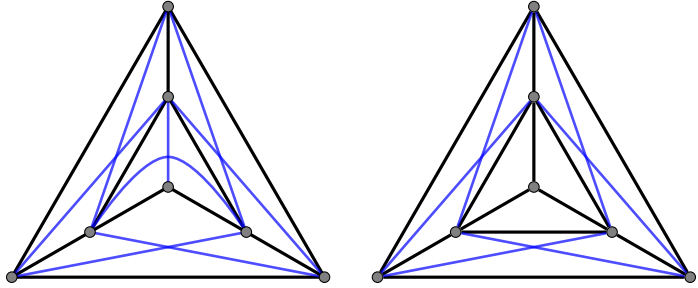

\begin{remark}
    \label{rmk:richrestriction}
    If $\phi$ is a rich $1$-drawing of a graph~$G$ and~${\mathcal{S}(\phi) \subseteq H \subseteq G}$, then~${\phi \restricted H}$ is rich. 
\end{remark}

The following lemma establishes a strong connection between $3$-connected $1$-planar graphs with at least five vertices and the true-planar skeletons of their rich $1$-drawings, in fact allowing us to completely recover the original graph from the true-planar skeleton.

\begin{lemma}
    \label{lem:trueplanarskeletons}
    Let~$\phi$ be a rich $1$-drawing of a $3$-connected edge-maximal $1$-planar graph~$G$ with at least five vertices. 
    The following statements hold.
    \begin{enumerate}
        [label=(\roman*)]
        \item\label{item:trueplanarskeletons1} 
            $\mathcal{S}(\phi)$ is $3$-connected;
        \item\label{item:trueplanarskeletons2} 
            no face of~${\mathcal{S}(\phi)}$ has degree greater than~$4$; 
        \item\label{item:trueplanarskeletons3} 
            the number of faces of~${\mathcal{S}(\phi)}$ of degree~$4$ is equal to the number of crossings of~$\phi$; 
        \item\label{item:trueplanarskeletons4} 
            the number of triangles in~$G$ is at most the number of non-trivial $3$-separators of~${\mathcal{S}(\phi)}$, plus four times the number of 
            faces of~${\mathcal{S}(\phi)}$ of degree~$4$, 
            plus the number of faces of~${\mathcal{S}(\phi)}$ of degree~$3$. 
    \end{enumerate}
\end{lemma}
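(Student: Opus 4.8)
The plan is to first pin down the local structure forced by richness and then read off all four statements from a single picture of how $G$ sits over its skeleton. For each crossing of a pair of edges $vw$ and $xy$, richness guarantees that $\{v,w,x,y\}$ induces a $K_4$ in which the four uncrossed edges $vx,xw,wy,yv$ form a $4$-cycle bounding a disc that contains the crossing, while $vw$ and $xy$ are the two diagonals drawn inside it. Since $\phi$ is a $1$-drawing, each crossed edge is involved in a unique crossing, so every crossed edge is a diagonal of exactly one such quadrilateral. The key preliminary observation is that a crossed edge only borders the four cells into which the diagonals cut this disc; hence every region of $\mathbb{S}^2\setminus\phi(G)$ whose boundary avoids all crossings is bounded solely by edges of $\mathcal{S}(\phi)$. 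It follows that the faces of $\mathcal{S}(\phi)$ are exactly the crossing-free faces of $\phi$ together with one quadrilateral region per crossing (obtained by deleting its two diagonals), and that $G$ is recovered from $\mathcal{S}(\phi)$ by reinserting both diagonals into each quadrilateral face. I will use this dictionary throughout.

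For (i), I would first show $\mathcal{S}(\phi)$ is $2$-connected: replacing each crossed edge $vw$ by the skeleton path $vxw$ (available from its $K_4$) turns any $G$-path into an $\mathcal{S}(\phi)$-path, so a cut vertex or $1$-separation of $\mathcal{S}(\phi)$ not crossed by a diagonal would already cut $G$, while a diagonal $vw$ joining the two sides would force both $x$ and $y$ into the separator, impossible for a separator of size at most $1$. For $3$-connectivity, suppose $\{x,y\}$ were a $2$-separator. The same analysis shows that either no diagonal joins the two sides, so $\{x,y\}$ separates $G$ (contradicting $3$-connectedness of $G$), or a unique diagonal $vw$ does, in which case $\{x,y\}$ must be exactly its crossing partners and $xy$ is crossed. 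In the latter case the quadrilateral $vxwy$ is one face of $\mathcal{S}(\phi)$ incident to both $x$ and $y$; since a $2$-cut of a $2$-connected planar graph lies on at least two common faces, there is a second such face $F^*$, which cannot be a quadrilateral (else $xy$ would be one of its boundary edges or its diagonal, both impossible as $xy$ is already drawn inside $vxwy$) and so is an empty face of $\phi$. Its boundary cycle meets both sides of the separation in a vertex other than $v,w$, giving a non-adjacent pair lying on an empty face; adding that edge contradicts edge-maximality. This edge-maximality step, and in particular locating the second face $F^*$, is the main obstacle.

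For (ii) and (iii), the quadrilateral faces have degree $4$ by construction, so it remains to show every crossing-free face is a triangle. If such an empty face had a boundary cycle of length at least $4$, then some chord $u_1u_3$ would be addable inside it unless $u_1u_3\in E(G)$; but an edge (skeleton or diagonal) joining two vertices that already lie on a common face would place them on two common faces, making $\{u_1,u_3\}$ a $2$-cut and contradicting (i). Hence all crossing-free faces are triangles, so the degree-$4$ faces are exactly the quadrilaterals, which are in bijection with the crossings; this gives (ii) and (iii) at once.

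Finally, for (iv) I would classify each triangle $T=abc$ of $G$ by how it meets the diagonals and inject the classes into the three counted quantities. If all three edges of $T$ lie in $\mathcal{S}(\phi)$, then $T$ is a triangle of the $3$-connected planar graph $\mathcal{S}(\phi)$, hence either bounds a degree-$3$ face or is a separating triangle, so its vertex set is a non-trivial $3$-separator. If $T$ lies inside a single quadrilateral's $K_4$, it is one of that $K_4$'s four triangles, accounting for the factor four times the number of degree-$4$ faces. In the remaining case $T$ has a diagonal edge $ab$ of some quadrilateral $pqrs$ with $\{a,b\}=\{p,r\}$ but $c\notin\{q,s\}$: here the crucial point is that every edge of $T$ crosses only diagonal edges, never an edge of $\mathcal{S}(\phi)$, so $T$ is drawn as a simple closed curve meeting $\mathcal{S}(\phi)$ only at $a,b,c$; since $q$ and $s$ lie on opposite sides of this curve, both sides contain skeleton vertices, and therefore $\{a,b,c\}$ is again a non-trivial $3$-separator. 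Mapping each triangle to its vertex set (for the two separating cases, which never collide since a vertex set determines its triangle) or to the corresponding degree-$4$ or degree-$3$ face yields injections whose images are bounded by the number of non-trivial $3$-separators, four times the number of degree-$4$ faces, and the number of degree-$3$ faces, respectively, giving the claimed bound.
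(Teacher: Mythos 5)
Your part (iv) is essentially the paper's argument (facial triangles charged to degree-$3$ faces, triangles inside a crossing's $K_4$ charged four per degree-$4$ face, and the Jordan-curve argument showing every remaining triangle is a non-trivial $3$-separator), and your overall strategy is close in spirit. However, there is a genuine gap at the foundation: the ``dictionary'' you set up in the opening paragraph and then ``use throughout'' is not established by what you write. Richness does give, for a crossing pair $vw$, $xy$, that $vx,xw,wy,yv$ lie in $\mathcal{S}(\phi)$ and bound a disc $D$ containing the crossing, with the two diagonals drawn inside $D$. It does \emph{not} follow that $D$ is a face of $\mathcal{S}(\phi)$: a priori other vertices of $G$ (and hence other edges) could be drawn inside one of the four cells, in which case the region of ${\mathbb{S}^2\setminus\phi(\mathcal{S}(\phi))}$ containing the crossing is strictly larger than you claim and need not be a quadrilateral. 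Your sentence ``It follows that the faces of $\mathcal{S}(\phi)$ are exactly the crossing-free faces of $\phi$ together with one quadrilateral region per crossing'' is a non sequitur, and it is precisely the substance of (ii) and (iii), which you then ``prove'' by appealing to it. The claim is true, but it needs an argument: for instance, a vertex $z$ drawn in the cell with corners $v$ and $x$ has all its edges confined to that cell (they can cross neither the uncrossed sides of $D$ nor the diagonals, each of which already carries its one allowed crossing), so $\{v,x\}$ would be a non-trivial $2$-separator of the $3$-connected graph $G$. The paper sidesteps this entirely by proving (i) and (ii) first, without any such dictionary, and only then identifying degree-$4$ faces with crossings.

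There is also a secondary unjustified step in your proof of (i). You invoke ``a $2$-cut of a $2$-connected planar graph lies on at least two common faces'' and then assert that the second common face $F^*$ ``meets both sides of the separation in a vertex other than $v,w$''. The fact you quote does not give this: both arcs of the boundary cycle of $F^*$ between $x$ and $y$ could lie on the same side of the separation. You need the sharper statement that among the faces incident to both $x$ and $y$ there are at least two whose boundaries meet both sides (obtained from the cyclic arrangement of the bridges of $\{x,y\}$), and you must additionally rule out that the only cross-pair available on $\partial F^*$ is $\{v,w\}$ itself, which \emph{is} an edge of $G$ and so yields no contradiction with edge-maximality. Both points, like the dictionary, are repairable, but as written the contradiction in (i) is not actually reached; the paper's proof of (i) proceeds quite differently, locating a region around $x$ meeting both sides and showing that every component of ${G-\{v,w,x,y\}}$ has at most two neighbours in $\{v,w,x,y\}$.
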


\begin{proof}
    Let~${H := \mathcal{S}(\phi)}$. 
    For~\ref{item:trueplanarskeletons1}, suppose for a contradiction that there is a non-trivial separation~${(A,B)}$ of~$H$ of order at most~$2$. 
    Since~$G$ is $3$-connected, there is an edge in~${G-E(H)}$ between~${A \setminus B}$ and~${B \setminus A}$. 
    Consider such an edge~$vw$, and let~$xy$ be the edge which crosses~$vw$ in~$\phi$. 
    Since~$\phi$ is rich, ${\{vx,vy,wx,wy\} \subseteq E(H)}$, so~${\{x,y\} = A \cap B}$. 
    In particular, $vw$ is the unique edge crossing~$xy$ in~$\phi$, so~${(A,B)}$ is a separation of~${G-vw}$. 
    Since~$\phi$ is rich, no edge incident with a vertex in~${A \setminus B}$ crosses any edge incident to a vertex in~${B \setminus A}$. 
    Also, since~${xy \notin E(H)}$, no edge incident to~$x$ crosses any edge incident to~$y$. 
    Let~$R$ be a region of~${\mathbb{S}^2 \setminus \phi(G)}$ whose boundary contains~$x$, a segment of an edge~$e$ incident to~$x$ in~${G[A \setminus \{y\}]}$ and a segment of an edge~$e'$ incident to~$x$ in~${G[B \setminus \{y\}]}$. 
    Note that the boundary of~$R$ contains a vertex~$v'$ in~${A \setminus B}$ (either an endvertex of~$e$ or an endvertex of an edge which crosses~$e$). 
    Similarly, the boundary of~$R$ contains a vertex~$w'$ in~${B \setminus A}$. 
    Since~$G$ is edge-maximal, we have~${v'w' \in E(G)\setminus E(H)}$, and so~${v'w'}$ crosses~${xy}$, and hence~${\{v',w'\} = \{v,w\}}$. 
    Now~$x$ and~$y$ lie in different regions of~${\mathbb{S}^2 \setminus (R \cup \phi(vw))}$. 
    Hence, every component of~${G - \{v,w,x,y\}}$ has at most two neighbours in~${\{v,w,x,y\}}$, contradicting that~$G$ is $3$-connected.
    
    For~\ref{item:trueplanarskeletons2}, suppose for a contradiction that some facial cycle~$C$ of~$H$ has length at least~$5$. 
    Since~$K_5$ is not planar, some pair of vertices of~$C$ are not adjacent in~$H$. 
    Since~$G$ is maximal, there is a crossed edge between some pair of vertices on~$C$ which are not adjacent in~$H$. 
    Since~$H$ is $3$-connected, this crossed edge is drawn in the face bounded by~$C$. 
    Hence, since~$\phi$ is rich there is a $4$-cycle in~$H$ containing only vertices of~$C$. 
    Some edge of this $4$-cycle is not in~$C$, and deleting the endvertices of this edge disconnects~$H$, contradicting~\ref{item:trueplanarskeletons1}. 

    For~\ref{item:trueplanarskeletons3}, consider a pair of edges~$e$ and~$f$ which cross in~$\phi$. 
    Since~$\phi$ is rich, these edges have four distinct endvertices in total which, by~\ref{item:trueplanarskeletons2}, induce a facial cycle of~$H$. 
    Now consider a facial cycle~$C$ of~$H$ of length~$4$. 
    Since there is no facial cycle of length~$4$ with respect to any planar drawing of~$K_4$, there is a pair vertices of~$C$ which are not adjacent in~$H$. 
    As~$G$ is maximal, there is a crossed edge between some pair of vertices of~$C$ which are not adjacent in~$H$. 
    Since~$H$ is $3$-connected, this crossed edge is drawn in the face bounded by~$C$. 
    Now, since every face of~$H$ has degree at most~$4$, there is a one-to-one correspondence between crossings of~$\phi$ and faces of~$H$ of degree~$4$. 
    
    For~\ref{item:trueplanarskeletons4}, observe that trivially the number of triangles of~$G$ whose vertices all lie on a facial cycle~$F$ of~$H$ is at most four if~$F$ has length~$4$, 
    and at most one if~$F$ has length~$3$. 
    Consider a triangle~$T$ of $G$ whose vertices do not all lie in any facial cycle of~$H$. 
    If no edge of~$T$ is crossed in~$\phi$, then~$T$ is a non-facial triangle of~$H$, and hence a separating triangle of~$H$. 
    Suppose instead that some edge~$vw$ of~$T$ is crossed by an edge~${v'w'}$. 
    The vertices~$v$, $w$, $v'$ and~$w'$ are all on the facial cycle of~$H$ in which this crossing is drawn, so by assumption neither~$v'$ nor~$w'$ is a vertex of~$T$. 
    Hence, ${\phi(v')}$ and~${\phi(w')}$ are in different regions of~${\mathbb{S}^2 \setminus \phi(T)}$. 
    It follows that~$v'$ and~$w'$ are in separate components of~${H-V(T)}$. 
    Hence, if~$T$ is a triangle of~$G$ 
    such that~${V(T)}$ is not contained in a facial cycle of~$H$, then~${V(T)}$ is a non-trivial $3$-separator of~$H$. 
\end{proof}

\begin{corollary}
    \label{cor:crossingcount}
    A rich $1$-drawing~$\varphi$ of an edge-maximal $3$-connected $1$-planar graph~$G$ on at least~$5$ vertices has exactly~${\abs{E(G)}-3\abs{V(G)}+6}$ crossings. 
\end{corollary}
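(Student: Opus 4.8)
The plan is to relate the number of crossings of~$\varphi$ to combinatorial data of the true-planar skeleton~${H := \mathcal{S}(\varphi)}$ via Euler's formula, using the structural facts already established in Lemma~\ref{lem:trueplanarskeletons}. Since~$H$ is obtained from~$G$ by deleting only crossed edges, it is a spanning subgraph, so~${\abs{V(H)} = \abs{V(G)}}$; write~${n := \abs{V(G)}}$ and let~$c$ denote the number of crossings of~$\varphi$.

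First I would count edges. Because~$\varphi$ is a $1$-drawing, every crossed edge is involved in exactly one crossing, and by~\ref{item:drawing4} each crossing involves two distinct edges; hence the crossed edges number exactly~$2c$, and since the uncrossed edges are precisely the edges of~$H$ we obtain
\[
    \abs{E(G)} = \abs{E(H)} + 2c.
\]
Next I would apply Euler's formula to the planar drawing~${\phi \restricted H}$. By Lemma~\ref{lem:trueplanarskeletons}\ref{item:trueplanarskeletons1}, $H$ is $3$-connected and hence connected, so writing~$F$ for the number of faces of~$H$ we have~${n - \abs{E(H)} + F = 2}$. By Lemma~\ref{lem:trueplanarskeletons}\ref{item:trueplanarskeletons2} every face of~$H$ has degree at most~$4$, and since~$H$ is a simple $3$-connected plane graph every face has degree at least~$3$; writing~$F_3$ and~$F_4$ for the numbers of faces of degree~$3$ and~$4$, respectively, we thus have~${F = F_3 + F_4}$. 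By Lemma~\ref{lem:trueplanarskeletons}\ref{item:trueplanarskeletons3}, ${F_4 = c}$. Finally, as~$H$ is $2$-connected each edge lies on the boundary of exactly two faces, so double-counting edge-face incidences gives~${3F_3 + 4F_4 = 2\abs{E(H)}}$.

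It then remains to solve the resulting linear system. Substituting~${F_3 = 2 - n + \abs{E(H)} - c}$ (from Euler's formula together with~${F_4 = c}$) into~${3F_3 + 4F_4 = 2\abs{E(H)}}$ yields~${c = 3n - 6 - \abs{E(H)}}$, and eliminating~$\abs{E(H)}$ using~${\abs{E(H)} = \abs{E(G)} - 2c}$ gives~${c = \abs{E(G)} - 3n + 6}$, as desired. The argument is essentially bookkeeping; the only subtlety, and the place where the hypotheses do genuine work, is guaranteeing that every face of~$H$ has degree exactly~$3$ or~$4$ — the upper bound is Lemma~\ref{lem:trueplanarskeletons}\ref{item:trueplanarskeletons2}, the lower bound uses that~$H$ is a simple $3$-connected plane graph, and the exact correspondence~${F_4 = c}$ is Lemma~\ref{lem:trueplanarskeletons}\ref{item:trueplanarskeletons3}.
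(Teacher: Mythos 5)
Your proof is correct and follows essentially the same route as the paper: both arguments rest on Lemma~\ref{lem:trueplanarskeletons}\ref{item:trueplanarskeletons1}--\ref{item:trueplanarskeletons3} together with the identity~${\abs{E(G)} = \abs{E(\mathcal{S}(\varphi))} + 2c}$ and an Euler-formula count for the skeleton. The paper merely packages your explicit face-degree computation as ``add one edge per quadrilateral face to obtain a triangulation with~${3\abs{V(G)}-6}$ edges,'' which is the same bookkeeping.
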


\begin{proof}
    Let~${H := \mathcal{S}(\phi)}$. 
    By Lemma~\ref{lem:trueplanarskeletons}, $H$ is a $3$-connected planar graph with no face of degree more than~$4$. 
    Hence, $H$ can be extended to a planar triangulation by adding exactly one edge for each facial cycle of length~$4$. 
    The result then follows from Lemma~\ref{lem:trueplanarskeletons}\ref{item:trueplanarskeletons3}. 
\end{proof}

\section{%
\texorpdfstring{Bounding the number of cliques in $1$-planar graphs excluding~$K_6$}%
{Bounding the number of cliques in 1-planar graphs excluding K6}}
\label{sec:nok6}

In this section, we bound the number of cliques of any and all sizes in $1$-planar graphs excluding subgraphs isomorphic to $K_6$. 
The key tool which allows us to do this is the following lemma. 
While it only references triangles, it is easy to bound the number of cliques of any size which contain a given edge in terms of the number of triangles which contain it. 

\begin{lemma}
    \label{lem:4trianglecrossings}
    Let~$\phi$ be a rich $1$-drawing of a graph~$G$ with at least one crossing. 
    If every crossed edge of~$G$ is contained in at least four triangles, then~$G$ has an induced subgraph~$H$ isomorphic to either~$K_6$ or~$K_{2,2,2,2}$. 
\end{lemma}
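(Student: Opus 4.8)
The plan is to read off the desired subgraph from the local structure around a single crossing, using richness to manufacture $K_4$'s and a parity argument to force extra adjacencies. First I would fix a crossing $vw \times xy$. Since $\phi$ is rich, $\{v,w,x,y\}$ induces a $K_4$ in which $vw$ and $xy$ are the \emph{only} crossed edges; drawing this $K_4$, the uncrossed edges $vx,xw,wy,yv$ bound a quadrilateral whose diagonals $vw,xy$ meet at the crossing point, cutting the interior into four triangular regions and leaving one outer region $O$. The first claim is that every common neighbour of $v$ and $w$ other than $x,y$ lies in $O$: if such a vertex $a$ lay in an inner region (which has exactly one of $v,w$ on its boundary, say $v$), then the edge $aw$ would have to escape that region, and since $vw$ and $xy$ are crossed only by each other it could cross only a quadrilateral side; but then richness applied to that new crossing would produce a $K_4$ containing the crossed edge $vw$ as a non-crossing edge, a contradiction. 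The symmetric statement holds for common neighbours of $x,y$.

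Next I would invoke the hypothesis. Since $vw$ is crossed it lies in at least four triangles, two of which are $vwx$ and $vwy$, so there are at least two further common neighbours of $v,w$, all in $O$; call this set $A$, and symmetrically let $C$ be the set of at least two common neighbours of $x,y$ in $O$. For $a\in A$ the edges $va,wa$ form an arc $P_a$ from $v$ to $w$ inside $O$, and for $c\in C$ the edges $xc,yc$ form an arc $P_c$ from $x$ to $y$. The endpoints $v,x,w,y$ alternate around the boundary of the disc $O$, so the arcs $P_a$ and $P_c$ must cross an odd number of times whenever $a\ne c$ (equivalently, the vertex-disjoint triangles $vwa$ and $xyc$ cross evenly, and they already cross at $vw\times xy$); since every edge is crossed at most once, $P_a$ and $P_c$ cross exactly once, and richness converts this crossing into a $K_4$ on its four endpoints. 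Counting these crossings against the at most $2\abs{A}$ (respectively $2\abs{C}$) available arc-edges yields $\abs{A}\,\abs{C}-\abs{A\cap C}\le 2\min(\abs{A},\abs{C})$, which already forces $\abs{A},\abs{C}\le 3$.

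Then I would split into two cases. If $A\cap C=\emptyset$, the inequality gives $\abs{A}=\abs{C}=2$ and moreover that all four arc-edges on each side are crossed, producing a perfect matching of four crossings besides $vw\times xy$; reading off the resulting $K_4$'s, I would show that the eight distinct vertices $\{v,w,x,y\}\cup A\cup C$ are pairwise adjacent except for a perfect matching of four non-edges, i.e.\ they induce $K_{2,2,2,2}$. If instead some $u\in A\cap C$ exists, then $u$ is adjacent to all of $v,w,x,y$, giving a $K_5$ whose four uncrossed spokes $uv,ux,uw,uy$ subdivide $O$; applying the region argument and richness to a further common neighbour of the still-crossed edge $vw$ (one must exist since $vw$ needs a fourth triangle) produces a sixth vertex adjacent to all of $v,w,x,y,u$, hence an induced $K_6$.

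\textbf{The main obstacle} will be the detailed bookkeeping in these two cases. In the $K_{2,2,2,2}$ case I must pin down \emph{exactly} which four pairs are non-adjacent, since an induced copy is required and a priori the eight vertices could carry extra edges up to the $1$-planar bound of $4\cdot 8-8=24$. In the $K_6$ case I must ensure the extension terminates at six mutually adjacent vertices rather than merely generating a new $K_5$ together with a new crossed edge, which would restart the process. Both points hinge on carefully tracking, through the rotation of the edges around the crossing and the fact that each edge is crossed at most once, precisely which edges are already saturated by crossings, so that each newly forced $K_4$ closes up the clique (or the cocktail-party pattern) instead of spawning further vertices.
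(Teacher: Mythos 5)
Your toolkit (parity of crossings between vertex-disjoint triangles, richness converting each crossing into a $K_4$, and counting crossings against the at most two crossable edges of each arc) is exactly the paper's, and your opening region argument and the inequality $|A||C|-|A\cap C|\le 2\min(|A|,|C|)$ are sound. But both of your terminal cases have genuine gaps. In the case $A\cap C\ne\emptyset$ your conclusion fails in the sub-case $|A\cap C|=1$: writing $A=\{u,a\}$ and $C=\{u,c\}$ with $a\ne c$, the ``further common neighbour'' of $v,w$ is $a$, and since $a\notin C$ it is non-adjacent to at least one of $x,y$, so no sixth vertex adjacent to all of $v,w,x,y,u$ exists and your construction produces neither $K_6$ nor $K_{2,2,2,2}$. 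The paper handles precisely this configuration (its Case 2) by showing it contradicts the hypothesis: it exhibits a crossed edge (there, $x_1y_2$) lying in at most three triangles. Some such elimination step is unavoidable, and your sketch has no mechanism for it. Even in the sub-case $|A\cap C|\ge 2$, where $K_6$ is the right answer, the missing edge between the two common neighbours $u,u'$ comes in the paper not from the spoke-subdivision picture but from the parity argument applied to the disjoint triangles $vwu$ and $xyu'$, which forces a crossing between a $u$-spoke and a $u'$-spoke and hence $uu'\in E(G)$ by richness.

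In the case $A\cap C=\emptyset$, ``reading off the resulting $K_4$'s'' does not suffice: the four crossings you identify yield all adjacencies between $A\cup\{v,w\}$ and $C\cup\{x,y\}$ and pin down the four non-edges, but they never produce the edges $a_1a_2$ and $c_1c_2$ inside the parts, and without these the eight vertices induce $K_{2,2,2,2}$ minus two edges. This is not bookkeeping: the paper obtains these edges only by re-running the entire argument at a second crossing ($x_1y_1\times v_1w_1$), and to guarantee that this re-run again lands in the disjoint case it relies on an extremal choice of the initial crossing that you never make (the crossing is chosen so that, if possible, its four endpoints have a common neighbour; in the disjoint case this ensures no crossing has all four endpoints sharing a neighbour). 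So the skeleton is right, but each of the two closing arguments needs a further idea that is absent from the proposal.
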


\begin{proof}
    We first observe the following easy fact.
    \begin{enumerate}
        [label=\upshape{(}$\dagger$\upshape{)}]
        \item\label{item:crossingtriangles} Every $1$-drawing of a graph restricted to the union of two vertex-disjoint triangles has~$0$ or~$2$ crossings.
    \end{enumerate}

    Let~$v_1v_2$ and~$x_1x_2$ be two edges which cross, chosen if possible such that~$v_1$,~$v_2$,~$x_1$ and~$x_2$ have a common neighbour.  
    Since~$\phi$ is rich, ${\{v_1,v_2,x_1,x_2\}}$ induces a~$K_4$ in~$G$.
    Choose distinct common neighbours~$w_1$ and~$w_2$ of~$v_1$ and~$v_2$ in~${G - \{x_1,x_2\}}$, and distinct common neighbours $y_1$ and $y_2$ of $x_1$ and $x_2$ in $G-\{v_1,v_2\}$, so that $\abs{\{w_1,w_2\}\cap \{y_1,y_2\}}$ is maximised.
    
    First, suppose~$\{w_1,w_2\}=\{y_1,y_2\}$. By~\ref{item:crossingtriangles}, for some~${i,j \in [2]}$, the edge~${w_1v_i}$ crosses~$w_2x_j$. 
    Hence, since~$\phi$ is rich,~$w_1$ and~$w_2$ are adjacent. 
    It follows that~${H := G[\{v_1,v_2,w_1,w_2,x_1,x_2\}]}$ is isomorphic to~$K_6$ 
    (see Figure~\ref{fig:4trianglecrossings}(a)). 

    Suppose instead that ${\abs{\{w_1,w_2\} \cap \{y_1,y_2\}} = 1}$. Without loss of generality, we may assume that~${w_1 = y_1}$. 
    By~\ref{item:crossingtriangles}, without loss of generality, $v_2w_1$ crosses $x_1y_2$. 
    Since~$\phi$ is rich, $w_1x_1$ and $v_2x_2$ are edges of $G$ which are uncrossed with respect to $\phi$. Now by~\ref{item:crossingtriangles} applied to $w_2v_1v_2w_2$ and $w_1x_1x_2w_1$, for some $i\in [2]$, the edges $v_iw_2$ and $x_2w_1$ cross with respect to $\phi$. 
    By~\ref{item:crossingtriangles}, there is no triangle containing $x_1y_2$ which is vertex-disjoint from~$w_1v_2x_2w_1$. Hence,~$x_1y_2$ is a crossed edge which is contained in at most three triangles 
    (see Figure~\ref{fig:4trianglecrossings}(b)).
    
    Finally, suppose~${\{w_1,w_2\} \cap \{y_1,y_2\} = \emptyset}$. 
    By~\ref{item:crossingtriangles}, for every~${i,j \in [2]}$ there exist~${k,\ell \in [2]}$ such that~$v_iw_j$ crosses~$x_ky_\ell$. 
    Without loss of generality,~$x_1y_1$ crosses~$v_1w_1$. 
    Since~$\phi$ is rich,~$x_1w_1$,~$y_1v_1$ and~$y_1w_1$ are all uncrossed edges of~$G$.
    Hence, by our choice of~$w_1$,~$w_2$,~$y_1$ and~$y_2$ we have that~$y_1$ is not adjacent to~$v_2$ and that~$w_1$ is not adjacent to~$x_2$. 
    Applying~\ref{item:crossingtriangles} to the triangles~${x_1x_2y_2x_1}$ and~${v_1v_2w_1v_1}$, we find that~$w_1v_2$ crosses~$y_2x_1$, and hence~$v_2y_2$ and~$w_1y_2$ are uncrossed edges of~$G$. 
    Similarly~$w_2v_1$ crosses~$y_1x_2$ and finally~$w_2v_2$ crosses~$y_2x_2$, 
    and so~$w_2y_1$,~$w_2x_2$ and~$w_2y_2$ are all uncrossed edges of~$G$. 
    
    Now consider the pair of crossing edges~$y_1x_1$ and~$v_1w_1$. 
    Let~$w'_1$ and $w'_2$ be the two common neighbours of~$v_1$ and~$w_1$ distinct from~$x_1$ and~$y_1$, and let~$y'_1$ and~$y'_2$ be the two common neighbours of~$x_1$ and~$y_1$ distinct from~$v_1$ and~$w_1$. 
    By our choice of~$v_1v_2$ and~$x_1x_2$, we know that~${\{w'_1,w'_2\} \cap \{y'_1,y'_2\} = \emptyset}$. 
    Applying~\ref{item:crossingtriangles}, there are at least four pairs of crossing edges such that one edge is contained in~${A := \{v_1w'_1,v_1w'_2,w_1w'_1,w_1w'_2\}}$ and the other is contained in~${B := \{x_1y'_1,x_1y'_2,y_1y'_1,y_1y'_2\}}$. 
    Therefore each edge in~$A$ crosses exactly one edge in~$B$, and vice versa. 
    Observe that~${v_2 \in \{w'_1,w'_2\}}$, so~${v_2w_1 \in A}$, which means~${x_1y_2 \in B}$, so~$y_2y_1$ is an edge in~$B$. 
    Similarly,~$w_1w_2$ is an edge in~$A$, and by process of elimination these edges cross. 
    Now~${H := G[\{v_1,v_2,w_1,w_2,x_1,x_2,y_1,y_2\}]}$ is isomorphic to~$K_{2,2,2,2}$  
    (see Figure~\ref{fig:4trianglecrossings}(c)). \qedhere
    
    \begin{figure}[htbp]
        \centering
        \begin{tikzpicture}
            [scale=0.8]
            \tikzset{vertex/.style = {circle, draw, fill=black!50, inner sep=0pt, minimum width=4pt}}
            \tikzset{edge0/.style = {line width=1.2pt, black}}
            \tikzset{edge1/.style = {line width=1pt, blue, opacity=0.7}}
            
            \node [vertex, label=below:$\scriptstyle x_2$] (v0) at (210:3) {};
            \node [vertex, label=below:$\scriptstyle v_2$] (v1) at (330:3) {};
            \node [vertex, label=above:$\scriptstyle w_2$] (v2) at (90:3) {};
            \node [vertex, label=3:$\scriptstyle v_1$] (w0) at (210:1.5) {};
            \node [vertex, label=177:$\scriptstyle x_1$] (w1) at (330:1.5) {};
            \node [vertex, label={[label distance=4pt] 272.5:$\scriptstyle w_1$}] (w2) at (90:1.5) {};
            
            \foreach \i in {0,1,2} {
                \pgfmathtruncatemacro{\j}{Mod(\i+1,3)}
                \foreach \v in {v,w} {
                    \draw [edge0] (\v\i) edge (\v\j) {};
                };
                \draw [edge0] (v\i) edge (w\i) {};
                \draw [edge1] (v\i) edge (w\j) {};
                \draw [edge1] (v\j) edge (w\i) {};
            };
            
            \node at (270:2.5) {(a)};
        \end{tikzpicture}
        \quad
        \begin{tikzpicture}
            [scale=0.8]
            \tikzset{vertex/.style = {circle, draw, fill=black!50, inner sep=0pt, minimum width=4pt}}
            \tikzset{edge0/.style = {line width=1.2pt, black}}
            \tikzset{edge1/.style = {line width=1pt, blue, opacity=0.7}}
            
            \coordinate [label=above:$\scriptstyle{w_1}$] (v0) at (90:3) {};
            \coordinate [label=below:$\scriptstyle w_2$] (v1) at (210:3) {};
            \coordinate [label=below:$\scriptstyle y_2$] (v2) at (330:3) {};
            \coordinate [label=5:$\scriptstyle v_1$] (v3) at (160:0.75) {};
            \coordinate [label=175:$\scriptstyle x_1$] (v4) at (20:0.75) {};
            \coordinate [label=below:$\scriptstyle x_2$] (v5) at (210:1.5) {};
            \coordinate [label=below:$\scriptstyle v_2$] (v6) at (330:1.5) {};
           
            \foreach \i in {1,...,6} {
                \pgfmathtruncatemacro{\p}{Mod(\i,2)}
                \draw [edge\the\numexpr 1-\p \relax] (v\i) edge (v\the\numexpr \i+\p \relax) {};
                   \ifthenelse{\i < 5}{
                       \draw [edge0] (v0) edge (v\i) {};
                       \draw [edge0] (v\i) edge (v\the\numexpr 6-\p \relax) {};
                       \draw [edge1] (v\i) edge (v\the\numexpr 5+\p \relax) {};
                   }{
                       \draw [edge1] (v0) edge (v\i) {};
                       \draw [edge1] (v\the\numexpr \i-4 \relax) edge (v\the\numexpr \i-2 \relax) {};
                   }
            }
            
            \foreach \i in {0,...,6} {
                \node [vertex] at (v\i) {};
            };
            
            \node at (270:2.5) {(b)};
        \end{tikzpicture}
        \quad
        \begin{tikzpicture}
            [scale=0.8]
            \tikzset{vertex/.style = {circle, draw, fill=black!50, inner sep=0pt, minimum width=4pt}}
            \tikzset{edge0/.style = {line width=1.2pt, black}}
            \tikzset{edge1/.style = {line width=1pt, blue, opacity=0.7}}
            
            \node [vertex, label={[label distance=-2pt] above:$\scriptstyle w_1$}] (v0) at (45:2.5) {};
            \node [vertex, label={[label distance=-2pt] above:$\scriptstyle y_1$}] (v1) at (135:2.5) {};
            \node [vertex, label=below:$\scriptstyle w_2$] (v2) at (225:2.5) {};
            \node [vertex, label=below:$\scriptstyle y_2$] (v3) at (315:2.5) {};
            \node [vertex, label=above:$\scriptstyle x_1$] (w0) at (45:1) {};
            \node [vertex, label=above:$\scriptstyle v_1$] (w1) at (135:1) {};
            \node [vertex, label=below:$\scriptstyle x_2$] (w2) at (225:1) {};
            \node [vertex, label=below:$\scriptstyle v_2$] (w3) at (315:1) {};
            
            \foreach \i in {0,1,2,3} {
                \pgfmathtruncatemacro{\j}{Mod(\i+1,4)}
                \pgfmathtruncatemacro{\k}{Mod(\i+2,4)}
                \foreach \v in {v,w} {
                    \draw [edge0] (\v\i) edge (\v\j) {};
                };
                \draw [edge0] (v\i) edge (w\i) {};
                \draw [edge1] (v\i) edge (w\j) {};
                \draw [edge1] (v\j) edge (w\i) {};
                \draw [edge1] (w\i) edge (w\k) {};
            };
            \draw [edge1] plot[smooth, tension=0.7] coordinates {(v0) (135:3) (v2)};
            \draw [edge1] plot[smooth, tension=0.7] coordinates {(v1) (45:3) (v3)};
            
            \node at (270:2.75) {(c)};
        \end{tikzpicture}
        \caption{The three cases in the proof of Lemma~\ref{lem:4trianglecrossings}.}
        \label{fig:4trianglecrossings}
    \end{figure}
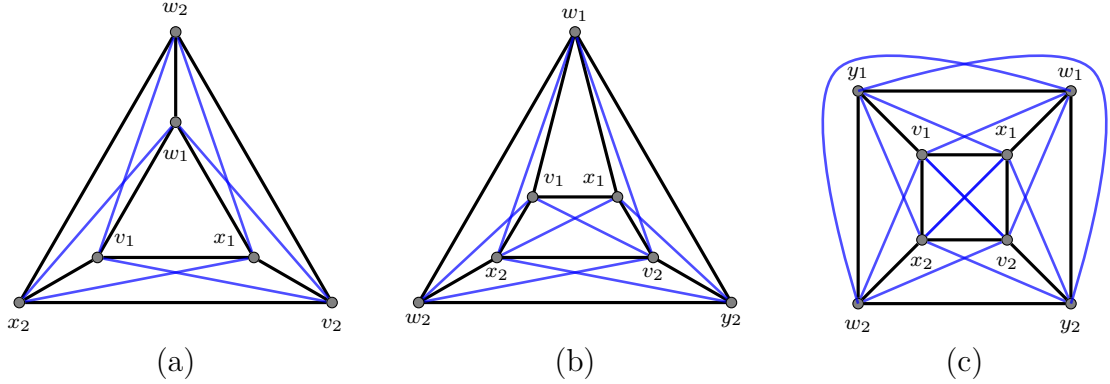
\end{proof}

\begin{lemma}
    \label{lem:K6freebound}
    Let~${n \geq 3}$ be an integer, let~$G$ be a $3$-connected $n$-vertex graph with no subgraph isomorphic to~$K_6$ and let~$\phi$ be a rich $1$-drawing of~$G$ with exactly~$c$ crossings. 
    Then 
    \begin{align*}
        \mathcal{N}(G,K_t) \leq
        \begin{cases}
            3c + 3n - 8 &\textnormal{ if } t = 3,\\
            3c + n - 3  &\textnormal{ if } t = 4,\\
            c           &\textnormal{ if } t = 5. 
        \end{cases}
    \end{align*}
    
    Furthermore, if~${t \in \{3,4\}}$ and equality holds, then~$G$ has exactly~${c + 3n - 6}$ edges. 
\end{lemma}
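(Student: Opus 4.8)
The plan is to induct on the number of crossings~$c$, proving all three inequalities simultaneously while tracking the edge count needed for the equality clause. To avoid complications with connectivity under edge deletion, I would first establish the inequalities (and the edge-count consequence of equality) for the wider class of $n$-vertex graphs that are both $K_6$-free and $K_{2,2,2,2}$-free and carry a rich $1$-drawing with~$c$ crossings, dropping the $3$-connectivity hypothesis entirely. This class is closed under deleting crossed edges, which is exactly what makes the induction go through.

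For the base case $c = 0$ the drawing is planar, so $G$ is a planar graph. Then $\mathcal{N}(G,K_3) \leq 3n - 8$ and $\mathcal{N}(G,K_4) \leq n - 3$ by Theorems~\ref{thm:planarK3} and~\ref{thm:planarK4}, while $\mathcal{N}(G,K_5) = 0$ since $K_5$ is non-planar; these are precisely the claimed bounds at $c = 0$. Moreover, equality for $t \in \{3,4\}$ forces $G$ to be an Apollonian network, which is a triangulation and hence has $3n - 6 = c + 3n - 6$ edges.

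For the inductive step $c \geq 1$, since $G$ has a crossing and is both $K_6$-free and $K_{2,2,2,2}$-free, the contrapositive of Lemma~\ref{lem:4trianglecrossings} supplies a crossed edge $e = vw$ lying in at most three triangles, which I would delete. By Remark~\ref{rmk:richrestriction} the restriction of~$\phi$ to $G - e$ is again rich, it has exactly $c - 1$ crossings (as $e$ was involved in a single crossing), and $G - e$ remains $K_6$-free and $K_{2,2,2,2}$-free, so the induction hypothesis applies. Because $e$ lies in at most three triangles, $v$ and~$w$ have at most three common neighbours; hence the number of triangles through~$e$ is at most~$3$, the number of copies of~$K_4$ through~$e$ is at most $\binom{3}{2} = 3$, and the number of copies of~$K_5$ through~$e$ is at most $\binom{3}{3} = 1$. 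Since every clique counted in~$G$ but not in $G - e$ must use~$e$, these increments combine with the inductive bounds for $G - e$ to give exactly the desired bounds for~$G$. For the equality clause, if equality holds for $t \in \{3,4\}$ then both the corresponding bound for $G - e$ and the relevant increment (three triangles, respectively three copies of~$K_4$) must be tight; the inductive edge-count conclusion then yields $\abs{E(G-e)} = (c-1) + 3n - 6$, whence $\abs{E(G)} = c + 3n - 6$.

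Finally I would deduce the lemma as stated. If the $3$-connected graph~$G$ is $K_{2,2,2,2}$-free, the claim is the special case of the statement just proved. Otherwise $G$ contains $K_{2,2,2,2}$ as a subgraph; since a $3$-connected $1$-planar graph cannot properly contain~$K_{2,2,2,2}$, we must have $G = K_{2,2,2,2}$, so $n = 8$ and, by Corollary~\ref{cor:crossingcount}, $c = \abs{E(G)} - 3n + 6 = 6$. A direct count gives $\mathcal{N}(G,K_3) = 32$, $\mathcal{N}(G,K_4) = 16$ and $\mathcal{N}(G,K_5) = 0$, which lie strictly below $3c + 3n - 8 = 34$, $3c + n - 3 = 23$ and $c = 6$ respectively, so all three inequalities hold and the equality clause is vacuous here. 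The step I expect to be most delicate is arranging the induction so that every hypothesis survives the deletion of a crossed edge; this is the reason for passing to the $K_{2,2,2,2}$-free, connectivity-free formulation, which isolates the single genuinely exceptional graph~$K_{2,2,2,2}$, whose crossed edges all lie in four triangles and which therefore cannot be reduced.
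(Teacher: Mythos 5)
Your argument is correct and is essentially the paper's proof: both isolate the exceptional case where $G$ contains~$K_{2,2,2,2}$ (forcing ${G \cong K_{2,2,2,2}}$ by $3$-connectivity, where the bounds are checked directly) and otherwise repeatedly use the contrapositive of Lemma~\ref{lem:4trianglecrossings} together with Remark~\ref{rmk:richrestriction} to strip off crossed edges lying in at most three triangles until a planar graph remains, to which Theorems~\ref{thm:planarK3} and~\ref{thm:planarK4} apply; your induction on~$c$ is just a repackaging of the paper's iterative deletion ${G_0, G_1, \ldots, G_c}$. The only substantive difference is that you cite the fact that a $3$-connected $1$-planar graph cannot properly contain~$K_{2,2,2,2}$ as known, whereas the paper proves it on the spot via Corollary~\ref{cor:drawing-properties}\ref{item:dp2} (every region of a drawing of~$K_{2,2,2,2}$ has only two boundary vertices, yielding a $2$-separator) together with Theorem~\ref{thm:edgeextremal1planar}; you should include that short argument rather than assert the fact.
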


\begin{proof}
    First suppose that~$G$ contains a subgraph~$H$ isomorphic to~$K_{2,2,2,2}$.
    By Corollary~\ref{cor:drawing-properties}\ref{item:dp1} no edge in~${E(G) \setminus E(H)}$ crosses any edge of~$H$. 
    
    Suppose for contradiction that~${n \geq 9}$, and let~$C$ be a component of~${G - V(H)}$. 
    Note that there is some region~$R$ of~${\mathbb{S}^2 \setminus \phi(H)}$ which contains~$\phi(C)$. 
    Note that every region of~${\mathbb{S}^2 \setminus \phi(H)}$ has exactly two vertices in its boundary by Corollary~\ref{cor:drawing-properties}\ref{item:dp2}. 
    The two vertices in the boundary of~$R$ separate~${V(C)}$ from~${V(H)}$ in~$G$, contradicting the $3$-connectivity of~$G$. 
    Therefore ${n=8}$, and so~${G \cong K_{2,2,2,2}}$ by Theorem~\ref{thm:edgeextremal1planar}. 
    Note that~${\mathcal{N}(G,K_3) = 32}$, ${\mathcal{N}(G,K_4) = 16}$ and~${\mathcal{N}(G,K_5) = 0}$, as required.
    
    Now suppose that~$G$ contains no subgraph isomorphic to~$K_{2,2,2,2}$. Let~${t \in \{3,4,5\}}$, let~${G_0 := G}$ and for each~${i \in [c]}$, let~$G_i$ be a graph obtained from~$G_{i-1}$ by deleting an edge~$e_i$ which is crossed in~${\phi \restricted G_{i-1}}$ 
    and subject to this is contained in as few triangles of $G_{i-1}$ as possible.
    By Remark~\ref{rmk:richrestriction} and Lemma~\ref{lem:4trianglecrossings}, for all~${i \in [c]}$, the edge~$e_i$ is contained in at most three triangles and hence in at most~${\binom{3}{t-2}}$ copies of~$K_t$ in~$G_{i-1}$. 
    It follows that~${\mathcal{N}(G_0,K_t) \leq {\binom{3}{t-2}}c + \mathcal{N}(G_c,K_t)}$. 
    Since~$\phi$ has exactly~$c$ crossings,~$G_c$ is an $n$-vertex planar graph and hence contains no copy of~$K_5$, at most~${3n-8}$ copies of~$K_3$ by Theorem~\ref{thm:planarK3}, and at most~${n-3}$ copies of~$K_4$ by Theorem~\ref{thm:planarK4}. 
    This yields the required bounds for~${\mathcal{N}(G,K_t)}$. 
    Furthermore, if~${t \in \{3,4\}}$ and equality holds, then~$G_c$ is an Apollonian network, and thus the number of edges in~$G$ is~${c+3n-6}$. 
\end{proof}

The following is an immediate corollary of Lemma~\ref{lem:rich1drawing}, Corollary~\ref{cor:crossingcount}, Theorem~\ref{thm:edgeextremal1planar} and Lemma~\ref{lem:K6freebound}. 

\begin{corollary}
    \label{cor:K6freebound}
    If~${n \geq 5}$ and~$G$ is a $3$-connected edge-maximal $1$-planar graph with~$n$ vertices and no subgraph isomorphic to~$K_6$, then
    \begin{align*}
        \mathcal{N}(G,K_t) \leq
        \begin{cases}
            6n - 14 &\textnormal{ if } t = 3,\\
            4n - 9  &\textnormal{ if } t = 4,\\
             n - 2  &\textnormal{ if } t = 5.
        \end{cases}
    \end{align*}
    In particular,~$G$ contains at most~${16n - 32}$ cliques. 
    Furthermore, if~$G$ has less than~${4n-8}$ edges, then~$G$ contains at most~${16n-40}$ cliques. 
    \qed
\end{corollary}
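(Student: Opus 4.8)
The plan is to assemble the four cited results into a short computation. First I would produce a rich $1$-drawing of $G$: since $G$ is an edge-maximal $1$-planar graph, Lemma~\ref{lem:rich1drawing} supplies a rich $1$-drawing $\phi$; let $c$ denote its number of crossings. As $G$ is $3$-connected with at least five vertices, Corollary~\ref{cor:crossingcount} gives the exact identity $c = \abs{E(G)} - 3n + 6$. The only further input needed is a bound on the number of edges: Theorem~\ref{thm:edgeextremal1planar} yields $\abs{E(G)} \leq 4n-8$ for every $n \geq 5$ (one checks that the small-case value $\binom{n}{2}$ for $n \in \{5,6\}$ and the exceptional value $4n-9$ for $n \in \{7,9\}$ are all at most $4n-8$). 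Substituting into the identity gives the key inequality $c \leq n-2$.

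With this, the three individual bounds are immediate. Feeding $c \leq n-2$ into the three estimates of Lemma~\ref{lem:K6freebound} (applicable since $G$ is $3$-connected and has no subgraph isomorphic to $K_6$) yields $\mathcal{N}(G,K_3) \leq 3c+3n-8 \leq 6n-14$, then $\mathcal{N}(G,K_4)\leq 3c+n-3 \leq 4n-9$, and finally $\mathcal{N}(G,K_5) \leq c \leq n-2$, exactly as claimed.

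For the total count I would sum over all clique sizes. Since $G$ has no subgraph isomorphic to $K_6$, every clique has at most five vertices, so the total number of cliques equals
\[
    1 + n + \abs{E(G)} + \mathcal{N}(G,K_3) + \mathcal{N}(G,K_4) + \mathcal{N}(G,K_5),
\]
where the $1$ counts the empty clique and $n$ counts the single vertices. Bounding each term by the estimates above (and $\abs{E(G)} \leq 4n-8$) and adding produces $16n-32$. For the ``furthermore'' clause, the hypothesis $\abs{E(G)} < 4n-8$ means $\abs{E(G)} \leq 4n-9$, which via the crossing identity sharpens the inequality to $c \leq n-3$; re-running the same sum with the improved values $6n-17$, $4n-12$ and $n-3$ for the $K_3$-, $K_4$- and $K_5$-counts (and $\abs{E(G)} \leq 4n-9$) gives $16n-40$.

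There is no genuine obstacle here, as the statement is a bookkeeping consequence of the cited results. The only points requiring a little care are verifying that the edge bound $4n-8$ holds uniformly across all regimes of Theorem~\ref{thm:edgeextremal1planar} (including the exceptional and small cases), and being consistent about which clique sizes enter the total --- in particular, remembering to include the empty clique and the $n$ single-vertex cliques so that the constants add up correctly.
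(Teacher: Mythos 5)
Your proposal is correct and is exactly the derivation the paper intends: the corollary is stated as an immediate consequence of Lemma~\ref{lem:rich1drawing}, Corollary~\ref{cor:crossingcount}, Theorem~\ref{thm:edgeextremal1planar} and Lemma~\ref{lem:K6freebound}, and your chaining of these (including the bookkeeping for the empty clique and the single vertices, which matches the paper's convention that $\mathcal{N}_{\textnormal{cliques}}$ counts $K_0$) reproduces the constants $16n-32$ and $16n-40$ precisely.
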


\section{%
\texorpdfstring{Maximising cliques in $1$-planar graphs}%
{Maximising cliques in 1-planar graphs}}
\label{sec:mainproofs}

In this section we complete the proofs of all of our main theorems. The following lemma is an important tool in each of these proofs, allowing us to apply Lemma~\ref{lem:separatingstitch} to extremal graphs which contain subgraphs isomorphic to~$K_6$.

\begin{lemma}
    \label{lem:K6subgraphseparations}
    If~$G$ is a $1$-planar graph with a proper subgraph~$H$ isomorphic to~$K_6$, 
    then either there is a simple $1$-drawing~$\phi$ of~$G$ such that~${\mathcal{S}(\phi)}$ contains a separating triangle of~$G$, 
    or there are at least three components of~${G-V(H)}$ which have at most two neighbours in~$H$.
\end{lemma}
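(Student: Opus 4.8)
The plan is to exploit the essentially unique drawing of $H\cong K_6$ to localise the components of $G-V(H)$, and then to turn a suitable component into a separating triangle realised in the true-planar skeleton of some drawing.

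\textbf{Localising the components.} I would fix a simple $1$-drawing $\phi$ of $G$. By Lemma~\ref{lem:unique1drawing} and Corollary~\ref{cor:drawing-properties}, $\phi\restricted H$ is the standard drawing of $K_6$: two vertex-disjoint facial triangles $T_1,T_2$ bounding faces $F_1,F_2$, three uncrossed spoke edges forming a perfect matching between $V(T_1)$ and $V(T_2)$, and the remaining six edges of $H$ drawn as three crossing pairs in the annulus. A crossed edge of $H$ admits no further crossing, so any edge of $G-E(H)$ meeting $\phi(H)$ crosses one of the nine uncrossed edges, and such a crossing only connects two prescribed cells of $\mathbb{S}^2\setminus\phi(H)$. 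Recording these adjacencies, the cells split into three kinds of super-region: the cells bordering $F_1$ (boundary vertices $V(T_1)$), the cells bordering $F_2$ (boundary vertices $V(T_2)$), and, for each spoke, the two cells flanking it (boundary vertices the two endpoints of that spoke). Since a $1$-drawing lets each edge cross at most once, every component $C$ of $G-V(H)$ stays within one super-region, whence its neighbourhood $N_H(C)$ in $H$ satisfies $N_H(C)\subseteq V(T_1)$, or $N_H(C)\subseteq V(T_2)$, or $N_H(C)$ lies in a single spoke and so has size at most $2$.

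\textbf{Finding a separating triangle.} A component in a spoke super-region has at most two neighbours in $H$, and a component with three neighbours in $H$ must have $N_H(C)$ equal to $V(T_1)$ or $V(T_2)$. Assume the second conclusion of the lemma fails, so fewer than three components have at most two neighbours in $H$. If some component $C$ has $\abs{N_H(C)}=3$, say $N_H(C)=V(T_2)$, then $V(T_2)$ separates $C$ from $V(T_1)$, so $T_2$ is a separating triangle. Otherwise every component has at most two neighbours, hence at most two components exist; picking one, $C_1$, with $N_H(C_1)\subseteq\{a,b\}$ and any third vertex $c\in V(H)$, the triangle $T$ on $\{a,b,c\}$ separates $C_1$ from $V(H)\setminus\{a,b,c\}$. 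Either way I obtain a separating triangle $T$ and a separation $(A,B)$ with $A\cap B=V(T)$, where $A$ is $V(T)$ together with all components attaching to $H$ only within $V(T)$, and $B\supseteq V(H)$.

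\textbf{Realising $T$ in a skeleton.} By Lemma~\ref{lem:separatingstitch} it suffices to exhibit $G$ as a stitch of $G[A]$ and $G[B]$, and by Lemma~\ref{lem:stitching} this reduces to drawing $G[A]$ and $G[B]$ separately, each with $T$ uncrossed and the rest of the graph confined to one side of $T$. I would draw $H$ with $T$ a facial triangle bounding a face $F$. For the large side $G[B]$, each component of $B\setminus V(H)$ attaches to at most a clique of $H$ lying on the boundary of some cell outside $F$, so placing them there leaves $F$ empty and $T$ uncrossed bounding $F$. For the small side $G[A]$, delete $E(T)$, observe that $V(T)$ then lies on a common face since $A\setminus V(T)$ attaches only within $V(T)$, and reinsert $T$ without crossings with $A\setminus V(T)$ inside it. Lemma~\ref{lem:stitching} glues these into a simple $1$-drawing $\phi'$ of $G$ with $T\subseteq\mathcal{S}(\phi')$ and $T$ separating, which is the first conclusion.

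\textbf{Main obstacle.} The crux is this last realisation step — converting the abstract separating triangle $T$ into an \emph{uncrossed} one. The large side is easy, because emptying the face bounded by $T$ automatically removes every crossing on $T$; the delicate point is the small side, where I must verify that after deleting $E(T)$ the three vertices of $T$ share a face, so that $T$ can be redrawn freely. This is precisely where the property that $A\setminus V(T)$ attaches to $H$ only within $V(T)$ is used, and in the two-component case it is secured by choosing the drawing of $H$ so that the two neighbours of $C_1$ lie on the chosen facial triangle. Degenerate cases (a component with at most one neighbour, or $G$ disconnected) only make the separating triangle easier to produce.
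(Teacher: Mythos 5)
Your localisation of the components via the unique simple $1$-drawing of $K_6$ is correct, as is the super-region bookkeeping and the conclusion that, when the second alternative fails, $G$ has a separating triangle $T$ with $V(T)\subseteq V(H)$. The gap is in the realisation step, which you yourself identify as the crux. First, the inference ``delete $E(T)$, observe that $V(T)$ then lies on a common face since $A\setminus V(T)$ attaches only within $V(T)$'' is a non-sequitur: attaching only within $V(T)$ is a graph-theoretic property, whereas the common-face claim is a topological property of the fixed drawing $\phi$. What you actually need is that the $A$-side components are \emph{drawn} inside the closure of the super-region of $T$, whose complement is a disc with $V(T)$ on its boundary; but a component attached to at most one vertex of $T$ (or to none) need not be drawn in that super-region at all, so the confinement can fail and such components require separate treatment, as does the claim that after reinserting $T$ everything lands on one side. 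Second, in your two-component case you redraw $H$ so that $N_H(C_1)$ lies on a facial triangle, but then every component on both sides must be re-attached via Lemma~\ref{lem:stitching}, which requires the attachment clique $G[S_i]$ to lie in the true-planar skeleton of the drawing of $G[V(C_i)\cup S_i]$. Since you explicitly allow edges of components to cross the nine uncrossed edges of $H$, this hypothesis can fail for $\phi\restricted G[V(C_i)\cup S_i]$, and the further redrawing needed to restore it is not supplied.

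The paper sidesteps all of this with one move you did not make: extend $G$ to an edge-maximal $1$-planar graph $G'$, take a \emph{rich} $1$-drawing of $G'$ (Lemma~\ref{lem:rich1drawing}), and restrict it to $G$. Corollary~\ref{cor:drawing-properties}\ref{item:dp1} combined with quasi-richness then shows that no edge outside $E(H)$ crosses any edge of $H$, so each component of $G-V(H)$ lies in a single region of $\mathbb{S}^2\setminus\phi_0(H)$ (giving your neighbour classification without super-regions, and yielding a skeleton separating triangle immediately when that region is a face of degree $3$), and every attachment clique is automatically uncrossed in the component's restricted drawing, so Lemma~\ref{lem:stitching} applies with no further topology. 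Your approach could likely be repaired by importing this device or by carrying out the omitted redrawing arguments in detail, but as written the passage from an abstract separating triangle to one contained in $\mathcal{S}(\phi)$ is not established.
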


\begin{proof}
    Let~$G'$ be an edge maximal $1$-planar graph containing~$G$ as a subgraph, let~$\phi'$ be a rich $1$-drawing of~$G'$ and let~$\phi_0$ be the restriction of~$\phi'$ to~$G$.  
    By Corollary~\ref{cor:drawing-properties}\ref{item:dp1} no edge in~${E(G) \setminus E(H)}$ crosses any edge of~$H$ in~$\phi_0$. 
    
    Let~$s$ be the number of components of~${G - V(H)}$, and let~${\{ C_i \colon i \in [s]\}}$ be the set of these components. 
    Note that since~$H$ is a proper subgraph of~$G$, we have that~${s \geq 1}$. 
    Now, for~${i \in [s]}$, there is a region~$R$ of~${\mathbb{S}^2 \setminus \phi_0(H)}$ containing~${\phi_0(C_i)}$. 
    The vertices in the boundary of~$R$ separate~${V(C_i)}$ from~${V(H)}$ in~$G$. 
    If the boundary of~$R$ is a face of~${\phi_0 \restricted H}$ of degree~$3$, then its facial triangle is a separating triangle of~$G$ contained in~${\mathcal{S}(\phi_0)}$, as desired. 
    Otherwise, such a component has at most two neighbours in~$H$ by Corollary~\ref{cor:drawing-properties}\ref{item:dp2}, so we may assume that~${s \leq 2}$. 
    For~${i \in [s]}$, let~$S_i$ be the set of neighbours of~$C_i$ in~$H$, and let~${G_i := G[V(C_i) \cup S_i]}$. 
    Now there are at most two edges of~$H$ in ${\bigcup \{ G_i \colon i \in [s]\}}$, so there is a $1$-drawing~$\phi^\ast$ of~$H$ in which each of these edges is in a facial cycle by Corollary~\ref{cor:drawing-properties}\ref{item:dp3} and the symmetry of $K_6$. 
    By considering the restriction of~$\phi_0$ to~${G_i}$, with~$s$ applications of Lemma~\ref{lem:stitching} we obtain a simple $1$-drawing~$\phi$ of~$G$ such that~${\phi \restricted H}$ is equivalent to~$\phi^\ast$. 
    Now some facial cycle with respect to~$\phi^\ast$ contains~$S_1$, and this is a separating triangle of~$G$.
\end{proof}

\subsection{%
\texorpdfstring{Maximising triangles in $1$-planar graphs}%
{Maximising triangles in 1-planar graphs}}

\begin{lemma}
    \label{lem:3connectivity}
    Let~$G$ be a graph with at least four vertices and at least~${(19\abs{V(G)}-62)/3}$ triangles. 
    If~${(A,B)}$ is a separation of~$G$ of order at most~$2$, 
    then for some~${X \in \{A,B\}}$ we have that~${G[X]}$ has at least~${(19\abs{X}-53)/3}$ triangles. 
\end{lemma}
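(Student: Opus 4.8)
The plan is to reduce the statement to a single additivity property of triangle counts across a low-order separation, and then close with a short arithmetic estimate. Write $n := \abs{V(G)}$ and let $k := \abs{A \cap B} \le 2$ be the order of the separation; since $A \cup B = V(G)$ we have $\abs{A} + \abs{B} = n + k$. The crucial structural observation is that $G$ has no edge between $A \setminus B$ and $B \setminus A$: such an edge would have to lie in $G[A]$ or in $G[B]$ (because $G[A] \cup G[B] = G$), but it has an endvertex outside $A$ and an endvertex outside $B$, so it lies in neither.

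First I would use this to show that the vertex set of every triangle of $G$ is contained in $A$ or in $B$. Indeed, a triangle cannot have both a vertex in $A \setminus B$ and a vertex in $B \setminus A$, since these would be joined by a (forbidden) edge; hence its vertex set avoids $B \setminus A$, and so lies in $A$, or avoids $A \setminus B$, and so lies in $B$. Moreover, since $\abs{A \cap B} \le 2$, no triangle has all three vertices in the separator, so no triangle lies in both $G[A]$ and $G[B]$. Inclusion--exclusion then gives the clean identity
\[
    \mathcal{N}(G, K_3) = \mathcal{N}(G[A], K_3) + \mathcal{N}(G[B], K_3).
\]

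With this identity I would finish by contradiction. Suppose the conclusion fails, so that $\mathcal{N}(G[A], K_3) < (19\abs{A} - 53)/3$ and $\mathcal{N}(G[B], K_3) < (19\abs{B} - 53)/3$. Adding these and substituting $\abs{A} + \abs{B} = n + k$ yields
\[
    \mathcal{N}(G, K_3) < \frac{19(n + k) - 106}{3} \le \frac{19(n + 2) - 106}{3} = \frac{19n - 68}{3} < \frac{19n - 62}{3},
\]
where the middle inequality uses $k \le 2$. This contradicts the hypothesis $\mathcal{N}(G, K_3) \ge (19n - 62)/3$, completing the argument.

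I do not anticipate a genuine obstacle; the only point to verify is the arithmetic slack. The constant $53$ in the conclusion is tuned precisely so that the two ``$-53$'' terms combine to ``$-106$'', which, even after absorbing the worst-case separator contribution $19k \le 38$, remains strictly below the threshold governed by ``$-62$'' (one needs $19k \le 44$, and $k \le 2$ gives $38$). Trivial separations need no separate treatment: if one side, say $B$, has at most two vertices, then $(19\abs{B} - 53)/3 \le (38 - 53)/3 < 0 \le \mathcal{N}(G[B], K_3)$, so the conclusion already holds for $X = B$, consistent with the contradiction argument above.
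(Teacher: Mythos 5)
Your proposal is correct and follows essentially the same route as the paper: both establish the additivity identity $\mathcal{N}(G,K_3)=\mathcal{N}(G[A],K_3)+\mathcal{N}(G[B],K_3)$ across the low-order separation and then derive a contradiction from $\abs{A}+\abs{B}\leq \abs{V(G)}+2$ via the same arithmetic. The only difference is that you spell out the justification of the additivity identity and the trivial-side edge case, which the paper leaves implicit.
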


\begin{proof}
    Since~${(A,B)}$ is a separation of order at most~$2$, we have that 
    \[
        \mathcal{N}(G,K_3)=\mathcal{N}(G[A],K_3)+\mathcal{N}(G[B],K_3).
    \]
    Suppose for a contradiction that~$\mathcal{N}(G[X],K_3)<(19|X|-53)/3$ for each~${X \in \{A,B\}}$. 
    Then
    \begin{align*}
        \mathcal{N}(G,K_3)
        &< \frac{19(|A|+|B|)-106}{3}
        \leq \frac{19(|V(G)|+2)-106}{3}
        < \frac{19|V(G)|-62}{3}.\qedhere
    \end{align*}
\end{proof}

\begin{lemma}
    \label{lem:computation8}
    Let~$k$,~$s$ and~$n$ be integers such that~${k \in \{1,2\}}$,~${s \in \{0,1,2\}}$ and~${n = 3 k + s}$, 
    and let~$G$ be a $1$-planar graph with~$n$ vertices. 
    If~$G$ contains at least~${19 k + 5s - 18}$ triangles, 
    then~$G$ is isomorphic to one of~$K_3$,~$K_6$,~${K_3 + C_4}$,~${K_2+\overline{P_6}}$ or~$K_{2,2,2,2}$. 
    In particular,~$G$ contains exactly~${19 k + 5s - 18}$ triangles unless~$G$ is isomorphic to~$K_{2,2,2,2}$. 
\end{lemma}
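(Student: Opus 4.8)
The plan is to treat the six relevant orders $n=3k+s\in\{3,4,5,6,7,8\}$ separately, running an induction on $n$ so that the classification for smaller orders is available when the argument reaches $n=8$. The small cases go quickly. For $n=3$ the hypothesis asks for at least $19\cdot1-18=1$ triangle, forcing $G\cong K_3$. For $n\in\{4,5\}$ the threshold $19k+5s-18$ equals $6$ and $11$, both strictly larger than $\binom{4}{3}=4$ and $\binom{5}{3}=10$, the maximum number of triangles in \emph{any} graph on that many vertices, so the hypothesis is vacuous. For $n=6$ the threshold is $20=\binom{6}{3}$, attained only by $K_6$, which is $1$-planar, so $G\cong K_6$. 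For $n=7$ the threshold is $25$, and I would invoke Corollary~\ref{cor:7vertex1planar}: $G$ is a spanning subgraph of $K_3+C_4$ or of $K_3+\overline{K_{1,3}}$. A direct count gives $25$ triangles in $K_3+C_4$ and only $23$ in $K_3+\overline{K_{1,3}}$ (which is $K_6$ with a seventh vertex joined to one of its triangles); since every edge of $K_3+C_4$ lies in a triangle, every proper spanning subgraph has fewer than $25$, so the only possibility is $G\cong K_3+C_4$.

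The case $n=8$, where the threshold is $30=(19\cdot8-62)/3$, is the heart of the matter. First I would show $G$ is $3$-connected: a non-trivial separation of order at most $2$ would, by Lemma~\ref{lem:3connectivity}, yield a side $G[X]$ with $\abs{X}\le 7$ and at least $(19\abs{X}-53)/3$ triangles, which for every $\abs{X}\in\{3,\dots,7\}$ exceeds the maximum number of triangles on $\abs{X}$ vertices recorded above, a contradiction. Next I would exclude $K_6$: if $G$ contained a copy $H$ of $K_6$, then $G-V(H)$ has only two vertices and hence at most two components, so the second alternative of Lemma~\ref{lem:K6subgraphseparations} fails and we obtain a simple $1$-drawing whose true-planar skeleton contains a separating triangle. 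By Lemma~\ref{lem:separatingstitch}, $G$ is then a stitch of graphs $H_1,H_2$ with $\abs{V(H_1)}+\abs{V(H_2)}=11$ and $4\le\abs{V(H_i)}\le 7$, so that $\mathcal{N}(G,K_3)=\mathcal{N}(H_1,K_3)+\mathcal{N}(H_2,K_3)-1$. Feeding in the already-established maxima ($25,20,10,4$ on $7,6,5,4$ vertices) for the two admissible splits $(4,7)$ and $(5,6)$ gives at most $\max\{4+25,10+20\}-1=29<30$ triangles, a contradiction. Hence $G$ is $K_6$-free.

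It remains to classify the $3$-connected, $K_6$-free, $8$-vertex $1$-planar graphs with at least $30$ triangles. If $G$ contains $K_{2,2,2,2}$, then since $K_{2,2,2,2}$ already has the maximum $4\cdot8-8=24$ edges of a $1$-planar graph (Theorem~\ref{thm:edgeextremal1planar}) it is edge-maximal, forcing $G\cong K_{2,2,2,2}$ (which has $32$ triangles). Otherwise I would pass to an edge-maximal $1$-planar supergraph $G'$ on the same vertex set; it is still $K_6$-free by the previous paragraph, and it is $K_{2,2,2,2}$-free as well, since $G'\supseteq K_{2,2,2,2}$ would force $G'\cong K_{2,2,2,2}$ and then $G\subseteq K_{2,2,2,2}$, whence (removing any edge of $K_{2,2,2,2}$ destroys four triangles) $G\cong K_{2,2,2,2}$, contrary to assumption. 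Applying Corollary~\ref{cor:crossingcount} and Lemma~\ref{lem:K6freebound} to a rich $1$-drawing $\phi$ of $G'$ with $c=\abs{E(G')}-18$ crossings gives $30\le\mathcal{N}(G',K_3)\le 3c+16$, so $c\in\{5,6\}$; the skeleton $\mathcal{S}(\phi)$ is then a $3$-connected planar graph on $8$ vertices with all faces of degree at most $4$, exactly $c$ of them quadrilaterals (a quadrangulation when $c=6$, and five quadrilateral plus two triangular faces when $c=5$, by Euler's formula).

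The plan for the final step is to enumerate these skeletons and, via the triangle estimate of Lemma~\ref{lem:trueplanarskeletons}\ref{item:trueplanarskeletons4}, retain only those whose reconstruction has at least $30$ triangles: the quadrangulation branch $c=6$ should yield only the cube, giving $K_{2,2,2,2}$ (already excluded), while the $c=5$ branch should single out $K_2+\overline{P_6}$, with exactly $30$ triangles. This finite enumeration of $8$-vertex planar skeletons is the main obstacle and the step where I expect to need computational assistance. Once $G'$ is identified as $K_2+\overline{P_6}$, the fact that every one of its edges lies in a triangle forces $G=G'$, and the ``in particular'' clause then follows by reading off the triangle counts $1,20,25,30,32$ of the five graphs, the last of which is the sole value exceeding its threshold $19k+5s-18$.
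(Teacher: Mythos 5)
Your proposal is correct and follows essentially the same route as the paper's proof: dispose of $n\le 7$ via Corollary~\ref{cor:7vertex1planar}, reduce the $n=8$ case to an edge-maximal $3$-connected graph, bound the number of crossings of a rich $1$-drawing using Lemma~\ref{lem:K6freebound} and Lemma~\ref{lem:crossingnumber}, and finish by computationally enumerating the admissible true-planar skeletons via Lemma~\ref{lem:trueplanarskeletons} (the paper uses \emph{plantri} for exactly this step, so your deferral to computer assistance matches its standard). Your explicit exclusion of $K_6$ subgraphs through Lemmas~\ref{lem:K6subgraphseparations} and~\ref{lem:separatingstitch} is a worthwhile extra step that the paper leaves implicit before invoking Lemma~\ref{lem:K6freebound}; the only loose end is your appeal to Lemma~\ref{lem:3connectivity}, whose conclusion is vacuous when the guaranteed side has at most two vertices --- in that case you should instead note that every triangle then lies in the other side, which has at most seven vertices and hence at most $25$ triangles.
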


\begin{proof}
    The cases where~${n \leq 6}$ are trivial, so we assume~${k = 2}$ and~${s \in \{1,2\}}$. 
    If~${s = 1}$, the claim follows easily from Corollary~\ref{cor:7vertex1planar}. 
    If~${s = 2}$, note that no proper subgraph of~${K_2+\overline{P_6}}$ or~$K_{2,2,2,2}$ has at least~$30$ triangles, so we may take~$G$ to be an edge-maximal $1$-planar graph. 
    If~$G$ has a $2$-separation~${(A,B)}$ with~${\abs{A} = 7}$, then~${G[A]}$ has at most~$22$ triangles by Corollary~\ref{cor:7vertex1planar}, and so~$G$ has at most~$23$ triangles.
    If~$G$ has a $2$-separation~${(A,B)}$ with~${\abs{A} \in \{5,6\}}$, then the number of triangles in~$G$ is at most ${\binom{\abs{A}}{3} + \binom{10-\abs{A}}{3}}$, and is therefore at most~$24$. 
    Hence,~$G$ is $3$-connected. 
    Let~$\phi$ be a rich $1$-drawing of~$G$.
    By Lemma~\ref{lem:crossingnumber}, we know that~$\phi$ has at most~$6$ crossings. 
    If~$\phi$ has at most~$4$ crossings, then~$G$ has at most~${3 \cdot 4 + 3 \cdot 8 - 8 = 28}$ triangles by Lemma~\ref{lem:K6freebound}. 

    If~$\phi$ has exactly~$5$ crossings, 
    then by Lemma~\ref{lem:trueplanarskeletons}, 
    ${\mathcal{S}(\phi)}$ is a $3$-connected planar graph with~$8$ vertices with~$5$ faces of degree~$4$, whose other faces each have degree~$3$. Note the number of edges of a $3$-connected planar graph can be determined from the degrees of its faces, and then the number of vertices can be deduced from Euler's formula. 
    Since~$G$ has~$8$ vertices, we can deduce that~${\mathcal{S}(\phi)}$ has exactly two faces of degree~$3$, and exactly~$13$ edges. 
    Furthermore, the number of non-trivial $3$-separators in~${\mathcal{S}(\phi)}$ is at least~${30 - 4 \cdot 5 - 2 = 8}$, by Lemma~\ref{lem:trueplanarskeletons}\ref{item:trueplanarskeletons4}. 
    Using the program \emph{plantri} written by Brinkmann and McKay~\cite{plantri}, we find that there is a unique $3$-connected $8$-vertex planar graph with~$13$ edges, faces of degree at most~$4$ and at least eight non-trivial $3$-separators. 
    The unique maximal $1$-planar graph with this true-planar skeleton is~${K_2+\overline{P_6} }$ (see Figure~\ref{fig:K2+P5C}), since it is formed by adding edges between all pairs of vertices of~${\mathcal{S}(\phi)}$ that share a face. 
    
    If~$\phi$ has exactly~$6$ crossings, then by Lemma~\ref{lem:trueplanarskeletons},~${\mathcal{S}(\phi)}$ is a $3$-connected planar quadrangulation. 
    The cube is the unique $3$-connected planar quadrangulation on $8$-vertices, and~$K_{2,2,2,2}$ is the unique maximal $1$-planar graph with this true-planar skeleton.
\end{proof}

\begin{lemma}
    \label{lem:computation}
    Let~$k$,~$s$ and~$n$ be integers such that~${3 \leq k \leq 6}$,~${s \in \{0,1,2\}}$ and~${n=3k+s}$, 
    and let~$\phi$ be a rich $1$-drawing of an 
    edge-maximal $1$-planar 
    $3$-connected graph with~$n$ vertices. 
    If~$G$ contains at least~${19k + 5s - 18}$ triangles, then~$\mathcal{S}(\phi)$ contains a separating triangle of~$G$. 
\end{lemma}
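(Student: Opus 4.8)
The plan is to argue by contradiction. Suppose $\mathcal{S}(\phi)$ contains no separating triangle of $G$, and set $H := \mathcal{S}(\phi)$. I first extract two structural facts from this assumption. Recall that since $\phi$ is rich, the four endpoints of any crossing induce a $K_4$ all of whose four non-crossing edges lie in $H$. Now suppose $T$ were a separating triangle of $H$; since $E(T) \subseteq E(H)$, it is an uncrossed triangle of $G$. If some crossed edge $uv$ of $G$ joined the two sides of the separation of $H$ induced by $V(T)$, then its crossing partner $u'v'$ would, by richness, have both endpoints adjacent in $H$ to both $u$ and $v$, forcing $u',v' \in V(T)$ and hence $u'v' \in E(T)$; but the edges of $T$ lie in $H$ and are uncrossed, a contradiction. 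So $V(T)$ separates $G$ too, making $T$ a separating triangle of $G$ in $\mathcal{S}(\phi)$, contrary to assumption. Hence $H$ has no separating triangle. For the second fact, if $G$ contained a $K_6$ subgraph (necessarily proper, as $n \geq 9$), then $3$-connectivity forces every component of $G - V(K_6)$ to have at least three neighbours in the $K_6$, so the second alternative of Lemma~\ref{lem:K6subgraphseparations} cannot occur; applying that lemma with $G$ itself as the edge-maximal supergraph and $\phi$ as its rich drawing then produces a separating triangle inside $\mathcal{S}(\phi)$, again a contradiction. Thus $G$ is $K_6$-free, and since $n \geq 9$ it is $K_{2,2,2,2}$-free as well (by the component argument in the proof of Lemma~\ref{lem:K6freebound}).

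Next I would pin down the skeleton quantitatively. By Lemma~\ref{lem:trueplanarskeletons}, $H$ is a $3$-connected planar graph on $n$ vertices all of whose faces have degree $3$ or $4$, the number of degree-$4$ faces equals the number $c$ of crossings of $\phi$, and Lemma~\ref{lem:crossingnumber} gives $c \leq n-2$. A double count of edge--face incidences together with Euler's formula shows that the number of triangular faces equals $2n - 4 - 2c$. Since $G$ is $K_6$-free and $K_{2,2,2,2}$-free, Lemma~\ref{lem:K6freebound} applies and gives $\mathcal{N}(G,K_3) \leq 3c + 3n - 8$; combined with the hypothesis $\mathcal{N}(G,K_3) \geq 19k + 5s - 18$ this forces $c$ to be close to its maximum value $n-2$, so that $H$ is ``almost a quadrangulation''. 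In particular the admissible pairs $(n,c)$ form a short explicit list.

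Finally, for each of the finitely many values $n \in \{9, \ldots, 20\}$ and each admissible $c$, I would enumerate all candidate skeletons: the $3$-connected planar graphs on $n$ vertices whose faces all have degree at most $4$ and which contain no separating triangle. Exactly as in the proof of Lemma~\ref{lem:computation8}, each such $H$ determines a unique edge-maximal $1$-planar graph $G$ by adding the two crossing diagonals in every quadrilateral face, so $\mathcal{N}(G,K_3)$ can be computed directly; one then checks that in every case $\mathcal{N}(G,K_3) < 19k + 5s - 18$, contradicting the hypothesis. This enumeration can be carried out with \emph{plantri}, with the bounds of the previous paragraph drastically pruning the search. I expect the main obstacle to be precisely this step: converting the qualitative condition ``$H$ has no separating triangle'' into a sharp enough quantitative drop in the triangle count. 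The conceptual reason it should succeed is that the triangle-maximal $K_6$-free configurations are built by stacking, which necessarily creates separating triangles, so forbidding them excludes exactly the configurations meeting the threshold; verifying this uniformly across the whole range $9 \leq n \leq 20$ is what forces the computation.
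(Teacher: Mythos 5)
Your proposal is correct and follows essentially the same route as the paper: eliminate $K_6$-subgraphs via Lemma~\ref{lem:K6subgraphseparations} and $3$-connectivity, combine Lemma~\ref{lem:K6freebound} with Lemma~\ref{lem:crossingnumber} to force ${c=n-2}$ or ${(n,c)=(11,8)}$ so that $\mathcal{S}(\phi)$ is a quadrangulation or nearly one, and finish with a \emph{plantri}-assisted check on the finitely many candidate skeletons. The only differences are presentational: you argue by contradiction, you spell out (correctly, via richness) why a separating triangle of $\mathcal{S}(\phi)$ is also one of $G$, and your final computation counts triangles of the reconstructed $G$ directly rather than bounding them by the number of non-trivial $3$-separators of the skeleton as in Lemma~\ref{lem:trueplanarskeletons}\ref{item:trueplanarskeletons4} --- an equivalent (and if anything sharper) check.
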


\begin{proof}
    By Lemma~\ref{lem:K6subgraphseparations}, we may assume~$G$ has no subgraph isomorphic to~$K_6$. 
    Hence, by Lemma~\ref{lem:K6freebound},~$G$ has at most~${3c+3n-8}$ triangles, where~$c$ is the number of crossings of~$\phi$. 
    It follows that~${c \geq n-(10+s-k)/3}$. 
    Note that~${(10+s-k)/3 \leq 3}$ with equality if and only if~${n = 11}$, and that~${c \leq n-2}$ by Lemma~\ref{lem:crossingnumber}. It follows that either $n=11$ and $c=8$, or $c=n-2$. 
    
    Suppose that~${n = 11}$ and~${c = 8}$. 
    Note that the number of triangles in~$G$ is at least~$49$. 
    By Lemma~\ref{lem:trueplanarskeletons},~${\mathcal{S}(\phi)}$ has exactly eight faces of degree~$4$, and the other faces of~${\mathcal{S}(\phi)}$ are of degree~$3$. 
    Thus there are exactly two facial triangles of~${\mathcal{S}(\phi)}$, by Euler's formula.
    By Lemma~\ref{lem:trueplanarskeletons}\ref{item:trueplanarskeletons4}, there are at least~${49 - 4 \cdot 8 - 2 = 15}$ non-trivial $3$-separators of~$\mathcal{S}(\phi)$.
    Using \emph{plantri}~\cite{plantri}, we find that there are only three planar graphs\footnote{These graphs are true-planar skeletons of rich $1$-drawings of a stitch of~$K_6$ and~${K_2 + \overline{P_6}}$ and the two possible stitches of~${K_3 + C_4}$ and itself. }
    satisfying the conditions on~${\mathcal{S}(\phi)}$, each of which contains a separating triangle of~${\mathcal{S}(\phi)}$. 
    Note that since no edge of~${\mathcal{S}(\phi)}$ is crossed with respect to~$\phi$, every separating triangle of~${\mathcal{S}(\phi)}$ is a separating triangle of~$G$. 
    
    Suppose instead that~${c = n-2}$. 
    By Lemma~\ref{lem:trueplanarskeletons},~${\mathcal{S}(\phi)}$ is a $3$-connected planar graph with~${n-2}$ faces of degree~$4$, 
    and therefore a $3$-connected planar quadrangulation. 
    Furthermore, the number of triangles in~$G$ is at most~${4(n-2)}$ plus the number of non-trivial $3$-separators of~${\mathcal{S}(\phi)}$. 
    Again using \emph{plantri} to generate all $3$-connected planar quadrangulations on at most~$20$ vertices, 
    it is quick to computationally verify that no such quadrangulation has the requisite number of non-trivial $3$-separators. 
\end{proof}

\begin{lemma}\label{lem:extremaltriangles}
    Let~$k$,~$s$ and~$n$ be integers such that~${k \geq 1}$, ${s \in \{0,1,2\}}$ and~${n = 3k+s}$, 
    and let~$\phi$ be a rich $1$-drawing of a graph~$G$ with~$n$ vertices and at least~${19k + 5s - 18}$ triangles. 
    Then~$G$ either contains exactly~${19k + 5s - 18}$ triangles or is isomorphic to~$K_{2,2,2,2}$. 
    
    Furthermore, if~${k \geq 3}$, then for each~${i \in [2]}$ there exist 
    integers~${k_i \geq 2}$ and~${s_i \in \{0,1,2\}}$ with~${s_1 + s_2 \leq 2}$
    and a ${(3k_i + s_i)}$-vertex graph~$H_i$ with exactly~${19k_i + 5s_i - 18}$ triangles such that~$G$ is a stitch of~$H_1$ and~$H_2$. 
\end{lemma}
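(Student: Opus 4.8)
The plan is to induct on $n$. For the base cases $k\in\{1,2\}$ (so $n\le 8$) the first assertion is exactly Lemma~\ref{lem:computation8}, and the ``furthermore'' clause is vacuous as it requires $k\ge 3$. So suppose $k\ge 3$, hence $n\ge 9$, and assume the lemma for all graphs on fewer than $n$ vertices. Since $n\ge 9$ we have $G\not\cong K_{2,2,2,2}$, so it suffices to realise $G$ as a stitch of two graphs $H_1,H_2$ with the stated sizes and triangle counts: the exact value $\mathcal{N}(G,K_3)=19k+5s-18$ will then drop out of the arithmetic of stitching carried out in the last paragraph. Throughout I may use the inductive hypothesis, via edge-maximal supergraphs together with Lemma~\ref{lem:rich1drawing}, in the form that every $1$-planar graph on $m$ vertices with $3\le m<n$ has at most $19k'+5s'-18$ triangles (where $m=3k'+s'$), with the sole exception of $K_{2,2,2,2}$ on $m=8$ vertices.

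First I would show $G$ is $3$-connected. Since $G$ has at least $19k+5s-18=(19n-4s-54)/3\ge(19n-62)/3$ triangles, Lemma~\ref{lem:3connectivity} applies to any separation of order at most $2$. A non-trivial separation $(A,B)$ of order at most $2$ with a part of size at most $2$ would force an isolated or degree-one vertex, or a component on at most two vertices; deleting it leaves at least $19k+5s-18$ triangles on fewer vertices, which is impossible by the inductive bound. Hence every non-trivial separation of order at most $2$ has both parts of size at least $3$, and then the dense side provided by Lemma~\ref{lem:3connectivity} contains strictly more triangles than the inductive bound permits, a contradiction. Thus $G$ is $3$-connected.

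Next I would produce a simple $1$-drawing $\psi$ of $G$ whose true-planar skeleton $\mathcal{S}(\psi)$ contains a separating triangle of $G$, splitting on whether $G$ has a subgraph isomorphic to $K_6$. If it does, then as $n\ge 9$ this $K_6$ is proper, and Lemma~\ref{lem:K6subgraphseparations} gives what we want: its alternative conclusion, that three components of $G$ minus the $K_6$ each have at most two neighbours in it, cannot occur, since all neighbours of such a component lie in the $K_6$, so two of them would form a $2$-separator, contradicting $3$-connectivity. If instead $G$ is $K_6$-free, then Lemma~\ref{lem:K6freebound} and Lemma~\ref{lem:crossingnumber} give $\mathcal{N}(G,K_3)\le 3(n-2)+3n-8=6n-14$, and comparing with $19k+5s-18$ forces $n\le 4s+12$, i.e.\ $3\le k\le 6$; in this bounded range the separating triangle is supplied by the computational Lemma~\ref{lem:computation}. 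I expect this $K_6$-free, small-$n$ case to be the main obstacle, since Lemma~\ref{lem:computation} is phrased for edge-maximal graphs while $G$ need not be edge-maximal. I would handle it by passing to an edge-maximal supergraph $G^+$ and applying Lemma~\ref{lem:computation} (or Lemma~\ref{lem:K6subgraphseparations}) to a rich drawing of $G^+$; the delicate point is to ensure that the separating triangle obtained for $G^+$ is made of edges already present in $G$, so that it descends to a separating triangle of $G$ itself.

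Finally, given such a $\psi$, Lemma~\ref{lem:separatingstitch} yields proper subgraphs $G_1,G_2$ of $G$ with $G=G_1\cup G_2$ and $T:=G_1\cap G_2$ a facial triangle of $\psi\restricted G_i$ for each $i$; set $H_i:=G_i$. Writing $\abs{V(G_i)}=3k_i+s_i$ with $s_i\in\{0,1,2\}$, we have $\abs{V(G_1)}+\abs{V(G_2)}=n+3$ and $\mathcal{N}(G,K_3)=\mathcal{N}(G_1,K_3)+\mathcal{N}(G_2,K_3)-1$, so $\mathcal{N}(G_1,K_3)+\mathcal{N}(G_2,K_3)\ge 19k+5s-17$. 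Each $G_i$ has $T$ as a facial triangle and so is not isomorphic to $K_{2,2,2,2}$ (which by Corollary~\ref{cor:drawing-properties} has no facial triangle); hence the inductive hypothesis gives $\mathcal{N}(G_i,K_3)\le 19k_i+5s_i-18$, with strict inequality when $\abs{V(G_i)}\in\{4,5\}$, while $\abs{V(G_i)}=3$ is impossible since both $G_1,G_2$ are proper subgraphs. As $s_1+s_2\equiv s\pmod 3$, either $s_1+s_2=s$ or $s_1+s_2=s+3$, and summing the upper bounds gives a total of at most $19k+5s-17$ in the first case and at most $19k+5s-21$ in the second. Comparing with the lower bound $19k+5s-17$ therefore forces $s_1+s_2=s$ (so $s_1+s_2\le 2$), excludes $\abs{V(G_i)}\in\{4,5\}$ (so $\abs{V(G_i)}\ge 6$ and $k_i\ge 2$), and yields equality $\mathcal{N}(G_i,K_3)=19k_i+5s_i-18$ for each $i$. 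This is exactly the asserted stitch, and substituting back shows $\mathcal{N}(G,K_3)=(19k+5s-17)-1=19k+5s-18$, completing the induction.
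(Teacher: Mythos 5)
Your proposal follows essentially the same route as the paper's proof: induction on $n$ with base cases from Lemma~\ref{lem:computation8}, $3$-connectivity via Lemma~\ref{lem:3connectivity} and the inductive bound, a separating triangle in a true-planar skeleton obtained from Lemma~\ref{lem:K6subgraphseparations} (ruling out the alternative conclusion by $3$-connectivity) or, in the $K_6$-free case with $3\le k\le 6$, from Lemma~\ref{lem:computation}, followed by Lemma~\ref{lem:separatingstitch} and the same final counting (the paper excludes $k_i=1$ by citing Lemma~\ref{lem:computation8} where you compare with $\binom{4}{3}$ and $\binom{5}{3}$, which amounts to the same thing). The one step you flag as unresolved --- that Lemma~\ref{lem:computation} is stated for edge-maximal graphs while $G$ here need not be edge-maximal --- is a legitimate observation, but the paper does no extra work there either: it applies Lemma~\ref{lem:computation} to $G$ directly, so there is no hidden argument in the paper that your sketch is missing.
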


\begin{proof}
    We proceed by induction on~$n$. 
    For~${n\leq 8}$ the result follows from Lemma~\ref{lem:computation8}, so we assume~${n \geq 9}$.

    We first show that~$G$ is $3$-connected. 
    Suppose for a contradiction that~$G$ has a non-trivial separation~${(A,B)}$ of order at most~$2$. 
    By Lemma~\ref{lem:3connectivity}, we may assume without loss of generality that~${G[A]}$ has at least~${(19 \abs{A}-53)/3}$ triangles. 
    Let~$k'$ and~$s'$ be integers such that~${\abs{A} = 3k'+s'}$ and~${s' \in \{0,1,2\}}$. 
    Note that~${19k'+5s'-18}$ is strictly less than~${(19\abs{A}-53)/3}$, and~$32$ (the number of triangles in~$K_{2,2,2,2}$) is strictly less than~${(19 \cdot 8 - 53) / 3 = 33}$. 
    Hence, by induction,~${\abs{A} \leq 2}$. 
    It follows that the number of triangles in~$G$ equals the number of triangles in~${G[B]}$. 
    However,~${5 \leq \abs{B} \leq n-1}$, so the number of triangles in~${G[B]}$ is less than 
    \[
        (19|B|-53)/3 \leq \frac{19(3k+s-1)-53}{3} < 19k+5s-18,
    \]
    a contradiction. 
    Hence,~$G$ is $3$-connected. 

    If no subgraph of~$G$ is isomorphic to~$K_6$, then~${19k + 5s - 18 \leq 6n - 14}$ by Lemmas~\ref{lem:K6freebound} and~\ref{lem:crossingnumber}. 
    It follows that~${k \leq 6}$, and so there is a separating triangle~$T$ of~$G$ contained in~${\mathcal{S}(\phi)}$ by Lemma~\ref{lem:computation}. 
    If~$G$ contains a subgraph isomorphic to~$K_6$, then~$\mathcal{S}(\phi)$ contains a separating triangle~$T$ of $G$ by Lemma~\ref{lem:K6subgraphseparations}. 
    
    By Lemma~\ref{lem:separatingstitch}, there are graphs~$H_1$ and~$H_2$ such that~$G$ is a stitch of~$H_1$ and~$H_2$. 
    The number of triangles in~$G$ is equal to the number of triangles in~$H_1$ plus the number of triangles in~$H_2$ minus one. 
    For~${i \in [2]}$, let~${k_i \geq 1}$ and~${s_i \in \{0,1,2\}}$ be integers such that~${\abs{V(H_i)} = 3k_i + s_i}$. 
    
    Since~$T$ is a facial triangle with respect to~${\phi \restricted H_i}$, by Corollary~\ref{cor:drawing-properties}\ref{item:dp2} 
    we have that~$H_i$ is not isomorphic to~$K_{2,2,2,2}$. 
    It follows that the number of triangles in~$H_i$ is at most~${19k_i+5s_i-18}$, by induction. 
    Note that either~${s_1 + s_2 = s}$ and~${k_1 + k_2 = k+1}$, or~${s_1 + s_2 = s + 3}$ and ${k_1 + k_2 = k}$. 
    However, in the latter case, the number of triangles in~$G$ is at most
    \[
        19(k_1+k_2) + 5(s_1+s_2) - 37=19k+5s-22 < 19k + 5s - 18.
    \]  
    Hence~${s = s_1 + s_2}$ and~${k_1+k_2 = k+1}$, and the number of triangles in~$G$ is given by
    \[
        \mathcal{N}(H_1,K_3)+\mathcal{N}(H_2,K_3)-1\leq 19(k_1+k_2)+5(s_1+s_2)-37=19k+5s-18. 
    \]
    Since equality holds, for each~${i \in [2]}$ applying Lemma~\ref{lem:computation8} to~$H_i$ implies that~${k_i \geq 2}$. 
\end{proof}

The following theorem immediately implies Theorem~\ref{thm:mainintro1}. 
\begin{thm}
    \label{thm:1planartriangles}
    Let~$n$,~$k$ and~$s$ be non-negative integers with~${s \in \{0,1,2\}}$ and~${n = 3k+s}$.
    An $n$-vertex $1$-planar graph~$G$ contains the extremal number~${f_3(n)}$ of triangles if and only if~${G \in \mathcal{E}_3}$. 
    In particular, 
    \[
        f_3(n) = 
        \begin{cases}
            \binom{n}{3} & \textnormal{ if } n \leq 6,\\
            32 & \textnormal{ if } n = 8,\\
            19k + 5s - 18 & \textnormal{ otherwise. }
        \end{cases}
    \]
\end{thm}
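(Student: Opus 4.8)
The plan is to prove the stated value of~$f_3(n)$ and the characterisation~${G\in\mathcal{E}_3}$ simultaneously, by induction on~$n$, with Lemma~\ref{lem:extremaltriangles} as the main engine and Lemma~\ref{lem:computation8} seeding the base cases. Since inserting an edge never destroys a triangle, I would work throughout with a fixed edge-maximal $1$-planar supergraph~$G'$ of~$G$ with~${V(G')=V(G)}$, together with a rich $1$-drawing~$\phi'$ of~$G'$ (which exists by Lemma~\ref{lem:rich1drawing}). The upper bound~${\mathcal{N}(G,K_3)\le f_3(n)}$ is then immediate: for~${n\le 6}$ it is trivial, and for~${n\ge 7}$ we may assume~${\mathcal{N}(G',K_3)\ge 19k+5s-18}$ (otherwise the bound already holds), whereupon Lemma~\ref{lem:extremaltriangles} forces either~${\mathcal{N}(G',K_3)=19k+5s-18}$ or~${G'\cong K_{2,2,2,2}}$ (so~${n=8}$ with~$32$ triangles); as~${\mathcal{N}(G,K_3)\le\mathcal{N}(G',K_3)}$, this gives~${\mathcal{N}(G,K_3)\le f_3(n)}$ in every case.

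For the implication~${G\in\mathcal{E}_3\Rightarrow\mathcal{N}(G,K_3)=f_3(n)}$, I would induct along the recursive definition of~$\mathcal{E}_3$. The base graphs are checked directly: those on at most two vertices have no triangles, $K_n$ has~$\binom{n}{3}$ triangles for~${3\le n\le 6}$, and~${K_3+C_4}$,~$K_{2,2,2,2}$ and the auxiliary graph~${K_2+\overline{P_6}}$ have~$25$,~$32$ and~$30$ triangles respectively, matching~$f_3$ in each relevant case. For a stitch~${G=H_1\cup H_2}$ with~${H_1\cap H_2}$ a triangle~$T$, there are no edges between~${V(H_1)\setminus V(H_2)}$ and~${V(H_2)\setminus V(H_1)}$, so every triangle of~$G$ lies in~$H_1$ or in~$H_2$ and~$T$ is their only common triangle; hence~${\mathcal{N}(G,K_3)=\mathcal{N}(H_1,K_3)+\mathcal{N}(H_2,K_3)-1}$. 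The key point is that a stitch component is never~$K_{2,2,2,2}$, since the shared triangle~$T$ must be facial with respect to~${\phi\restricted H_i}$, which Corollary~\ref{cor:drawing-properties}\ref{item:dp2} forbids for~$K_{2,2,2,2}$; consequently each~$H_i$ has exactly~${19k_i+5s_i-18}$ triangles (an $8$-vertex component being~${K_2+\overline{P_6}}$, not~$K_{2,2,2,2}$). Substituting the relations~${k=k_1+k_2-1}$ and~${s=s_1+s_2}$ forced by~${|V(G)|=n}$ then yields~${\mathcal{N}(G,K_3)=19k+5s-18=f_3(n)}$. Since~$\mathcal{E}_3$ contains an $n$-vertex graph for every~$n$ (iteratively stitch copies of~$K_6$ onto a base block of the correct residue), the value~$f_3(n)$ is attained.

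For the converse, let~$G$ be an $n$-vertex $1$-planar graph with~${\mathcal{N}(G,K_3)=f_3(n)}$, and induct on~$n$. For~${n\le 6}$, having~$\binom{n}{3}$ triangles forces~${G\cong K_n\in\mathcal{E}_3}$; for~${n\in\{7,8\}}$, Lemma~\ref{lem:computation8} combined with the triangle counts above pins~$G$ down to~${K_3+C_4}$ or~$K_{2,2,2,2}$, both in~$\mathcal{E}_3$. For~${n\ge 9}$ I would pass to the edge-maximal supergraph~$G'$ and its rich drawing~$\phi'$; as~$G'$ is also extremal, Lemma~\ref{lem:extremaltriangles} (which applies since~${k\ge 3}$) exhibits~$G'$ as a stitch of graphs~$H_1,H_2$ with~${k_i\ge 2}$,~${s_1+s_2\le 2}$, and exactly~${19k_i+5s_i-18}$ triangles each. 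Each component~$H_i$ with~${k_i=2}$ is then identified through Lemma~\ref{lem:computation8} as~$K_6$,~${K_3+C_4}$ or~${K_2+\overline{P_6}}$ (its triangle count excluding~$K_{2,2,2,2}$), all lying in~${\mathcal{E}_3\cup\{K_2+\overline{P_6}\}}$, while each component with~${k_i\ge 3}$ lies in~$\mathcal{E}_3$ by the induction hypothesis, since it carries~${f_3(3k_i+s_i)}$ triangles; by the definition of~$\mathcal{E}_3$ this gives~${G'\in\mathcal{E}_3}$. Finally, every edge of such a~$G'$ lies in a triangle (this holds for each base block on at least three vertices and is preserved under stitching), so—because~$G$ and~$G'$ share the same triangle set—no edge of~$G'$ can be missing from~$G$; thus~${G=G'\in\mathcal{E}_3}$.

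As the combinatorial core is already isolated in Lemma~\ref{lem:extremaltriangles}, the remaining work is chiefly organisational. I expect the principal obstacle to be the reduction to edge-maximal graphs in the converse: for~${n\ge 9}$ the extremal graphs have strictly fewer than~${4n-8}$ edges, so edge-maximality is not given for free, and the identity~${G=G'}$ has to be extracted from the observation that every edge of~$G'$ meets a triangle. A second point demanding care is the uniform treatment of $8$-vertex building blocks, where the extremal value~${f_3(8)=32}$ is realised only by~$K_{2,2,2,2}$, whereas every $8$-vertex block that can occur in a stitch must instead be the $30$-triangle graph~${K_2+\overline{P_6}}$, as~$K_{2,2,2,2}$ admits no facial triangle.
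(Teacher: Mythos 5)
Your proposal is correct and follows essentially the same route as the paper: induction on~$n$ with Lemma~\ref{lem:computation8} handling~${n \le 8}$, the forward direction by unfolding the stitch recursion defining~$\mathcal{E}_3$ (with the same triangle counts and the same use of Corollary~\ref{cor:drawing-properties} to exclude~$K_{2,2,2,2}$ as a stitch component), and the converse via the decomposition supplied by Lemma~\ref{lem:extremaltriangles}. The one place you are more careful than the paper is the reduction to an edge-maximal supergraph~$G'$ (needed so that a rich $1$-drawing exists before invoking Lemma~\ref{lem:extremaltriangles}) together with the recovery of~${G = G'}$ from the fact that every edge of a graph in~$\mathcal{E}_3$ on at least three vertices lies in a triangle; the paper applies Lemma~\ref{lem:extremaltriangles} to~$G$ directly and leaves this step implicit.
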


\begin{proof}
    We proceed by induction on~$n$. 
    The result is trivial when~${n \leq 6}$, and follows from Lemma~\ref{lem:computation8} when~${n \in \{7,8\}}$, so we may assume that~${n \geq 9}$. 
    
    Consider an $n$-vertex graph~$G$ in~$\mathcal{E}_3$. 
    By definition, for each~${i \in [2]}$ there are integers~${k_i \geq 2}$ and~${s_i \in \{0,1,2\}}$ with~${s_1 + s_2 \leq 2}$ and a graph~${H_i \in \mathcal{E}_3 \cup \{K_2+\overline{P_6}\}}$ with~${\abs{V(H_i)} = 3k_i + s_i}$ such that~$G$ is a stitch of~$H_1$ and~$H_2$. 
    Since~${s_1 + s_2 \leq 2}$ and~${\abs{V(G)} = \abs{V(H_1)} + \abs{V(H_2)} - 3}$, we have~${k_1 + k_2 - 1 = k}$ and~${s_1 + s_2 = s}$.  
    Now~${\mathcal{N}(G,K_3) = \mathcal{N}(H_1,K_3) + \mathcal{N}(H_2,K_3) - 1}$. 
    Note that~${K_2 + \overline{P_6}}$ has~${19 \cdot 2 + 5 \cdot 2 - 18}$ triangles, and that neither~$H_1$ nor~$H_2$ is isomorphic to~$K_{2,2,2,2}$ by Corollary~\ref{cor:drawing-properties}\ref{item:dp2}. 
    By induction, we have
    \[
        {\mathcal{N}(G,K_3) 
        = 19 (k_1 + k_2) + 5 (s_1 + s_2) - 37 
        = 19k+5s-18}.
    \]
    
    Now suppose that~$G$ is an $n$-vertex graph containing~${f_3(n)}$ triangles. 
    To complete the proof, we will show that~${G \in \mathcal{E}_3}$. 
    Note that~${\mathcal{N}(G,K_3) \geq 19k + 5s - 18}$ by the previous argument. 
    By Lemma~\ref{lem:extremaltriangles}, for each~${i \in [2]}$ there exist integers~${k_i \geq 2}$ and~${s_i \in \{0,1,2\}}$ with~${s_1 + s_2 \leq 2}$ and a ${(3k_i+s_i)}$-vertex graph~$H_i$ with exactly~${19k_i + 5s_i - 18}$ triangles 
    such that~$G$ is a stitch of~$H_1$ and~$H_2$. 
    By the induction hypothesis and Lemma~\ref{lem:computation8}, we have that~$k_1$ and~$k_2$ are each at least~$2$ and~$H_1$ and~$H_2$ are isomorphic to graphs in~${\mathcal{E}_3 \cup \{ K_2 + \overline{P_6} \}}$, 
    and hence~${G \in \mathcal{E}_3}$, as desired. 
\end{proof}

\subsection{%
\texorpdfstring{Maximising larger cliques in $1$-planar graphs}%
{Maximising larger cliques in 1-planar graphs}}

\begin{lemma}\label{lem:few2seps}
    Let~$k$,~$s$ and~$t$ be integers with~${k \geq 1}$, ${s \in \{0,1,2\}}$ and~${t \in \{4,5,6\}}$ and~$G$ be a graph with~${3k+s}$ vertices, and for~${i \in [3]}$ let~${(A_i,B_i)}$ be non-trivial separations of~$G$ of order at most~$2$ such that the sets~${A_i \setminus B_i}$ are pairwise disjoint and~${\abs{\bigcap_{i \in [3]} B_i} \geq 3}$.
    If there do not exist integers~${k' \geq 1}$ and~${s' \in \{0,1,2\}}$ such that some induced subgraph of~$G$ with~${3k' + s'}$ vertices contains more than~${(k'-1)\binom{6}{t} + \binom{s'+3}{t}}$ subgraphs isomorphic to~$K_t$, then $G$ contains fewer than~${(k-1)\binom{6}{t}}$ subgraphs isomorphic to~$K_t$. 
\end{lemma}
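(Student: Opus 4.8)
The plan is to use the three separations to split the copies of $K_t$ in $G$ into pieces, each controlled by the hypothesised extremal bound, and then to argue that three separators of order at most two are too inefficient for the total count to reach $(k-1)\binom{6}{t}$.

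First I would fix notation, writing $S_i := A_i \cap B_i$ (so $\abs{S_i} \le 2$) and $P_i := A_i \setminus B_i$ for the three non-empty flaps, which are pairwise disjoint by assumption. Since $B_i = V(G) \setminus P_i$, the set $D := \bigcap_{i \in [3]} B_i$ equals $V(G) \setminus (P_1 \cup P_2 \cup P_3)$, so $V(G)$ is the disjoint union of $D, P_1, P_2, P_3$ and $\abs{D} \ge 3$ by hypothesis. The key observation is that, because no edge of $G$ joins $A_i \setminus B_i$ to $B_i \setminus A_i$, any copy $Q$ of $K_t$ meeting $P_i$ must be contained in $A_i$; consequently every copy of $K_t$ lies in $G[D]$ or in some $G[A_i]$. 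Counting with multiplicity, this yields $\mathcal{N}(G, K_t) \le \mathcal{N}(G[D], K_t) + \sum_{i \in [3]} \mathcal{N}(G[A_i], K_t)$.

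Next I would apply the hypothesis to the induced subgraphs $G[D]$ and $G[A_i]$. Setting $h(3k'+s') := (k'-1)\binom{6}{t} + \binom{s'+3}{t}$ for $s' \in \{0,1,2\}$, the hypothesis bounds each term by $h$ of the relevant order (and the term is $0$ whenever the subgraph has at most two vertices, since $t \ge 4$). The elementary estimate I would use is the linear bound $h(m) \le \frac{1}{3}\binom{6}{t}\, m - \binom{6}{t}$ for every $m \ge 3$, which is equivalent to $\binom{s'+3}{t} \le \frac{s'}{3}\binom{6}{t}$ and is easily checked for each $t \in \{4,5,6\}$ and $s' \in \{0,1,2\}$. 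Substituting, and using $\abs{D} + \sum_i \abs{P_i} = \abs{V(G)} = 3k+s$ together with $\abs{A_i} = \abs{P_i} + \abs{S_i}$, reduces the goal to a single numerical inequality.

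Finally I would verify that inequality by separating the flaps into the large ones (with $\abs{A_i} \ge 3$) and the small ones (with $\abs{A_i} \le 2$). The main point, and the step I expect to require the most care, is that the linear bound must not be applied to a small flap, since there $h = 0$ while $\frac{1}{3}\binom{6}{t}\abs{A_i} - \binom{6}{t} < 0$; instead one discards the vanishing small-flap terms and exploits the full saving of $-\binom{6}{t}$ contributed by $G[D]$ and by each large flap. Using $\abs{S_i} \le 2$ for large flaps, $s \le 2$, and $\abs{P_i} \ge 1$ for small flaps, the inequality collapses to $2 < 3$, giving the strict bound $\mathcal{N}(G, K_t) < (k-1)\binom{6}{t}$. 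I would also remark at the start that the hypotheses force $\abs{V(G)} \ge \abs{D} + 3 \ge 6$, hence $k \ge 2$, so that $(k-1)\binom{6}{t}$ is positive and the statement is meaningful.
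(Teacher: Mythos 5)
Your proposal is correct and follows essentially the same route as the paper: both decompose every copy of $K_t$ into one of $G[A_1]$, $G[A_2]$, $G[A_3]$ or $G\bigl[\bigcap_{i\in[3]}B_i\bigr]$, apply the hypothesised extremal bound to each induced piece, and finish with the arithmetic observation that $\binom{s'+3}{t}$ is small compared to $\tfrac{1}{3}\binom{6}{t}$ per residual vertex. The only cosmetic difference is that the paper pads each separator to order exactly $2$ while you instead discard the vanishing contribution of flaps with $\abs{A_i}\le 2$, and your intermediate estimate $\binom{s'+3}{t}\le \tfrac{s'}{3}\binom{6}{t}$ replaces the paper's $\binom{s_i+3}{t}\le\tfrac{s_i}{2}\binom{5}{t}$; both yield the same strict conclusion.
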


\begin{proof}
    For each~${i \in [3]}$, we may assume that~${(A_i,B_i)}$ has order exactly~$2$. 
    Let~${G_i := G[A_i]}$, 
    let~${G_4 := G\big[\bigcap_{i \in [3]} B_i \big]}$, and for~${i \in [4]}$ let~$k_i$ and~$s_i$ be integers with~${k_i \geq 1}$ and~${s_i \in \{0,1,2\}}$ such that~${\abs{V(G_i)} = 3k_i + s_i}$. 
    Let~$k^\ast$ and~$s^\ast$ be integers with~${k^\ast \geq 1}$ and~${s^\ast \in \{0,1,2\}}$ such that~${\sum_{i \in [4]} s_i = 3k^\ast + s^\ast}$ 
    and note that since~${3k+s-6 = \sum_{i \in [4]} (3k_i+s_i)}$, we have that~${\sum_{i \in [4]} k_i = k+2-k^\ast}$. 
    Therefore, 
    \begingroup
    \allowdisplaybreaks
    \begin{align*}
        \mathcal{N}(G,K_t)
        &\leq \sum_{i \in [4]} \left( (k_i-1) \binom{6}{t} + \binom{s_i+3}{t} \right)\\
        &= (k-2-k^\ast) \binom{6}{t} + \sum_{i \in [4]} \binom{s_i+3}{t}\\
        &\leq (k-2-k^\ast) \binom{6}{t} + \sum_{i\in [4]} \left( \frac{s_i}{2} \binom{5}{t} \right)\\
        &< (k-2-k^\ast) \binom{6}{t} + \frac{3k^\ast+3}{2} \binom{5}{t}\\
        &= (k-1)\binom{6}{t}+(k^\ast+1)\left( \frac{3}{2}\binom{5}{t}-\binom{6}{t}\right)\\
        &< (k-1) \binom{6}{t}. \qedhere
    \end{align*}
    \endgroup
\end{proof}

\begin{lemma}\label{lem:3sepsKt}
    Let~$k$,~$s$ and~$t$ be integers with~${k \geq 1}$, ${s \in \{0,1,2\}}$ and~${t \in \{4,5,6\}}$, let~$G$ be a graph with~${3k+s}$ vertices, and let~${(A,B)}$ be a non-trivial separation of order~$3$ of~$G$. 
    For~${X \in \{A,B\}}$ let~$k_X$ and~$s_X$ be integers with~${k_X \geq 1}$ and~${s_X \in \{0,1,2\}}$ such that~${\abs{X} = 3k_X + s_X}$.
    If~$G$ contains at least~${(k-1) \binom{6}{t} + \binom{s+3}{t}}$ subgraphs isomorphic to~$K_t$, then one of the following holds
    \begin{enumerate}
        [label=(\arabic*)]
        \item\label{item:3sepsKt-1} for each~${X \in \{A,B\}}$, the graph~${G[X]}$ contains exactly~${(k_X-1) \binom{6}{t} + \binom{s_X+3}{t}}$ subgraphs isomorphic to~$K_t$, and either~${0 \in \{s_A,s_B\}}$, or both~${t = 6}$ and~${s_A = s_B = 1}$;
        \item\label{item:3sepsKt-2} for some~${X \in \{A,B\}}$, the graph~${G[X]}$ contains more than~${(k_X-1) \binom{6}{t} + \binom{s_X+3}{t}}$ subgraphs isomorphic to~$K_t$. 
    \end{enumerate}
\end{lemma}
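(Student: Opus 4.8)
The plan is to reduce everything to the observation that, since $t \geq 4$, every copy of $K_t$ lies on a single side of the separation, and then to run a short finite computation on small binomial coefficients. Throughout, write $b(k',s') := (k'-1)\binom{6}{t} + \binom{s'+3}{t}$ for the target count, so that the hypothesis reads $\mathcal{N}(G,K_t) \geq b(k,s)$ and alternative~\ref{item:3sepsKt-1} asks for $\mathcal{N}(G[X],K_t) = b(k_X,s_X)$ for both $X \in \{A,B\}$ together with a restriction on the pair $(s_A,s_B)$.

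First I would record the exact splitting identity. The separator $A \cap B$ has three vertices, so $G[A \cap B]$ contains no $K_t$ for $t \geq 4$; as every clique of $G$ lies inside $G[A]$ or inside $G[B]$, this gives
\[
    \mathcal{N}(G,K_t) = \mathcal{N}(G[A],K_t) + \mathcal{N}(G[B],K_t).
\]
I then assume alternative~\ref{item:3sepsKt-2} fails, that is, $\mathcal{N}(G[X],K_t) \leq b(k_X,s_X)$ for each $X$, and combine this with the hypothesis to obtain the chain $b(k,s) \leq \mathcal{N}(G,K_t) \leq b(k_A,s_A)+b(k_B,s_B)$. Setting $D := b(k_A,s_A)+b(k_B,s_B)-b(k,s)$, this chain forces $D \geq 0$.

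The crux is to show that, in fact, $D \leq 0$ in every admissible configuration, so that $D = 0$ and the whole chain collapses to equalities. From $\abs{A}+\abs{B} = \abs{V(G)}+3$ I get $3(k_A+k_B)+(s_A+s_B) = 3(k+1)+s$; since $s_A+s_B \in \{0,\dots,4\}$ is congruent to $s$ modulo $3$, either $s_A+s_B = s$ with $k_A+k_B = k+1$, or $s_A+s_B = s+3$ with $k_A+k_B = k$. In the first case the $\binom{6}{t}$ terms cancel and $D = \binom{s_A+3}{t}+\binom{s_B+3}{t}-\binom{s+3}{t}$, which equals $0$ whenever $0 \in \{s_A,s_B\}$ (as $\binom{3}{t}=0$), while the only other admissible pair, $(s_A,s_B)=(1,1)$ with $s=2$, gives $D = 2\binom{4}{t}-\binom{5}{t}$, equal to $0$ for $t=6$ and negative for $t \in \{4,5\}$. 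In the second case the $\binom{6}{t}$ terms no longer cancel, and checking the admissible pairs $(1,2)$ (for $s=0$) and $(2,2)$ (for $s=1$) over $t \in \{4,5,6\}$ shows $D < 0$ throughout. Hence $D \leq 0$ always, so $D = 0$; this forces equality on each side, i.e.\ $\mathcal{N}(G[X],K_t)=b(k_X,s_X)$, and pins the configuration down to exactly the surviving $D=0$ cases, namely $0 \in \{s_A,s_B\}$, or both $t=6$ and $s_A=s_B=1$. This is precisely alternative~\ref{item:3sepsKt-1}.

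The argument is essentially a finite verification, so the only real obstacle is bookkeeping: correctly enumerating the admissible pairs $(s_A,s_B)$ for each value of $s$ from the congruence constraint and the range $\{0,1,2\}$, and evaluating the handful of binomial coefficients without sign errors, since the conclusion turns entirely on the sign of $D$ in each of these finitely many cases.
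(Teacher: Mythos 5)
Your proposal is correct and follows essentially the same route as the paper: assume alternative (2) fails, use that every $K_t$ (for $t\geq 4$) lies entirely in $G[A]$ or $G[B]$ to split the count, and then distinguish the cases $s_A+s_B=s$ (with $k_A+k_B=k+1$) and $s_A+s_B=s+3$ (with $k_A+k_B=k$), ruling out the latter and extracting the equality conditions from the former. The only difference is cosmetic: the paper packages the finite check into the single inequality $\binom{s_A+3}{t}+\binom{s_B+3}{t}\leq\binom{s_A+s_B+3}{t}$, while you enumerate the admissible pairs $(s_A,s_B)$ explicitly.
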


\begin{proof}
    Assume that~\ref{item:3sepsKt-2} fails. 
    Observe that~${\mathcal{N}(G,K_t) = \mathcal{N}(G[A],K_t) + \mathcal{N}(G[B],K_t)}$. 
    Assume for a contradiction that~${s_A + s_B \geq 3}$, and hence~${k_A + k_B = k}$. 
    Now, since~${s_A, s_B \leq 2}$ and~${t \in \{4,5,6\}}$, we have
    \begin{align*}
        \mathcal{N}(G[A],K_t) + \mathcal{N}(G[B],K_t) 
        &\leq (k_A+k_B-2)\binom{6}{t} + \binom{s_A+3}{t} + \binom{s_B+3}{t}\\ 
        &< (k-2)\binom{6}{t} + \binom{6}{t} + \binom{s+3}{t}
        \leq \mathcal{N}(G,K_t), 
    \end{align*}
    a contradiction. 
    So~${s_A + s_B < 3}$, and hence~${k_A + k_B = k+1}$ and~${s_A + s_B = s}$. 
    Since~${t \geq 4}$,

    \begin{align*}
        \mathcal{N}(G[A],K_t) + \mathcal{N}(G[B],K_t) 
        &\leq (k_A+k_B-2) \binom{6}{t} + \binom{s_A+3}{t} + \binom{s_B+3}{t}\\
        &\leq (k-1) \binom{6}{t} + \binom{s_A+s_B+3}{t} 
        \leq \mathcal{N}(G,K_t). 
    \end{align*}
    Since equality holds, we deduce that~\ref{item:3sepsKt-1} holds. 
\end{proof}

The following theorem immediately implies Theorem~\ref{thm:mainintro2}. 
\begin{thm}
    \label{thm:1planarbigcliques}
    Let~$n$,~$k$ and~$s$ be non-negative integers with~${s \in \{0,1,2\}}$ and~${n = 3k+s}$ and let~${t \in \{4,5,6\}}$. 
    An $n$-vertex $1$-planar graph~$G$ contains the extremal number~${f_t(n)}$ of subgraphs isomorphic to~$K_t$ if and only if~${G \in \mathcal{E}_t}$. 
    In particular, if~${n \geq 3}$ then 
    \[
        f_t(n) = (k-1) \binom{6}{t} + \binom{s+3}{t}. 
    \]
\end{thm}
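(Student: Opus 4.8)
The plan is to prove, by induction on $n$, the following sharpening: every $n$-vertex $1$-planar graph $G$ satisfies $\mathcal{N}(G,K_t)\le (k-1)\binom{6}{t}+\binom{s+3}{t}$, with equality precisely when $G\in\mathcal{E}_t$. For the ``if'' direction I would take $G\in\mathcal{E}_t$ and evaluate $\mathcal{N}(G,K_t)$ along its stitch decomposition. The decisive simplification compared to the triangle case is that a stitch glues $H_1$ and $H_2$ along a triangle, and since $t\ge 4$ no copy of $K_t$ can lie inside a triangle; hence $\mathcal{N}(G,K_t)=\mathcal{N}(H_1,K_t)+\mathcal{N}(H_2,K_t)$ with no correction term. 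Using $\binom{3}{t}=0$ together with the size congruences built into Definition~\ref{def:extremalgraphs} (namely $s_1=0$ when $t\in\{4,5\}$, and $s_1+s_2\le 2$ when $t=6$), the inductive hypothesis gives exactly $(k-1)\binom{6}{t}+\binom{s+3}{t}$.

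For the upper bound I would argue as follows for $n$ beyond a base range. If $G$ is $4$-connected, pass to an edge-maximal $1$-planar supergraph $G'$; this can only raise the clique count and preserves $4$-connectivity. By Lemma~\ref{lem:K6subgraphseparations}, $G'$ cannot contain a proper $K_6$, since each of the two outcomes of that lemma produces a separation of order at most $3$; as $n>6$ this makes $G'$ entirely $K_6$-free, and Corollary~\ref{cor:K6freebound} (or, for $t=6$, the trivial bound $\mathcal{N}(G',K_6)=0$) then bounds $\mathcal{N}(G',K_t)$ strictly below $(k-1)\binom6t+\binom{s+3}t$ in this range. If instead $G$ has a non-trivial separation of order at most $3$, then since $t\ge 4$ the separator carries no copy of $K_t$, so $\mathcal{N}(G,K_t)$ splits additively over the two sides; a short arithmetic check (for order at most $2$, where the separation is strictly wasteful) or Lemma~\ref{lem:3sepsKt} (for order $3$), combined with the inductive hypothesis, yields $\mathcal{N}(G,K_t)\le (k-1)\binom6t+\binom{s+3}t$. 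The base cases are handled directly: for $t\in\{5,6\}$ the $K_6$-free estimate already beats the target for all $n\ge 7$, so only $n\le 6$ needs checking, whereas for $t=4$ a few further values (those small $n$ for which $4n-9$ does not beat $f_4(n)$) must be treated with computational assistance in the spirit of Lemma~\ref{lem:computation}; these are exactly the cases producing $K_6$-free extremal graphs such as $K_3+C_4$, which is why such graphs appear explicitly in Definition~\ref{def:extremalgraphs}.

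For the equality case with $n$ large, the upper-bound argument shows that $G$ is not $4$-connected and that equality forces an order-$3$ separation for which alternative~\ref{item:3sepsKt-1} of Lemma~\ref{lem:3sepsKt} holds: both sides are extremal on their vertex sets and their part-sizes obey precisely the congruences in the definition of $\mathcal{E}_t$. By induction each side lies in $\mathcal{E}_t$, and since at least one side is large it contains a $K_6$, so $G$ has a proper $K_6$-subgraph. Applying Lemma~\ref{lem:K6subgraphseparations} to $G$, the ``three small components'' alternative would, via Lemma~\ref{lem:few2seps}, force $\mathcal{N}(G,K_t)<(k-1)\binom6t$, contradicting equality; hence there is a simple $1$-drawing whose true-planar skeleton contains a separating triangle of $G$. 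Lemma~\ref{lem:separatingstitch} then exhibits $G$ as a stitch of two graphs, which by the previous step lie in $\mathcal{E}_t$ and meet the required size condition, so $G\in\mathcal{E}_t$.

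The step I expect to be the main obstacle is upgrading an \emph{abstract} order-$3$ separation into a genuine stitch along a \emph{facial} separating triangle: this is where the $K_6$-machinery of Lemma~\ref{lem:K6subgraphseparations}, together with Lemma~\ref{lem:few2seps} to exclude the degenerate component configuration, must be deployed carefully, and where one must verify that the parity outcomes of Lemma~\ref{lem:3sepsKt} line up exactly with the stitching rules of $\mathcal{E}_t$ (including the special case $s_A=s_B=1$ permitted only when $t=6$). The other delicate point is the base-case analysis for $t=4$, where $K_6$-free extremal graphs genuinely occur and must be pinned down by computer search.
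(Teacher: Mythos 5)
Your proposal follows essentially the same route as the paper's proof: induction on $n$, additivity of the $K_t$-count over a stitch for the lower bound, the $4$-connected case dispatched via Lemma~\ref{lem:K6subgraphseparations} and Corollary~\ref{cor:K6freebound} (with $K_3+C_4$ emerging at $t=4$), and the non-$4$-connected case handled by combining Lemma~\ref{lem:3sepsKt}, Lemma~\ref{lem:few2seps}, Lemma~\ref{lem:K6subgraphseparations} and Lemma~\ref{lem:separatingstitch} to upgrade an abstract order-$3$ separation into a genuine stitch. The only noticeable deviation is that you flag additional small cases for $t=4$ (those $n$ with $4n-9\ge f_4(n)$, namely $n\in\{8,10,11\}$ as well as $n=7$) for separate treatment, whereas the paper deduces $n=7$ directly from Corollary~\ref{cor:K6freebound}; this extra caution is harmless and does not change the argument.
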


\begin{proof}
    We proceed by induction on~$n$. 
    The claim is trivial when~${n \leq 6}$, so assume~${n \geq 7}$. 
    
    Consider an $n$-vertex graph~$G$ in~$\mathcal{E}_t$. 
    If~${G \cong K_3+C_4}$, then~${t = 4}$ and~${\mathcal{N}(G,K_4) = 16}$, as required. 
    Otherwise, for each~${i \in [2]}$ there exist integers~${k_i \geq 1}$ and~${s_i \in \{0,1,2\}}$ with~${s_1 + s_2 \leq 2}$ and a graph~${H_i \in \mathcal{E}_t}$ with~${\abs{V(H_i)} = 3k_i + s_i}$ such that~$G$ is a stitch of~$H_1$ and~$H_2$, with either~${s_1 = 0}$ or~${t = 6}$. 
    Since~${s_1 + s_2 \leq 2}$ and~${\abs{V(G)} = \abs{V(H_1)} + \abs{V(H_2)} - 3}$, we have that~${k_1 + k_2 - 1 = k}$ and~${s_1 + s_2 = s}$.  
    Now~${\mathcal{N}(G,K_t) = \mathcal{N}(H_1,K_t) + \mathcal{N}(H_2,K_t)}$ since~${t > 3}$. 
    Note that ${\binom{s_1+3}{t} = 0}$,~${\binom{s_2+3}{6} = 0}$ and~${\binom{s_2+3}{t} =\binom{s+3}{t}}$.
    By induction, we have 
    \begin{align*}
        \mathcal{N}(G,K_t) 
        &= (k_1 - 1) \binom{6}{t} + \binom{s_1+3}{t} + (k_2-1) \binom{6}{t} + \binom{s_2+3}{t}\\
        &= (k-1) \binom{6}{t} + \binom{s+3}{t}. 
    \end{align*}
    
    Now suppose that~$G$ is an $n$-vertex graph containing~${f_t(n)}$ subgraphs isomorphic to~$K_t$. 
    To complete the proof, we will show that~${G \in \mathcal{E}_t}$. 
    By the previous argument we have that~${\mathcal{N}(G,K_t) \geq (k-1) \binom{6}{t} + \binom{s+3}{t}}$. 
    
    First, assume that~$G$ is $4$-connected. 
    By Lemma~\ref{lem:K6subgraphseparations}, $G$ has no subgraph isomorphic to~$K_6$. 
    This is a contradiction if~${t = 6}$. 
    By Corollary~\ref{cor:K6freebound}, we deduce that~${t = 4}$ and~${n = 7}$. 
    By Corollary~\ref{cor:7vertex1planar}, $G$ is isomorphic to a subgraph of~${K_3 + \overline{K_{1,3}}}$ or of~${K_3 + C_4}$. 
    Every proper subgraph of each of these contains strictly fewer subgraphs isomorphic to~$K_4$, so~${G \cong K_3 + C_4}$, as desired. 
    
    Suppose instead that~$G$ is not $4$-connected, and consider a non-trivial $3$-separation of~${(A,B)}$ of~$G$, not necessarily minimal. 
    By Lemma~\ref{lem:3sepsKt} and the induction hypothesis, either~${t = 6}$ or one of~$\abs{A}$, $\abs{B}$ is divisible by~$3$. 
    Hence~$G$ has a subgraph isomorphic to~$K_6$, either because~${(k-1) \binom{6}{6} + \binom{s+3}{6} \geq 1}$ or by Definition~\ref{def:extremalgraphs} and the induction hypothesis. 
    Now by Lemma~\ref{lem:few2seps} and the induction hypothesis, we obtain from Lemma~\ref{lem:K6subgraphseparations} a simple $1$-drawing~$\phi$ of~$G$ such that~${\mathcal{S}(\phi)}$ contains a separating triangle of~$G$. 
    By Lemma~\ref{lem:separatingstitch}, there are graphs~$H_1$ and~$H_2$ such that~$G$ is a stitch of~$H_1$ and~$H_2$. 
    By the induction hypothesis and Lemma~\ref{lem:3sepsKt}, it follows that~${G \in \mathcal{E}_t}$, as desired. 
\end{proof}

\subsection{%
\texorpdfstring{Maximising all cliques in $1$-planar graphs}%
{Maximising all cliques in 1-planar graphs}}

For a graph~$G$, we write~${\mathcal{N}_{\textnormal{cliques}}(G)}$ for the number of subgraphs~${H \subseteq G}$ that are cliques. 

\begin{lemma}\label{lem:3sepsCliques}
    Let~$k$ and~$s$ be integers with~${k \geq 1}$ and ${s \in \{0,1,2\}}$, let~$G$ be a graph with~${3k+s}$ vertices, and let~${(A,B)}$ be a non-trivial separation of~$G$ of order at most~$3$. 
    For~${X \in \{A,B\}}$ let~$k_X$ and~$s_X$ be integers with~${k_X \geq 1}$ and~${s_X \in \{0,1,2\}}$ such that ${\abs{X} = 3k_X + s_X}$.
    If~$G$ contains at least~${56(k-1) + 2^{s+3}}$ cliques, then one of the following statements holds.
    \begin{enumerate}
        [label=(\roman*)]
        \item\label{item:3sepsCliques-1} 
            The order of~$(A,B)$ is exactly~$3$, 
            one of~$s_A$ or~$s_B$ is equal to~$0$, 
            and for all~${X \in \{A,B\}}$, the graph~${G[X]}$ contains exactly~${56(k_X-1)+2^{s_X+3}}$ cliques; 
        \item\label{item:3sepsCliques-2} the order of~$(A,B)$ is exactly~$3$ and $G[A\cap B]$ is not isomorphic to a triangle;
        \item\label{item:3sepsCliques-3} for some~${X \in \{A,B\}}$, the graph~${G[X]}$ contains more than~${56(k_X-1) + 2^{s_X+3}}$ cliques. 
    \end{enumerate}
\end{lemma}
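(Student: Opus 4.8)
The plan is to treat this as a purely arithmetic statement about how cliques split across a separation, since the conclusion only compares the clique counts of the two sides against the explicit numbers $56(k_X-1)+2^{s_X+3}$, with no further use of $1$-planarity. The starting point is the clique-counting identity
\[
  \mathcal{N}_{\textnormal{cliques}}(G)=\mathcal{N}_{\textnormal{cliques}}(G[A])+\mathcal{N}_{\textnormal{cliques}}(G[B])-\mathcal{N}_{\textnormal{cliques}}(G[A\cap B]),
\]
which holds because $(A,B)$ is a separation: every clique (including the empty clique and single vertices) has its vertex set contained in $A$ or in $B$, and the cliques counted on both sides are exactly those inside $A\cap B$. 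I would set $d:=\abs{A\cap B}$, $c:=\mathcal{N}_{\textnormal{cliques}}(G[A\cap B])$, $\sigma:=s_A+s_B$ and $\Delta:=k_A+k_B-k$, and record the elementary facts that $c\geq1$ always, that $c=8$ when $G[A\cap B]$ is a triangle, and that counting vertices gives the divisibility constraint $3\Delta=s+d-\sigma$.

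Assuming that neither condition~\ref{item:3sepsCliques-2} nor condition~\ref{item:3sepsCliques-3} holds, I aim to deduce~\ref{item:3sepsCliques-1}. Since~\ref{item:3sepsCliques-3} fails, we have $\mathcal{N}_{\textnormal{cliques}}(G[X])\leq 56(k_X-1)+2^{s_X+3}$ for both $X\in\{A,B\}$; combining this with the hypothesis $\mathcal{N}_{\textnormal{cliques}}(G)\geq 56(k-1)+2^{s+3}$ and the identity (and using $2^{s_X+3}=8\cdot 2^{s_X}$) yields the master inequality
\[
  56(\Delta-1)+8\bigl(2^{s_A}+2^{s_B}-2^{s}\bigr)\geq c\geq 1.
\]
The heart of the argument is a short finite analysis of this inequality under $3\Delta=s+d-\sigma$ with $s\in\{0,1,2\}$, $d\in\{0,1,2,3\}$ and $s_A,s_B\in\{0,1,2\}$. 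First, if $\Delta\leq0$ then the left side is at most $56(\Delta-1)+56\leq0$, contradicting that it is at least $c\geq1$; and $\Delta\geq2$ would force $s+d\geq6$, which is impossible. Hence $\Delta=1$ and $\sigma=s+d-3$. Substituting $\Delta=1$ collapses the inequality to $8(2^{s_A}+2^{s_B}-2^{s})\geq c$, and checking the few admissible triples shows that every case with $d\leq2$ gives $2^{s_A}+2^{s_B}-2^{s}\leq0$, contradicting $c\geq1$. Thus $d=3$, so the separation has order exactly $3$, and $\sigma=s$.

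Finally, with $d=3$ and~\ref{item:3sepsCliques-2} failing, $G[A\cap B]$ must be a triangle, so $c=8$; the inequality $8(2^{s_A}+2^{s_B}-2^{s})\geq8$ then forces $2^{s_A}+2^{s_B}-2^{s}\geq1$. Because $\sigma=s\leq2$, this quantity equals $1$ in every case except $(s_A,s_B)=(1,1)$ with $s=2$, where it is $0$; hence one of $s_A,s_B$ equals $0$ and $2^{s_A}+2^{s_B}-2^{s}=1$ exactly. Consequently $56(k_A-1)+2^{s_A+3}+56(k_B-1)+2^{s_B+3}-8$ equals exactly $56(k-1)+2^{s+3}$, so the whole chain of inequalities is tight, forcing $\mathcal{N}_{\textnormal{cliques}}(G[X])=56(k_X-1)+2^{s_X+3}$ for both $X$, which is precisely~\ref{item:3sepsCliques-1}. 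I expect the only delicate point to be the bookkeeping in this finite check: keeping the divisibility constraint $3\Delta=s+d-\sigma$ synchronised with the small-integer inequalities so that no admissible triple $(d,s_A,s_B)$ is overlooked, and pinning down the constants (the value $8$ for the triangle, and $56$ per unit of $\Delta$) exactly.
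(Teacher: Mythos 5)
Your argument is correct and follows essentially the same route as the paper's proof: the inclusion--exclusion identity for cliques across the separation, the upper bounds from the failure of~\ref{item:3sepsCliques-3}, and the vertex-count divisibility constraint, reducing everything to a finite arithmetic check. The only cosmetic difference is that the paper compresses the case analysis via the linear bound ${2^{s'+3}\leq 12s'+8}$ into a single chain of inequalities forcing ${\abs{A\cap B}=3}$, whereas you enumerate the admissible tuples directly; both are sound.
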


\begin{proof}
    Assume that~\ref{item:3sepsCliques-3} fails. 
    Let~$k^\ast$ and~$s^\ast$ be integers with~${k^\ast \geq -1}$ and~${s^\ast \in \{0,1,2\}}$ such that ${s_A + s_B - \abs{A \cap B} = 3k^\ast + s^\ast}$ 
    and note that we have~${k_A + k_B = k - k^\ast}$ and~${s = s^\ast}$ since~${3k+s = 3(k_A + k_B) + s_A + s_B - \abs{A \cap B}}$. 
    Observe that for all~${s' \in \{0,1,2\}}$, we have~${2^{s'+3} \in \{12s'+4, 12s'+8\}}$. 
    Now
    \begin{align*}
        \mathcal{N}_{\textnormal{cliques}}(G) 
        &= \mathcal{N}_{\textnormal{cliques}}(G[A]) + \mathcal{N}_{\textnormal{cliques}}(G[B]) - \mathcal{N}_{\textnormal{cliques}}(G[A\cap B])\\
        &\leq 56(k_A+k_B-2)+2^{s_A+3}+2^{s_B+3}-\abs{A\cap B}\\
        &\leq 56(k-k^\ast-2)+12(s_A+s_B)+16-\abs{A\cap B}\\
        &\leq 56(k-k^\ast-2)+12(3k^\ast+s+\abs{A\cap B})+16-\abs{A\cap B}\\
        &\leq 56(k-1)+12s-20k^\ast+11\abs{A\cap B}-40.
    \end{align*}
    Since~${\mathcal{N}_{\textnormal{cliques}}(G) \geq  56(k-1)+2^{s+3}}$, we deduce that~${-20k^\ast + 11\abs{A\cap B} - 40 \geq 4}$, and so~${\abs{A \cap B} = 3}$ and~${k^\ast = -1}$. 
    This implies that~${s_A + s_B = s}$, and either~${s_A = s_B = 1}$, or one of~$s_A$ or~$s_B$ is equal to~$0$. 
    We may further assume that~$\mathcal{N}_{\textnormal{cliques}}(G[A\cap B])=8$, or else~\ref{item:3sepsCliques-2} holds. 
    Now
    \begin{align*}
        \mathcal{N}_{\textnormal{cliques}}(G[A]) + \mathcal{N}_{\textnormal{cliques}}(G[B]) - 8
        &\leq 56(k_A+k_B-2)+2^{s_A+3}+2^{s_B+3} - 8\\
        &\leq 56(k-1)+2^{s+3}
        \leq \mathcal{N}_{\textnormal{cliques}}(G)
    \end{align*}
    and since equality holds, so does~\ref{item:3sepsCliques-1}.
\end{proof}

The following theorem immediately implies Theorem~\ref{thm:mainintro3}. 

\begin{thm}\label{thm:1planarallcliques}
    Let~$n$,~$k$ and~$s$ be non-negative integers with~${s \in \{0,1,2\}}$ and~${n = 3k+s}$.
    An $n$-vertex $1$-planar graph~$G$ contains the extremal number~${f(n)}$ of cliques 
    if and only if~${G \in \mathcal{E}}$. 
    In particular, if~${n \geq 3}$ then
    \[
        f(n) = 
            56(k-1)+2^{s+3}.
    \]
\end{thm}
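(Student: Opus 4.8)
The plan is to mirror the proof of Theorem~\ref{thm:1planarbigcliques}, arguing by induction on $n$ with $\mathcal{N}_{\textnormal{cliques}}$ and Lemma~\ref{lem:3sepsCliques} in the roles played there by $\mathcal{N}(\cdot,K_t)$ and Lemma~\ref{lem:3sepsKt}. The cases $n \leq 6$ are immediate: $K_n$ is the unique $n$-vertex $1$-planar graph maximising cliques, it has $2^n = f(n)$ cliques, and it lies in $\mathcal{E}$. A finite list of small boundary cases --- principally $n = 7$, where the extremal graph $K_3 + C_4$ is itself $K_6$-free --- has to be settled directly, exactly as in Lemmas~\ref{lem:computation8} and~\ref{lem:computation}.

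For the forward direction, let $G \in \mathcal{E}$ with $n \geq 7$. Either $G \cong K_3 + C_4$, treated as a base case, or $G$ is a stitch of graphs $H_1, H_2 \in \mathcal{E}$ with $s_1 = 0$ and $s_1 + s_2 \leq 2$. Writing $T = H_1 \cap H_2$ for the shared facial triangle, every clique of $G$ lies entirely in $H_1$ or entirely in $H_2$ (any clique meeting both sides is contained in $T$), so by inclusion--exclusion $\mathcal{N}_{\textnormal{cliques}}(G) = \mathcal{N}_{\textnormal{cliques}}(H_1) + \mathcal{N}_{\textnormal{cliques}}(H_2) - \mathcal{N}_{\textnormal{cliques}}(T)$, and $\mathcal{N}_{\textnormal{cliques}}(T) = 2^3 = 8$. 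Since $s_1 = 0$ forces $k_1 + k_2 = k+1$ and $s_2 = s$, the induction hypothesis gives $\mathcal{N}_{\textnormal{cliques}}(G) = 56(k_1-1) + 8 + 56(k_2-1) + 2^{s+3} - 8 = 56(k-1) + 2^{s+3}$; that $G$ is $1$-planar is guaranteed by Lemma~\ref{lem:stitching}.

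For the reverse direction, assume $\mathcal{N}_{\textnormal{cliques}}(G) = f(n)$ with $n \geq 7$. First, $G$ is edge-maximal $1$-planar, since adding any $1$-planarity-preserving edge would create a new clique. Next I would prove $G$ is $3$-connected: given a non-trivial separation of order at most $2$, conclusions~\ref{item:3sepsCliques-1} and~\ref{item:3sepsCliques-2} of Lemma~\ref{lem:3sepsCliques} are unavailable (both demand order exactly $3$), so conclusion~\ref{item:3sepsCliques-3} must hold, yielding a proper induced $1$-planar subgraph with strictly more cliques than its own extremal value --- impossible by induction. With $G$ edge-maximal and $3$-connected, if $G$ contained no $K_6$ then Corollary~\ref{cor:K6freebound} would bound $\mathcal{N}_{\textnormal{cliques}}(G)$ by $16n - 32$, which is strictly less than $f(n)$ outside a short explicit list of small $n$; hence $G$ has a subgraph $H \cong K_6$.

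Finally, since $G$ is $3$-connected, any component of $G - V(H)$ with at most two neighbours in $H$ would furnish a separation of order at most $2$, so the second alternative of Lemma~\ref{lem:K6subgraphseparations} cannot occur and we obtain a simple $1$-drawing $\phi$ whose skeleton $\mathcal{S}(\phi)$ contains a separating triangle of $G$. Lemma~\ref{lem:separatingstitch} then exhibits $G$ as a stitch of two graphs along this triangle, and applying Lemma~\ref{lem:3sepsCliques} to the resulting order-$3$ separation rules out conclusion~\ref{item:3sepsCliques-3} by induction and conclusion~\ref{item:3sepsCliques-2} because the separator is a triangle, leaving conclusion~\ref{item:3sepsCliques-1}; the induction hypothesis then places both sides in $\mathcal{E}$ with the parameters required by Definition~\ref{def:extremalgraphs}, so $G \in \mathcal{E}$. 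The main obstacle is not the inductive engine but the boundary cases: because $f(n)$ meets the crude $K_6$-free bound for several small values --- $n \in \{7,10\}$ when $s = 1$ and $n \in \{8,11\}$ when $s = 2$, in addition to genuinely $K_6$-free extremal graphs such as $K_3 + C_4$ --- the ``force a $K_6$'' step breaks down there, and these must be resolved by the crossing-number and \emph{plantri} computations of Lemmas~\ref{lem:computation8} and~\ref{lem:computation}; a secondary point is checking that the order-$\leq 2$ separations in the connectivity step never leave a side too small for the formula to apply.
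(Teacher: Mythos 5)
Your outline reproduces the paper's proof step for step: the same induction, the same use of Lemma~\ref{lem:3sepsCliques} both to force $3$-connectivity and to identify the two sides of the final stitch, and the same passage through Lemma~\ref{lem:K6subgraphseparations} and Lemma~\ref{lem:separatingstitch} once a $K_6$ is located. Your observation that $f(n)$ meets the crude bound $16n-32$ of Corollary~\ref{cor:K6freebound} exactly at $n\in\{10,11\}$ is correct and is in fact glossed over in the paper; those two values fall to the ``furthermore'' clause of that corollary combined with the already-established triangle bound (equality in the clique count would force $6n-14$ triangles, exceeding $f_3(n)$ from Theorem~\ref{thm:1planartriangles}).

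The one genuine gap is your assertion that the remaining boundary case $n=8$ is ``resolved by the crossing-number and \emph{plantri} computations of Lemmas~\ref{lem:computation8} and~\ref{lem:computation}.'' Those lemmas count triangles, not cliques, and they do not decide this case: the triangle-extremal $8$-vertex graphs $K_{2,2,2,2}$ and $K_2+\overline{P_6}$ have only $81$ and $84$ cliques respectively, while a hypothetical $K_6$-free $8$-vertex graph with $f(8)=88$ cliques could a priori have well under the $30$-triangle threshold of Lemma~\ref{lem:computation8}, so no contradiction is available from triangle counts alone. The paper needs a dedicated argument here: if $\delta(G)\leq 4$, delete a minimum-degree vertex $v$, use induction to force $G-v\cong K_3+C_4$ with $\deg(v)=4$, and derive a contradiction from Corollary~\ref{cor:drawing-properties}; otherwise $\Delta(\overline{G})\leq 2$, invoke the Bodendiek--Schumacher--Wagner classification of $8$-vertex $1$-planar graphs with $24$ edges to reduce to the case that $\overline{G}$ has at least five edges, and check the nine candidates for $\overline{G}$, of which only $\overline{C_5+K_3}$ yields $88$ cliques; finally $C_5+K_3$ is ruled out because its $31$ triangles violate Lemma~\ref{lem:computation8}. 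So the triangle computation does enter, but only at the end of a reduction your proposal does not supply.
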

\begin{proof}
    We proceed by induction on~$n$. 
    The claim is trivial when~${n \leq 6}$, so assume~${n \geq 7}$. 
    
    Consider an $n$-vertex graph~$G$ in~$\mathcal{E}$. 
    If~${G \cong K_3+C_4}$, then~${\mathcal{N}_{\textnormal{cliques}}(G) = 72}$, as required.
    Otherwise, for each~${i \in [2]}$ there exist integers~${k_i \geq 1}$ and~${s_i \in \{0,1,2\}}$ and a graph~${H_i \in \mathcal{E}}$ with~${\abs{V(H_i)} = 3k_i + s_i}$ such that~$G$ is a stitch of~$H_1$ and~$H_2$, with~${s_1 = 0}$.
    Since~${\abs{V(G)} = \abs{V(H_1)} + \abs{V(H_2)} - 3}$ and~$s_1=0$, we have that~${k_1 + k_2 - 1 = k}$ and~${s_2 = s}$.  
    Now~${\mathcal{N}_{\textnormal{cliques}}(G) = \mathcal{N}_{\textnormal{cliques}}(H_1) + \mathcal{N}_{\textnormal{cliques}}(H_2) - \mathcal{N}_{\textnormal{cliques}}(K_3)}$. 
    By induction, we have 
    \begin{align*}
        \mathcal{N}_{\textnormal{cliques}}(G) 
        &= 56(k_1 - 1) + 2^{3} + 56(k_2-1) + 2^{s_2+3} - 8\\
        &= 56(k-1) + 2^{s+3}. 
    \end{align*}
    
    Now suppose that~$G$ is an $n$-vertex graph containing~${f(n)}$ cliques. 
    To complete the proof, we will show that~${G \in \mathcal{E}}$. 
    Note that~${\mathcal{N}_{\textnormal{cliques}}(G) \geq 56(k-1) + 2^{s+3}}$ by the previous argument, and that~$G$ is an edge maximal~$1$-planar graph since adding edges to a graph increases the number of cliques. 
    Also~$G$ is~$3$-connected by~Lemma~\ref{lem:3sepsCliques} and the induction hypothesis.
    
    First, assume that~$G$ contains no subgraph isomorphic to~$K_6$.  
    By Corollary~\ref{cor:K6freebound}, we deduce that~${n \in \{7,8\}}$. 
    If~${n = 7}$, then by Corollary~\ref{cor:7vertex1planar}, $G$ is either~${K_3 + \overline{K_{1,3}}}$ or~${K_3 + C_4}$, as desired. 
    Now suppose for a contradiction that~${n = 8}$. 
    First, assume that~$G$ has a vertex~$v$ of degree at most~$4$. 
    Now~${\mathcal{N}_{\textnormal{cliques}}(G) \leq \mathcal{N}_{\textnormal{cliques}}(G-v) + 2^4 \leq 88}$ by induction, and since equality holds we have that~${G - v \cong K_3+C_4}$ 
    and that the degree of~$v$ is exactly~$4$. 
    Since~$G$ is edge maximal, there is a rich $1$-drawing~$\phi$ of~$G$.
    By Corollary~\ref{cor:drawing-properties}\ref{item:dp1} no edge of~${G - v}$ is crossed with respect to~$\phi$, so at least four vertices are contained in the boundary of the region of $\mathbb{S}^2\setminus \phi(G-v)$ containing $\phi(v)$. This contradicts Corollary~\ref{cor:drawing-properties}\ref{item:dp2}.
    We therefore can assume that~${\delta(G) \geq 5}$ and hence~${\Delta(\overline{G}) \leq 2}$. 
    It was shown by Bodendiek, Schuhmacher and Wagner~\cite{BodendiekSW84} that the graph~$K_{2,2,2,2}$ is up to isomorphism the only $8$-vertex $1$-planar graph with~$24$ edges, and since it contains exactly~$81$ cliques, we may assume that~$\overline{G}$ has at least~$5$ edges. 
    It is straightforward to verify that of the nine $8$-vertex graphs with five edges and maximum degree at most~$2$, only~${\overline{C_5 + K_3}}$ contains~$88$ independent sets of vertices. 
    However,~${C_5 + K_3}$ contains~$31$ triangles, and hence is not $1$-planar by Lemma~\ref{lem:computation8}. 
    We therefore deduce that~${n \neq 8}$, as required. 
    
    So we may assume that~$G$ contains a subgraph~$H$ isomorphic to~$K_6$. 
    By Lemma~\ref{lem:K6subgraphseparations}, Lemma~\ref{lem:separatingstitch} and the fact that~$G$ is~$3$-connected, there are graphs~$H_1$ and~$H_2$ such that~$G$ is a stitch of~$H_1$ and~$H_2$. 
    By Lemma~\ref{lem:3sepsCliques} and the induction hypothesis, we deduce that~${G \in \mathcal{E}}$, as required. 
\end{proof}

\section{Generating extremal graphs efficiently with tree-decompositions}
\label{sec:treedec}
In Section~\ref{sec:stitchings}, we gave a recursive definition of the classes of graphs which are extremal in the sense of Theorems~\ref{thm:1planartriangles}, \ref{thm:1planarbigcliques} and~\ref{thm:1planarallcliques}. 
While this recursive definition was useful for the proofs in Section~\ref{sec:mainproofs}, it does not by itself provide a way to generate all of the graphs in these classes. 
The reason for this is that in order to determine which graphs are stitches of two extremal graphs~$H_1$ and~$H_2$, we need to determine which triangles of~$H_1$ and~$H_2$ are facial with respect to some $1$-drawing. 
We solve this problem in Subsection~\ref{subsec:treedec}, which allows us to provide a structural characterisation of the classes of extremal graphs in terms of tree-decompositions.
In Subsection~\ref{subsec:efficent}, we discuss how to use these characterisations to generate the $n$-vertex graphs in each of these classes in polynomial time.

\subsection{A structural characterisation of the extremal graphs in terms of tree-decompositions}
\label{subsec:treedec}

A pair~${(T,\mathcal{V})}$ is a \emph{tree-decomposition of~$G$} if~$T$ is a tree and~$\mathcal{V}$ is a family~${( V_t \colon t \in V(T))}$ of sets of vertices of~$G$ such that
\begin{enumerate}
    [label=(T\arabic*)]
    \item \label{item:T1} ${V(G) = \bigcup \mathcal{V}}$; 
    \item \label{item:T2} for every edge~${e \in E(G)}$ there is a~${t \in V(T)}$ such that~${e \in E(G[V_t])}$; and
    \item \label{item:T3} ${V_{s} \cap V_{t} \subseteq V_{u}}$ whenever~$u$ is a node on the unique path from~$s$ to~$t$ in~$T$.
\end{enumerate}

We call the sets in~$\mathcal{V}$ the \emph{parts} of~$(T,\mathcal{V})$. 
We call the sets~$V_{s} \cap V_{t}$ for each edge~${s t \in E(T)}$ the \emph{adhesion sets} of~$(T,\mathcal{V})$. We call $V(T)$ the set of \emph{nodes} of the tree.

\begin{definition}
    We define sets~$\mathcal{A}(\mathcal{E}_3)$, $\mathcal{A}(\mathcal{E}_4)$, $\mathcal{A}(\mathcal{E}_5)$, $\mathcal{A}(\mathcal{E}_6)$ and $\mathcal{A}(\mathcal{E})$ as follows: 
    
    $\mathcal{A}(\mathcal{E}_3) := \{ K_6, K_3 + C_4, K_2 + \overline{P_6} \}$;
    
    $\mathcal{A}(\mathcal{E}_4) := \{ K_4, K_5, K_6, K_3 + C_4 \}$;
    
    $\mathcal{A}(\mathcal{E}_5) := \{ K_4, K_5, K_6 \}$;
    
    $\mathcal{A}(\mathcal{E}_6) :=\{K_3,K_6\}\cup \{ H \subseteq K_5 \colon  H \textnormal{ is } 3\textnormal{-connected} \}$;
    
    $\mathcal{A}(\mathcal{E})
    := \{ K_4, K_5, K_6, K_3 + C_4 \}$.
\end{definition}

\begin{lemma}
    \label{lem:td1}
    Let~$\mathcal{C}$ be one of $\mathcal{E}_3$, $\mathcal{E}_4$, $\mathcal{E}_5$, $\mathcal{E}_6$ or~$\mathcal{E}$. 
    Every $3$-connected graph~${G \in \mathcal{C}}$ with at least $9$ vertices has a tree-decomposition ${(T,\mathcal{V})}$ with the following properties.
    \begin{enumerate}
        [label=(\alph*), series=tdenum]
        \item \label{item:td1-1} For each node~${t \in V(T)}$ the graph~$G[V_t]$ is isomorphic to a graph in~${\mathcal{A}(\mathcal{C}) \setminus \{ K_3 \}}$;  
        \item \label{item:td1-2} each graph in~${\mathcal{H} := \{ G[V_s \cap V_t] \colon st \in E(T) \}}$ is a triangle;
        \item \label{item:td1-3} one of the following holds:
        \begin{itemize}
            \item at most one part induces a graph not isomorphic to~$K_6$;
            \item $\mathcal{C}$ is~$\mathcal{E}_3$ and two parts each induce a subgraph isomorphic to~${K_3 + C_4}$ and all other parts induce a subgraph isomorphic to~$K_6$;
            \item $\mathcal{C}$ is~$\mathcal{E}_6$ and two parts each induce a subgraph isomorphic to~${K_4}$ that share no triangle and all other parts induce a subgraph isomorphic to~$K_6$.
        \end{itemize}
    \end{enumerate}
\end{lemma}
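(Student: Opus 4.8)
The plan is to induct on $|V(G)|$, exploiting the recursive definition of $\mathcal{C}$ in Definition~\ref{def:extremalgraphs}. Since every base graph listed there has at most eight vertices, a $3$-connected $G\in\mathcal{C}$ with $|V(G)|\ge 9$ cannot be a base graph, so it is a stitch of graphs $H_1,H_2$ (lying in $\mathcal{C}$, together with $K_2+\overline{P_6}$ when $\mathcal{C}=\mathcal{E}_3$) witnessed by some $(G_1,G_2,T,\phi)$, subject to the size restrictions of Definition~\ref{def:extremalgraphs}. First I would verify that the pieces $G_1\cong H_1$ and $G_2\cong H_2$ are themselves $3$-connected: as $V(T)$ is a clique it lies entirely on one side of any separation of $G_i$, so a non-trivial separation of $G_i$ of order at most $2$ with $V(T)$ on one side extends, by appending $V(G_{3-i})$ to that side, to a non-trivial separation of $G$ of order at most $2$, contradicting the $3$-connectivity of $G$.

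With the pieces $3$-connected, I would run the induction. Each $H_i$ is either isomorphic to a graph in $\mathcal{A}(\mathcal{C})\setminus\{K_3\}$, in which case it forms a single bag, or it is again a stitch, to which the induction hypothesis applies. The point to check here is that the size and facial-triangle constraints of Definition~\ref{def:extremalgraphs} prevent the ``bad'' $3$-connected base graphs (namely $K_4$ and $K_5$ in $\mathcal{E}_3$, which are too small, and $K_{2,2,2,2}$, which has no facial triangle by Corollary~\ref{cor:drawing-properties}\ref{item:dp2}) from ever occurring as a piece, so that every piece is an atom or a stitch. To glue the two decompositions I would use that $V(T)$, being a clique, lies in a bag of each, and add a tree edge joining two such bags; since $V(G_1)\cap V(G_2)=V(T)$, the intersection of the joined bags is exactly $V(T)$, a triangle, so \ref{item:td1-2} is preserved, \ref{item:td1-1} is immediate, and the tree-decomposition axioms \ref{item:T1}--\ref{item:T3} are routine.

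The real content is \ref{item:td1-3}. Writing $a(H)$ for the number of bags of a decomposition not isomorphic to $K_6$, I would first record the arithmetic fact that, since every adhesion set is a triangle, $|V(H)|$ and $\sum_t|V_t|$ are congruent modulo $3$; as every non-$K_6$ atom has order $\not\equiv 0\pmod 3$, a bag is non-$K_6$ exactly when it contributes a nonzero residue. Combined with $|V(H_i)|=3k_i+s_i$ and the inductive form of \ref{item:td1-3}, this pins $a(H_i)$ down in terms of $s_i$; in particular $s_i=0$ forces the decomposition of $H_i$ to consist solely of copies of $K_6$. For $\mathcal{C}\in\{\mathcal{E}_4,\mathcal{E}_5,\mathcal{E}\}$ the constraint $s_1=0$ then yields $a(G)=a(H_1)+a(H_2)=a(H_2)\le 1$, the first alternative of \ref{item:td1-3}. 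For $\mathcal{C}=\mathcal{E}_3$ the constraint $s_1+s_2\le 2$ bounds $a(G)\le 2$, and a short case analysis on $(s_1,s_2)$ shows the only way to reach $a(G)=2$ is with two $K_3+C_4$ bags, matching the second alternative (no further condition being demanded there).

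The case $\mathcal{C}=\mathcal{E}_6$ is where I expect the main difficulty, since its special alternative additionally requires that the two $K_4$ bags \emph{share no triangle}. The same case analysis forces $a(G)\le 2$, with $a(G)=2$ arising only when each side contributes a single $K_4$. Two bags $B_1\subseteq V(G_1)$ and $B_2\subseteq V(G_2)$ satisfy $B_1\cap B_2\subseteq V(T)$, so they share a triangle precisely when both contain $V(T)$. If at most one of the two $K_4$'s contains $V(T)$, I would route the connecting edge so as to keep them apart, and \ref{item:td1-3} holds with $a(G)=2$; if both contain $V(T)$, I would instead join them directly and \emph{merge} them into the single bag $G[V(T)\cup\{d_1,d_2\}]$, where $d_i\in V(G_i)\setminus V(T)$ is the apex of the $i$-th $K_4$. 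As $d_1\ne d_2$ and $d_1d_2\notin E(G)$ (no edge of $G_1$ or $G_2$ joins them), this merged bag is $K_5$ minus an edge, a $3$-connected subgraph of $K_5$ and hence an atom of $\mathcal{A}(\mathcal{E}_6)$, so the merge preserves \ref{item:td1-1} and \ref{item:td1-2} while reducing to $a(G)=1$. The crux is to verify that this dichotomy is exhaustive, that a single apex always lies on each side so the merge is genuinely available, and that the freedom in choosing the connecting bags suffices to prevent any unwanted pair of $K_4$ bags from sharing a triangle; this may require strengthening the induction hypothesis to control which bag houses a prescribed facial triangle.
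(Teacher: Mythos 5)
Your proposal is correct and follows essentially the same route as the paper: induct over the stitch structure (noting the pieces inherit $3$-connectivity), glue the two tree-decompositions along a bag on each side containing the stitching triangle, use the mod-$3$ residue count to establish \ref{item:td1-3}, and, in the one problematic $\mathcal{E}_6$ configuration, merge the two $K_4$ bags sharing a triangle into a single bag isomorphic to a $3$-connected subgraph of $K_5$ --- which is precisely the tree surgery the paper performs. Your closing worry is unfounded and needs no strengthened induction hypothesis: any two bags coming from opposite sides of the stitch intersect in a subset of the stitching triangle, so they share a triangle only if both contain it, while within each side the inductive form of \ref{item:td1-3} already rules out a sharing pair; hence your dichotomy is exhaustive exactly as stated.
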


\begin{proof}
    Since the stitch of two graphs is $3$-connected if and only if both of them are, by the definition of~$\mathcal{C}$ we inductively obtain a tree-decomposition~${\mathcal{T} = (T,\mathcal{V})}$ of~$G$ satisfying properties~\ref{item:td1-1}, \ref{item:td1-2} and~\ref{item:td1-3}, except that if~$\mathcal{C}$ is~$\mathcal{E}_6$, then two distinct parts~$V_x$ and~$V_y$ with~${G[V_x] \cong G[V_y] \cong K_4}$ may share a triangle~$D$. 
    In this case, for any distinct nodes~$s$ and~$t$ on the unique path~$P$ from~$x$ to~$y$ in~$T$, we have~${D = G[V_s \cap V_t]}$ by property~\ref{item:td1-2} and~\ref{item:T3}. 
    We now define a tree~$T'$ from~$T$ by first deleting the edge in~$E(P)$ incident to~$x$, adding a new edge between~$x$ and~$y$ and contracting that edge to a new node~$z$. 
    Moreover, we define~${V'_t := V_t}$ for all~${t \in V(T') \setminus \{z\}}$ and~${V'_z := V_x \cup V_y}$. 
    Now~${G[V'_z]}$ is a stitch of two copies of~$K_4$, and hence is isomorphic to a graph in~${\mathcal{A}(\mathcal{E}_6)}$. 
    Hence, the tree-decomposition~${(T', (V'_t \colon t \in V(T')))}$ satisfies all desired properties. 
\end{proof}

The following lemma now completes the picture, allowing us to determine when a graph with a tree-decomposition as in Proposition~\ref{lem:td1} is in the given class~$\mathcal{C}$. 

\begin{lemma}
    \label{lem:dagger}
    Let $G$ be a $3$-connected graph with tree-decomposition~${(T,\mathcal{V})}$ as in Lemma~\ref{lem:td1}.
    Let~$\mathcal{D}$ be a set of triangles in~$G$ and let~${F \subseteq E(G)}$. 
    The following statements are equivalent.
    \begin{enumerate}
        [label=(\arabic*)]
        \item\label{item:treecomp1} There is a $1$-drawing~$\phi$ of~$G$ with respect to which all triangles in~$\mathcal{D}$ are facial and no edge in~$F$ is crossed.
        \item\label{item:treecomp2} 
        \begin{itemize}
            \item $\mathcal{D}$ and~$\mathcal{H}$ are disjoint;
            \item for each~${D \in \mathcal{H}}$, there is a unique edge~${st \in E(T)}$ such that~${G[V_s \cap V_t] = D}$; 
            \item for every node~$t \in V(T)$ there is a $1$-drawing of~${G[V_t]}$ with respect to which all triangles~${D \subseteq G[V_t]}$ in~${\mathcal{D} \cup \mathcal{H}}$ are facial and all edges in~${E(G[V_t]) \cap F}$ are uncrossed.  
        \end{itemize}
    \end{enumerate}
\end{lemma}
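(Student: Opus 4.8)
The plan is to prove the two implications \ref{item:treecomp1}$\,\Leftrightarrow\,$\ref{item:treecomp2} separately, with \ref{item:treecomp2}$\,\Rightarrow\,$\ref{item:treecomp1} a gluing argument and \ref{item:treecomp1}$\,\Rightarrow\,$\ref{item:treecomp2} a decomposition argument. For the gluing direction I would root~$T$ at an arbitrary node and induct on~$\abs{V(T)}$, at each step attaching a leaf part to the drawing already built for the rest of~$G$ along their common adhesion triangle by a single application of Lemma~\ref{lem:stitching}. Every such adhesion triangle lies in~$\mathcal{H}$, so by the third bullet of~\ref{item:treecomp2} it is facial in the chosen local $1$-drawing of each of the two parts meeting along it; since the rest of a part lies in a single region of the sphere minus a facial triangle, the hypotheses of Lemma~\ref{lem:stitching} are satisfied. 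The uniqueness clause ensures that distinct tree-edges at a node correspond to distinct facial triangles, so no two parts are ever stitched onto the same face. Finally, as every clique of~$G$ lies inside one part, each triangle of~$\mathcal{D}$ lies in a single part, is facial there, and---being disjoint from~$\mathcal{H}$ by the first bullet---is never stitched over, hence remains facial; likewise each edge of~$F$ lies in a single part, is uncrossed there, and picks up no new crossing since stitching creates no crossing between edges of different parts.

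For the converse I start from a global $1$-drawing~$\phi$ of~$G$ realising~\ref{item:treecomp1} and analyse how~$\phi$ meets each adhesion triangle~${D \in \mathcal{H}}$. Since~$G$ is $3$-connected on at least nine vertices and~$(T,\mathcal{V})$ is as in Lemma~\ref{lem:td1}, the set~${V(D)}$ is a genuine $3$-separator and every component of~${G - V(D)}$ is adjacent to all three vertices of~$D$. The crux is the following \emph{separation claim}: the three edges of~$D$ are uncrossed in~$\phi$, so that~${\phi(D)}$ is a Jordan curve, and the two tree-sides of the separation that~$D$ induces lie in the two distinct regions of~${\mathbb{S}^2 \setminus \phi(D)}$, with nothing else attached along~$D$. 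Granting the separation claim, the three bullets of~\ref{item:treecomp2} follow readily. Restricting~$\phi$ to a part~${G[V_t]}$ deletes, for each incident adhesion triangle, everything on its far side; by the claim this empties the corresponding region, so every triangle of~${\mathcal{D} \cup \mathcal{H}}$ lying in~${G[V_t]}$ is facial and every edge of~${F \cap E(G[V_t])}$ is uncrossed in the restriction, which is the third bullet. If some~$D$ lay in~${\mathcal{D} \cap \mathcal{H}}$ it would be both facial in~$\phi$ (so one region of~$\phi(D)$ is empty) and separating with both sides non-empty, contradicting the claim, which gives disjointness. And if some~$D$ were the adhesion set of two different tree-edges, there would be at least three non-empty, pairwise non-adjacent tree-sides competing for the two regions of~$\phi(D)$, which the claim forbids, giving uniqueness.

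The main obstacle is the separation claim, and this is exactly where the precise form of the tree-decomposition from Lemma~\ref{lem:td1} is indispensable. It is \emph{false} for separating triangles of $3$-connected $1$-planar graphs in general: the graph~${K_5 - e}$, two copies of~$K_4$ glued along a triangle, has a $1$-drawing in which the shared triangle is facial and both apices lie on the same side. This is precisely the configuration that the proof of Lemma~\ref{lem:td1} absorbs into a single part, so that for a genuine adhesion triangle of~$(T,\mathcal{V})$ each side carries one of the dense parts (such as~$K_6$,~${K_3 + C_4}$ or~${K_2 + \overline{P_6}}$), or at least a vertex adjacent to all of~$V(D)$ sitting opposite such a part. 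I would establish the claim by exploiting the rigidity of the drawings of these parts---the uniqueness of the $1$-drawings of~$K_6$,~${K_3 + C_4}$ and~$K_{2,2,2,2}$ from Lemma~\ref{lem:unique1drawing} together with the facial-triangle dichotomy of Corollary~\ref{cor:drawing-properties}---which forces any subgraph attached along~$D$ into the unique face of the part bounded by~$D$, and hence into the opposite region; the parity fact that two vertex-disjoint triangles cross an even number of times would be used to rule out the remaining crossing configurations. Once the separation claim is proved the rest is bookkeeping, so I expect essentially all of the difficulty to lie in its verification, treated uniformly across the possible part types.
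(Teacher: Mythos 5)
Your architecture matches the paper's: \ref{item:treecomp2}$\,\Rightarrow\,$\ref{item:treecomp1} is dispatched by iterated application of Lemma~\ref{lem:stitching} exactly as you describe, and \ref{item:treecomp1}$\,\Rightarrow\,$\ref{item:treecomp2} is reduced to showing that each adhesion triangle~$D$ is drawn as a Jordan curve with the two sides of the corresponding separation in the two regions of~${\mathbb{S}^2 \setminus \phi(D)}$, after which disjointness, uniqueness and the local statements follow by the bookkeeping you give. You have also correctly located the difficulty in this separation claim and correctly observed that it fails for separating triangles of $3$-connected $1$-planar graphs in general.

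The gap is that the separation claim is asserted rather than proved, and as you state it --- ``the three edges of~$D$ are uncrossed in~$\phi$'' for the \emph{given} witness~$\phi$ of~\ref{item:treecomp1} --- it need not hold: nothing in~\ref{item:treecomp1} prevents a witness from carrying a spurious crossing on an edge of an adhesion triangle, since~\ref{item:treecomp1} only constrains the triangles of~$\mathcal{D}$ and the edges of~$F$. The paper's first move is therefore a normalisation you omit: replace~$\phi$ by a quasi-rich drawing (uncrossing an edge that can be uncrossed only enlarges the set of facial triangles and keeps~$F$ uncrossed). Quasi-richness is then precisely what makes Corollary~\ref{cor:drawing-properties}\ref{item:dp1} usable: every uncrossed edge of a part~${H_x \cong K_6}$ or~${K_3+C_4}$ is incident with two mutually crossing edges of~$H_x$, so no edge of any other part can cross it; this is the step that confines each component of~${H_y - V(D)}$ to a single region of~${\mathbb{S}^2 \setminus \phi(H_x)}$. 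Even granting that, one must still show that this region is bounded by~$\phi(D)$ itself and that the edges of~$D$ are uncrossed; the paper does this by redrawing~$E(D)$ inside the region and using $3$-connectivity together with the rigidity of the drawings of~$K_6$ and~${K_3+C_4}$ to show the new drawing is equivalent to the old one. Your appeal to ``rigidity'' and to the parity of crossings of disjoint triangles (which the paper uses in Lemma~\ref{lem:4trianglecrossings}, not here) does not substitute for these two steps, so the core of~\ref{item:treecomp1}$\,\Rightarrow\,$\ref{item:treecomp2} remains open in your write-up. Two smaller inaccuracies: $K_{2,2,2,2}$ never occurs as a part, and~${K_2+\overline{P_6}}$ is not covered by Lemma~\ref{lem:unique1drawing} or Corollary~\ref{cor:drawing-properties}; the paper sidesteps this by always taking the ``rigid'' side~$H_x$ of the comparison to be a part isomorphic to~$K_6$ or~${K_3+C_4}$, which property~\ref{item:td1-3} of Lemma~\ref{lem:td1} guarantees is possible.
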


\begin{proof}
    Suppose that~\ref{item:treecomp1} holds. 
    We may assume that~$\phi$ is quasi-rich, since redrawing a crossed edge so that it is no longer crossed can only increase the set of facial triangles. 
    In particular, we may assume that~$\phi\restricted G[V_t]$ is quasi-rich for every node~$t$ of~$T$. 
    Consider a triangle~$D$ in $\mathcal{H}$. 
    Note that by property~\ref{item:td1-3} in Lemma~\ref{lem:td1}, at most one part of~${(T,\mathcal{V})}$ induces a graph which contains~$D$ and is not isomorphic to a graph in ${\{K_6,K_3+C_4\}}$. 
    Let~$k$ be the number of distinct parts of~${(T,\mathcal{V})}$ containing~${V(D)}$, and let ${\{H_i \colon i \in [k]\}}$ be the graphs induced on these parts, so that every graph in ${\{H_i \colon i \in [k-1]\}}$ is isomorphic to a graph in~${\{K_6, K_3+C_4\}}$.
    Note that $k\geq 2$ by the definition of $\mathcal{H}$.

    Let~${x \in [k-1]}$ and~${y \in [k]\setminus \{x\}}$ and let~${H := H_x \cup H_y}$. 
    By Corollary~\ref{cor:drawing-properties}\ref{item:dp1} applied to~${\phi \restricted H_x}$ no edge of~$H_y$ crosses any edge of~$H_x$, since~${\phi\restricted H}$ is quasi-rich.
    It follows that for every component~$C$ of~${H_y - V(D)}$ there is a unique region~$R$ of~${\mathbb{S}^2 \setminus \phi(H_x)}$ containing~${\phi(C)}$. 
    Since~$H_y$ is $3$-connected, the boundary of~$R$ contains~${V(D)}$. 
    The intersection of~${\phi(H_x-E(D))}$ with the boundary of~$R$ is~${V(D)}$. 
    Since~$H_x$ is $3$-connected, the boundary of~$R$ is a simple closed curve. 
    It follows that we can redraw the edges of~$D$ inside~$R$, so that they do not cross any edge of~${H}$, forming a new quasi-rich drawing~$\phi'$ of~${H}$.
    Again by Corollary~\ref{cor:drawing-properties}\ref{item:dp1}, no edge which is crossed with respect to~${\phi\restricted H_x}$ can be redrawn so that it is uncrossed. 
    In particular, the edges of~$D$ are uncrossed with respect to~${\phi\restricted (H)}$. 
    Note that for each edge~${vw \in E(D)}$, at most one of the regions of ${\mathbb{S}^2 \setminus (\phi(vw) \cup \phi'(vw))}$ contains a point of~${\phi(H)}$, since~$H$ is $3$-connected. 
    It follows that~$\phi'$ and~${\phi\restricted H}$ are equivalent. 
    Hence~$\phi(D)$ is the boundary of the region of~${\mathbb{S}^2 \setminus \phi(H_x)}$ containing~$\phi(C)$, and by symmetry is the boundary of every region of~${\mathbb{S}^2 \setminus \phi(H_x)}$ containing~${\phi(C')}$ for any component~$C'$ of~${H_y-V(D)}$. 
    Since~$H_x$ is $3$-connected, it follows that one region of~${\mathbb{S}^2 \setminus \phi(D)}$ contains~${\phi(H_y-V(D))}$, the other region of~${\mathbb{S}^2 \setminus \phi(D)}$ contains~${\phi(H_x-V(D))}$, and~$D$ is not facial with respect to~$\phi$. 
    In particular, ${D \notin \mathcal{D}}$.
    Since no edge of~$H_y$ crosses any edge of~$H_x$ with respect to~$\phi$, it follows that~$D$ is a facial triangle with respect to both~${\phi\restricted H_x}$ and~${\phi\restricted H_y}$, and hence~$D$ is facial with respect to the restriction of~$\phi$ to any subgraph of~$G$ induced on a part of~${(T,\mathcal{V})}$ which contains~${V(D)}$. If $k\geq 3$, then by the pigeonhole principle we can find distinct $i$ and $j$ in $[k]$ such that $\phi(H_{i}-V(D))$ and $\phi(H_{j}-V(D))$ are contained in the same region of $\mathbb{S}^2\setminus \phi(D)$, a contradiction. Hence there is a unique edge $st\in E(T)$ with $D=G[V_s\cap V_t]$. 
    Uncrossed edges with respect to $\phi$ are still uncrossed in all restrictions $\phi$ to subgraphs of $G$, so~\ref{item:treecomp1} implies~\ref{item:treecomp2}.
    
    It is an easy consequence of Lemma~\ref{lem:stitching} that~\ref{item:treecomp2} implies~\ref{item:treecomp1}.
\end{proof}

\begin{corollary}
    \label{cor:td1}
    Each tree-decomposition as in Lemma~\ref{lem:td1} also satisfies 
    \begin{enumerate}
        [resume*=tdenum]
        \item \label{item:td1-5} for each~${D \in \mathcal{H}}$, there is a unique edge~${st \in E(T)}$ such that~${G[V_s \cap V_t] = D}$;
        \item \label{item:td1-4} for each node~${t \in V(T)}$, the graph~${G[V_t]}$ has a $1$-drawing with respect to which all triangles $D\subseteq G[V_t]$ in $\mathcal{H}$ are facial. 
         
    \end{enumerate}
\end{corollary}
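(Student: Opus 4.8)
The plan is to obtain this corollary as an immediate special case of Lemma~\ref{lem:dagger}, applied with no distinguished triangles and no distinguished edges. The first thing I would record is that $G$ is $1$-planar. Since $G$ belongs to one of the classes $\mathcal{E}_3$, $\mathcal{E}_4$, $\mathcal{E}_5$, $\mathcal{E}_6$ or~$\mathcal{E}$, it is obtained from $1$-planar building blocks by iterated stitching, so Lemma~\ref{lem:stitching} guarantees that $G$ admits a $1$-drawing.

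Next I would invoke Lemma~\ref{lem:dagger} for the tree-decomposition~${(T,\mathcal{V})}$ with the choices~${\mathcal{D} := \emptyset}$ and~${F := \emptyset}$. With these choices, statement~\ref{item:treecomp1} of that lemma asserts merely the existence of some $1$-drawing of~$G$ (the conditions on~$\mathcal{D}$ and~$F$ being vacuous), and this holds by the previous paragraph. Consequently statement~\ref{item:treecomp2} holds as well.

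Finally I would read off the two desired properties directly from the three bullets of~\ref{item:treecomp2}. The first bullet, that~$\mathcal{D}$ and~$\mathcal{H}$ are disjoint, is trivially satisfied because~${\mathcal{D} = \emptyset}$. The second bullet is verbatim property~\ref{item:td1-5}. The third bullet, after substituting~${\mathcal{D} \cup \mathcal{H} = \mathcal{H}}$ and~${E(G[V_t]) \cap F = \emptyset}$, states precisely that for each node~$t$ the graph~${G[V_t]}$ has a $1$-drawing with respect to which every triangle of~$\mathcal{H}$ contained in~${G[V_t]}$ is facial, which is property~\ref{item:td1-4}. Since all hypotheses of Lemma~\ref{lem:dagger} are furnished by the assumption that~${(T,\mathcal{V})}$ is a tree-decomposition as in Lemma~\ref{lem:td1}, there is essentially no obstacle to overcome; the single point worth stating explicitly is the $1$-planarity of~$G$, which is immediate from its recursive definition.
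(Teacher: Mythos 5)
Your proof is correct and follows essentially the same route as the paper: the paper's own proof also derives the corollary by applying Lemma~\ref{lem:dagger} (in the direction \ref{item:treecomp1}~$\Rightarrow$~\ref{item:treecomp2}, with the trivial choices of~$\mathcal{D}$ and~$F$), the only input being that~$G$ is $1$-planar, which as you note is immediate from the recursive definition of the classes via stitching.
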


\begin{proof}
    By property~\ref{item:td1-3} of Lemma~\ref{lem:td1}, for each edge~${st \in E(T)}$ either~${G[V_s]}$ or~${G[V_t]}$ is isomorphic to a graph in~${\{K_6, K_3 + C_4\}}$. 
    The result now follows from Lemma~\ref{lem:dagger}. 
\end{proof}

Now for the classes~$\mathcal{E}_3$, $\mathcal{E}_4$ and~$\mathcal{E}$, these results immediately yield a characterisation in terms of these tree-decompositions. 

\begin{proposition}
    \label{prop:td2}
    Let~$\mathcal{C}$ be one of $\mathcal{E}_3$, $\mathcal{E}_4$ or~$\mathcal{E}$. 
    A graph~$G$ with at least $9$ vertices is in $\mathcal{C}$ if and only if it has a tree-decomposition~${(T,\mathcal{V})}$ with the following properties.
    \begin{enumerate}
        [label=(\alph*)]
        \item \label{item:td2-1} For each~${t \in V(T)}$ the graph~$G[V_t]$ is isomorphic to a graph in~$\mathcal{A}(\mathcal{C})$; 
        \item \label{item:td2-2} each graph in~${\mathcal{H} := \{ G[V_s \cap V_t] \colon st \in E(T) \}}$ is a triangle; 
        \item \label{item:td2-3} the number of nodes~${t \in V(T)}$ such that $G[V_t]$ is not isomorphic to $K_6$ is at most two, and this inequality is strict 
        unless~$\mathcal{C}$ is~$\mathcal{E}_3$ and two distinct parts each induce a subgraph isomorphic to~${K_3 + C_4}$; 
        \item \label{item:td2-5} for each~${D \in \mathcal{H}}$, there is a unique edge~${st \in E(T)}$ such that~${G[V_s \cap V_t] = D}$; 
        \item \label{item:td2-4} for each node~${t \in V(T)}$, the graph~${G[V_t]}$ has a $1$-drawing with respect to which all triangles~${D \subseteq G[V_t]}$ in~$\mathcal{H}$ are facial. 
    \end{enumerate}
\end{proposition}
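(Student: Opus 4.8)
The plan is to prove Proposition~\ref{prop:td2} by establishing both directions of the equivalence, leaning heavily on the structural results already in hand (Lemma~\ref{lem:td1}, Lemma~\ref{lem:dagger} and Corollary~\ref{cor:td1}). For the forward direction, suppose $G \in \mathcal{C}$ with at least $9$ vertices. Since the stitch of two $3$-connected graphs is $3$-connected and each generator in $\mathcal{A}(\mathcal{C})$ is $3$-connected, a straightforward induction shows that $G$ is $3$-connected. Then Lemma~\ref{lem:td1} directly furnishes a tree-decomposition satisfying \ref{item:td2-1}, \ref{item:td2-2} and \ref{item:td2-3} (for the classes $\mathcal{E}_3$, $\mathcal{E}_4$, $\mathcal{E}$, property \ref{item:td1-3} of Lemma~\ref{lem:td1} is exactly the statement that at most one part is not a $K_6$, with the sole exception of two $K_3+C_4$ parts in $\mathcal{E}_3$). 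Properties \ref{item:td2-5} and \ref{item:td2-4} then follow immediately from Corollary~\ref{cor:td1}, whose conclusions \ref{item:td1-5} and \ref{item:td1-4} are verbatim the same statements.

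\textbf{For the converse direction,} suppose $G$ has a tree-decomposition $(T,\mathcal{V})$ satisfying \ref{item:td2-1}--\ref{item:td2-4}. The strategy is to induct on the number of nodes of $T$ and realise $G$ as an iterated stitch. First I would observe that properties \ref{item:td2-4} and \ref{item:td2-5}, taken together, are precisely condition~\ref{item:treecomp2} of Lemma~\ref{lem:dagger} with $\mathcal{D} = \emptyset$ and $F = \emptyset$; applying Lemma~\ref{lem:dagger} yields a single $1$-drawing $\phi$ of $G$ with respect to which every adhesion triangle $D \in \mathcal{H}$ is facial. I would then pick a leaf node $\ell$ of $T$ with neighbour $m$, so that $D := G[V_\ell \cap V_m]$ is the unique adhesion triangle at $\ell$. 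Setting $G_1 := G[V_\ell]$ and $G_2 := G\big[\bigcup_{t \neq \ell} V_t\big]$, the separation properties of the tree-decomposition guarantee that $G_1 \cap G_2 = D$ is a triangle and $G_1 \cup G_2 = G$, while $D$ is facial with respect to both $\phi \restricted G_1$ and $\phi \restricted G_2$ since it is facial in $\phi$ and no edge of $G_1$ crosses an edge of $G_2$. Thus $G$ is a stitch of $G_1 = G[V_\ell]$ and $G_2$. One checks that $G_1 \in \mathcal{A}(\mathcal{C}) \subseteq \mathcal{C}$ and that $G_2$ inherits a tree-decomposition satisfying \ref{item:td2-1}--\ref{item:td2-4} on the smaller tree $T - \ell$, so by induction $G_2 \in \mathcal{C}$; the divisibility-and-exceptional-part bookkeeping in the definition of $\mathcal{C}$ (the constraints on $s_1, s_2$ and on the number of non-$K_6$ parts, controlled by \ref{item:td2-3}) then certifies that the stitch $G$ lies in $\mathcal{C}$.

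\textbf{The main obstacle} I anticipate is the careful bookkeeping in the inductive step of the converse, specifically verifying that the arithmetic side-conditions in Definition~\ref{def:extremalgraphs} are satisfied at each stitch. Property~\ref{item:td2-3} controls how many parts fail to be $K_6$ and hence governs the residues $s_i \pmod 3$ of the pieces; one must confirm that peeling off the leaf part preserves the invariant ``at most one non-$K_6$ part (with the stated $\mathcal{E}_3$ exception)'' on $T - \ell$ and that the resulting $(k_i, s_i)$ satisfy the $s_1 + s_2 \leq 2$ (or $s_1 = 0$) constraints demanded by $\mathcal{C}$. A subtlety is that the exceptional configurations---two $K_3+C_4$ parts in $\mathcal{E}_3$---require choosing the leaf $\ell$ to be a $K_6$ part when possible, so that $G_1$ is a ``generic'' generator and $G_2$ still satisfies \ref{item:td2-3}; I would handle the base case (when $T$ has one or two nodes, or $G_2$ drops below $9$ vertices) separately by direct appeal to the relevant clauses of Definition~\ref{def:extremalgraphs} and the small-graph membership of cliques and $K_3 + C_4$ in $\mathcal{C}$.
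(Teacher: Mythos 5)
Your overall architecture matches the paper's: the forward direction is exactly Lemma~\ref{lem:td1} plus Corollary~\ref{cor:td1} after noting $3$-connectivity, and the converse is the paper's ``simple induction on $\abs{V(T)}$'' via leaf-peeling and stitching. However, one step in your converse is wrong as stated. You claim that applying Lemma~\ref{lem:dagger} with ${\mathcal{D} = F = \emptyset}$ ``yields a single $1$-drawing $\phi$ of $G$ with respect to which every adhesion triangle ${D \in \mathcal{H}}$ is facial.'' With ${\mathcal{D} = \emptyset}$, statement~(1) of Lemma~\ref{lem:dagger} asserts only that $G$ is $1$-planar; it says nothing about the triangles of $\mathcal{H}$ being facial in the global drawing. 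Indeed they cannot be: condition~(2) of that lemma demands that $\mathcal{D}$ and $\mathcal{H}$ be disjoint, and its proof shows explicitly that for ${D \in \mathcal{H}}$ each of the two regions of ${\mathbb{S}^2 \setminus \phi(D)}$ contains vertices of $G$, so $\phi(D)$ bounds no face of $\phi$. Consequently your justification that $D$ is facial with respect to ${\phi \restricted G_1}$ and ${\phi \restricted G_2}$ ``since it is facial in $\phi$'' does not go through.

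The conclusion you need is nevertheless true, and is essentially what the $(1)\Rightarrow(2)$ argument inside Lemma~\ref{lem:dagger} establishes: for a quasi-rich drawing, each ${D \in \mathcal{H}}$ is facial with respect to the restriction of $\phi$ to every part containing ${V(D)}$, and ${\phi(G_2 - V(D))}$ lies entirely in one region of ${\mathbb{S}^2 \setminus \phi(D)}$, whence $D$ is facial with respect to ${\phi \restricted G_2}$ as well (since $D$ is induced, no other edge of $G_2$ meets the opposite region). Alternatively --- and this is what the paper intends --- you can avoid the global drawing altogether and run the induction bottom-up: property~\ref{item:td2-4} gives each part a drawing in which its adhesion triangles are facial, property~\ref{item:td2-5} ensures each adhesion triangle is used by exactly one tree edge, and Lemma~\ref{lem:stitching} together with the remark following the definition of a stitch (all but one facial triangle of each piece survives as a facial triangle of the stitch) lets you glue the parts along $T$ one leaf at a time while preserving the facial triangles still needed. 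Your bookkeeping for the residues $s_i$ and the non-$K_6$ parts, including choosing the leaf to be a $K_6$ part in the exceptional two-${(K_3+C_4)}$ configuration, is the right way to finish and is consistent with the paper's appeal to property~\ref{item:td2-3}.
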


\begin{proof}
    First note that each~${G \in \mathcal{C}}$ with at least~$9$ vertices is $3$-connected. 
    Hence the existence of such a tree-decomposition is given from Lemma~\ref{lem:td1} and Corollary~\ref{cor:td1}.
    
    For the other direction, if~$G$ has a tree-decomposition~${(T, \mathcal{V})}$ with these properties, then it follows easily from Lemma~\ref{lem:stitching} that~$G$ is $1$-planar. 
    Using property~\ref{item:td2-3}, a simple induction on~$\abs{V(T)}$ lets us conclude that~$G$ is in~$\mathcal{C}$. 
\end{proof}

Note that not all sufficiently large graphs in~$\mathcal{E}_5$ and~$\mathcal{E}_6$ are $3$-connected, so a structural characterisation in terms of tree-decompositions needs to take separations of order less than~$3$ into account. 
Generalising the well-known notion of the block-cutvertex tree, Tutte~\cite{tutte} proved a decomposition theorem about ``$3$-blocks'' in graphs, for which we will use the following special case. 

\begin{theorem}
    \label{thm:tutte}
    \cite{tutte}
    Let~$G$ be a $2$-connected graph in which every non-trivial $2$-separator induces a clique. 
    Then~$G$ has a tree-decomposition of adhesion~$2$ in which every part induces a $3$-connected graph or a triangle.
\end{theorem}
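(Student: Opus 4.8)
The plan is to prove this special case of Tutte's theorem directly by induction on $\abs{V(G)}$, rather than invoking the general theory of $3$-blocks. The base cases are when $G$ is $3$-connected (take the one-node tree-decomposition whose single part is $G$) and when $\abs{V(G)} \le 3$ (since $G$ is $2$-connected it must be a triangle, and again a single node suffices). So I would assume that $G$ has at least four vertices and is not $3$-connected, and hence admits a non-trivial $2$-separation $(A,B)$ with separator $\{u,v\}$; by hypothesis $uv \in E(G)$. The idea is then to split along $\{u,v\}$ and recurse on $G[A]$ and $G[B]$.

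The heart of the argument is to show that both sides inherit the hypotheses of the theorem. For $2$-connectivity of $G[A]$ (and symmetrically $G[B]$): since the separation is non-trivial we have $\abs{A} \ge 3$, and for any $a \in A \setminus \{u,v\}$ the $2$-connectivity of $G$ yields two internally disjoint $a$--$\{u,v\}$ paths which, to avoid $B$, must lie inside $A$; together with the edge $uv$ this shows $G[A]$ is $2$-connected. For the clique condition, let $\{x,y\}$ be a non-trivial $2$-separator of $G[A]$ and write $A = P \cup Q$ with $P \cap Q = \{x,y\}$ and no edge between $P\setminus Q$ and $Q\setminus P$. The edge $uv$ forces $u$ and $v$ onto the same side (neither endpoint can lie strictly opposite the other across a separator), say both in $P$. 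Then the vertices of $Q \setminus \{x,y\}$ have no neighbour outside $Q$ in $G$, since their only possible route to $B$ would pass through $\{u,v\} \subseteq P$; hence $\{x,y\}$ is also a non-trivial $2$-separator of $G$, so it induces a clique and $xy \in E(G[A])$. I would check that this reasoning is unaffected when $u$ or $v$ happens to lie in $\{x,y\}$.

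Given these two facts, the induction hypothesis supplies tree-decompositions of $G[A]$ and of $G[B]$ of adhesion $2$ whose parts induce $3$-connected graphs or triangles. To assemble a tree-decomposition of $G$ I would use property~\ref{item:T2}: since $uv$ is an edge, some node of the $G[A]$-tree has a part containing both $u$ and $v$, and likewise for the $G[B]$-tree, and I join these two nodes by a new tree-edge. Because any part coming from $A$ meets any part coming from $B$ only inside $A \cap B = \{u,v\}$, the new adhesion set is exactly $\{u,v\}$, condition~\ref{item:T3} is preserved by the standard gluing of two tree-decompositions along a shared pair, and the overall adhesion remains $2$. The recursion terminates at exactly the desired parts, since a $2$-connected graph that is neither $3$-connected nor a triangle has at least four vertices and therefore a non-trivial $2$-separation, whose separator is an edge by the inherited clique condition, so it can always be split further.

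The main obstacle I anticipate is the bookkeeping in the inheritance step, namely verifying that every non-trivial $2$-separator of a side $G[A]$ remains a non-trivial $2$-separator of $G$ so that the clique hypothesis transfers, and in particular handling the degenerate cases where $u$ or $v$ coincides with a vertex of the new separator. Once this transfer is established, the rest is routine induction together with standard manipulation of tree-decompositions.
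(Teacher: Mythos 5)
Your proof is correct, but it takes a different route from the paper: the paper offers no argument at all for this statement, simply citing it as a special case of Tutte's general decomposition of $2$-connected graphs into $3$-blocks, whereas you prove the special case directly by induction on $\abs{V(G)}$. The key points all check out: the fan version of Menger's theorem gives, for each $a \in A \setminus \{u,v\}$, two paths to $u$ and to $v$ meeting $\{u,v\}$ only in their endpoints, and since no edge joins $A\setminus B$ to $B \setminus A$ these paths stay in $A$, which together with the edge $uv$ yields $2$-connectivity of $G[A]$; the transfer of the clique condition works because $uv \in E(G)$ forces $u$ and $v$ onto one side of any $2$-separation of $G[A]$ (including the degenerate cases where one or both of them lie in the separator), so the far side has all of its $G$-neighbours inside $A$ and the separator remains a non-trivial $2$-separator of $G$; and the gluing step is sound because the edge $uv$ guarantees a part containing $\{u,v\}$ on each side, so the new adhesion set is exactly $\{u,v\}$ and condition (T3) follows from (T3) on each side. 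What your argument makes explicit, and what the citation hides, is precisely the role of the clique hypothesis: it is what lets the naive recursive splitting terminate in $3$-connected pieces and triangles, whereas for general $2$-connected graphs one needs Tutte's virtual edges and the additional cycle and bond pieces of the $3$-block tree. The trade-off is that the citation buys brevity and a connection to a classical, more general structure theorem, while your induction is elementary and self-contained; either is acceptable, though if you include your version you should state the fan lemma you are using.
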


\begin{corollary}
    \label{cor:2separationedges}
    Let~$G$ be an edge-maximal $1$-planar graph, and let~$\phi$ be a rich $1$-drawing of~$G$. 
    Then~$G$ has a tree-decomposition in which each part induces a triangle or a $3$-connected subgraph, and each adhesion set induces a subgraph isomorphic to~$K_2$, whose unique edge is uncrossed with respect to~$\phi$.
\end{corollary}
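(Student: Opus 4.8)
The plan is to verify the hypotheses of Tutte's decomposition theorem (Theorem~\ref{thm:tutte}) and then read off the extra information about the adhesion edges directly from the drawing. The engine throughout is richness: by definition of a rich drawing (and Lemma~\ref{lem:crossingsarecliques}) every crossing of~$\phi$ is a pair of diagonals of an induced~$K_4$, and by Lemma~\ref{lem:unique1drawing} this~$K_4$ has a unique drawing up to weak equivalence. The single most useful consequence I would extract first is a ``no crossing across a separation'' principle: if~$(A,B)$ is a separation of~$G$ with~${A \cap B = \{u,v\}}$, then no edge with an endpoint in~${A \setminus B}$ can cross an edge with an endpoint in~${B \setminus A}$, since richness would then produce an induced~$K_4$ on the four endpoints and hence an edge joining~${A \setminus B}$ to~${B \setminus A}$, contradicting that~$\{u,v\}$ separates.

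First I would dispose of low connectivity: an edge-maximal $1$-planar graph on at least three vertices is $2$-connected, since a disconnection or a cut vertex would allow an extra edge to be routed across it with no new crossing, contradicting maximality (the smaller cases being immediate). So I may assume~$G$ is $2$-connected and show that every non-trivial $2$-separator~$\{u,v\}$ induces a~$K_2$ whose edge is uncrossed. To see that~${uv \in E(G)}$, suppose not and fix a separation~$(A,B)$ with separator~$\{u,v\}$; then every edge of~${G[A]}$ meets~${A \setminus B}$ and every edge of~${G[B]}$ meets~${B \setminus A}$, so by the no-crossing principle~${\phi(G[B])}$ meets~${\phi(G[A])}$ only in~$\{u,v\}$. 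Since~$G$ is $2$-connected, each component~$C$ of~${G - \{u,v\}}$ on the $B$-side is joined to both~$u$ and~$v$, so~${\phi(C)}$ together with its edges to~$u$ and~$v$ lies in the closure of a single region~$R$ of~${\mathbb{S}^2 \setminus \phi(G[A])}$ whose boundary contains both~$u$ and~$v$; routing~$uv$ inside~$R$ hugging its boundary adds~$uv$ with no crossing, contradicting edge-maximality. Hence~${uv \in E(G)}$.

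The crux is to show this edge is uncrossed. Suppose instead that~$uv$ crosses some edge~$xy$. By richness~${G[\{u,v,x,y\}] \cong K_4}$, drawn in the standard way, so~${ux, xv, vy, yu}$ are uncrossed and their images form a Jordan curve~$\gamma$ bounding two disks, one of which,~$D_1$, contains the crossing point. Because~$xy$ is an edge, $x$ and~$y$ lie on the same side of~$(A,B)$; after relabelling the whole~$K_4$ lies in~$A$. Applying the no-crossing principle (and using that, as~$\phi$ is a $1$-drawing, $uv$ and~$xy$ are each crossed only once) confines every $B$-side component to a single region of~${\mathbb{S}^2 \setminus \phi(K_4)}$ whose boundary meets both~$u$ and~$v$; inspecting the five regions cut out by~$\gamma$ and the two diagonals shows that the only such region is the outer disk~$D_2$, so~${\phi(B \setminus A) \subseteq D_2}$. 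Now~$x$ lies on~${\gamma = \partial D_2}$ and has no neighbour in~${B \setminus A}$, and a rotation-at-a-vertex analysis should produce a face inside~$D_2$ incident both to~$x$ and to a vertex of~${B \setminus A}$; adding that edge contradicts edge-maximality, so~$uv$ is uncrossed. I expect this paragraph to be the main obstacle: confining~${B \setminus A}$ to~$D_2$ is clean, but extracting a concretely addable edge from edge-maximality requires a careful analysis of the faces and rotation system inside~$D_2$.

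With both facts in hand, $G$ is a $2$-connected graph all of whose non-trivial $2$-separators induce cliques, so Theorem~\ref{thm:tutte} supplies a tree-decomposition of adhesion~$2$ in which every part induces a triangle or a $3$-connected subgraph. Each adhesion set is a non-trivial $2$-separator of~$G$, since its removal separates the exclusive vertices of the two sides and each part has at least three vertices; hence by the above it induces a~$K_2$ whose unique edge is uncrossed with respect to~$\phi$, as required.
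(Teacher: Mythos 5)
Your endgame --- verifying the hypotheses of Theorem~\ref{thm:tutte} together with the uncrossedness of the adhesion edges --- matches the paper's, but the route you take to get there is incomplete at precisely the step you flag yourself. You reduce ``the edge $uv$ between the two separator vertices is uncrossed'' to producing, via an unspecified rotation-at-a-vertex analysis, a face of the whole drawing incident with both $x$ and a vertex of ${B \setminus A}$, and this is never carried out. As stated it is also aimed at the wrong vertex: nothing forces $x$ to lie on the boundary of the region of ${\mathbb{S}^2 \setminus \phi(G[A])}$ containing a given component of ${G - \{u,v\}}$ on the $B$-side, since other parts of ${G[A]}$ may be drawn in $D_2$ and shield $x$ from ${B \setminus A}$, so the addable edge you hope for need not exist at $x$ at all. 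Your preliminary steps ($2$-connectivity of edge-maximal $1$-planar graphs; routing $uv$ ``hugging the boundary'' past the other $B$-side bridges drawn in the same region) are likewise asserted rather than proved and hide the same visibility issue.

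The paper's proof sidesteps all of this with one observation you are missing. Fix any non-trivial separation ${(A,B)}$ of order at most $2$ and let $R$ be a region of ${\mathbb{S}^2 \setminus \phi(G[A])}$ containing a vertex of ${B \setminus A}$. After the easy reduction (no vertex of ${A \setminus B}$ on the boundary of $R$, else edge-maximality fails), the boundary of $R$ must contain a simple closed curve passing through at most two vertices. In a simple $1$-drawing each edge carries at most one crossing, so every maximal edge-arc of such a curve has a vertex as an endpoint; a short count then shows the curve passes through exactly two vertices $v$ and $w$ and through a crossing whose two edges are incident with $v$ and $w$ respectively (they cannot both be incident with the same vertex since $\phi$ is simple). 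Richness now delivers everything at once: the four endpoints of that crossing induce a $K_4$ in which only the crossing pair is crossed, so $vw$ is an uncrossed edge of $G$; and the same count rules out separators of size $0$ or $1$, giving $2$-connectivity. Replacing your two-stage argument (``$uv$ is an edge'', then ``$uv$ is uncrossed'') by this single boundary-crossing argument closes the gap; your confinement of ${B \setminus A}$ to the outer region of the $K_4$ is correct but then becomes unnecessary.
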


\begin{proof}
    Let~$(A,B)$ be a non-trivial separation of order at most~$2$ and 
    let~$R$ be a region of~$\mathbb{S}^2 \setminus \phi(G[A])$ containing a vertex in~${B \setminus A}$. 
    We may assume that no vertex of~${A \setminus B}$ is on the boundary of~$R$, or else we could draw a new uncrossed edge from~${A \setminus B}$ to~${B \setminus A}$ inside~$R$, contradicting the edge-maximality of~$G$. 
    Now the boundary of~$R$ contains a simple closed curve but contains at most two vertices of~${G[A]}$, so in particular it contains a crossing of~${\phi\restricted G[A]}$. 
    It follows that~${A \cap B}$ consists of exactly two vertices~$v$ and~$w$, and that some edge incident to~$v$ crosses some edge incident to~$w$. 
    Since~$\phi$ is rich, ${G[A \cap B] \cong K_2}$, and the edge in~${G[A \cap B]}$ is uncrossed with respect to~$\phi$. 
    The result now follows from Theorem~\ref{thm:tutte}. 
\end{proof}

This allows us to complete our characterisation for the classes~$\mathcal{E}_5$ and~$\mathcal{E}_6$. 
Note that for the sake of simplicity the following characterisation only characterises the graphs in the respective class up to possible deletion of a small constant number of edges that are not contained in any copy of one of the respective cliques. 

\begin{proposition}
    \label{prop:td3}
    A graph~$G$ with ${n \geq 3}$ vertices is in~$\mathcal{E}_6$ if and only if it is a spanning subgraph of some graph~$G'$ with $\mathcal{N}(G',K_6)=\mathcal{N}(G,K_6)$ which has a tree-decomposition ${(T,\mathcal{V})}$ satisfying the following properties. 
    \begin{enumerate}
        [label=(\alph*)]
        \item\label{item:td3-1} For each node~${t \in V(T)}$, the graph $G'[V_t]$ is isomorphic to a graph in~${\mathcal{A}(\mathcal{E}_6)}$; 
        \item\label{item:td3-2} each graph in~$\mathcal{H} := \{ G[V_s \cap V_t] \colon st \in E(T) \}$ is a clique of order~$2$ or~$3$; 
        \item\label{item:td3-4}
        there are exactly $\lfloor (n-3)/3\rfloor$ nodes $t\in V(T)$ such that $G'[V_t]$ is isomorphic to $K_6$;
        \item\label{item:td3-3} for each triangle~${D \in \mathcal{H}}$, there is a unique edge~${st \in E(T)}$ such that~${G'[V_s \cap V_t] = D}$; 
        \item\label{item:td3-5} for each node~${t \in V(T)}$, the graph~${G'[V_t]}$ has a $1$-drawing with respect to which all triangles~${D \subseteq G[V_t]}$ in~$\mathcal{H}$ are facial 
        and all edges of~${G'[V_t] \cap \bigcup \mathcal{H}}$ are uncrossed. 
    \end{enumerate}
\end{proposition}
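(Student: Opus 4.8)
I would prove the two implications separately, using throughout the identification, furnished by Theorem~\ref{thm:1planarbigcliques}, of $\mathcal{E}_6$ with the class of $1$-planar graphs $G$ on $n$ vertices satisfying $\mathcal{N}(G,K_6)=f_6(n)=\lfloor(n-3)/3\rfloor$. The overall shape of the argument parallels the proof of Proposition~\ref{prop:td2}; the two new ingredients are Corollary~\ref{cor:2separationedges} (built on Tutte's Theorem~\ref{thm:tutte}), used to handle separations of order at most~$2$, and the passage from $G$ to $G'$, which lets us disregard edges lying in no copy of~$K_6$.

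Assume first that $G$ is a spanning subgraph of some $G'$ with $\mathcal{N}(G',K_6)=\mathcal{N}(G,K_6)$ admitting a tree-decomposition as described. Since by~\ref{item:td3-2} every adhesion set induces a clique on at most three vertices and by~\ref{item:td3-5} every part has a $1$-drawing realising its incident adhesion triangles as facial and its adhesion edges as uncrossed, an induction on $\abs{V(T)}$ via Lemma~\ref{lem:stitching} (exactly as in the proof of Proposition~\ref{prop:td2}) yields a $1$-drawing of $G'$, whence $G\subseteq G'$ is also $1$-planar. Every clique of $G'$ lies in a single part, and the only member of $\mathcal{A}(\mathcal{E}_6)$ containing a $K_6$ is $K_6$ itself, so the copies of $K_6$ in $G'$ are exactly its $K_6$-parts; by~\ref{item:td3-4} there are $\lfloor(n-3)/3\rfloor=f_6(n)$ of these. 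Thus $G$ is $1$-planar with $\mathcal{N}(G,K_6)=f_6(n)$, and Theorem~\ref{thm:1planarbigcliques} gives $G\in\mathcal{E}_6$.

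For the converse I would induct on $n$, disposing of small $n$ directly (with computational help where convenient). If $G$ is $3$-connected with at least nine vertices, Lemma~\ref{lem:td1} and Corollary~\ref{cor:td1} already provide a tree-decomposition with $G'=G$ satisfying~\ref{item:td3-1},~\ref{item:td3-2},~\ref{item:td3-3} and~\ref{item:td3-5} (all adhesions being triangles), while~\ref{item:td3-4} follows by inclusion--exclusion along $T$ from property~\ref{item:td1-3} of Lemma~\ref{lem:td1}: each of the configurations allowed there forces exactly $\lfloor(n-3)/3\rfloor$ parts isomorphic to $K_6$. If $G$ is not $3$-connected I would take a non-trivial separation $(A,B)$ of order at most~$2$; as a copy of $K_6$ cannot meet both $A\setminus B$ and $B\setminus A$ and cannot lie in the at most two vertices of $A\cap B$, we get $\mathcal{N}(G,K_6)=\mathcal{N}(G[A],K_6)+\mathcal{N}(G[B],K_6)$. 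Combined with $f_6(\abs{A})+f_6(\abs{B})\leq f_6(n)$ and the extremality of $G$, this forces each of $G[A]$ and $G[B]$ to be extremal, hence in $\mathcal{E}_6$ by Theorem~\ref{thm:1planarbigcliques}; applying the induction hypothesis to each side and merging the two tree-decompositions across the separator would then produce the decomposition of the required~$G'$.

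The heart of the matter, and the main obstacle, is the construction of $G'$ in this last step, where the separator $A\cap B$ must be realised as a clique adhesion of order exactly $2$ or $3$; for separations of order at most~$1$ (such as the cut-vertex in a stitch of a bowtie with~$K_6$) this requires adding edges. The key fact making this safe is that an edge $pq$ across a separator of order at most~$2$ can never lie in a copy of $K_6$: its other four vertices would be common neighbours of $p$ and $q$ and hence lie in $A\cap B$, which is impossible as $\abs{A\cap B}\leq2$. Thus one may add edges to turn low-order separators into clique adhesions without disturbing the $K_6$-count, so that~\ref{item:td3-4} is preserved. What remains technical is to add these edges so that $1$-planarity is maintained (by a region argument as in Corollary~\ref{cor:2separationedges}) and so that each part retains a $1$-drawing realising the required facial triangles and uncrossed adhesion edges; here Lemma~\ref{lem:dagger} is the tool that certifies both~\ref{item:td3-5} and the uniqueness statement~\ref{item:td3-3}. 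Correctly interleaving the order-$2$ adhesions coming from Corollary~\ref{cor:2separationedges} with the order-$3$ adhesions coming from Lemma~\ref{lem:td1} into a single tree-decomposition is the most delicate bookkeeping in the proof.
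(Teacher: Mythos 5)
Your proposal is correct in substance and relies on the same key ingredients as the paper (Theorem~\ref{thm:1planarbigcliques} to identify $\mathcal{E}_6$ with the $K_6$-extremal graphs, Lemma~\ref{lem:stitching} for the backward implication, Lemma~\ref{lem:td1} together with Lemma~\ref{lem:dagger} for the $3$-connected pieces, and Corollary~\ref{cor:2separationedges} for low-order separators), but the forward direction is organised genuinely differently. You induct on $n$, peeling off one separation of order at most~$2$ at a time, splitting the $K_6$-count across the two sides, and constructing $G'$ by local edge additions justified by the (correct) observation that an edge across a separator of order at most~$2$ lies in no copy of~$K_6$. The paper instead makes a single global move: it takes $G'$ to be an edge-maximal $1$-planar supergraph of~$G$, notes that $G'\in\mathcal{E}_6$ (so $\mathcal{N}(G',K_6)=\mathcal{N}(G,K_6)$ comes for free from extremality rather than from your common-neighbour argument), applies Corollary~\ref{cor:2separationedges} to decompose $G'$ along all of its $2$-separators at once into triangles and $3$-connected parts, refines each $3$-connected part via Lemmas~\ref{lem:td1} and~\ref{lem:dagger}, and glues. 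The main thing the paper's device buys is that the step you correctly flag as most delicate largely dissolves: since an edge-maximal $1$-planar graph has no non-trivial separator of order less than~$2$ and every $2$-separator induces an uncrossed $K_2$ (Corollary~\ref{cor:2separationedges}), all adhesion sets of order~$2$ arise with the required uncrossed edge already in place, whereas in your version the cut-vertex case (e.g.\ a bowtie stitched to~$K_6$) needs a separate argument to upgrade an order-$1$ separator to a $K_2$-adhesion while preserving $1$-planarity, which adding a single edge does not automatically achieve. Your count that property~\ref{item:td1-3} of Lemma~\ref{lem:td1} forces exactly $\lfloor (n-3)/3\rfloor$ parts isomorphic to~$K_6$ is a valid (and slightly more self-contained) alternative to the paper's appeal to Theorem~\ref{thm:1planarbigcliques} for property~\ref{item:td3-4}.
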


\begin{proof}
    If there is a supergraph~$G'$ with such a tree-decomposition, then~$G$ contains ${\lfloor(n-3)/3\rfloor}$ subgraphs isomorphic to~$K_6$ by~\ref{item:td3-4}. 
    Hence, by Theorem~\ref{thm:1planarbigcliques} it is sufficient to show that~$G'$ is $1$-planar. 
    This follows easily from Lemma~\ref{lem:stitching}.
    
    For the other direction, suppose $G$ is in $\mathcal{E}_6$, and let~$G'$ be an $n$-vertex edge-maximal $1$-planar graph containing~$G$. 
    By Theorem~\ref{thm:1planarbigcliques}, $G'$ is also in~$\mathcal{E}_6$. 
    Let~${\mathcal{T}' := (T', (V'_t \colon t \in V(T)))}$ be a tree-decomposition of~$G'$ as in Corollary~\ref{cor:2separationedges}.
    It is easy to construct a $1$-planar graph~$G''$ with a tree-decomposition ${(T',(V''_t \colon V(T)))}$ such that each adhesion set has size two, for each~${t \in V(T)}$ the graph~${G[V''_t]}$ is in~$\mathcal{E}_6$ and~${\abs{V''_t} = \abs{V'_t}}$. 
    Hence, by Theorem~\ref{thm:1planarbigcliques}, each part of $\mathcal{T}'$ induces a graph in $\mathcal{E}_6$. 
    
    By Lemmas~\ref{lem:td1} and~\ref{lem:dagger}, for each~${t \in V(T')}$, the graph $G[V_t]$ has a tree-decomposition ${\mathcal{T}^t := (T^t, (V^t_s))}$ satisfying properties~\ref{item:td3-1}, \ref{item:td3-2}, \ref{item:td3-3} as well as 
    \begin{enumerate}
        [label=($e'$)]
        \item for each node~${s \in V(T^t)}$ the graph~$G'[V^t_s]$ has a $1$-drawing with respect to which all triangles~${D \subseteq G'[V^t_s]}$ in~$\mathcal{H}$ are facial and all edges of~${G'[V^t_s] \cap \bigcup \mathcal{H}'}$ 
        are uncrossed, 
        where~$\mathcal{H}' := \{ G'[V_s' \cap V_t'] \colon st \in E(T') \}$. 
    \end{enumerate}
    
    We now construct a tree~$T$ as the disjoint union of the trees $T^t$ for all~${t \in V(T')}$ by adding for each edge~$st$ of~$T'$ an edge between some arbitrary nodes~${u \in V(T^s)}$ and~${v \in V(T^t)}$ for which~${V_u^s \cap V_v^t = V'_s \cap V'_t}$. 
    Setting for each~${s \in V(T)}$ the part~$V_s$ to be equal to the part~$V_s^t$ for the unique~${t \in V(T')}$ with~${s \in V(T^t)}$ yields a tree-decomposition that is easily seen to satisfy properties~\ref{item:td3-1}, \ref{item:td3-2}, \ref{item:td3-3} and~\ref{item:td3-5}. 
    By Theorem~\ref{thm:1planarbigcliques}, it also satisfies~\ref{item:td3-4}.
\end{proof}

For the class~$\mathcal{E}_5$ we observe the following.

\begin{proposition}
    \label{prop:E5}
    An $n$-vertex graph~$G$ is in~$\mathcal{E}_5$ if and only if~${G \in \mathcal{E}_6}$ and either~$n$ is congruent to~$1$ modulo~$3$ or~${G \in \mathcal{E}}$. 
\end{proposition}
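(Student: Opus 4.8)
The plan is to prove the stated equivalence by induction on $n$, working directly from the recursive definitions of $\mathcal{E}_5$, $\mathcal{E}_6$ and $\mathcal{E}$ and using two elementary facts about stitches. First, if $G$ is a stitch of $H_1$ and $H_2$ along a triangle $T$, then no copy of $K_6$ or $K_5$ can meet both sides of $T$ or lie inside $T$, so $\mathcal{N}(G,K_6)=\mathcal{N}(H_1,K_6)+\mathcal{N}(H_2,K_6)$ and likewise for $K_5$. Second, writing $\abs{V(H_i)}=3k_i+s_i$, the identity $\abs{V(G)}=\abs{V(H_1)}+\abs{V(H_2)}-3$ gives $n\equiv s_1+s_2\pmod{3}$, and when $s_1+s_2\le 2$ this congruence is the equality $s=s_1+s_2$. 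The base cases $n\le 7$ are finite checks (with a little care for the genuinely small, possibly disconnected, graphs admitted into $\mathcal{E}_5$ and $\mathcal{E}_6$); the inductive step is carried out for $n\ge 8$, where every member of each of the three classes is a stitch.

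For the forward implication, let $G\in\mathcal{E}_5$ be a stitch of $H_1,H_2\in\mathcal{E}_5$ with $s_1=0$. By induction $H_1,H_2\in\mathcal{E}_6$, and since $s_1+s_2=s_2\le 2$ this exhibits $G$ as an $\mathcal{E}_6$-stitch, so $G\in\mathcal{E}_6$. If in addition $n\not\equiv 1\pmod{3}$, then $s=s_2\neq 1$, so both $\abs{V(H_1)}\equiv 0$ and $\abs{V(H_2)}\equiv s_2$ are incongruent to $1$ modulo $3$; the induction hypothesis gives $H_1,H_2\in\mathcal{E}$, and the same stitch (with $s_1=0$) witnesses $G\in\mathcal{E}$. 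Hence $G$ satisfies the right-hand side.

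For the converse I first record the auxiliary claim that every $G\in\mathcal{E}_6$ with $n\ge 6$ and $n\equiv 0\pmod{3}$ lies in $\mathcal{E}_5$: such a $G$ is an $\mathcal{E}_6$-stitch with $s_1+s_2\le 2$ and $s_1+s_2\equiv 0$, forcing $s_1=s_2=0$, and induction applied to each side (both of which have at least $6$ vertices) finishes the claim. Now suppose $G\in\mathcal{E}_6$ and either $n\equiv 1\pmod{3}$ or $G\in\mathcal{E}$. If $G\in\mathcal{E}$, write $G$ as an $\mathcal{E}$-stitch of $H_1,H_2$ with $s_1=0$; the $K_6$-additivity identity together with $\mathcal{N}(G,K_6)=f_6(n)$ forces $\mathcal{N}(H_i,K_6)=f_6(\abs{V(H_i)})$, so each $H_i\in\mathcal{E}_6$, whence $H_i\in\mathcal{E}_5$ by induction and $G\in\mathcal{E}_5$. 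If instead $n\equiv 1\pmod{3}$, write $G$ as an $\mathcal{E}_6$-stitch of $H_1,H_2$ with $s_1+s_2\le 2$; the residue computation forces $s_1+s_2=1$, so after relabelling $s_1=0$, $s_2=1$. The side with $\abs{V(H_2)}\equiv 1$ lies in $\mathcal{E}_5$ by induction, while the side with $\abs{V(H_1)}\equiv 0$ (and at least $6$ vertices, being a proper part of the stitch) lies in $\mathcal{E}_5$ by the auxiliary claim; hence $G\in\mathcal{E}_5$.

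The main obstacle is precisely the backward direction when $n\equiv 1\pmod{3}$: an $\mathcal{E}_6$-stitch decomposition of $G$ is only guaranteed to satisfy $s_1+s_2\le 2$, not the constraint $s_1=0$ demanded by the $\mathcal{E}_5$ recursion, so one cannot immediately descend. The residue bookkeeping is what saves the argument, as it pins any such decomposition to $\{s_1,s_2\}=\{0,1\}$; the divisible-by-three side then falls outside the scope of the main induction hypothesis and must be supplied by the separate auxiliary claim. I would finally verify the additivity identities at the stitching triangle and confirm the finite base cases, taking care that the small disconnected graphs in $\mathcal{E}_5$ do not disturb the equivalence.
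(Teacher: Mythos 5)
The paper states Proposition~\ref{prop:E5} as a bare observation with no proof, so there is nothing to compare your argument against; what you have written is essentially a proof that the paper omits, and its structure is sound. Your induction is the natural one, and you have correctly identified the one genuinely non-trivial point: in the backward direction with $n\equiv 1\pmod 3$, an $\mathcal{E}_6$-stitch only guarantees $\{s_1,s_2\}=\{0,1\}$, and the side divisible by three is not reachable by the main induction hypothesis, so it must be supplied by your auxiliary claim that every $G\in\mathcal{E}_6$ on $n\geq 6$ vertices with $3\mid n$ lies in $\mathcal{E}_5$. (That auxiliary claim is, in effect, Corollary~\ref{cor:k6stitching} of the paper, which shows all five classes coincide on $3k$ vertices; your self-contained inductive proof of it is fine, using that a $3$-vertex part of a stitch would have to equal the stitching triangle and hence violate properness, forcing both parts to have at least six vertices.) Your use of Theorem~\ref{thm:1planarbigcliques} together with additivity of $K_6$-counts across a separating triangle to pull $\mathcal{E}_6$-membership down to the parts of an $\mathcal{E}$-stitch is also legitimate and handles the $n\equiv 2\pmod 3$ case, where the hypothesis $G\in\mathcal{E}$ is genuinely needed to obtain a decomposition with $s_1=0$.

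One caveat: your promised ``finite check'' of the base cases will not confirm the statement as literally written. For $n=3$ the graph $\overline{K_3}$ lies in $\mathcal{E}_5$ and $\mathcal{E}_6$ (both classes admit all graphs on at most four, respectively five, vertices) but not in $\mathcal{E}$ (whose small members are cliques only), and $3\not\equiv 1\pmod 3$; the same happens at $n=2$. So the equivalence fails for $n\in\{2,3\}$ and holds only from $n=4$ onward. This is a defect of the proposition's phrasing rather than of your argument --- the paper only ever uses the statement for the sizes relevant to generating extremal graphs --- but you should either restrict the range of $n$ or note the exceptions explicitly rather than asserting that the small disconnected graphs ``do not disturb the equivalence,'' since at $n=3$ they do.
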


\subsection{Efficiently generating the extremal graphs}
\label{subsec:efficent}

We begin by observing that the $3k$-vertex extremal graphs for any integer~${k \geq 2}$ are isomorphic to the strong product of a triangle and a path of length~$k$, as mentioned in the introduction. 

\begin{corollary}
    \label{cor:k6stitching}
    Let~$\mathcal{C}$ be one of $\mathcal{E}_3$, $\mathcal{E}_4$, $\mathcal{E}_5$, $\mathcal{E}_6$ or~$\mathcal{E}$ and let~${k \geq 2}$ be an integer, then a $3k$-vertex graph~$G$ is in~$\mathcal{C}$ if and only if~$G$ is isomorphic to the strong product of a triangle and a path of length~${k-1}$. 
\end{corollary}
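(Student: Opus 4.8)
The plan is to prove the two inclusions separately, handling $k=2$ by direct inspection of Definition~\ref{def:extremalgraphs} and $k \geq 3$ through the tree-decomposition characterisations in Propositions~\ref{prop:td2}, \ref{prop:td3} and~\ref{prop:E5}. I will use repeatedly the following concrete description of $K_3 \boxtimes P_k$: its vertex set splits into $k$ \emph{layers} (the fibres over the path), each layer induces a triangle, consecutive layers together induce a $K_6$, and there are no edges between non-consecutive layers; equivalently, $K_3 \boxtimes P_k$ is the graph obtained by stitching $k-1$ copies of $K_6$ in a path along triangles.

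For the inclusion $K_3 \boxtimes P_k \in \mathcal{C}$ I would induct on $k$. The base case $k=2$ is $K_6$, which lies in each of the five classes. For $k \geq 3$, the graph $K_3 \boxtimes P_k$ is the stitch of $H_1 := K_3 \boxtimes P_{k-1}$ and $H_2 := K_6$ along the facial end-layer triangle of $H_1$; such a drawing exists by Lemma~\ref{lem:stitching} and the iterated-stitching discussion following it. With $k_1 = k-1 \geq 2$, $k_2 = 2$ and $s_1 = s_2 = 0$, all arithmetic side-conditions of Definition~\ref{def:extremalgraphs} are met for every class, so $K_3 \boxtimes P_k \in \mathcal{C}$ by the induction hypothesis. (Membership in $\mathcal{E}_5$ may alternatively be read off from Proposition~\ref{prop:E5} once membership in $\mathcal{E}_6$ and $\mathcal{E}$ is known.)

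The content is the reverse inclusion. Let $G \in \mathcal{C}$ have $3k$ vertices. When $k=2$ no stitch is available: a factor on three vertices would equal the shared triangle and violate properness, while the only remaining split $4+5$ violates the arithmetic conditions of Definition~\ref{def:extremalgraphs} in every class; hence $G$ is a small graph and the unique $6$-vertex one is $K_6 = K_3 \boxtimes P_2$. Now let $k \geq 3$, so $G$ has at least $9$ vertices, and first take $\mathcal{C} \in \{\mathcal{E}_3,\mathcal{E}_4,\mathcal{E}\}$ with the tree-decomposition $(T,\mathcal{V})$ of Proposition~\ref{prop:td2}. Every adhesion set is a triangle, so counting each vertex by the subtree it spans gives $3k = \sum_{t}|V_t| - 3(|V(T)|-1)$, whence the total \emph{excess} $\sum_t(|V_t|-6)$ is a multiple of $3$. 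Since the parts have sizes in $\{4,5,6,7,8\}$ and property~\ref{item:td2-3} permits at most one non-$K_6$ part (or, only in $\mathcal{E}_3$, two copies of $K_3+C_4$), the possible total excess lies in $\{-2,-1,0,1,2\}$ and is a multiple of $3$ only when it equals $0$, that is, when every part induces $K_6$. By Lemma~\ref{lem:unique1drawing} and Corollary~\ref{cor:drawing-properties}\ref{item:dp3} each such part has exactly two facial triangles and these are vertex-disjoint; as distinct tree-edges at a node yield distinct adhesion triangles (property~\ref{item:td2-5}), every node of $T$ has degree at most $2$, so $T$ is a path. Along this path consecutive adhesion triangles are vertex-disjoint and exhaust each $K_6$, so reading off the successive adhesion triangles together with the two free end-triangles as layers exhibits $G$ as $k$ triangles joined completely exactly between consecutive ones, that is $G \cong K_3 \boxtimes P_k$.

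The case $\mathcal{C} = \mathcal{E}_6$ follows the same outline through Proposition~\ref{prop:td3}, and its two new features are the crux of the argument: adhesions may now have order $2$, and the tree-decomposition is carried by a spanning supergraph $G'$ with $\mathcal{N}(G',K_6)=\mathcal{N}(G,K_6)$. The same counting, using that each non-$K_6$ part of $\mathcal{A}(\mathcal{E}_6)$ has at least three vertices, forces every adhesion to be a triangle and every non-$K_6$ part to be a single $K_3$; by the uniqueness in property~\ref{item:td3-3} such a $K_3$ has only one triangle-adhesion, so it is a leaf contained in its neighbour and may be deleted. After this reduction $G'$ is a path of $k-1$ copies of $K_6$, hence $G' \cong K_3 \boxtimes P_k$ by the previous paragraph, and since every edge of $K_3 \boxtimes P_k$ lies in one of its copies of $K_6$ while $\mathcal{N}(G,K_6)=\mathcal{N}(G',K_6)$, no edge is missing and $G=G'$. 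Finally, for $\mathcal{C}=\mathcal{E}_5$ and $n=3k$ Proposition~\ref{prop:E5} gives $G\in\mathcal{E}_5$ if and only if $G\in\mathcal{E}_6\cap\mathcal{E}$, so this case reduces to the two just treated. I expect the main obstacle to be exactly this interplay between the modulo-$3$ vertex count (which eliminates every non-$K_6$ part) and the vertex-disjointness of the two facial triangles of $K_6$ (which collapses the decomposition tree to a path), together with the bookkeeping for $\mathcal{E}_6$ around order-$2$ adhesions and the passage from $G'$ back to $G$.
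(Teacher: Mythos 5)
Your proposal is correct and follows essentially the same route as the paper's proof: both rely on the tree-decomposition characterisations of Propositions~\ref{prop:td2}, \ref{prop:td3} and~\ref{prop:E5} together with Corollary~\ref{cor:drawing-properties}\ref{item:dp3} to force every part to be a $K_6$ and the decomposition tree to be a path. The paper states the reduction to all-$K_6$ parts without proof, whereas you supply the modulo-$3$ vertex count, the elimination of $K_3$ leaves, and the recovery of $G$ from $G'$ in the $\mathcal{E}_6$ case; these details are accurate and simply make explicit what the paper leaves implicit.
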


\begin{proof}
    By the characterisations given in Propositions~\ref{prop:td2}, \ref{prop:td3} and~\ref{prop:E5}, 
    a $3k$-vertex graph~$G$ in~$\mathcal{C}$ has a tree-decomposition~${(T,\mathcal{V})}$ in which each part is isomorphic to~$K_6$ with a drawing in which each adhesion set is a distinct facial triangle. 
    By Corollary~\ref{cor:drawing-properties}\ref{item:dp3}, each part has at most two such triangles, and these triangles are vertex disjoint. 
    Hence~$T$ is a path of length~${k-2}$, 
    and the result now follows from the definition of the strong product. 
\end{proof}

If~$n$ is not divisible by~$3$, Propositions~\ref{prop:td2}, \ref{prop:td3} and~\ref{prop:E5} still allow us to generate all extremal graphs of size~$n$ in polynomial time. 

If~$\mathcal{C}$ is~$\mathcal{E}_3$, we observe that~${K_3 + C_4}$ has a unique $1$-drawing which has exactly two facial triangles (see Lemma~\ref{lem:unique1drawing} and Corollary~\ref{cor:drawing-properties}\ref{item:dp3}). 
Moreover, by Lemma~\ref{lem:rich1drawing}, each $1$-drawing of~${K_2 + \overline{P_6}}$ is rich, and there is a unique rich $1$-drawing of~${K_2 + \overline{P_6}}$ up to weak equivalence by the computer search in Lemma~\ref{lem:computation8}. 
Hence, the tree~$T$ in the tree-decomposition from Proposition~\ref{prop:td2} is again a path of length~${\lfloor n/3 \rfloor}$. 
With the restrictions on the parts given in the characterisation in Proposition~\ref{prop:td2}, we conclude that we can generate all $n$-vertex graphs in~$\mathcal{E}_3$ in polynomial time. 

If~$\mathcal{C}$ is either~$\mathcal{E}_4$ or~$\mathcal{E}$, then as there is at most one part inducing a graph not isomorphic to~$K_6$, we observe that~$T$ is the subdivision of a star. 
Moreover,~$K_4$ contains only four triangles, and 
since every simple $1$-drawing of~$K_5$ is rich by Lemma~\ref{lem:rich1drawing}, and it is easy to observe that the drawing depicted in Figure~\ref{fig:k5} is the unique $1$-drawing of $K_5$ up to weak equivalence, 
we conclude that~$T$ has maximum degree at most~$4$. 
As before, the restrictions on the parts given in the characterisation in Proposition~\ref{prop:td2} allow us to generate all $n$-vertex graphs in~$\mathcal{E}_4$ or~$\mathcal{E}$ in polynomial time. 

In the case of~$\mathcal{E}_6$, note that the only non-planar graphs in~${\mathcal{A}(\mathcal{E}_6)}$ are~$K_6$ and~$K_5$. 
In a planar drawing of a graph, every non-separating triangle is facial. 
Thus in order to apply Proposition~\ref{prop:td3}, the only remaining graph to consider is~${K_3+\overline{K_2}}$, since this is the only graph in~${\mathcal{A}(\mathcal{E}_6)}$ with a separating triangle. 
It is easy to show that there is only one $1$-drawing up to weak equivalence such that the separating triangle is facial, see Figure~\ref{fig:k5}. 
Thus we can again generate all $n$-vertex graphs in~$\mathcal{E}_6$ in polynomial time.

By Proposition~\ref{prop:E5}, we can extract all $n$-vertex graphs in~$\mathcal{E}_5$ from the previously generated lists.

    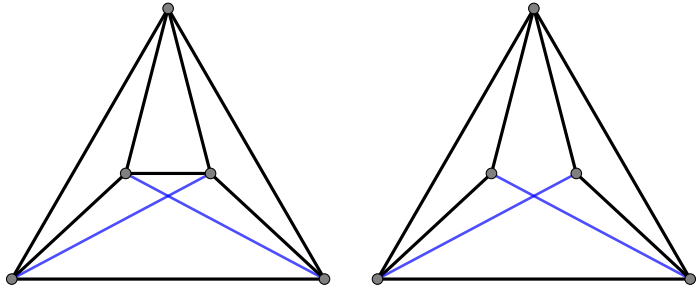
\begin{figure}[htbp]
        \centering
        \begin{tikzpicture}
            [scale=0.8]
            \tikzset{vertex/.style = {circle, draw, fill=black!50, inner sep=0pt, minimum width=4pt}}
            \tikzset{edge0/.style = {line width=1.2pt, black}}
            \tikzset{edge1/.style = {line width=1pt, blue, opacity=0.7}}
            
            \coordinate (v0) at (210:3) {};
            \coordinate (v1) at (330:3) {};
            \coordinate (v2) at (90:3) {};
            \coordinate (w0) at (160:0.75) {};
            \coordinate (w1) at (20:0.75) {};
            
            \foreach \i in {0,1,2} {
                \pgfmathtruncatemacro{\j}{Mod(\i+1,3)}
                \draw [edge0] (v\i) edge (v\j) {};
                \foreach \k in {0,1} {
                \ifthenelse{\i = 2 \OR \i = \k}{
                    \draw [edge0] (v\i) edge (w\k) {};
                }{
                    \draw [edge1] (v\i) edge (w\k) {};
                }
                };
            };
            \draw [edge0] (w0) edge (w1) {};
            
            \foreach \v in {v0,v1,v2,w0,w1} {
                \node [vertex] at (\v) {};
            };
        \end{tikzpicture}
        \quad
        \begin{tikzpicture}
            [scale=0.8]
            \tikzset{vertex/.style = {circle, draw, fill=black!50, inner sep=0pt, minimum width=4pt}}
            \tikzset{edge0/.style = {line width=1.2pt, black}}
            \tikzset{edge1/.style = {line width=1pt, blue, opacity=0.7}}
            
            \coordinate (v0) at (210:3) {};
            \coordinate (v1) at (330:3) {};
            \coordinate (v2) at (90:3) {};
            \coordinate (w0) at (160:0.75) {};
            \coordinate (w1) at (20:0.75) {};
            
            \foreach \i in {0,1,2} {
                \pgfmathtruncatemacro{\j}{Mod(\i+1,3)}
                \draw [edge0] (v\i) edge (v\j) {};
                \foreach \k in {0,1} {
                \ifthenelse{\i = 2 \OR \i = \k}{
                    \draw [edge0] (v\i) edge (w\k) {};
                }{
                    \draw [edge1] (v\i) edge (w\k) {};
                }
                };
            };
            
            \foreach \v in {v0,v1,v2,w0,w1} {
                \node [vertex] at (\v) {};
            };
        \end{tikzpicture}
        \caption{On the left, the unique simple $1$-drawing of~$K_5$ (up to weak equivalence), and on the right, the only simple $1$-drawing of~$K_3 + \overline{K_2}$ (up to weak equivalence) such that the separating triangle is facial. }
        \label{fig:k5}
    \end{figure}

\section{Concluding remarks}\label{sec:conclusion}

For $1$-planar graphs, our results show that for any~$t$ and~$t'$ in~${\{3,4,5,6\}}$, there is not much difference between maximising cliques of size~$t$ and maximising cliques of size~$t'$. 
In fact, when~$n$ is divisible by~$3$, the $n$-vertex graphs with ${f_t(n)}$ copies of~$K_t$ also have ${f_{t'}(n)}$ copies of~$K_{t'}$. 
This behaviour is similar to what we observe for planar graphs, where Apollonian networks are the graphs maximising both triangles and cliques of size~$4$. 
Interestingly however, the $1$-planar graphs which maximise cliques of any size greater than~$2$ do not maximise the number of edges. The $n$-vertex $1$-planar graphs with $4n-8$ edges, called 
optimal $1$-planar graphs, form an interesting class in their own right. Indeed, much of the research into $1$-planar graphs has focused explicitly on this subclass. 
We therefore pose the following question.

\begin{question}
    Let~$n$ and~$t$ be positive integers with~${t \in \{3,4,5\}}$ and~${n \geq 10}$. 
    What is the maximum number of subgraphs isomorphic to~$K_t$ in an $n$-vertex $1$-planar graph with~${4n-8}$ edges? 
\end{question}

For~${t=3}$, Lemma~\ref{lem:trueplanarskeletons} implies that this question is equivalent to maximising the number of non-trivial $3$-separators in a $3$-connected planar quadrangulation. 
In this case, we suspect the answer is given by following construction. 
Let~$H_0$ be the $4$-cycle ${v_0v_1v_2v_3v_0}$, let~$\phi_0$ be a planar drawing of~$H_0$, and let~$F^0_1$ and~$F^0_2$ be the faces of~$\phi_0$. 
For each positive integer~$i$, let~$H_i$ be obtained from~$H_{i-1}$ by adding a new vertex~$v_{i+3}$ adjacent to~$v_{i+2}$ and the non-neighbour of~$v_{i+2}$ in the boundary of~$F^{i-1}_2$ and let~$\phi_i$ extend~$\phi_{i-1}$ to a drawing of~$H_i$ by drawing the new vertices and edges inside~$F^{i-1}_2$. 
Let~${F^i_1 := F^{i-1}_1}$ and let~$F^i_2$ be a face of~$\phi_i$ whose boundary contains~$v_{i+3}$, (chosen arbitrarily). 
Finally let~$G_i$ be obtained from~$H_i$ by taking two disjoint copies of the cube~$Q_3$, and identifying a $4$-cycle in the first copy with the boundary cycle of~$F^i_1$ and a $4$-cycle in the second copy with the boundary cycle of~$F^i_2$. 
We will call the class of graphs~$G_i$ constructed this way~$\mathcal{G}^\ast$. 
Every graph in~$\mathcal{G}^\ast$ is a $3$-connected planar quadrangulation, and it can be shown that $n$-vertex graphs in~$\mathcal{G}^\ast$ have exactly ${4n-14}$ non-trivial $3$-separators. 
Lemma~\ref{lem:trueplanarskeletons} and Lemma~\ref{lem:K6freebound} together imply that this is within ten of the maximimum possible value. 
We conjecture the following. 

\begin{conjecture}
    For~${n \geq 14}$, the $n$-vertex $3$-connected planar quadrangulations with the maximum number of $3$-separators are exactly the $n$-vertex graphs in~$\mathcal{G}^\ast$.
\end{conjecture}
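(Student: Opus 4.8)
The statement is purely combinatorial, so first I would keep it in the language of quadrangulations. Let $Q$ be a $3$-connected planar quadrangulation on $n$ vertices; Euler's formula gives $2n-4$ edges and $n-2$ quadrilateral faces, and (as noted in the concluding discussion, via Lemma~\ref{lem:trueplanarskeletons}) the number of triangles in the associated optimal $1$-planar graph equals $4(n-2)$ plus the number of non-trivial $3$-separators of $Q$. Thus the conjecture splits into two claims: \emph{(i)} every such $Q$ has at most $4n-14$ non-trivial $3$-separators, improving the bound $4n-4$ implied by Lemmas~\ref{lem:trueplanarskeletons} and~\ref{lem:K6freebound} by exactly the slack of $10$; and \emph{(ii)} equality holds precisely for $Q\in\mathcal{G}^\ast$. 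The plan is to prove \emph{(i)} and \emph{(ii)} together by induction on $n$, with the finitely many base cases near $n=14$ checked using \emph{plantri}, as in Lemmas~\ref{lem:computation8} and~\ref{lem:computation}.

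First I would analyse the local structure of a minimal non-trivial $3$-separator $S=\{a,b,c\}$. Since $Q$ is bipartite and triangle-free, $S$ is never a triangle; in a fixed embedding such a minimal $S$ is traced by a simple closed curve meeting $Q$ exactly in $a,b,c$ and passing through three faces, each face contributing two of its four corners. I would classify separators by whether the curve crosses each face between adjacent or opposite corners and by which colour class of the bipartition each of $a,b,c$ lies in (necessarily two in one class and one in the other). The quantitative heart of \emph{(i)} is a bound on how many separators can share a prescribed vertex or face: I expect to show that the separators form a nearly laminar family, and that the bounded amount of unavoidable "crossing" is exactly what produces the $-14$ rather than $-4$.

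For the inductive step I would proceed along an \emph{innermost} separator (one side of whose separation contains no further $3$-separator), cap off each side along $S$ to obtain two smaller $3$-connected quadrangulations $Q_1,Q_2$ with $|V(Q_1)|+|V(Q_2)|=n+O(1)$, and bound the separators of $Q$ by those of $Q_1$ and $Q_2$ plus a controlled contribution of $S$ and of the capping. Alternatively, and perhaps more cleanly, I would invoke a generation theorem for $3$-connected planar quadrangulations (generating them from the cube $Q_3$ by a short list of local expansions) and track how the number of $3$-separators changes under each expansion. Either route should force the extremal graphs to be "path-like": the decomposition must be a path each of whose internal pieces contributes the maximal increment of $4$ per vertex, pinning the two end-caps as the unique local configurations attaining the maximum, which a finite check identifies as the two copies of $Q_3$ glued into $F^i_1$ and $F^i_2$ — exactly $\mathcal{G}^\ast$.

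The main obstacle is the tightness in \emph{(ii)}: controlling the interaction of $3$-separators that are \emph{not} nested, and ruling out every family other than $\mathcal{G}^\ast$ that might reach $4n-14$. Concretely, the hardest parts should be (a) showing that the decomposition path cannot branch and that no end-cap other than $Q_3$ is extremal, and (b) proving that each unit of the additive constant is forced. I anticipate that the most robust way to certify equality is to run a discharging argument on $Q$ in parallel with the reduction — assigning charge $1$ to each non-trivial $3$-separator and redistributing it to faces and vertices so that the total is visibly at most $4n-14$ — and then to trace through which local configurations can be charge-tight; reconciling this exact constant with the finite base cases and with the crossing analysis of the second paragraph is where I expect the bulk of the difficulty to lie.
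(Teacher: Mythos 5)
This statement is posed in the paper as an open \emph{conjecture}; the authors give no proof of it (they only verify, via \emph{plantri}-assisted computations for small cases and the bounds of Lemmas~\ref{lem:trueplanarskeletons} and~\ref{lem:K6freebound}, that the graphs in~$\mathcal{G}^\ast$ come within an additive constant of~$10$ of the best known upper bound). So there is no proof in the paper to compare yours against, and the relevant question is whether your text actually closes the problem. It does not: it is a research programme in which every load-bearing step is left as an expectation. The quantitative half~(i), the upper bound of~$4n-14$, is precisely the open content of the conjecture, and your route to it --- that the non-trivial $3$-separators form a ``nearly laminar'' family whose unavoidable crossings account for exactly the constant~$14$, certified by an unspecified discharging scheme --- is asserted (``I expect to show'', ``I anticipate'') rather than proved. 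The uniqueness half~(ii) is likewise only located as the hard part, with nothing in the proposal that actually constrains an extremal quadrangulation to be path-like or forces the end-caps to be copies of the cube.

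There is also a concrete technical obstacle in the inductive step you sketch. In a $3$-connected planar quadrangulation a minimal non-trivial $3$-separator $S=\{a,b,c\}$ is an independent set (the graph is bipartite and triangle-free, as you note), and the separating curve passes through the interiors of three quadrilateral faces. ``Capping off each side along $S$'' therefore does not in general produce two smaller $3$-connected planar quadrangulations: adding edges among $a,b,c$ creates a triangle and destroys bipartiteness, while any other capping gadget must be shown to preserve $3$-connectivity, planarity, and the all-faces-degree-$4$ condition, and one must then control exactly how many $3$-separators of $Q$ survive in, or are created in, $Q_1$ and $Q_2$. Until that reduction (or the alternative route through a generation theorem for $3$-connected quadrangulations, with the increment of the separator count computed for every expansion move) is carried out in full, the induction has no content. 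The overall strategy --- translate via Lemma~\ref{lem:trueplanarskeletons} to counting $3$-separators of quadrangulations, check base cases near $n=14$ computationally, and induct along a structural decomposition --- is a sensible line of attack and consistent with how the authors handle small cases in Lemmas~\ref{lem:computation8} and~\ref{lem:computation}, but as written the proposal is a plan, not a proof, and the conjecture remains open.
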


\medskip

In another direction, it is natural to consider extending our results to $k$-planar graphs for higher values of $k$. For $k\geq 3$, not even the maximum number of edges is known for $n$-vertex $k$-planar graphs, so counting larger cliques is likely to be an extremely difficult problem. However, counting triangles in $2$-planar graphs is a logical next step toward broadening our understanding. Here, we conjecture the following.
\begin{conjecture}\label{conj:2planar}
    For~${n \geq 7}$, the maximum number of triangles in an $n$-vertex $2$-planar graph is at most $(17n-49)/2$. 
\end{conjecture}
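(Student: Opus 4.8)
The plan is to transplant the entire machinery developed here for $1$-planar graphs---rich drawings, true-planar skeletons, deletion of crossed edges lying in few triangles, and separating-triangle induction---to the $2$-planar setting, and to proceed by induction on $n$ with the small cases (up to some explicit bound) settled by computer search. The target slope $17/2$ matches the construction obtained by starting from $K_7$ (which is $2$-planar and, having $\binom{7}{3}=35=(17\cdot 7-49)/2$ triangles, is extremal for $n=7$) and iteratively stitching further copies of $K_7$ along facial triangles: each stitch contributes four new vertices and $35-1=34$ new triangles, i.e.\ exactly $17/2$ triangles per vertex. So I expect the extremal graphs to be the $2$-planar analogues of the graphs in $\mathcal{E}_3$, assembled from copies of $K_7$ via the stitching operation of Lemma~\ref{lem:stitching}.

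First I would establish the $2$-planar analogues of the structural preliminaries: the known Pach--T\'oth bound that an $n$-vertex $2$-planar graph has at most $5n-10$ edges, a linear bound on the number of crossings in an optimal $2$-drawing (replacing Lemma~\ref{lem:crossingnumber}), and a suitable notion of a ``rich'' $2$-drawing together with a true-planar skeleton whose faces now have bounded but larger degree. The analogue of Lemma~\ref{lem:trueplanarskeletons} should then bound the number of triangles of $G$ in terms of the face degrees and the non-trivial $3$-separators of the skeleton. With these in hand, the reduction step mirrors Section~\ref{sec:nok6}: from an extremal $G$ I would repeatedly delete a crossed edge contained in as few triangles as possible. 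If the process terminates at a planar graph, then Theorem~\ref{thm:planarK3} (at most $3n-8$ triangles) together with an accounting of the deleted triangles---at most some threshold $\tau-1$ per crossing, times the number of crossings---yields the bound; otherwise $G$ contains a subgraph in which every crossed edge lies in at least $\tau$ triangles.

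The crux, and the main obstacle, is the $2$-planar analogue of Lemma~\ref{lem:4trianglecrossings}: to show that if every crossed edge lies in at least $\tau$ triangles, then $G$ contains one of a short finite list of dense subgraphs (the natural candidates being $K_7$ and certain $2$-planar complete multipartite graphs), from which one can extract a separating triangle, apply Lemma~\ref{lem:separatingstitch}, and induct. The parity observation that vertex-disjoint triangles cross an even number of times still holds, but in a $2$-drawing each edge may be crossed twice, so the clean ``each crossing induces a $K_4$'' behaviour of the $1$-planar case (Lemma~\ref{lem:crossingsarecliques}) is lost and the case analysis around a crossing becomes substantially richer. Pinning down the correct threshold $\tau$, proving that the forced dense subgraphs admit essentially unique drawings with enough facial triangles to drive the stitch decomposition, and carrying out the enlarged base-case computation are where I expect almost all of the difficulty to lie.
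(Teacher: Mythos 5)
This statement is Conjecture~\ref{conj:2planar} in the paper: the authors do not prove it, they only exhibit the construction (stitched copies of~$K_7$, Figure~\ref{fig:K7}) that attains the bound, so there is no proof of record to compare yours against. Your proposal correctly identifies that construction and the arithmetic behind the slope~$17/2$, and the overall plan faithfully mirrors the paper's own $1$-planar argument; but as you yourself concede, it is a research programme rather than a proof. The decisive step --- the $2$-planar analogue of Lemma~\ref{lem:4trianglecrossings}, forcing a short list of dense subgraphs once every crossed edge lies in at least $\tau$ triangles --- is left entirely open, and without it nothing after the base cases goes through.

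Beyond that admitted gap, there are two concrete reasons the transplant is not routine. First, Lemma~\ref{lem:crossingsarecliques} genuinely fails in the $2$-planar setting: rerouting an edge close to a crossing point may itself create a third crossing on some edge, so edge-maximal $2$-planar graphs need not have every crossing pair spanning a $K_4$, and the whole notion of a rich drawing (and hence of a well-behaved true-planar skeleton with bounded face degrees) has no clean analogue. Second, the quantitative accounting in the deletion step does not obviously close. In the $1$-planar case the paper gets $\mathcal{N}(G,K_3)\leq 3c+3n-8$ with $c\leq n-2$ crossings (Lemmas~\ref{lem:K6freebound} and~\ref{lem:crossingnumber}), which undercuts the extremal value $\approx 19n/3$ for $n\geq 12$. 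Here the target is $(17n-49)/2$, the planar contribution is still only $3n-8$, but an $n$-vertex $2$-planar graph can have up to $5n-10$ edges, almost all of which may be crossed; so charging $\tau-1$ triangles per deleted crossed edge forces $\tau-1$ to be barely above~$1$, far too weak to force any dense subgraph. A workable argument would need either a much sharper bound on the number of crossed edges or a fundamentally different charging scheme, and neither is supplied.
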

The bound in Conjecture~\ref{conj:2planar} is achieved by the $2$-planar graphs formed by stitching copies of~$K_7$ together. 
This stitching is possible since~$K_7$ has a $2$-drawing with two facial triangles, see Figure~\ref{fig:K7}. 
It may even be the case that for all positive integers~$k$, maximising copies of the largest $k$-planar cliques also maximises triangles, up to an additive constant, and vice versa.

    \begin{figure}[htbp]
        \centering
        \begin{tikzpicture}
            [scale=1]
            \tikzset{vertex/.style = {circle, draw, fill=black!50, inner sep=0pt, minimum width=4pt}}
            \tikzset{edge0/.style = {line width=1.2pt, black}}
            \tikzset{edge1/.style = {line width=1pt, blue, opacity=0.7}}
            \tikzset{edge2/.style = {line width=1pt, red, opacity=0.7}}
            
            \coordinate (v1) at (210:3) {};
            \coordinate (v2) at (330:3) {};
            \coordinate (v3) at (90:3) {};
            \coordinate (v4) at (210:1.5) {};
            \coordinate (v5) at (330:1.5) {};
            \coordinate (v6) at (90:1.5) {};
            \coordinate (v7) at (85:2.1) {};
            
            \draw [edge0] (v1) edge (v2) {};
            \draw [edge0] (v2) edge (v3) {};
            \draw [edge0] (v3) edge (v1) {};
            
            \draw [edge0] (v4) edge (v5) {};
            \draw [edge0] (v5) edge (v6) {};
            \draw [edge0] (v6) edge (v4) {};
            
            \draw [edge0] (v1) edge (v4) {};
            \draw [edge0] (v2) edge (v5) {};
            \draw [edge1] (v1) edge (v5) {};
            \draw [edge1] (v2) edge (v4) {};
            
            \draw [edge0] (v7) edge (v3) {};
            \draw [edge0] (v7) edge (v6) {};
            \draw [edge2] (v7) edge (v1) {};
            \draw [edge1] (v7) edge (v2) {};
            \draw [edge2] (v7) edge (v4) {};
            \draw [edge1] (v7) edge (v5) {};
            \draw [edge2] (v1) edge (v6) {};
            \draw [edge2] (v3) edge (v4) {};
            \draw [edge2] (v2) edge (v6) {};
            \draw [edge2] (v3) edge (v5) {};
            
            \draw [edge2] (v3) edge (v6) {};
            
            \foreach \i in {1,...,7} {
                \node [vertex] at (v\i) {};
            };
        \end{tikzpicture}
        \caption{A $2$-planar drawing of~$K_7$. The black edges are the edges of the true-planar skeleton, the blue edges are crossed exactly once and the red edges are crossed exactly twice.}
        \label{fig:K7}
    \end{figure}
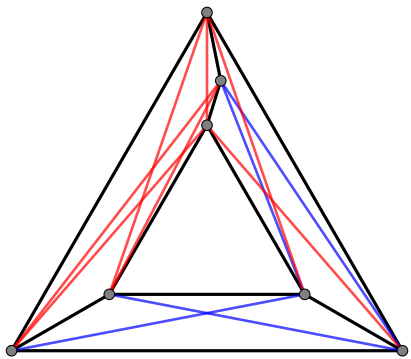

\begin{bibdiv}
\begin{biblist}

\bib{MR791009}{article}{
   author={Alon, N.},
   author={Caro, Y.},
   title={On the number of subgraphs of prescribed type of planar graphs
   with a given number of vertices},
   conference={
      title={Convexity and graph theory},
      address={Jerusalem},
      date={1981},
   },
   book={
      series={North-Holland Math. Stud.},
      volume={87},
      publisher={North-Holland, Amsterdam},
   },
   date={1984},
   pages={25--36},
   review={\MR{791009}},
   doi={10.1016/S0304-0208(08)72803-2},
}

\bib{AlonShikhelman}{article}{
   author={Alon, Noga},
   author={Shikhelman, Clara},
   title={Many $T$ copies in $H$-free graphs},
   journal={J. Combin. Theory Ser. B},
   volume={121},
   date={2016},
   pages={146--172},
   issn={0095-8956},
   review={\MR{3548290}},
   doi={10.1016/j.jctb.2016.03.004},
}

\bib{BT18}{article}{
   author={Bar\'{a}t, J\'{a}nos},
   author={T\'{o}th, G\'{e}za},
   title={Improvements on the density of maximal 1-planar graphs},
   journal={J. Graph Theory},
   volume={88},
   date={2018},
   number={1},
   pages={101--109},
   issn={0364-9024},
   review={\MR{3781606}},
   doi={10.1002/jgt.22187},
}

\bib{BodendiekSW83}{article}{
   author={Bodendiek, R.},
   author={Schumacher, H.},
   author={Wagner, K.},
   title={Bemerkungen zu einem {Sechsfarbenproblem} von {G.} {Ringel}},
   language={German},
   journal={Abh. Math. Sem. Univ. Hamburg},
   volume={53},
   date={1983},
   pages={41--52},
   issn={0025-5858},
   review={\MR{732806}},
   doi={10.1007/BF02941309},
}

\bib{BodendiekSW84}{article}{
   author={Bodendiek, R.},
   author={Schumacher, H.},
   author={Wagner, K.},
   title={\"{U}ber $1$-optimale Graphen},
   language={German},
   journal={Math. Nachr.},
   volume={117},
   date={1984},
   pages={323--339},
   issn={0025-584X},
   review={\MR{755314}},
   doi={10.1002/mana.3211170125},
}

\bib{MR2433860}{article}{
   author={Bollob\'{a}s, B\'{e}la},
   author={Gy\H{o}ri, Ervin},
   title={Pentagons vs. triangles},
   journal={Discrete Math.},
   volume={308},
   date={2008},
   number={19},
   pages={4332--4336},
   issn={0012-365X},
   review={\MR{2433860}},
   doi={10.1016/j.disc.2007.08.016},
}
 
 \bib{Borodin84}{article}{
   author={Borodin, O. V.},
   title={Solution of the Ringel problem on vertex-face coloring of planar
   graphs and coloring of $1$-planar graphs},
   language={Russian},
   journal={Metody Diskret. Analiz.},
   number={41},
   date={1984},
   pages={12--26, 108},
   issn={0136-1228},
   review={\MR{832128}},
}

\bib{plantri}{article}{
   author={Brinkmann, Gunnar},
   author={McKay, Brendan D.},
   title={Fast generation of planar graphs},
   journal={MATCH Commun. Math. Comput. Chem.},
   volume={58},
   date={2007},
   number={2},
   pages={323--357},
   issn={0340-6253},
   review={\MR{2357364}},
}

\bib{MR774940}{article}{
   author={Chiba, Norishige},
   author={Nishizeki, Takao},
   title={Arboricity and subgraph listing algorithms},
   journal={SIAM J. Comput.},
   volume={14},
   date={1985},
   number={1},
   pages={210--223},
   issn={0097-5397},
   review={\MR{774940}},
   doi={10.1137/0214017},
}

\bib{MR2831105}{article}{
   author={Cutler, Jonathan},
   author={Radcliffe, A. J.},
   title={Extremal problems for independent set enumeration},
   journal={Electron. J. Combin.},
   volume={18},
   date={2011},
   number={1},
   pages={Paper 169, 17},
   review={\MR{2831105}},
}

\bib{DidimoLM19}{article}{
   author={Didimo, Walter},
   author={Liotta, Giuseppe},
   author={Montecchiani, Fabrizio},
   title={A survey on graph drawing beyond planarity},
   journal={ACM Comput. Surveys},
   volume={52},
   date={2019},
   number={1},
   pages={1--37},
   issn={0360-0300},
   doi={10.1145/3301281},
}

\bib{MR2838013}{article}{
   author={Dujmovi\'{c}, Vida},
   author={Fijav\v{z}, Ga\v{s}per},
   author={Joret, Gwena\"{e}l},
   author={Sulanke, Thom},
   author={Wood, David R.},
   title={On the maximum number of cliques in a graph embedded in a surface},
   journal={European J. Combin.},
   volume={32},
   date={2011},
   number={8},
   pages={1244--1252},
   issn={0195-6698},
   review={\MR{2838013}},
   doi={10.1016/j.ejc.2011.04.001},
}

\bib{erdos}{article}{
   author={Erd\H{o}s, P.},
   title={On the number of complete subgraphs contained in certain graphs},
   language={English, with Russian summary},
   journal={Magyar Tud. Akad. Mat. Kutat\'{o} Int. K\"{o}zl.},
   volume={7},
   date={1962},
   pages={459--464},
   issn={0541-9514},
   review={\MR{151956}},
}

\bib{MR3667668}{article}{
   author={Fox, Jacob},
   author={Wei, Fan},
   title={On the number of cliques in graphs with a forbidden minor},
   journal={J. Combin. Theory Ser. B},
   volume={126},
   date={2017},
   pages={175--197},
   issn={0095-8956},
   review={\MR{3667668}},
   doi={10.1016/j.jctb.2017.04.004},
}

\bib{MR2673004}{article}{
   author={Fomin, Fedor V.},
   author={Oum, Sang{-}il},
   author={Thilikos, Dimitrios M.},
   title={Rank-width and tree-width of $H$-minor-free graphs},
   journal={European J. Combin.},
   volume={31},
   date={2010},
   number={7},
   pages={1617--1628},
   issn={0195-6698},
   review={\MR{2673004}},
   doi={10.1016/j.ejc.2010.05.003},
}

\bib{MR4187140}{article}{
   author={Fox, Jacob},
   author={Wei, Fan},
   title={On the number of cliques in graphs with a forbidden subdivision or
   immersion},
   journal={SIAM J. Discrete Math.},
   volume={34},
   date={2020},
   number={4},
   pages={2556--2582},
   issn={0895-4801},
   review={\MR{4187140}},
   doi={10.1137/18M1206126},
}

\bib{BodlaenderG07}{article}{
   author={Grigoriev, Alexander},
   author={Bodlaender, Hans L.},
   title={Algorithms for graphs embeddable with few crossings per edge},
   journal={Algorithmica},
   volume={49},
   date={2007},
   number={1},
   pages={1--11},
   issn={0178-4617},
   review={\MR{2344391}},
   doi={10.1007/s00453-007-0010-x},
}

\bib{MR2959390}{article}{
  author={Grzesik, Andrzej},
  title={On the maximum number of five-cycles in a triangle-free graph},
  journal={J. Combin. Theory Ser. B},
  volume={102},
  date={2012},
  number={5},
  pages={1061--1066},
  issn={0095-8956},
  review={\MR{2959390}},
  doi={10.1016/j.jctb.2012.04.001},
}

\bib{MR2900057}{article}{
   author={Gy\H{o}ri, Ervin},
   author={Li, Hao},
   title={The maximum number of triangles in $C_{2k+1}$-free graphs},
   journal={Combin. Probab. Comput.},
   volume={21},
   date={2012},
   number={1-2},
   pages={187--191},
   issn={0963-5483},
   review={\MR{2900057}},
   doi={10.1017/S0963548311000629},
}

\bib{MR519175}{article}{
   author={Hakimi, S. L.},
   author={Schmeichel, E. F.},
   title={On the number of cycles of length $k$ in a maximal planar graph},
   journal={J. Graph Theory},
   volume={3},
   date={1979},
   number={1},
   pages={69--86},
   issn={0364-9024},
   review={\MR{519175}},
   doi={10.1002/jgt.3190030108},
}

\bib{c5c3}{article}{
  author={Hatami, Hamed},
  author={Hladk\'{y}, Jan},
  author={Kr\'{a}\v{l}, Daniel},
  author={Norine, Serguei},
  author={Razborov, Alexander},
  title={On the number of pentagons in triangle-free graphs},
  journal={J. Combin. Theory Ser. A},
  volume={120},
  date={2013},
  number={3},
  pages={722--732},
  issn={0097-3165},
  review={\MR{3007147}},
  doi={10.1016/j.jcta.2012.12.008},
}

\bib{HongTokuyama20}{book}{
   editor={Hong, Seok-Hee},
   editor={Tokuyama, Takeshi},
   title={Beyond Planar Graphs},
   series={Communications of NII Shonan Meetings},
   publisher={Springer},
   date={2020},
   isbn={978-981-15-6532-8},
   doi={10.1007/978-981-15-6533-5},
}
\bib{HopcroftT74}{article}{
   author={Hopcroft, John},
   author={Tarjan, Robert},
   title={Efficient planarity testing},
   journal={J. Assoc. Comput. Mach.},
   volume={21},
   date={1974},
   pages={549--568},
   issn={0004-5411},
   review={\MR{359387}},
   doi={10.1145/321850.321852},
}

\bib{HJW-subgraphdensities}{article}{
   author={Huynh, Tony},
   author={Joret, Gwena\"el},
   author={Wood, David},
   title={Subgraph densities in a surface},
   date={2020},
   eprint={2003.13777},
   note={Preprint},
}

\bib{MR1841642}{article}{
   author={Kahn, Jeff},
   title={An entropy approach to the hard-core model on bipartite graphs},
   journal={Combin. Probab. Comput.},
   volume={10},
   date={2001},
   number={3},
   pages={219--237},
   issn={0963-5483},
   review={\MR{1841642}},
   doi={10.1017/S0963548301004631},
}

\bib{kawa}{article}{
   title={Cliques in odd-minor-free graphs},
   author={Kawarabayashi, Ken{-}ichi},
   author={Wood, David},
   eprint={1108.2949},
   note={Preprint},
  date={2011},
}

\bib{KobourovLM17}{article}{
   author={Kobourov, Stephen G.},
   author={Liotta, Giuseppe},
   author={Montecchiani, Fabrizio},
   title={An annotated bibliography on 1-planarity},
   journal={Comput. Sci. Rev.},
   volume={25},
   date={2017},
   pages={49--67},
   issn={1574-0137},
   review={\MR{3697129}},
   doi={10.1016/j.cosrev.2017.06.002},
}

\bib{Korzhik08}{article}{
   author={Korzhik, Vladimir P.},
   title={Minimal non-1-planar graphs},
   journal={Discrete Math.},
   volume={308},
   date={2008},
   number={7},
   pages={1319--1327},
   issn={0012-365X},
   review={\MR{2382367}},
   doi={10.1016/j.disc.2007.04.009},
}
\bib{KorzhikMohar13}{article}{
   author={Korzhik, Vladimir P.},
   author={Mohar, Bojan},
   title={Minimal obstructions for 1-immersions and hardness of 1-planarity
   testing},
   journal={J. Graph Theory},
   volume={72},
   date={2013},
   number={1},
   pages={30--71},
   issn={0364-9024},
   review={\MR{2993075}},
   doi={10.1002/jgt.21630},
}

\bib{Kuratowski}{article}{
   author={Kuratowski, Kazimierz},
   title={Sur le problème des courbes gauches en topologie},
   journal={Fund. Math. (in French)},
   volume={15},
   date={1930},
   number={},
   pages={271--283},
   issn={0895-4801},
   review={},
   doi={},
}

\bib{MR3414466}{article}{
   author={Lee, Choongbum},
   author={Oum, Sang{-}il},
   title={Number of cliques in graphs with a forbidden subdivision},
   journal={SIAM J. Discrete Math.},
   volume={29},
   date={2015},
   number={4},
   pages={1999--2005},
   issn={0895-4801},
   review={\MR{3414466}},
   doi={10.1137/140979988},
}

\bib{MR2236510}{article}{
   author={Norine, Serguei},
   author={Seymour, Paul},
   author={Thomas, Robin},
   author={Wollan, Paul},
   title={Proper minor-closed families are small},
   journal={J. Combin. Theory Ser. B},
   volume={96},
   date={2006},
   number={5},
   pages={754--757},
   issn={0095-8956},
   review={\MR{2236510}},
   doi={10.1016/j.jctb.2006.01.006},
}

\bib{MR2571902}{article}{
   author={Reed, Bruce},
   author={Wood, David R.},
   title={A linear-time algorithm to find a separator in a graph excluding a
   minor},
   journal={ACM Trans. Algorithms},
   volume={5},
   date={2009},
   number={4},
   pages={Art. 39, 16},
   issn={1549-6325},
   review={\MR{2571902}},
   doi={10.1145/1597036.1597043},
}

\bib{ringel1965}{article}{
   author={Ringel, Gerhard},
   title={Ein {S}echsfarbenproblem auf der {K}ugel},
   language={German},
   journal={Abh. Math. Sem. Univ. Hamburg},
   volume={29},
   date={1965},
   pages={107--117},
   issn={0025-5858},
   review={\MR{187232}},
   doi={10.1007/BF02996313},
}

\bib{Schumacher86}{article}{
   author={Schumacher, H.},
   title={Zur Struktur $1$-planarer Graphen},
   language={German},
   journal={Math. Nachr.},
   volume={125},
   date={1986},
   pages={291--300},
   issn={0025-584X},
   review={\MR{847368}},
   doi={10.1002/mana.19861250122},
}

\bib{suzuki2010}{article}{
   author={Suzuki, Yusuke},
   title={Re-embeddings of maximum 1-planar graphs},
   journal={SIAM J. Discrete Math.},
   volume={24},
   date={2010},
   number={4},
   pages={1527--1540},
   issn={0895-4801},
   review={\MR{2746706}},
   doi={10.1137/090746835},
}

\bib{tutte}{book}{
   author={Tutte, W. T.},
   title={Graph theory},
   series={Encyclopedia of Mathematics and its Applications},
   volume={21},
   note={With a foreword by C. St. J. A. Nash-Williams},
   publisher={Addison-Wesley Publishing Company, Advanced Book Program,
   Reading, MA},
   date={1984},
   pages={xxi+333},
   isbn={0-201-13520-5},
   review={\MR{746795}},
}

\bib{turan1941external}{article}{
  title={On an external problem in graph theory},
  author={Tur{\'a}n, Paul},
  journal={Mat. Fiz. Lapok},
  volume={48},
  pages={436--452},
  year={1941}
}

\bib{MR1513158}{article}{
   author={Wagner, K.},
   title={\"{U}ber eine Eigenschaft der ebenen Komplexe},
   language={German},
   journal={Math. Ann.},
   volume={114},
   date={1937},
   number={1},
   pages={570--590},
   issn={0025-5831},
   review={\MR{1513158}},
   doi={10.1007/BF01594196},
}

\bib{wood}{article}{
   author={Wood, David R.},
   title={On the maximum number of cliques in a graph},
   journal={Graphs Combin.},
   volume={23},
   date={2007},
   number={3},
   pages={337--352},
   issn={0911-0119},
   review={\MR{2320588}},
   doi={10.1007/s00373-007-0738-8},
}

\bib{MR3558055}{article}{
   author={Wood, David R.},
   title={Cliques in graphs excluding a complete graph minor},
   journal={Electron. J. Combin.},
   volume={23},
   date={2016},
   number={3},
   pages={Paper 3.18, 16},
   review={\MR{3558055}},
}

\bib{ZLW12}{article}{
   author={Zhang, Xin},
   author={Liu, Guizhen},
   author={Wu, Jian-Liang},
   title={$(1,\lambda)$-embedded graphs and the acyclic edge choosability},
   journal={Bull. Korean Math. Soc.},
   volume={49},
   date={2012},
   number={3},
   pages={573--580},
   issn={1015-8634},
   review={\MR{2963420}},
   doi={10.4134/BKMS.2012.49.3.573},
}

\bib{MR2593625}{article}{
   author={Zhao, Yufei},
   title={The number of independent sets in a regular graph},
   journal={Combin. Probab. Comput.},
   volume={19},
   date={2010},
   number={2},
   pages={315--320},
   issn={0963-5483},
   review={\MR{2593625}},
   doi={10.1017/S0963548309990538},
}

\bib{Zykov}{article}{
   author={Zykov, A. A.},
   title={On some properties of linear complexes},
   journal={Amer. Math. Soc. Translation},
   volume={1952},
   date={1952},
   number={79},
   pages={33},
   issn={0065-9290},
   review={\MR{0051516}},
}

\end{biblist}
\end{bibdiv}

\end{document}